\numberwithin{equation}{section} 
\numberwithin{figure}{section}
\newcommand{\newrefformat}[2]{}
\theoremstyle{plain}
\newtheorem{theorem}[equation]{Theorem}
\newtheorem{corollary}[equation]{Corollary}
\newtheorem{proposition}[equation]{Proposition}
\newtheorem{lemma}[equation]{Lemma}
\newtheorem{introtheorem}{Theorem}
\crefname{introtheorem}{Theorem}{Theorems}
\crefname{lemma}{Lemma}{Lemmas}
\crefname{theorem}{Theorem}{Theorems}
\crefname{definition}{Definition}{Definitions}
\crefname{proposition}{Proposition}{Propositions}
\crefname{remark}{Remark}{Remarks}
\crefname{corollary}{Corollary}{Corollaries}
\crefname{equation}{Equation}{Equations}
\crefname{construction}{Construction}{Constructions}
\crefname{example}{Example}{Examples}
\crefname{warning}{Warning}{Warnings}
\crefname{appsec}{Appendix}{Appendices}
\crefname{subsection}{Subsection}{Subsections}
\crefname{question}{Question}{Questions}
\crefname{recollection}{Recollection}{Recollections}
\theoremstyle{definition}
\newtheorem{definition}[equation]{Definition}
\newtheorem{example}[equation]{Example}
\newtheorem{construction}[equation]{Construction}
\newtheorem{warning}[equation]{Warning}
\newtheorem{remark}[equation]{Remark}
\newcommand{\category}{{$\infty$-cat\-e\-go\-ry}\xspace}
\newcommand{\categories}{$\infty$-cat\-e\-gories\xspace}
\newcommand{\categorical}{{$\infty$-cat\-e\-gor\-i\-cal}\xspace}
\newcommand{\operad}{{$\infty$-op\-er\-ad}\xspace}
\newcommand{\operads}{{$\infty$-op\-er\-ads}\xspace}
\newcommand{\op}{\mathrm{op}}
\newcommand{\act}{\mathrm{act}}
\newcommand{\cC}{\mathscr{C}}
\newcommand{\cS}{\mathscr{S}}
\newcommand{\cM}{\mathscr{M}}
\newcommand{\cD}{\mathscr{D}}
\newcommand{\oO}{\mathscr{O}}
\newcommand{\cE}{\mathscr{E}}
\newcommand{\zz}{\mathbb{Z}}
\newcommand{\nn}{\mathbb{N}}
\newcommand{\ee}[1]{\mathbb{E}_{#1}}
\newcommand{\m}[1]{\mathscr{#1}}
\renewcommand{\r}[1]{\mathrm{#1}}
\newcommand{\gpd}[1]{\lVert #1 \rVert}
\newcommand{\Ar}{\operatorname{Ar}}
\newcommand{\sAb}{\mathrm{sAb}}
\newcommand{\choo}{\Ch_{\geq 0}^\mathbb{Z}}
\newcommand{\ssSet}{\mathrm{ssSet}}
\newcommand{\brak}[1]{\langle #1 \rangle}
\newcommand{\unit}{\mathbbm{1}}
\newcommand{\Set}{\mathrm{Set}}
\newcommand{\sSet}{\mathrm{sSet}}
\newcommand{\PoSet}{\operatorname{PoSet}}
\newcommand{\usk}{\mathbf{sk}}
\newcommand{\Deltaleq}[1]{\Delta_{\leq #1}}
\newcommand{\nnDelta}{\mathbb{N}\ltimes\Delta^{\op}_{\leq \ast}}
\newcommand{\sleq}[1]{\leq #1}
\DeclareMathOperator{\Hom}{Hom}
\DeclareMathOperator{\Map}{Map}
\DeclareMathOperator{\End}{End}
\DeclareMathOperator{\Fun}{Fun}
\newcommand{\uFun}{\underline{\Fun}}
\newcommand{\Sp}{\mathrm{Sp}}
\DeclareMathOperator{\Fin}{Fin}
\newcommand{\Spaces}{\mathscr{S}}
\DeclareMathOperator{\Ch}{Ch}
\DeclareMathOperator{\Alg}{Alg}
\DeclareMathOperator{\Mul}{Mul}
\newcommand{\Cat}{\mathrm{Cat}}
\DeclareMathOperator{\ev}{ev}
\DeclareMathOperator{\Act}{act}
\DeclareMathOperator{\Mod}{Mod}
\DeclareMathOperator{\id}{id}
\DeclareMathOperator{\N}{N}
\newcommand{\pr}{\mathrm{pr}}
\DeclareMathOperator{\PrL}{\operatorname{Pr^{L}}}
\newcommand{\pull}{\arrow[dr, phantom, "\lrcorner", very near start]}
\NewDocumentCommand{\pullb}{e{_^}}{%
  \mathbin{\mathop{\times}\displaylimits
    \IfValueT{#1}{_{#1}}
    \IfValueT{#2}{^{#2}}
  }%
}
\DeclareMathOperator*{\colim}{colim}
\DeclareMathOperator*{\hocolim}{hocolim}
\tikzset{shorten <>/.style={shorten >=#1,shorten <=#1}}
\DeclareMathOperator{\Fil}{Fil}
\DeclareMathOperator{\Assoc}{Assoc}
\DeclareMathOperator{\sk}{sk}
\newcommand{\ch}{\mathrm{Ch}}
\newcommand{\chp}{\mathrm{Ch}_{\geq 0}}
\newcommand{\norm}{\mathscr{N}}
\newcommand{\Day}{\circledast}
\newcommand\bigCircledast{\mathop{\mathpalette\b@gCircledast\relax}}
\newcommand\b@gCircledast[2]{%
\vcenter{\hbox{\m@th
\scalebox{\ifx#1\displaystyle 2.6\else1.2\fi}{$#1\circledast$}%
}}\vcenter{\hbox{\rule{0pt}{15pt}}}%
}
\title{On products of skeleta}
\author[Keenan]{Liam Keenan}
\author[P\'eroux]{Maximilien P\'eroux}
\address{Department of Mathematics, Brown University, 151 Thayer Street, Providence RI 02912, USA}
\email{liam\_keenan@brown.edu}
\address{Department of Mathematics, Michigan State University, 619 Red Cedar Road, East Lansing, MI 48823, USA}
\email{peroux@msu.edu}
\begin{document}
\keywords{Eilenberg--Zilber map, Dold--Kan correspondence, skeletal filtration, symmetric promonoidal \categories}
\renewcommand{\subjclassname}{\textup{2020} Mathematics Subject Classification}
\subjclass[2020]{Primary: 
18D60, 
18G31, 
18M05, 
18N60, 
55P42. 
Secondary:
18N40, 
18N55, 
55T05
}


\begin{abstract}
    Given a symmetric monoidal $\infty$-category $\cE$, compatible with finite colimits, we show that the functor sending a simplicial object in $\cE$ to its skeletal filtration is canonically lax symmetric monoidal. This monoidal structure is the analogue of the one induced by the Eilenberg--Zilber homomorphism from the Dold--Kan correspondence.
    To accomplish this, we establish some new results around $\oO$-promonoidal \categories for any $\infty$-operad $\oO$; most notably, we show that it is possible to localize $\oO$-promonoidal \categories in the same way one localizes symmetric monoidal \categories.  
\end{abstract}

\maketitle

\setcounter{tocdepth}{1}
\tableofcontents

\section{Introduction}

Given topological spaces $X$ and $Y$, the singular chain complex of the product $C_*(X\times Y; \zz)$ is not, in general, isomorphic to the tensor product $C_*(X; \zz)\otimes C_*(Y; \zz)$.
Instead, these chain complexes are homotopy equivalent, via the Eilenberg--Zilber homomorphism:
\[
\nabla\colon C_p(X; \zz) \otimes C_q(Y; \zz) \longrightarrow C_{p+q}(X\times Y; \zz).
 \]
Essentially, the assignment $\nabla$ sends a $p$-chain in $X$ and a $q$-chain in $Y$ to a signed sum of $(p+q)$-chains in $X\times Y$, determined by the collection of nondegenerate $(p+q)$ simplices of $\Delta^{p} \times \Delta^{q}$ \cite{EZ, EMcL}. 
These simplices are in bijection with strictly monotone maps of posets $[p+q] \rightarrow [p] \times [q]$, which can be interpreted as $(p,q)$-shuffles in the symmetric group $\Sigma_{p+q}$. 
The classical K\"{u}nneth theorem, relating the homologies of $X$ and $Y$ with the homology of $X\times Y$, can be deduced from the Eilenberg--Zilber homomorphism via a standard homological algebra argument.

In fact, Eilenberg--Zilber produced these maps not just for the singular chain complexes associated to $X$ and $Y$, but for the chain complexes associated to any pair of simplicial abelian groups $A$ and $B$
\[
\nabla\colon C_\ast(A) \otimes C_\ast(B) \longrightarrow C_\ast(A\otimes B).
\]
Furthermore, using the formula informally described above, $\nabla$ induces a canonical chain map on the \textit{normalized chain complexes} associated to the simplicial abelian groups $A$ and $B$
\[
\nabla\colon\norm_*(A)\otimes \norm_*(B) \longrightarrow \norm_*(A\otimes B), 
\]
which remains a chain homotopy equivalence. 
It was independently discovered by Dold and Kan \cite{doldkan1, doldkan2}, that the assignment $A \mapsto \norm_{\ast}(A)$, now called the \textit{Dold--Kan correspondence}, is an equivalence of categories between simplicial abelian groups and non-negatively graded chain complexes. 
More generally, this equivalence of categories holds after replacing abelian groups with any abelian category $\m{A}$, and, if $\m{A}$ is symmetric monoidal (compatibly with the abelian structure) there is an analogous Eilenberg--Zilber map. 

In the language of category theory, the natural map $\nabla$ endows the functor $\norm_{\ast}$ with the structure of a lax symmetric monoidal functor, which can be leveraged to show that the homotopy theory of simplicial algebras is equivalent to the homotopy theory of differential graded rings \cite{DoldKanSS}. 
Both of which are algebraic models for the homotopy theory of connective $\mathbb{E}_1$-algebras over the Eilenberg--Mac Lane spectrum $H\zz$, see \cite{Shipley-HZDGAS} and \cite[4.1.8.4]{HA}. 

There are extensions of the Dold--Kan correspondence to a myriad of other contexts \cite{Shipley-HZDGAS, cosimp-DK, Sore-DK, PerDK, PerDoldKan, DK101, DK102}, and for us the most relevant extension is the \textit{$\infty$-categorical Dold--Kan correspondence} due to Lurie \cite[1.2.4.1]{HA}, which we now recall.
When $\m{E}$ is an \category that admits finite colimits, and $X \colon \Delta^{\op} \rightarrow \m{E}$ is a simplicial object, there is an associated \textit{skeletal filtration}
\[
\sk_{0}(X) \rightarrow \cdots \rightarrow \sk_{n-1}(X) \rightarrow \sk_{n}(X) \rightarrow \cdots 
\]
which determines a functor $\sk_{\ast}^{\m{E}}(X) \colon \nn \rightarrow \m{E}$. 
By $\sk_{n}(X)$, we mean the (homotopy) colimit of $X$ restricted to the full subcategory $\Delta^{\op}_{\leq n} \subseteq \Delta^{\op}$, spanned by nonempty finite linearly ordered sets $[k]$ with $k \leq n$.
The $\infty$-categorical Dold--Kan correspondence is the assertion that the functor 
\[
\sk^{\m{E}}_{\ast} \colon \Fun(\Delta^{\op},\m{E}) \rightarrow \Fun(\nn,\m{E})
\]
is an equivalence of categories whenever $\m{E}$ is stable. 
In this setting, we have exchanged the abelian category $\m{A}$ for a stable \category $\m{E}$, the category of non-negative chain complexes in $\m{A}$ for the category of non-negatively filtered objects in $\m{E}$, and the normalized chain complex functor for the skeletal filtration functor. 

In this article, we construct an analogue of the Eilenberg--Zilber homomorphism for the skeletal filtration functor defined above. 
In more detail, for any pair of simplicial objects $X$ and $Y$ in an \category $\cE$ with finite colimits and a compatible symmetric monoidal product $\otimes$, we show there are natural maps $\cE$:
\begin{equation}\label{eq: intro-sk-ez}
\nabla\colon \sk_p^\cE(X)\otimes \sk_q^\cE(Y)\longrightarrow \sk_{p+q}^\cE(X\otimes Y)
\end{equation}
for all $p,q \geq 0$, which determine a natural map of filtered objects
\[
\nabla \colon \sk_*^\cE(X)\Day \sk_*^\cE(Y)\longrightarrow \sk_*^\cE(X\otimes Y). 
\]
Here, $\Day$ denotes the Day convolution monoidal product on filtered objects in $\cE$. 
We shall say $\cE$ is \textit{finitely cocomplete} symmetric monoidal \category if $\cE$ is an \category with finite colimits and a symmetric monoidal structure for which the bifunctor $-\otimes -\colon \cE\times \cE\to \cE$ preserves finite colimits separately in each variable. 
The above maps determine the following result.

\begin{introtheorem}[\cref{theorem: main theorem on EZ for sk}]\label{introtheorem: EZ structure}
Let $\cE$ be a finitely cocomplete symmetric monoidal \category. The skeletal filtration functor
\[\sk_*^\cE\colon \Fun(\Delta^\op, \cE)\longrightarrow \Fun(\nn, \cE)\] 
admits a canonical lax symmetric monoidal structure, with respect to the pointwise tensor product on simplicial objects in $\cE$ and the Day convolution product on filtered objects in $\cE$.
\end{introtheorem}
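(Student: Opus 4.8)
The plan is to realize $\sk_*^\cE$ as the functor induced on Day convolution \categories by a span of $\oO$-promonoidal \categories (with $\oO = \ee{\infty}$), and then to extract the lax symmetric monoidal structure from functoriality of Day convolution together with the promonoidal localization result advertised in the abstract.

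\textbf{Reformulation via Day convolution.} First I would recast both monoidal structures in promonoidal terms. The pointwise tensor product on $\Fun(\Delta^\op,\cE)$ is the Day convolution associated to the \emph{diagonal} (corepresentable) promonoidal structure on $\Delta^\op$, whose pro-tensor is $(c;a,b)\mapsto \Map_{\Delta^\op}(c,a)\times\Map_{\Delta^\op}(c,b)$; the co-Yoneda lemma then gives $(F\Day G)(c)\simeq F(c)\otimes G(c)$. Crucially this structure is genuinely promonoidal and is \emph{not} represented by any functor $\Delta^\op\times\Delta^\op\to\Delta^\op$, which is precisely why the promonoidal formalism is needed. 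On the other side, the Day convolution product on filtered objects $\Fun(\nn,\cE)$ is the one associated to the honest monoidal structure $(\nn,+)$.

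\textbf{Presenting the skeletal filtration as a span.} Next I would use the category $\nnDelta$, equipped with its projections $\pi\colon\nnDelta\to\nn$ (a coCartesian fibration with fiber $\Delta^\op_{\leq n}$ over $n$) and $\rho\colon\nnDelta\to\Delta^\op$, and identify $\sk_*^\cE\simeq \pi_!\circ\rho^*$: the value of $\pi_!(\rho^*X)$ at $n$ is the colimit of $X$ over the comma category $\nnDelta\times_\nn\nn_{/n}$, into which the fiber $\Delta^\op_{\leq n}$ includes cofinally (the coCartesian pushforward to the fiber $m=n$ provides initial objects of the relevant under-categories), so this colimit is $\colim_{\Delta^\op_{\leq n}}X=\sk_n(X)$. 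Moreover, since every fiber $\Delta^\op_{\leq n}$ has initial object $[0]$ and is therefore weakly contractible, the counit $\pi_!\pi^*\to\id$ is an equivalence; hence $\pi^*$ is fully faithful and $\pi_!$ is a localization functor.

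\textbf{Promonoidal refinement of the span.} The heart of the argument is to promote this span to $\oO$-promonoidal \categories. I would endow $\nnDelta$ with a promonoidal structure whose $\nn$-component is addition and whose $\Delta^\op_{\leq\ast}$-component encodes the product-of-posets / shuffle combinatorics (the nondegenerate simplices of $[a]\times[b]$, of dimension $a+b$), arranged so that $\rho$ becomes a morphism of $\oO$-promonoidal \categories onto the diagonal structure and $\pi$ becomes a strong morphism onto $(\nn,+)$ that exhibits $(\nn,+)$ as a promonoidal localization of $\nnDelta$. Granting this, restriction $\rho^*$ is lax symmetric monoidal by functoriality of Day convolution, while the promonoidal localization theorem upgrades the localization $\pi_!$ to a strong symmetric monoidal functor (and identifies the induced structure on $\Fun(\nn,\cE)$ with the $(\nn,+)$-Day convolution). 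Consequently the composite $\sk_*^\cE=\pi_!\circ\rho^*$ is lax symmetric monoidal for the pointwise product on the source and the Day convolution product on the target, and a final bookkeeping step identifies its structure maps with the $\nabla$ of \eqref{eq: intro-sk-ez}.

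\textbf{Main obstacle.} I expect the decisive difficulty to lie in the third step: constructing the promonoidal structure on $\nnDelta$ coherently and verifying that $\pi$ genuinely exhibits $(\nn,+)$ as a promonoidal localization, so that $\pi_!$ is compatible with the pro-tensor. This is exactly the point at which the new promonoidal-localization result is indispensable, and it is where the shuffle combinatorics underlying the classical Eilenberg--Zilber map must be encoded \categorically; the laxness (rather than strength) of the final structure is a shadow of the fact that $\rho$ is only a promonoidal morphism into the diagonal structure.
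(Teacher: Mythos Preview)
Your plan coincides with the paper's proof: $\sk_*^\cE$ is written as $L_! \circ q^*$ where $q\colon \nnDelta \to \Delta^{\op}$ is the projection (lax promonoidal for the pointwise/diagonal structure, so $q^*$ is lax symmetric monoidal) and $L\colon \nnDelta \to \nn$ is exhibited as a promonoidal localization via the shuffle/skeletality combinatorics (so $L_!$ is strong symmetric monoidal). The only implementation detail worth flagging is that the paper carries out the localization step with $\oO = \nn^{\otimes}$ rather than $\oO = \ee\infty$---building $(\nnDelta)^{\otimes}$ as an $\nn$-promonoidal full sub-\operad of $\nn^{\otimes}\times_{\Fin_*}(\Delta^{\op})^{\rm pt}$, applying the $\nn$-promonoidal groupoid completion (this is \cref{introtheorem: products of skeleta}), and then converting the resulting $\nn$-monoidal left Kan extension into a symmetric monoidal one by taking sections over $\nn$.
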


In the special case where $\cE$ is the \category of spectra, we show that the lax symmetric monoidal structure above recovers the usual Eilenberg--Zilber homomorphism. 
This result, combined with work of Hedenlund--Krause--Nikolaus \cite{hedenlund-krause-nikolaus}, shows that the spectral sequence associated to a simplicial $\ee{\infty}$-ring has a (graded) commutative multiplicative structure.  

One of technical challenges in establishing \cref{introtheorem: EZ structure} is that it is not enough to simply specify the maps $\nabla$ in \cref{eq: intro-sk-ez}: we must also specify all the homotopy coherences. 
A standard way to address this issue is to utilize the universal properties of the objects under consideration.
However, the symmetric monoidal structures on the source and target of $\sk_{\ast}^{\m{E}}$ have seemingly different origins: one is a pointwise tensor product, and the other is a convolution product arising from a symmetric monoidal structure on $\nn$. 
Because $\Delta^{\op}$ is not a symmetric monoidal category, the standard methods for manipulating convolution products do not apply. 
Nevertheless, this apparent difference can be remedied by working with \textit{symmetric promonoidal categories}: essentially, this is the minimal necessary structure on a category  required to make sense of a Day convolution product.

Informally, a symmetric monoidal structure on a category $\m{C}$ is encoded by a functor $\m{C} \times \m{C} \rightarrow \m{C}$, while a symmetric promonoidal structure on a category $\m{C}$ is encoded by a \textit{profunctor} $\m{C} \times \m{C} \nrightarrow \m{C}$, which is another name for a functor of the form $\m{C}^{\op}\times \m{C}^{\op} \times \m{C} \rightarrow {\rm Set}$. 
From this point of view, $\Delta^{\op}$ does admit a symmetric promonoidal structure, given by the profunctor 
\(
\Delta\times \Delta\times \Delta^\op\to \Set
\)
sending a $([p], [q] ; [n])$ to the collection of poset maps of the form $[n]\to [p]\times [q]$. 
Note that any such poset map factors through a $(p,q)$-shuffle, which crucially appear in the construction of the classical Eilenberg--Zilber homomorphism.

By studying this symmetric promonoidal structure on $\Delta^{\op}$, we deduce \cref{introtheorem: EZ structure} from a more fundamental result. 
To explain this, note that for $n \geq 0$, we may identify $\Fun(\Delta^{\op}_{\leq n},\m{E})$ with the full subcategory of ``$n$-skeletal'' objects in $\Fun(\Delta^{\op},\m{E})$, which induces an ascending filtration of the \category $\Fun(\Delta^{\op},\m{E})$: 
\[
\Fun(\Delta_{\leq 0}^{\op},\cE) \rightarrow \cdots \rightarrow \Fun(\Delta_{\leq n-1}^{\op},\cE) \rightarrow \Fun(\Delta_{\leq n}^{\op},\cE) \rightarrow \cdots. 
\]
We prove that the (pointwise) tensor product of an $p$-skeletal simplicial object $X$, with an $q$-skeletal simplicial object $Y$, results in an $(p+q)$-skeletal simplicial object $X \otimes Y$. 
This endows the filtration above with a tensor product that respects the filtration degree. 
We make this notion precise and refer to these as \textit{$\nn$-monoidal \categories} (\cref{rem: whay is an N-monoidal cat}).

\begin{introtheorem}[\cref{theorem: products of skeletal objects are skeletal}]\label{introtheorem: products of skeleta}
Let $\cE$ be a finitely cocomplete symmetric monoidal \category. 
The pointwise tensor product of simplicial objects endows the filtered \category 
\[
\Fun(\Delta_{\leq 0}^{\op},\cE) \rightarrow \cdots \rightarrow \Fun(\Delta_{\leq n-1}^{\op},\cE) \rightarrow \Fun(\Delta_{\leq n}^{\op},\cE) \rightarrow \cdots
\]
with a canonical $\nn$-monoidal structure. 
\end{introtheorem}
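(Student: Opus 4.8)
My plan is to recast the pointwise tensor product in promonoidal terms, isolate a single combinatorial fact about products of simplices, and then use the Day convolution/localization formalism to upgrade that fact into a coherent $\nn$-monoidal structure. First I would invoke the promonoidal description of $\Delta^{\op}$ recorded in the introduction, namely the symmetric promonoidal structure with multiplication profunctor
\[
M\colon \Delta \times \Delta \times \Delta^{\op} \longrightarrow \Set, \qquad M([p],[q];[n]) = \Hom_{\PoSet}([n],[p]\times[q]) \cong (\Delta^p \times \Delta^q)_n .
\]
Its Day convolution is precisely the pointwise tensor product: distributing $\otimes$ over the defining coend $\int^{[p],[q]} M([p],[q];[n]) \otimes X_p \otimes Y_q$ and applying the co-Yoneda lemma in each variable collapses it to $X_n \otimes Y_n$. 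This dissolves the apparent mismatch between the ``pointwise'' source and ``Day convolution'' target of $\sk^\cE_\ast$, placing both inside one formalism.

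The combinatorial heart is then the assertion that the simplicial set $M([p],[q];-) \cong \Delta^p \times \Delta^q$ is $(p+q)$-skeletal. This is immediate: a chain in the grid poset $[p]\times[q]$ has at most $p+q+1$ vertices, so every poset map $[n] \to [p]\times[q]$ factors as a surjection $[n]\twoheadrightarrow[m]$ with $m \le p+q$ followed by an injection; equivalently, the nondegenerate simplices of $\Delta^p\times\Delta^q$ — the $(p,q)$-shuffles and their faces — all have dimension at most $p+q$. In filtered language this says exactly that $M$ carries degree $(p,q)$ into degree $p+q$ for the skeletal filtration $\Delta^{\op}_{\leq 0}\subseteq\Delta^{\op}_{\leq 1}\subseteq\cdots$ of $\Delta^{\op}$.

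I would package this as an $\nn$-promonoidal structure on the filtered \category $\Delta^{\op}_{\leq\bullet}$, whose graded multiplication is the restriction of $M$ to a profunctor $\Delta^{\op}_{\leq p}\times\Delta^{\op}_{\leq q}\nrightarrow\Delta^{\op}_{\leq p+q}$ — well defined by the lemma — with the associativity, symmetry and unit coherences inherited from those of $M$. Feeding this into the promonoidal Day convolution construction, and using that Day convolution is compatible with the relevant localizations (here those exhibiting $\Fun(\Delta^{\op}_{\leq n},\cE)$ as the subcategory of $n$-skeletal objects of $\Fun(\Delta^{\op},\cE)$, as established earlier in the paper), transports the $\nn$-promonoidal structure to the desired $\nn$-monoidal structure on the filtration $\Fun(\Delta^{\op}_{\leq\bullet},\cE)$, with underlying bifunctors $\Fun(\Delta^{\op}_{\leq p},\cE)\times\Fun(\Delta^{\op}_{\leq q},\cE)\to\Fun(\Delta^{\op}_{\leq p+q},\cE)$ given by the pointwise tensor product.

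I expect the coherent bookkeeping of this last step, rather than the combinatorics, to be the main obstacle. At the level of objects the statement is morally transparent, and genuinely easy when $\cE$ is cocomplete: the $n$-skeletal objects are generated under colimits by the cells $\Delta^r\cdot V$ ($r\le n$, $V\in\cE$), one computes $(\Delta^r\cdot V)\otimes(\Delta^s\cdot W)\simeq(\Delta^r\times\Delta^s)\cdot(V\otimes W)$, and this is $(r+s)$-skeletal by the lemma, so that separate colimit-preservation of $-\otimes-$ forces a $p$-skeletal object tensored with a $q$-skeletal object to be $(p+q)$-skeletal. The difficulty is to produce \emph{all} of the associativity and symmetry data simultaneously and uniformly across the entire $\nn$-filtration, and to do so for an arbitrary finitely cocomplete $\cE$ where such naive generator arguments are unavailable; it is exactly this that forces the passage through $\nn$-promonoidal categories and the localization theory for their Day convolutions.
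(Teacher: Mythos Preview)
Your proposal is correct and follows essentially the same approach as the paper: recognize the pointwise promonoidal structure on $\Delta^{\op}$, prove the combinatorial lemma that $\Delta^p \times \Delta^q$ is $(p+q)$-skeletal (the paper's \cref{lemma: product of simplices is skeletal}), package this as an $\nn$-promonoidal structure on the filtered category $\nnDelta$ (the paper's \cref{proposition: nnDelta main prop}), and then apply Day convolution with values in $\cE$ to obtain the $\nn$-monoidal structure. One minor correction: the localization machinery you invoke is not actually needed for \emph{this} theorem---the $\nn$-monoidal structure on $\Fun(\Delta^{\op}_{\leq\bullet},\cE)$ arises directly from Day convolution over the $\nn$-promonoidal $(\nnDelta)^{\otimes}$ via \cref{prop: existence of Day convolution product}, and the promonoidal localization theory enters only later, in deducing \cref{introtheorem: EZ structure}.
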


In order to establish \cref{introtheorem: products of skeleta} and to deduce \cref{introtheorem: EZ structure} from it, we need a toolkit for manipulating $\oO$-promonoidal \categories, $\m{C}$, and the associated Day convolution product on $\Fun(\m{C},\m{E})$, when $\m{E}$ is a finitely cocomplete symmetric monoidal \category. 
These convolution products were originally invented by Day in \cite{day-promonoidal} for ordinary categories, and their \categorical counterparts have been studied in a number of works \cite{spectralmackeyii, shah2021parametrized, linskens-nardin-pol}; see also \cite{Glasman,HA,NikolausOperads} for the case where $\m{C}$ is monoidal instead of promonoidal. 
We draw special attention to the recent paper of Linskens--Nardin--Pol \cite{linskens-nardin-pol}, who establish the sort of toolkit we need, but with the stronger assumption that $\cE$ has all small colimits, meaning their results cannot be directly applied to our situation. 

We proceed by reviewing the theory of $\m{O}$-promonoidal \categories and establishing a number of fundamental technical results about the Day convolution product on $\Fun(\m{C},\m{E})$, where crucially we do not assume that $\m{E}$ admits all small colimits. 
We then prove the following result classifying $\m{O}$-promonoidal structures, extending \cite[3.37]{linskens-nardin-pol} which only covers the symmetric promonoidal case. 

\begin{introtheorem}[\cref{proposition: Day conv recovers presentably O-monoidal str}]
    Let $\oO$ be an \operad and let $p\colon \m{C} \rightarrow \m{O}$ be a coCartesian fibration. 
    Then, any $\m{O}$-promonoidal structure extending $p$, classifies, and is classified by, a presentably $\m{O}$-monoidal structure on the relative copresheaf category $\uFun_{\m{O}}(\m{C},\Spaces_{\oO})$.
    In other words, every presentably $\m{O}$-monoidal structure on $\uFun_{\m{O}}(\m{C},\Spaces_{\oO})$ arises from Day convolution. 
\end{introtheorem}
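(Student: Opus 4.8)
The plan is to produce two mutually inverse constructions between the space of $\oO$-promonoidal structures extending $p$ and the space of presentably $\oO$-monoidal structures on $\uFun_{\oO}(\m{C},\Spaces_{\oO})$, thereby extending the commutative case \cite[3.37]{linskens-nardin-pol} to an arbitrary \operad. In one direction, the Day convolution machinery developed earlier in the paper sends an $\oO$-promonoidal structure extending $p$ to a presentably $\oO$-monoidal structure on the copresheaf category; crucially this machinery was set up without assuming the target admits all small colimits, so it applies to $\Spaces_{\oO}$ and its parametrized variants directly. In the other direction, I would restrict a given presentably $\oO$-monoidal structure along the inclusion of representable copresheaves $\m{C}\hookrightarrow \uFun_{\oO}(\m{C},\Spaces_{\oO})$, extracting from the tensor products of representable copresheaves a profunctor which I claim is again $\oO$-promonoidal and extends $p$.

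First I would verify the composite that starts from a promonoidal structure, forms its Day convolution, and restricts back to representables. Here the essential input is the universal property of Day convolution established earlier together with the co-Yoneda lemma: the Day tensor of representables is computed by the promonoidal profunctor itself, so this composite is canonically the identity, naturally in all of the operadic coherence data. This direction is essentially formal, once the universal property of $\uFun_{\oO}(\m{C},\Spaces_{\oO})$ as the free presentable $\oO$-category on $\m{C}$ is available.

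The substantive direction is the reverse composite, which starts from a presentably $\oO$-monoidal structure, restricts to representables, and re-forms the Day convolution. The key point is that the structure maps of a presentably $\oO$-monoidal category preserve colimits separately in each variable, while every copresheaf is a colimit of representables; consequently the entire $\oO$-monoidal structure, together with its higher coherences over $\oO$, is determined by its restriction to representables, which is exactly the promonoidal data whose Day convolution recovers it. Concretely, I would present both spaces as spaces of sections of suitable fibrations over the category of operators of $\oO$ and upgrade the free-cocompletion adjunction to the $\oO$-parametrized presentable setting, so that presentably $\oO$-monoidal structures on the free cocompletion are classified by $\oO$-promonoidal structures on $\m{C}$. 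This is the relative analogue of the identification of presentably $\oO$-monoidal categories with $\Alg_{\oO}(\PrL)$.

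The main obstacle I anticipate is the parametrized bookkeeping. Unlike the commutative case of \cite{linskens-nardin-pol}, where one may argue inside the symmetric monoidal \category $\PrL$ and its theory of commutative algebras, for general $\oO$ the free-cocompletion and colimit-preservation arguments must be carried out fiberwise over the category of operators of $\oO$ and glued compatibly. The delicate points are to ensure that the Yoneda embedding is itself suitably $\oO$-structured and that restriction to representables is inverse to Day convolution at the level of the full space of coherences, and not merely on underlying objects; the colimit-preservation property of presentably $\oO$-monoidal tensor products is the lever that makes the reconstruction possible.
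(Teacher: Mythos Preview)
Your high-level strategy matches the paper's: one direction is Day convolution, the other is restriction to (co)representables, and the verification that these are mutually inverse rests on colimit-preservation plus the fact that representables generate. Where you diverge is in the concrete implementation of the comparison.

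The paper does not set up an $\oO$-parametrized free-cocompletion adjunction or argue with spaces of sections. Instead, it constructs the comparison map directly via an \emph{$\oO$-monoidal mapping space functor}: for any $\oO$-monoidal \category $\m{D}^{\otimes}$, the fiberwise mapping space $\Map_{\m{D}^{\otimes}/\m{O}^{\otimes}}\colon \m{D}^{\otimes,\vee}\times_{\m{O}^{\otimes}}\m{D}^{\otimes}\to\Spaces$ is shown (using the Segal condition and \cite[2.4.1.7]{HA}) to extend essentially uniquely to a map of \operads into $\Spaces^{\times}$, and hence into $\Spaces_{\oO}^{\otimes}$. Applying this to $\m{D}^{\otimes}=\uFun_{\oO}(\m{C},\Spaces_{\oO})^{\otimes}$ and precomposing with the inclusion $\m{C}^{\otimes}\hookrightarrow\uFun_{\oO}(\m{C},\Spaces_{\oO})^{\otimes,\vee}$ (note the coCartesian dual here, which is how the paper packages ``restriction to representables'') yields, by the universal property of Day convolution, a lax $\oO$-monoidal functor $\Phi^{\otimes}\colon\uFun_{\oO}(\m{C},\Spaces_{\oO})^{\otimes}\to\Fun_{\oO}(\m{C},\Spaces_{\oO})^{\otimes}$ which is the identity on underlying coCartesian fibrations. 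The check that $\Phi^{\otimes}$ is strong $\oO$-monoidal is then exactly your observation: both sides commute with colimits, so it suffices to check on representables, where it follows from the definition of $\m{C}^{\otimes}$ as a full suboperad of the dual.

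Your proposed route via a parametrized free-cocompletion adjunction is plausible and would likely work, but it would require developing that adjunction with all of its $\oO$-coherences, which is precisely the ``parametrized bookkeeping'' you flag as the main obstacle. The paper's $\oO$-monoidal mapping space construction is the device that sidesteps this: it produces the comparison map in one stroke, with the operadic coherences coming for free from the uniqueness in \cite[2.4.1.7]{HA}, rather than from an adjunction that must be separately established.
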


With the theorem above in mind, the symmetric promonoidal structure on $\Delta^\op$ discussed above is the one which classifies the symmetric monoidal structure on $\Fun(\Delta^\op, \cS)$ given by the Cartesian product of simplicial spaces. 
Our main application of the classification result above is the construction of $\m{O}$-promonoidal localizations. 

Recall that in \categories, we can universally invert a chosen collection $W$ of arrows in an \category $\cC$, to obtain a new \category $\cC[W^{-1}]$ called the localization of $\cC$ with respect to $W$, sometimes also called the Dwyer--Kan localization \cite[1.3.4.1]{HA}. If $\cC$ is symmetric monoidal, the localization can also be made symmetric monoidal, see \cite[4.1.7.4]{HA}, \cite[A.4, A.5]{nikolaus-scholze} and \cite{hindk}. We establish the following promonoidal analogue.

\begin{introtheorem}[\cref{theorem: promonoidal structures exist}]
    Let $\m{O}$ be an $\infty$-operad and $\cC$ be an $\m{O}$-promonoidal \category.
    Given $W$, an $\m{O}$-saturated collection of arrows in $\cC$ compatible with promonoidal structure on $\cC$, there is an $\m{O}$-promonoidal structure on the localization $\cC[W^{-1}]$, such that the universal functor $\cC\to \cC[W^{-1}]$ is $\m{O}$-promonoidal. 
\end{introtheorem}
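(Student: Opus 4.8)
The plan is to reduce the problem to the much better-understood task of localizing a presentably $\oO$-monoidal \category, passing back and forth along the classification correspondence of \cref{proposition: Day conv recovers presentably O-monoidal str}. Write $q\colon \cC\to \cC[W^{-1}]$ for the universal functor. Applying that classification to the given promonoidal structure on $\cC$ produces the Day convolution presentably $\oO$-monoidal structure on the relative copresheaf category $\uFun_{\oO}(\cC,\Spaces_{\oO})$, and the entire argument will be carried out inside this presentable $\oO$-monoidal category; the structure on $\cC[W^{-1}]$ is then recovered by inverting the classification at the very end.

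First I would realize copresheaves on $\cC[W^{-1}]$ as a reflective subcategory of copresheaves on $\cC$. Restriction along $q$ gives a fully faithful functor $q^*\colon \uFun_{\oO}(\cC[W^{-1}],\Spaces_{\oO})\hookrightarrow \uFun_{\oO}(\cC,\Spaces_{\oO})$, whose essential image is the full subcategory of $W$-local copresheaves, namely those $F$ with $F(w)$ invertible for every $w$ in $W$; this is the defining universal property of the localization, applied over $\oO$. Since $W$ is the $\oO$-saturation of a small collection, this subcategory is an accessible reflective localization of the presentable \category $\uFun_{\oO}(\cC,\Spaces_{\oO})$. Write $L$ for the associated localization functor and $S$ for its class of local equivalences. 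The $\oO$-saturation hypothesis is what guarantees that this localization is computed fiberwise over $\oO$, so that $\cC[W^{-1}]\to \oO$ remains a coCartesian fibration and the localized category is again of the relative form $\uFun_{\oO}(-,\Spaces_{\oO})$.

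The heart of the argument is to check that $L$ is compatible with the Day convolution $\oO$-monoidal structure, in the precise sense needed to transport a monoidal structure through a localization, i.e. that the class $S$ is closed under Day convolution against arbitrary objects (the $\oO$-monoidal refinement of \cite[2.2.1.9]{HA}; compare \cite[4.1.7.4]{HA}, \cite[A.4, A.5]{nikolaus-scholze}, \cite{hindk}). This is exactly where the hypothesis that $W$ is compatible with the promonoidal structure is used. Unwinding the coend formula, the Day convolution is assembled from the promonoidal multi-mapping objects $\Mul$ of $\cC$ together with colimits in $\Spaces_{\oO}$; the compatibility hypothesis on $W$ ensures that these operations send the arrows of $W$ into $S$ in each variable, so that Day convolution, being built from them via colimits, preserves the saturated class $S$. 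Granting this, the cited localization criterion endows $\uFun_{\oO}(\cC[W^{-1}],\Spaces_{\oO})\simeq L\,\uFun_{\oO}(\cC,\Spaces_{\oO})$ with a presentably $\oO$-monoidal structure — presentability survives because the localization is accessible — for which $L$ is an $\oO$-monoidal functor.

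Finally I would transport back across \cref{proposition: Day conv recovers presentably O-monoidal str}: the presentably $\oO$-monoidal structure just constructed on $\uFun_{\oO}(\cC[W^{-1}],\Spaces_{\oO})$ is classified by a unique $\oO$-promonoidal structure on $\cC[W^{-1}]$ extending its coCartesian fibration to $\oO$, which is the structure we want. That $q$ is $\oO$-promonoidal should follow by functoriality of the classification: the restriction $q^*=L$ is an $\oO$-monoidal localization functor between the two Day convolution categories, and under the correspondence such a functor corresponds precisely to an $\oO$-promonoidal functor on the bases. I expect the main obstacle to be exactly this functoriality, hand in hand with the compatibility verification above: turning ``$L$ is $\oO$-monoidal'' into ``$q$ is $\oO$-promonoidal'' requires that the object-level classification of \cref{proposition: Day conv recovers presentably O-monoidal str} upgrades to a statement about (op)lax $\oO$-monoidal functors, which will likely demand an auxiliary naturality result for Day convolution rather than being a formal consequence of the classification itself.
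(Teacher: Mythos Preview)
Your strategy is exactly the paper's: pass to Day convolution via \cref{proposition: Day conv recovers presentably O-monoidal str}, verify the reflective localization onto $W$-local copresheaves is compatible with the $\oO$-monoidal structure (the paper uses \cite[2.2.1.9]{HA} and the explicit formula for the Day product), and then transport back.

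Two small corrections. First, you write ``$q^{\ast}=L$ is $\oO$-monoidal'': it is the \emph{left} adjoint $q_{!}$ (left Kan extension) that is $\oO$-monoidal, while $q^{\ast}$ is the lax monoidal right adjoint; the localization endofunctor is $q^{\ast}q_{!}$. Second, the obstacle you flag at the end is real, and the paper does \emph{not} resolve it by an abstract functoriality statement for the classification. Instead, recall from \cref{proposition: closed O-monoidal structures are O-promonoidal} that $\cC^{\otimes}$ is \emph{defined} as a full sub-\operad of the coCartesian dual $\uFun_{\oO}(\cC,\Spaces_{\oO})^{\otimes,\vee}$ via the Yoneda embedding, and likewise for $\cC[W^{-1}]^{\otimes}$. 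Since $q_{!}$ is $\oO$-monoidal, so is its coCartesian dual $q_{!}^{\vee}$, and because left Kan extension along a localization preserves corepresentables, $q_{!}^{\vee}$ restricts to a map of full sub-\operads $\cC^{\otimes}\to\cC[W^{-1}]^{\otimes}$. Promonoidality of this map then follows immediately from $\oO$-monoidality of $q_{!}^{\vee}$ together with the fact that full sub-\operad inclusions are promonoidal. This is the commutative square at the end of the paper's proof, and it bypasses any need for a general ``functoriality of classification'' lemma.
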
 

We use this result to describe $\sk^\cE_*$ in terms of a left Kan extension along a promonoidal localization, which is a key ingredient deducing that $\sk^\cE_*$ is lax symmetric monoidal. 

Recall that any presentably symmetric monoidal \category is equivalent to an underlying \category of a combinatorial symmetric monoidal model category \cite{Nikolaus-Sagave}.
We can therefore prove \cref{introtheorem: EZ structure}  using model categories in the case where $\cE$ is presentable, and we do so in the appendices (\cref{cor: EZ for presentable case}). This approach has the advantage of giving a point-set approach in the definition of the Eilenberg--Zilber homomorphism. We also relate the $\infty$-categorical filtration $\sk^\cE_*$ with the usual skeletal filtration on ordinary categories.

The classical Eilenberg--Zilber homomorphism $\nabla\colon \norm_*(A)\otimes \norm_*(B)\to \norm_*(A\otimes B)$ for simplicial abelian groups $A$ and $B$ has a homotopy inverse $\Delta\colon \norm_*(A\otimes B)\to\norm_*(A)\otimes \norm_*(B)$ called the \textit{Alexander--Whitney homomorphism}, which provides a (non-symmetric) oplax monoidal structure on the normalization functor $\norm_*\colon \sAb\to \choo$. 
In forthcoming work with Tyler Lawson, we construct an analogous oplax monoidal structure on the skeletal filtration.

\subsection*{Overview}
In \cref{section: promonoidal cat}, we review promonoidal structures, both in ordinary categories and $\infty$-categories. In \cref{section: Day convolution and props} we review the Day convolution induced by promonoidal structures and its general properties.
We prove our classification and localization results for promonoidal \categories in \cref{section: promonoidal localization}.
We review the skeletal filtration and its relationship with normalization in \cref{section: the skeletal filtration}, as well as the classical Eilenberg--Zilber homomorphism. In \cref{section: EZ map on skeleton} is where we prove the the skeletal filtration is lax symmetric monoidal, and recovers the usual Eilenberg--Zilber homomorphism on the hearts. 
In \cref{section: base-change and Kan extensions}, we collect general results about compatibilities between Kan extensions and base-changes.
In \cref{sec: Appendix Day convo agree}, we show that the Day convolution product for model categories agrees with the Day convolution on their underlying \categories. In \cref{section: appendix EZ model cat}, we prove \cref{introtheorem: EZ structure} for presentably symmetric monoidal \categories, using model categories.

\subsection*{Conventions and notation}
In this article, we will make frequent use of the language of higher category theory and higher algebra as developed in the books \cite{HTT} and \cite{HA}. 
In particular, we will make use of symmetric monoidal \categories, \operads, and algebras over \operads. 
We also opt to use the word (co)limit in place of the more appropriate phrase ``homotopy (co)limit'' throughout. 
Throughout the bulk of this article, we commit the standard practice of writing $\m{C}$ in place of $N(\m{C})$ for the \category associated to a $1$-category. 
We make an exception in \cref{sec: Appendix Day convo agree,section: appendix EZ model cat}, where we need to carefully track such things. 

Throughout this article, we consider \categories of varying sizes, following the conventions in \cite{HTT} and \cite{kerodon}. 
That is, we assume that for every cardinal $\lambda_0$, there exists a strongly inaccessible cardinal $\lambda \geq \lambda_0$. 
We let $U(\lambda)$ denote the collection of all sets which are $\lambda$-small, and we declare a set to be small provided that it is $\lambda$-small. 
We write $\Spaces$ and $\Cat_\infty$ for the \categories of small spaces and \categories, respectively, which are themselves \textit{large} \categories; that is, they are $\lambda'$-small for some suitable choice of strongly inaccessible cardinal $\lambda' > \lambda$.
By choosing yet another strongly inaccessible cardinal $\lambda'' > \lambda'$, we can consider the \textit{huge} \category $\PrL$ of presentable \categories and colimit perserving functors between them; see \cite[4.8.1]{HA} for a construction. 
Recall that $\PrL$ is a closed symmetric monoidal \category, with the Lurie tensor product, whose monoidal unit is the $\infty$-category of spaces $\cS$, and whose internal hom is the presentable $\infty$-category $\Fun^L(\cC, \m{D})$ of colimit-preserving functors from $\m{C}$ to $\m{D}$ \cite[4.8.1.24]{HA}.

Throughout the text, we make repeated usage of the following $1$-categories: the category $\Fin_\ast$ of finite pointed sets and basepoint preserving maps between them, the category $\Delta$ of (nonempty) finite linearly ordered sets and monotone maps between them, and the category $\nn = \{0 \leq 1 \leq 2 \leq 3 \leq \cdots \}$ of nonnegative integers with its poset structure. 

\subsection*{Acknowledgements}
The authors would like to thank David White for some helpful pointers to the literature.
The present work has benefited from conversations with Maxine Calle, David Chan, Tom Goodwillie, Peter Haine, Asaf Horev, Shai Keidar, Cary Malkiewich, and Arpon Raksit.  
The authors warmly thank Tyler Lawson for his mathematical insights, encouragement, and interest in this project.

\section{Promonoidal categories}\label{section: promonoidal cat}

In this section, we review the basics of profunctors and $\m{O}$-promonoidal \categories, as well as fix some notation for the sequel. 
For a more in-depth treatment of these ideas, we refer the reader to \cite{day-promonoidal} and \cite{linskens-nardin-pol}.

\subsection{Symmetric promonoidal categories}
Let $\cC$ and $\cD$ be small categories.
We denote by $\m{P}(\cC)=\Fun(\cC^\op, \Set)$ the category of presheaves of $\cC$.
A profunctor $F\colon \cC \nrightarrow \cD$ is a functor $F\colon \cC^\op\times \cD\to \Set$, or equivalently a colimit preserving functor $\m{P}(\cD)\to \m{P}(\cC)$.
Given two profunctors $F \colon \m{C} \nrightarrow \m{D}$ and $G \colon \m{D} \nrightarrow \m{E}$, we define their composition, $G\circ F$, to be the profunctor specified by the coend
    \[
    (G\circ F)(-,-) = \int^{d \in \m{D}} F(-,d) \times G(d,-) \colon \m{C}^\op \times \m{E} \rightarrow \Set. 
    \]
    Profunctors may also equivalently be seen as correspondences, see more details in \cite[1.1.7]{ayala-francis-fibrations} for instance.
    We can consider the profunctorial analogue of symmetric monoidal categories, known as symmetric \textit{promonoidal} categories.
Informally, a promonoidal category consists of the following structure: 
\begin{enumerate}
    \item a small category $\m{C}$;
    \item a ``symmetric monoidal product'' profunctor $\mu\colon \m{C} \times \m{C} \nrightarrow \m{C}$ and a ``unit'' profunctor $\eta\colon \{\ast\} \nrightarrow \m{C}$;  
    \item natural isomorphisms expressing associativity, commutativity, and unitality constraints, e.g., the pentagon axiom. 
\end{enumerate}
Just as a symmetric monoidal category $\cC$ is equivalent to a coCartesian fibration $\cC^\otimes\to \Fin_*$ that satisfy the Segal's condition, we may do a similar construction in the promonoidal case.
To facilitate the following construction, we write $\mu(c_{1},c_{2};c')$ for the value of $\mu$ on a triple.
By associativity, there is a profunctor $\cC\times \cC\times \cC\nrightarrow \cC$ that can be written either as $\mu\circ (1\times \mu)$ or $\mu\circ (\mu\times 1)$ up to natural isomorphism.
We denote the value of this profunctor still as $\mu$ and is defined on a $4$-tuple $(c_1, c_2, c_3, c')$ in $\cC$ as the coend in $\Set$:
\begin{align*}
    \mu(c_1, c_2, c_3; c')&= (\mu \circ (1\times \mu))((c_1, c_2, c_3), c')\\
    &=\int^{(d_1,d_2)\in \cC\times \cC} \Hom_{\cC}(c_1, d_1)\times \mu(c_2, c_3; d_2)\times \mu(d_1, d_2; c')\\
    &\cong \int^{(d_1,d_2)\in \cC\times \cC} \mu(c_1, c_2; d_1)\times \Hom_\cC(c_3, d_2)\times \mu(d_1, d_2 ; c').
\end{align*}
We can then inductively define for $n\geq 3$:
\[
\mu(\{c_{i}\}_{1\leq i \leq n};c')
= (\mu \circ (1\times \mu))(\{c_{i}\}_{1\leq i \leq n}, c'). 
\]
\begin{construction}\label{construction: operator category of a symmetric promonoidal category}
Let $\m{C}$ be a symmetric promonoidal category.
Define its category of operators $\m{C}^{\otimes}$ as follows.  
\begin{enumerate}
\item An object in $\m{C}^{\otimes}$ is a finite sequence $c_{1},\dots,c_{n} \in \m{C}$, which we denote by $[c_{1},\dots,c_{n}]$. 
\item A morphism in $\m{C}^{\otimes}$, displayed as
$
[c_{1},\dots,c_{m}] \rightarrow [c'_{1},\dots,c'_{n}]$,
is given by a map $f \colon \brak{m} \rightarrow \brak{n}$ in $\Fin_{\ast}$ together with a collection of elements 
\[
\left\lbrace \alpha_{j} \in \mu(\{c_{i}\}_{i \in f^{-1}\{j\}};c'_{j}) \mid 1\leq j \leq n\right\rbrace.
\]
\item Composition in $\m{C}^{\otimes}$ is determined by composition in $\Fin_{\ast}$ together with the natural map
\[
\left(\prod_{j \in g^{-1}(k)} \mu(\{c_{i}\}_{i\in f^{-1}\{j\}};c'_{j})\right) \times \mu(\{c'_{j}\}_{j\in g^{-1}\{k\}};c''_{k}) \rightarrow \mu(\{c_{i}\}_{i\in (g\circ f)^{-1}\{k\}};c''_{k}).
\]
This map is obtained by viewing the set 
$\mu(\{c_{i}\}_{i\in (g\circ f)^{-1}\{k\}};c''_{k})$
as the value of a composition of profunctors, and using the fact that the source appears in the coend which calculates the target. 
\end{enumerate}
We obtain an induced functor $\m{C}^{\otimes} \rightarrow \Fin_{\ast}$ sending an object $[c_1, \cdots, c_n]$ to $\brak{n}$.
\end{construction}

As in the case of symmetric monoidal categories, a symmetric promonoidal structure can be encoded in terms of its category of operators; see \cite[2.1.1]{HA} for details. 

\begin{proposition}\label{prop: ordinary promonoidal category as flat inner fibration}
Let $\m{C}$ be a symmetric promonoidal category, and let $p\colon \m{C}^{\otimes} \rightarrow \Fin_\ast$ be the symmetric multicategory determined by \cref{construction: operator category of a symmetric promonoidal category}. 
Additionally, let $\m{C}^{\otimes}_{\act} \rightarrow \Fin$ be the pullback of $p$ along the full subcategory spanned by the active maps. 
Then, $p$ satisfies the following condition: 
\begin{description}
    \item[$(\ast)$] 
    for each map $\{0<1<2\} \rightarrow \Fin$, the following induced map is an equivalence of categories:
\[
\m{C}_{\act}^{\otimes}\pullb_{\Fin}\{0<1\} \coprod_{\m{C}_{\act}^{\otimes}\pullb_{\Fin}\{1\}} \m{C}_{\act}^{\otimes}\pullb_{\Fin}\{1<2\} \longrightarrow \m{C}_{\act}^{\otimes}\pullb_{\Fin}\{0<1<2\}.
\]
\end{description}
In fact, any symmetric promonoidal category is uniquely determined by a symmetric multicategory satisfying the condition above.
\end{proposition}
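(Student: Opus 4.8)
The plan is to read off condition $(\ast)$ as a statement about profunctor composition. First I would identify the three categories being glued. Fix a $2$-simplex $S_0 \xrightarrow{f} S_1 \xrightarrow{g} S_2$ in $\Fin$, that is, a pair of composable maps of finite sets, and write $h = g \circ f$. Unwinding \cref{construction: operator category of a symmetric promonoidal category}, the fibre $\m{C}^{\otimes}_{\act}\pullb_{\Fin}\{i\}$ over a vertex is the product $\prod_{s \in S_i}\m{C}$, while $\m{C}^{\otimes}_{\act}\pullb_{\Fin}\{0<1\}$ is the \emph{collage} (cograph) of the profunctor $\mathrm{Mor}_f \colon \prod_{s\in S_0}\m{C} \nrightarrow \prod_{t\in S_1}\m{C}$ whose value on a pair of tuples is $\prod_{t \in S_1}\mu(\{c_i\}_{i \in f^{-1}\{t\}}; c'_t)$, and likewise for $g$. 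Thus the functor in $(\ast)$ is the identity on objects and an isomorphism on every ``short'' hom-set (those lying over $\{0<1\}$, over $\{1<2\}$, or over a single vertex), so the entire content is concentrated in the hom-sets from level $0$ to level $2$.

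Next I would compute both sides on those hom-sets. Since the fibre $\prod_{t\in S_1}\m{C}$ sits inside $\m{C}^{\otimes}_{\act}\pullb_{\Fin}\{0<1\}$ as a cosieve and inside $\m{C}^{\otimes}_{\act}\pullb_{\Fin}\{1<2\}$ as a sieve, the pushout is the collage of the composite profunctor, so the level-$0$-to-level-$2$ hom-set on the left is the coend
\[
\int^{(y_t)_{t\in S_1} \in \prod_{t\in S_1}\m{C}} \mathrm{Mor}_f((c_i), (y_t)) \times \mathrm{Mor}_g((y_t), (c''_u)),
\]
whereas on the right it is $\mathrm{Mor}_h((c_i),(c''_u)) = \prod_{u \in S_2}\mu(\{c_i\}_{i \in h^{-1}\{u\}}; c''_u)$. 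Distributing the coend over the decomposition $\prod_{t\in S_1}\m{C} = \prod_{u \in S_2}\prod_{t \in g^{-1}\{u\}}\m{C}$ and recalling that $h^{-1}\{u\} = \coprod_{t \in g^{-1}\{u\}} f^{-1}\{t\}$, the coend splits as a product over $u \in S_2$ of precisely the iterated-$\mu$ coends appearing in \cref{construction: operator category of a symmetric promonoidal category}. The associativity isomorphism built into the promonoidal structure identifies each factor with $\mu(\{c_i\}_{i \in h^{-1}\{u\}}; c''_u)$, so the comparison is an isomorphism on these hom-sets and hence an equivalence of categories.

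For the converse I would run the construction backwards. Given a symmetric multicategory $q\colon \m{M} \to \Fin_\ast$ satisfying $(\ast)$, set $\m{C}$ to be the fibre over $\brak{1}$ and define $\mu(c_1,c_2;c')$ to be the set of active morphisms $[c_1,c_2] \to [c']$ lying over $\brak{2}\to\brak{1}$, with the unit profunctor coming from the active morphisms out of $\brak{0}$. The two instances of $(\ast)$ attached to the two bracketings of a triple supply the associativity constraint, since both compute the active-morphism set over $\brak{3} \to \brak{1}$; symmetry comes from the $\Sigma_n$-action on the fibres, and the higher instances of $(\ast)$ yield the pentagon and the remaining coherences. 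This recovers Day's axioms, and because $\m{M} \cong \m{C}^{\otimes}$, the two passages are mutually inverse, giving the asserted unique determination; this is the profunctorial analogue of \cite[2.1.1]{HA}.

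The step I expect to be the main obstacle is the passage in the second paragraph from the pushout of operator categories to the composite profunctor: one must verify that gluing the two collages along the common fibre introduces no spurious morphisms from level $0$ to level $2$, which is exactly where the sieve/cosieve structure of the fibre inclusions is needed, and then carry out the reindexing matching the resulting coend with the iterated-$\mu$ coend of the Construction. On the converse side the corresponding difficulty is promoting the associativity isomorphisms produced by single instances of $(\ast)$ to the full coherence data, which again reduces to the uniqueness inherent in the category-of-operators formalism.
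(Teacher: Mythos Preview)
The paper does not actually prove this proposition: immediately after the statement it reads ``The proposition is recorded here for context. Since it will not be used in what follows, we omit the proof.'' So there is no argument in the paper to compare yours against.

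That said, your approach is the natural one and is essentially correct. Identifying the pullbacks $\m{C}^{\otimes}_{\act}\pullb_{\Fin}\{i<j\}$ with collages of the profunctors $\mathrm{Mor}_f$, $\mathrm{Mor}_g$, $\mathrm{Mor}_h$ is exactly right, and the key observation that the pushout of two collages along their shared fibre is the collage of the composite profunctor is the standard way to see condition $(\ast)$ as ``profunctor composition is associative.'' Your Fubini-style splitting of the coend over $\prod_{u\in S_2}\prod_{t\in g^{-1}\{u\}}\m{C}$ is correct, and the resulting factors are precisely the iterated coends used in \cref{construction: operator category of a symmetric promonoidal category} to define the higher-arity $\mu$; the promonoidal associativity constraint then identifies them with $\mu(\{c_i\}_{i\in h^{-1}\{u\}};c''_u)$.

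Two small points worth tightening. First, the claim that the pushout introduces no spurious level-$0$-to-level-$2$ morphisms is true but deserves one more sentence: since the inclusion of the fibre into each collage is fully faithful and is a sieve on one side and a cosieve on the other, the pushout in $\Cat$ is computed by freely adjoining composites, which is exactly the coend. Second, for the converse direction you should note that the pentagon and higher coherences are forced not by separate instances of $(\ast)$ but by the associativity of composition already present in the category $\m{M}$; condition $(\ast)$ supplies the associator, and Mac Lane's coherence theorem for promonoidal categories (or the operator-category formalism of \cite[2.1.1]{HA}) handles the rest.
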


The proposition is recorded here for context. Since it will not be used in what follows, we omit the proof.
The construction of $\m{C}^{\otimes}$ and the result above implies that $\m{C}^{\otimes}_{\brak{m}} \simeq \prod_{i=1}^{m} \m{C}$, and for every map $f \colon \brak{m} \rightarrow \brak{n}$, we have a profunctor $H_{f}\colon \m{C}^{\otimes}_{\brak{m}} \nrightarrow \m{C}^{\otimes}_{\brak{n}}$, and a natural isomorphism
\[
H_{g}\circ H_{f} \xrightarrow{\cong} H_{g\circ f}. 
\]
In other words, we have a commutative pseudo-monoid in the bicategory $\Cat^{\mathrm{pro}}$ of small categories with promonoidal functors. 
If, additionally, we knew that for each (unique) active map $\phi\colon \brak{m} \rightarrow \brak{1}$, and for each tuple $[c_{1},\dots,c_{n}]$, the induced functor
\[
H_{\phi}(\{c_{i}\}_{1\leq i \leq n};-) \colon \m{C} \rightarrow \Set, 
\]
was corepresentable, then the commutative pseudo-monoid in $\Cat^{\mathrm{pro}}$ would give a commutative pseudo-monoid in $\Cat$. 

One reason promonoidal structures are desirable is that they give rise to Day convolution structures on presheaves and functor categories as we recall in next section, see \cref{construction: Day convolution in ordinary categories}.

\subsection{Exponentiable fibrations and promonoidal structures}\label{subsection: flat inner fibrations and symmetric promonoidal categories}
In the case of \categories, profunctors, and by extension, promonoidal structures, can be conveniently encoded by \textit{exponentiable fibrations}. 
These have been studied extensively by Ayala--Francis in \cite{ayala-francis-fibrations}. 

\begin{definition}
    A functor of \categories $p\colon \m{C} \rightarrow \m{O}$ is an \textit{exponentiable fibration}\footnote{These are also sometimes referred to in the literature as \textit{Condouch\'{e} fibrations}.}
    provided the functor 
    \[
    p^{\ast}=\m{C}\times_{\m{O}} (-) \colon \Cat_{\infty/\m{O}} \rightarrow \Cat_{\infty/\m{C}}
    \]
    admits a right adjoint, that we denote $p_{\ast}$. 
\end{definition}

\begin{remark}
    Thanks to \cite[Lemma 2.2.8]{ayala-francis-fibrations}, we can think of an exponentiable fibration as encoding: 
    \begin{enumerate}
        \item for each $x \in \m{O}$ an \category $\m{C}_{x} = \m{C}\times_{\m{O}} \{x\}$;
        \item for each $\phi \colon x \rightarrow y$ in $\m{O}$, a profunctor $\m{C}_{\phi} \colon \m{C}_{x}^{\op}\times\m{C}_{y} \rightarrow \Spaces$;
        \item for each composite $\psi\circ \phi \colon x \rightarrow y \rightarrow z$, a natural equivalence of profunctors 
        \(
        \m{C}_{\psi} \circ \m{C}_{\phi} \xrightarrow{\sim} \m{C}_{\psi\circ \phi};
        \)
        \item plus higher coherences.
    \end{enumerate}
\end{remark}

Let $\m{O}^{\otimes}$ be an \operad and let $I$ be a finite set. 
An object in $\m{O}^{\otimes}$ lies over a finite pointed set $I_+$, and by the Segal condition, such an object can be identified with a tuple of the form $\{x_i\}_{i\in I}$. 
For brevity, we use the notation $\{x_i\}$ when there is no chance of confusion. 
Furthermore, we write $\m{O}^{\otimes}_{\Act} = \m{O}^{\otimes} \times_{\Fin_\ast} \Fin$ for the wide subcategory of active morphisms in $\m{O}^{\otimes}$. 

\begin{definition}
    Let $p\colon \m{O}^{\otimes} \rightarrow \Fin_{\ast}$ be an $\infty$-operad. 
    We say that $\m{O}^{\otimes}$ is a \textit{symmetric promonoidal \category} if the
    induced functor $p_\act\colon \m{O}^{\otimes}_{\act} \rightarrow \Fin$ is an exponentiable fibration.
\end{definition}

\noindent A straightforward application of \cref{prop: ordinary promonoidal category as flat inner fibration} yields the following result. 

\begin{corollary}\label{cor: promonoidal nerve}
    Let $\m{C}$ be a symmetric promonoidal category. 
    Then $p\colon \N(\m{C}^{\otimes}) \rightarrow \Fin_{\ast}$ is a symmetric promonoidal \category.
\end{corollary}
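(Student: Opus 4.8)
The plan is to verify the two conditions packaged into the definition of a symmetric promonoidal \category for the functor $p\colon \N(\m{C}^\otimes)\to \Fin_\ast$: first, that it is an \operad, and second, that the active part $p_\act\colon \N(\m{C}^\otimes)_\act \to \Fin$ is an exponentiable fibration. The first condition is the more routine of the two. Since $\N$ is a right adjoint it preserves pullbacks, so $\N(\m{C}^\otimes)_\act \simeq \N(\m{C}^\otimes_\act)$, and $p$ is simply the nerve of the functor $\m{C}^\otimes \to \Fin_\ast$ produced by \cref{construction: operator category of a symmetric promonoidal category}. The nerve of any functor is an inner fibration \cite{HTT}, and the remaining \operad axioms hold already at the level of the underlying symmetric multicategory: the Segal equivalences $\m{C}^\otimes_{\brak n}\simeq \prod_{i=1}^n \m{C}$ recorded after \cref{prop: ordinary promonoidal category as flat inner fibration} give the Segal condition, while the inert-coCartesian lifts are the identity elements in the unary part of $\mu$, which is the hom-functor of $\m{C}$. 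Thus $\N(\m{C}^\otimes)$ is an \operad; this is an instance of the general fact that the operadic nerve of a symmetric multicategory is an \operad \cite{HA}.

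The substance of the corollary is therefore the exponentiability of $p_\act$. By Ayala--Francis \cite{ayala-francis-fibrations}, a functor of \categories is an exponentiable fibration if and only if it is a flat inner fibration, and flatness may be tested on $2$-simplices: for each $\sigma\colon \{0<1<2\}\to \Fin$ one must check that the canonical map
\[
\N(\m{C}^\otimes_\act)_{\{0<1\}}\coprod_{\N(\m{C}^\otimes_\act)_{\{1\}}} \N(\m{C}^\otimes_\act)_{\{1<2\}}\longrightarrow \N(\m{C}^\otimes_\act)_{\{0<1<2\}}
\]
is a categorical equivalence, where the coproduct denotes the homotopy pushout in the Joyal model structure. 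Each of these simplicial sets is the nerve of the corresponding $1$-categorical pullback appearing in condition $(\ast)$ of \cref{prop: ordinary promonoidal category as flat inner fibration}, and that proposition asserts precisely that the analogous map of $1$-categories is an equivalence of categories. The plan is thus to promote the $1$-categorical equivalence $(\ast)$ to the required categorical equivalence of simplicial sets.

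The main obstacle is exactly this promotion, since $\N$ does not preserve pushouts and the homotopy pushout above need not, a priori, agree with the nerve of the strict categorical pushout in $(\ast)$. I would resolve this by examining the gluing data: the fiber $\N(\m{C}^\otimes_\act)_{\{1\}}$ includes into $\N(\m{C}^\otimes_\act)_{\{0<1\}}$ as a cosieve (it is the fiber over the terminal vertex of $\{0<1\}$) and into $\N(\m{C}^\otimes_\act)_{\{1<2\}}$ as a sieve (the fiber over the initial vertex of $\{1<2\}$). Consequently these are monomorphisms of simplicial sets, so the homotopy pushout coincides with the strict pushout of simplicial sets, and the latter includes into the nerve of the categorical pushout by an inner anodyne map: the only simplices that must be adjoined are the ``mixed'' chains running from the fiber over $0$ through the fiber over $1$ to the fiber over $2$, each of which is filled by an inner horn whose witness exists by associativity of $\mu$ (the coend formula underlying $(\ast)$). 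Hence the homotopy pushout is categorically equivalent to the nerve of the $1$-categorical pushout, and composing with $\N$ applied to $(\ast)$ shows the displayed map is a categorical equivalence. Equivalently, one may package the whole argument through the correspondence between $1$-categorical exponentiable (Conduch\'e) functors and functors whose nerve is exponentiable in $\Cat_\infty$, under which $(\ast)$ is the classical characterization of the Conduch\'e condition. Either route yields that $p_\act$ is exponentiable, completing the proof that $\N(\m{C}^\otimes)$ is a symmetric promonoidal \category.
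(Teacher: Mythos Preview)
Your proposal is correct and follows the same route as the paper, which simply says the corollary is ``a straightforward application'' of \cref{prop: ordinary promonoidal category as flat inner fibration}. You have made explicit the one genuinely nontrivial step the paper leaves implicit: that the $1$-categorical pushout equivalence $(\ast)$ upgrades to the Joyal-model-structure pushout equivalence required for flatness of the nerve, via the sieve/cosieve observation and the inner-anodyne filling argument (or, equivalently, the identification of $1$-categorical Conduch\'e fibrations with functors whose nerve is exponentiable).
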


Just as with the theory of $\m{O}$-monoidal \categories in \cite[2.1.2]{HA}, there is an analogous notion of an $\m{O}$-promonoidal \category. 

\begin{definition}
    Let $p\colon \m{C}^{\otimes} \rightarrow \m{O}^{\otimes}$ be a fibration of $\infty$-operads. 
    We say that $p$ is an \textit{$\m{O}$-promonoidal \category} provided that $p_{\Act}\colon \m{C}^{\otimes}_{\Act} \rightarrow \m{O}^{\otimes}_{\Act}$ is an exponentiable fibration. 
    We say that $p$ is an \textit{$\m{O}$-monoidal \category} provided $p$ is a coCartesian fibration. 
\end{definition}

\begin{remark}
    Any $\m{O}$-promonoidal \category determines an exponentiable fibration of \categories $\m{C}^{\otimes}_{\brak{1}} \rightarrow \m{O}^{\otimes}_{\brak{1}}$, which we denote by $\m{C} \rightarrow \m{O}$.  
    Additionally, if $x \in \m{O} = \m{O}^{\otimes}_{\brak{1}}$, we use the notation $\m{C}_{x}$ in place of $\m{C}^{\otimes}_{\{x\}}$. 
    The authors find it helpful to think about an $\m{O}$-promonoidal \category $\m{C}^{\otimes}$ as endowing an exponentiable fibration $\m{C} \rightarrow \m{O}$ with an $\m{O}$-promonoidal structure.
\end{remark}

\begin{example}
    Any $\m{O}$-monoidal \category is an $\m{O}$-promonoidal \category. 
    For instance, when $\m{O} = \Assoc$, we see that monoidal \categories can be viewed as $\Assoc$-promonoidal \categories.
\end{example}

\begin{example}
    Let $I$ be small \category, and consider the presheaf category $\m{P}(I^{\op}\times I) = \Fun(I\times I^{\op},\Spaces)$, and recall there is a canonical identification 
    \[
    \m{P}(I^{\op}\times I) \simeq \m{P}(I^{\op}) \otimes \m{P}(I) \simeq \End^{L}(\m{P}(I))
    \]
    in $\PrL$, by the dualizability of presheaf categories. 
    As $\End^{L}(\m{P}(I))$ is an associative algebra in $\PrL$, this equivalence endows $\m{P}(I^{\op}\times I)$ with the structure of a presentably monoidal \category, which, by \cref{proposition: closed O-monoidal structures are O-promonoidal}, endows $I \times I^{\op}$ with the structure of an $\Assoc$-promonoidal \category; classically, this observation is due to Day--Street \cite[Section 7, Example 8]{DayRoss}.
\end{example}

\begin{definition}
Let $p\colon \m{C}^{\otimes} \rightarrow \m{O}^{\otimes}$ be a fibration of \operads, and let $\phi \colon \{x_i\} \rightarrow y$ be an active morphism in $\m{O}^{\otimes}$. 
Given $\{c_i\} \in \m{C}^{\otimes}_{\{x_i\}}$ and $c\in \m{C}^{\otimes}_{y}$, define \textit{the space of $\phi$-multimorphisms in $\m{C}^{\otimes}$} to be 
\[
\Mul^{\phi}_{\m{O}}(\{c_i\},c) = \Map_{\m{C}^{\otimes}}(\{c_i\},c) \times_{\Map_{\m{O}}(\{x_i\},y)} \{\phi\}.
\]
We say that $p$ is an \textit{$\m{O}$-corepresentable \operad} provided that 
\(
\Mul^{\phi}_{\m{O}}(\{c_i\},-) \colon \m{C}^{\otimes}_{y} \rightarrow \Spaces
\)
is a corepresentable functor for all active maps $\phi$ and all objects $\{c_i\}$.
\end{definition}

The following lemma provides a useful method for recognizing $\m{O}$-monoidal \categories among $\m{O}$-promonoidal \categories and $\m{O}$-corepresentable \operads.

\begin{lemma}\label{lem: corepresentable operad fibration is locally coCartesian}
    Let $p\colon \m{C}^{\otimes} \rightarrow \m{O}^{\otimes}$ be a fibration of \operads.
    \begin{enumerate}
        \item The fibration $p$ is locally coCartesian if and only if $p$ is $\m{O}$-corepresentable.  
        \item The fibration $p$ is $\m{O}$-monoidal if and only if it is $\m{O}$-corepresentable and $\m{O}$-promonoidal. 
    \end{enumerate}
\end{lemma}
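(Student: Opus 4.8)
The plan is to translate both fibration-theoretic conditions into statements about corepresentability of the profunctors encoded by $p$, and then to see that corepresentability is closed under composition precisely when the exponentiable (promonoidal) structure is present. The basic dictionary is the universal property of locally coCartesian edges: for an edge $\phi\colon u\to v$ in $\m{O}^\otimes$ and a lift $\tilde u\in\m{C}^\otimes_u$, a locally $p$-coCartesian lift of $\phi$ with source $\tilde u$ exists if and only if the functor
\[
\Map_{\m{C}^\otimes}(\tilde u,-)\times_{\Map_{\m{O}^\otimes}(u,v)}\{\phi\}\colon \m{C}^\otimes_v\longrightarrow\Spaces
\]
is corepresentable, in which case the corepresenting object is the target of the lift.

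For (1), I would first record that, since $p$ is a fibration of \operads, it admits coCartesian (hence locally coCartesian) lifts of inert morphisms. Using the inert--active factorization system on $\m{O}^\otimes$, together with the computation that a $p$-coCartesian lift of an inert morphism followed by a locally coCartesian lift of an active morphism is again locally coCartesian over the composite (a direct manipulation of the coCartesian mapping-space equivalence), I reduce the existence of all locally coCartesian lifts to the existence of such lifts of active morphisms. The Segal condition then reduces a general active morphism $\{x_i\}\to\{y_j\}$ to active morphisms with singleton target $\{x_i\}\to y$, since both the fiber $\m{C}^\otimes_{\{y_j\}}\simeq\prod_j\m{C}^\otimes_{y_j}$ and the functor displayed above split as products indexed by $j$. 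For such $\phi$ that functor is exactly $\Mul^\phi_{\m{O}}(\{c_i\},-)\colon\m{C}^\otimes_y\to\Spaces$, so its corepresentability for all $\phi$ and all $\{c_i\}$ is precisely the statement that $p$ is $\m{O}$-corepresentable, proving (1).

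For (2), the forward direction is short: if $p$ is $\m{O}$-monoidal (coCartesian) then it is locally coCartesian, hence $\m{O}$-corepresentable by (1), and its pullback $p_\Act$ to active morphisms is again coCartesian, hence exponentiable (coCartesian fibrations are exponentiable, \cite{ayala-francis-fibrations}), so $p$ is $\m{O}$-promonoidal. The content is the converse. By (1) the corepresentability hypothesis makes $p$ a locally coCartesian fibration, and by the standard criterion \cite[2.4.2.8]{HTT} it then suffices to show that locally coCartesian morphisms are closed under composition; after the factorization reductions as in (1), the only case to check is the composite of two locally coCartesian lifts $\tilde\phi\colon\tilde u\to\tilde v$ and $\tilde\psi\colon\tilde v\to\tilde w$ of active morphisms. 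Writing $\m{C}_\phi,\m{C}_\psi$ for the profunctors encoded by the exponentiable fibration $p_\Act$, corepresentability (extended to all active morphisms via the Segal argument of (1)) gives $\m{C}_\phi(\tilde u,-)\simeq\Map_{\m{C}^\otimes_v}(F_\phi\tilde u,-)$ and $\m{C}_\psi(\tilde v,-)\simeq\Map_{\m{C}^\otimes_w}(F_\psi\tilde v,-)$, while the exponentiable structure supplies the composition equivalence $\m{C}_\psi\circ\m{C}_\phi\simeq\m{C}_{\psi\circ\phi}$. Evaluating the defining coend and applying the co-Yoneda lemma yields
\[
\m{C}_{\psi\circ\phi}(\tilde u,\tilde w)\simeq\int^{\tilde v\in\m{C}^\otimes_v}\Map_{\m{C}^\otimes_v}(F_\phi\tilde u,\tilde v)\times\m{C}_\psi(\tilde v,\tilde w)\simeq\m{C}_\psi(F_\phi\tilde u,\tilde w)\simeq\Map_{\m{C}^\otimes_w}(F_\psi F_\phi\tilde u,\tilde w),
\]
so $\m{C}_{\psi\circ\phi}(\tilde u,-)$ is corepresented by $F_\psi F_\phi\tilde u$, the target of $\tilde\psi\circ\tilde\phi$; this exhibits the composite as the locally coCartesian lift of $\psi\circ\phi$. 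Hence locally coCartesian morphisms compose and $p$ is coCartesian, i.e.\ $\m{O}$-monoidal.

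The main obstacle is this last step of (2): local coCartesian-ness alone never forces the pushforward functors $F_\phi$ to compose, and it is exactly the exponentiable (promonoidal) structure---through the coherent profunctor-composition equivalences $\m{C}_\psi\circ\m{C}_\phi\simeq\m{C}_{\psi\circ\phi}$---that upgrades a family of individually corepresentable profunctors into a genuine functor. The delicate point to verify is that the equivalence produced by the co-Yoneda computation is implemented by the \emph{actual} composite edge $\tilde\psi\circ\tilde\phi$, and not merely by some abstract equivalence of mapping spaces; it is this identification that certifies that the composite edge itself is locally coCartesian.
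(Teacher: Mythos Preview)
Your proof is correct and follows the same overall strategy as the paper, though you unpack what the paper leaves to citations. For part (1) the paper simply invokes \cite[6.2.4.5]{HA} together with \cite[2.4.2.9]{HTT}; your reduction via the inert--active factorization and the Segal condition is exactly the content of those references in this setting. For part (2) the paper appeals to the black-box result \cite[1.1.8.2(a)]{ayala-francis-fibrations}, which says that an exponentiable fibration that is locally coCartesian is automatically coCartesian, applied to $p_{\Act}$; your coend/co-Yoneda computation showing that $\m{C}_{\psi\circ\phi}(\tilde u,-)$ is corepresented by $F_\psi F_\phi\tilde u$ is precisely the proof of that Ayala--Francis result in this instance. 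The paper's version is shorter because it outsources the argument; yours is more self-contained and makes visible where each hypothesis is used. Your closing caveat about identifying the abstract corepresenting object with the actual composite edge $\tilde\psi\circ\tilde\phi$ is well-placed: it is handled by tracing the unit of the co-Yoneda equivalence, which is exactly the image of $(\tilde\phi,\tilde\psi)$ under the canonical map from the coend diagram.
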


\begin{proof}
The first claim follows from the observation in \cite[6.2.4.5]{HA} extended to the $\m{O}$-monoidal case, which can be done using \cite[2.4.2.9]{HTT}. 

For the second claim, one direction is clear, so assume $p$ is $\m{O}$-corepresentable and $\m{O}$-promonoidal. 
By \cite[1.1.8.2(a)]{ayala-francis-fibrations}, $p_{\Act}$ is coCartesian as it is both exponentiable and locally coCartesian by assumption. 
However, as $p$ is a fibration of \operads the fact that $p_{\Act}$ is coCartesian implies $p$ is also coCartesian. 
\end{proof}

\begin{definition}\label{definition: (lax) O-promonoidal functors}
    Let $\m{C}$ and $\m{D}$ be $\m{O}$-promonoidal \categories. 
    A \textit{lax $\m{O}$-promonoidal functor} from $\m{C}$ to $\m{D}$ is a commutative diagram of \operad maps
    \[
    \begin{tikzcd}
	{\m{C}^{\otimes}} && {\m{D}^{\otimes}} \\
	& {\m{O}^{\otimes}}
	\arrow["f", from=1-1, to=1-3]
	\arrow["p"', from=1-1, to=2-2]
	\arrow["q", from=1-3, to=2-2]
    \end{tikzcd}
    \]
    We say that $f$ is \textit{$\m{O}$-promonoidal} if, additionally, for all active maps $\phi\colon\{x_{i}\} \rightarrow y$ in $\m{O}^{\otimes}$ and for all $\{c_{i}\}\in \m{C}^{\otimes}_{\{x_{i}\}}$, the natural map
    \[
    (f_{y})_{!}\Mul_{\m{C}}^{\phi}(\{c_{i}\},-) \rightarrow \Mul_{\m{D}}(\{f_{x_{i}}(c_{i})\},-)
    \]
    is an equivalence of functors $\m{D}^{\otimes}_{y} \rightarrow \Spaces$. 
\end{definition}

\begin{remark}
If $\m{C}$ and $\m{D}$ are $\m{O}$-monoidal \categories, then a map of \operads $F\colon \m{C}\rightarrow \m{D}$ is $\m{O}$-promonoidal if and only if it is $\m{O}$-monoidal in the sense of \cite[2.1.2.15]{HA}.
The reader unfamiliar with $\m{O}$-promonoidal categories may find this an instructive exercise. 
\end{remark}

We conclude this section by recording a checkable criterion for when a sub-\operad of an $\m{O}$-promonoidal \category is itself $\m{O}$-promonoidal. 

\begin{proposition}\label{proposition: sub operad of promonoidal is promonoidal}
    Let $p\colon \m{C}^{\otimes} \rightarrow \m{O}^{\otimes}$ be an $\m{O}$-promonoidal \category, let $\m{D}$ be a full subcategory of $\m{C} = \m{C}^{\otimes}_{\brak{1}}$, and let $\m{D}^{\otimes}$ be the full sub \operad of $\m{C}^{\otimes}$ spanned by the objects of $\m{D}$.
    Subsequently, $\iota\colon \m{D}^{\otimes} \rightarrow \m{C}^{\otimes}$, is a map of \operads over $\m{O}^\otimes$ and the following are equivalent:
    \begin{enumerate}
        \item The map of \operads $p\circ \iota \colon \m{D}^{\otimes} \rightarrow \m{O}$ exhibits $\m{D}^{\otimes}$ as an $\m{O}$-promonoidal category and $\iota$ is an $\m{O}$-promonoidal functor. 
        \item For each active morphism $\phi\colon \{x_i\} \rightarrow y$ in $\m{O}^{\otimes}$, and for each $\{d_{i}\} \in \m{D}_{\{x_i\}}^{\otimes}$, the induced map 
        \[
        (\iota_{y})_{!}\Mul^{\phi}_{\m{D}}(\{d_i\},-) \rightarrow \Mul_{\m{C}}^{\phi}(\{\iota(d_{i})\},-)
        \]
        is an equivalence of functors $\m{C}_{y} \rightarrow \Spaces$. 
    \end{enumerate}
\end{proposition}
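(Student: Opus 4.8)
The plan is to prove the two directions separately, with essentially all of the work going into $(2)\Rightarrow(1)$. To begin, since $\m{D}^{\otimes}$ is the full sub-\operad of $\m{C}^{\otimes}$ spanned by the objects of $\m{D}$, any inert-coCartesian lift in $\m{C}^{\otimes}$ of a morphism of $\m{O}^{\otimes}$ at an object of $\m{D}^{\otimes}$ again lands in $\m{D}^{\otimes}$ (inert lifts merely extract subtuples), so $p\circ\iota\colon \m{D}^{\otimes}\to\m{O}^{\otimes}$ is a fibration of \operads and $\iota$ is a map of \operads over $\m{O}^{\otimes}$. Granting for the moment that $\m{D}^{\otimes}$ is $\m{O}$-promonoidal, the assertion in $(1)$ that $\iota$ is an $\m{O}$-promonoidal functor is, by \cref{definition: (lax) O-promonoidal functors}, \emph{verbatim} condition $(2)$. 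Thus the whole equivalence reduces to showing that $(2)$ forces $\m{D}^{\otimes}$ to be $\m{O}$-promonoidal, i.e. that $p_{\Act}\circ\iota_{\Act}\colon \m{D}^{\otimes}_{\Act}\to\m{O}^{\otimes}_{\Act}$ is exponentiable.

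By the discussion following the definition of exponentiable fibrations, together with \cite[Lemma 2.2.8]{ayala-francis-fibrations}, exponentiability of the categorical fibration $\m{D}^{\otimes}_{\Act}\to\m{O}^{\otimes}_{\Act}$ is a flatness condition that can be checked on $2$-simplices of the base: for each pair of composable active morphisms $\{x_i\}\xrightarrow{\phi}\{y_j\}\xrightarrow{\psi}z$ in $\m{O}^{\otimes}$ and each $\{d_i\}\in\m{D}^{\otimes}_{\{x_i\}}$, $d\in\m{D}_{z}$, the profunctor-composition (coend) map
\[
\colim_{\{e_j\}\in\prod_j\m{D}_{y_j}}\Big(\textstyle\prod_j\Mul^{\phi_j}_{\m{D}}(\{d_i\},e_j)\Big)\times\Mul^{\psi}_{\m{D}}(\{e_j\},d)\longrightarrow\Mul^{\psi\phi}_{\m{D}}(\{d_i\},d)
\]
should be an equivalence, the Segal condition reducing the general case to the single-target $\psi$ displayed here. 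Since $\m{C}^{\otimes}$ is $\m{O}$-promonoidal by hypothesis, the analogous map for $\m{C}$ is an equivalence, and fullness of $\m{D}^{\otimes}$ identifies every $\m{D}$-multimorphism space with the corresponding $\m{C}$-multimorphism space evaluated on objects of $\m{D}$. It therefore suffices to show that the $\m{C}$-coend, formed over $\prod_j\m{C}_{y_j}$ with all source and target objects coming from $\m{D}$, agrees with the $\m{D}$-coend above.

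This is where $(2)$ enters. Through the equivalence $(\iota_{y_j})_!\Mul^{\phi_j}_{\m{D}}(\{d_i\},-)\xrightarrow{\sim}\Mul^{\phi_j}_{\m{C}}(\{\iota d_i\},-)$, hypothesis $(2)$ says precisely that each copresheaf $A_j:=\Mul^{\phi_j}_{\m{C}}(\{\iota d_i\},-)\colon\m{C}_{y_j}\to\Spaces$ is left Kan extended from the full subcategory $\m{D}_{y_j}$. By co-Yoneda we may write $A_j\simeq\colim_{\alpha}\Map_{\m{C}_{y_j}}(\iota d^{\alpha}_j,-)$ as a colimit of corepresentables indexed by objects $d^{\alpha}_j\in\m{D}_{y_j}$; fixing a single diagram $\alpha\mapsto\{d^{\alpha}_j\}\in\prod_j\m{D}_{y_j}$ and using that the external product preserves colimits in each variable (because $-\times S$ preserves colimits of spaces), we get $\boxtimes_j A_j\simeq\colim_{\alpha}\Map_{\prod_j\m{C}_{y_j}}(\{\iota d^{\alpha}_j\},-)$. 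Writing $B:=\Mul^{\psi}_{\m{C}}(\{-\},\iota d)$ for the remaining factor and commuting $-\times B$ past the colimit, the co-Yoneda density formula collapses the $\m{C}$-coend to $\colim_{\alpha}B(\{\iota d^{\alpha}_j\})$. The same computation over $\prod_j\m{D}_{y_j}$, with the restricted corepresentables $\Map_{\m{D}_{y_j}}(d^{\alpha}_j,-)$ and the \emph{same} indexing diagram, yields the identical value $\colim_{\alpha}B(\{\iota d^{\alpha}_j\})$. Hence the two coends agree, the $\m{D}$-coend map is an equivalence, $p_{\Act}\circ\iota_{\Act}$ is exponentiable, and $(1)$ follows.

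The main obstacle is the middle maneuver. One is tempted to argue that the $\m{D}$-coend integrand is the left Kan extension of the $\m{C}$-integrand, but this fails: the factor $B=\Mul^{\psi}_{\m{C}}(\{-\},\iota d)$ is \emph{not} left Kan extended from $\m{D}$, and the naive projection formula $\iota_!(G\times\iota^{\ast}B)\simeq(\iota_!G)\times B$ does not hold for cartesian products of copresheaves. The fix is to expand only the left-Kan-extended factors $A_j$ into corepresentables and then invoke co-Yoneda, so that $B$ is evaluated exactly at the $\m{D}$-objects $\{\iota d^{\alpha}_j\}$; this is what makes the $\m{C}$- and $\m{D}$-coends land on the same value. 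The remaining bookkeeping (the Segal reduction to single-target active maps and the verification that these fiberwise coends encode flatness of $p_{\Act}\circ\iota_{\Act}$ in the sense of \cite{ayala-francis-fibrations}) is routine.
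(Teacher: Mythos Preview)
Your proof is correct and follows the same overall strategy as the paper: reduce to showing $\m{D}^{\otimes}$ is $\m{O}$-promonoidal, use the Ayala--Francis criterion \cite[Lemma 2.2.8]{ayala-francis-fibrations} to reduce to the coend identity, identify $\m{D}$-multimorphism spaces with restricted $\m{C}$-multimorphism spaces via fullness, and then use hypothesis (2) to pass between the $\m{D}$-coend and the $\m{C}$-coend.

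The one difference is in how the last step is executed. The paper invokes its projection formula \cref{lem: projection formula} directly: writing $W=\Mul^{\psi}_{\m{C}}(-,d)$ and $F=\Mul^{\phi}_{\m{D}}(\{d_i\},-)$, hypothesis (2) says $\iota_! F\simeq\Mul^{\phi}_{\m{C}}(\{d_i\},-)$, and the projection formula gives
\[
\int^{\{e_j\}\in\prod_j\m{D}_{y_j}} (\iota^{\ast}W)\times F \;\simeq\; \int^{\{c_j\}\in\prod_j\m{C}_{y_j}} W\times \iota_! F,
\]
which is exactly the desired identification. Your co-Yoneda expansion of each $A_j$ into corepresentables at $\m{D}$-objects, followed by collapsing both coends to the same colimit $\colim_{\alpha}B(\{\iota d^{\alpha}_j\})$, is a hands-on proof of that projection formula in this instance. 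So your ``fix'' is not a workaround for a failure of the projection formula---it \emph{is} the projection formula for coends (the functor-level identity $\iota_!(G\times\iota^{\ast}B)\simeq(\iota_!G)\times B$ you flag as failing is a different, stronger statement that is indeed not needed here). The paper's packaging is a bit cleaner, but the content is the same.
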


\begin{proof}
    That (1) implies (2) is immediate from the definitions, so we demonstrate (2) implies (1). 
    Assuming we have shown that $\m{D}^{\otimes}$ is $\m{O}$-promonoidal via $p\circ \iota$, condition (2) is precisely the statement that $\iota$ is an $\m{O}$-promonoidal functor.
    Therefore, it will suffice to show that $p\circ \iota$ exhibits $\m{P}^{\otimes}$ as a promonoidal \category. 
    To this end, note that by \cite[Lemma 2.2.8]{ayala-francis-fibrations}, the induced map $\m{D}^{\otimes}_{\act} \rightarrow \m{O}^{\otimes}_{\act}$ is expontentiable if and only if for all active morphisms in $\m{O}^{\otimes}$ of the form $\phi \colon \{x_i\}_{i\in I} \rightarrow \{y_j\}_{j\in J}$ and $\psi\colon {y_j}_{j\in J} \rightarrow z$, the induced map  
    \[
    \int^{\{d_{j}\}_{j\in J} \in \m{D}^{\otimes}_{y_J}} \prod_{j \in J} \Mul^{\phi_j}_{\m{D}}(\{d_i\}_{i \in f^{-1}\{j\}},d_{j}) \times \Mul_{\m{D}}^{\psi}(\{d_{j}\}_{j\in J},-) \rightarrow \Mul^{\psi\circ \phi}_{\m{D}}(\{d_i\}_{i \in I}, -)
    \]
    is an equivalence of functors $\m{D}^{\otimes}_{z} \rightarrow \Spaces$, for all $\{d_i\}_{i\in I} \in \m{D}^{\otimes}_{\{x_i\}_{i\in I}}$. 
    As $\m{D}^{\otimes}$ is a sub-\operad of $\m{C}^{\otimes}$, we can identify the relevant multimorphism spaces of $\m{D}^{\otimes}$ with multimorphism spaces of $\m{C}^{\otimes}$. 
    Combining the aforementioned identification of multimorphism spaces with the natural equivalences
    \[
    (\iota_{y_j})_{!}\Mul_{\m{D}}^{\phi_{j}}(\{d_i\}_{i \in f^{-1}\{j\}},-) \simeq \Mul_{\m{C}}^{\phi_j}(\{\iota(d_i)\}_{i\in f^{-1}\{j\}},-), 
    \]
    guaranteed by (2), the projection formula \cref{lem: projection formula} implies the desired transformation is an equivalence. 
\end{proof}

\section{Day convolution and its properties}\label{section: Day convolution and props}
We begin by recalling the definition of the Day convolution \operad due to Shah \cite{shah2021parametrized}, and then adapt a number of technical results from \cite{linskens-nardin-pol} which we will need later on. 
Specifically, there is a blanket cocompleteness assumption made in \cite{linskens-nardin-pol} which is not strictly necessary for many of their results; this should be compared with the treatment of the Day convolution \operad in \cite[2.2.6.15, 2.2.6.16]{HA}.
This technical work occupies \cref{subsection: existence of convolution products} and \cref{subsection: functoriality of convolution operad}.

We begin by recalling the Day convolution in ordinary categories.
The following construction was first made by Day in \cite{day-promonoidal}.

\begin{construction}[{\cite[\S 3]{day-promonoidal}}]\label{construction: Day convolution in ordinary categories}
If $\cC$ is a small symmetric promonoidal category and $\m{E}$ is a cocomplete symmetric monoidal category, then the functor category $\Fun(\cC, \m{E})$ is endowed with a symmetric monoidal structure, with tensor product given by the Day convolution.
Given functors $F,G\colon \cC\rightarrow \m{E}$, their Day convolution product $F\Day G\colon \cC\rightarrow \m{E}$ is defined as the coend in $\m{E}$:
\[
(F\Day G)(c)= \int^{(c_1, c_2)\in \cC^{\times 2}} \mu(c_1, c_2; c)\odot F(c_1)\otimes G(c_2)
\]
for all $c\in \cC$.
Here $\mu\colon \cC\times \cC \nrightarrow \cC$ is the promonoidal product in $\cC$, while $\otimes\colon \m{E}\times \m{E}\rightarrow \m{E}$ is the monoidal product in $\m{E}$, and $\odot\colon \Set\times \m{E}\rightarrow \m{E}$ denotes the tensoring of $\m{E}$ over $\Set$.
If $\eta\colon \{* \}\nrightarrow \cC$ is the promonoidal unit of $\m{E}$, and $\unit_\m{E}$ is the monoidal unit of $\m{E}$, then the monoidal unit of the Day convolution is the functor $\unit\colon \cC\rightarrow \m{E}$ defined as
\(
\unit(c)= \eta^\op(c)\odot \unit_\m{E}.
\)
If $\m{E}$ is moreover closed and complete, with internal hom given by $[-,-]\colon \m{E}^\op\times \m{E}\rightarrow \m{E}$, then $\Fun(\cC, \m{E})$ is also closed with internal hom given by the end in $\m{E}$:
\[
\{F, G\}(c) = \int_{(c', c'')\in \cC^\op\times \cC}\left[ \mu(c,c';c'')\odot F(c'),\, G(c'')\right]
\]
for all $c\in \cC$ and all functors $F,G\colon \cC\rightarrow \m{E}$.
\end{construction}

\begin{remark}\label{remark: Day convolution in ordinaty symmetric monoidal case}
    In \cref{construction: Day convolution in ordinary categories}, if $\cC$ is a symmetric monoidal category, with tensor product $\wedge\colon \cC\times \cC\rightarrow \cC$, instead of a promonoidal category, then the Day convolution of functors $F,G\colon \cC\rightarrow \m{E}$ is given by:
    \begin{align*}
          (F\Day G)(c) &\cong  \int^{(c_1, c_2)\in \cC^{\times 2}} \cC(c_1\wedge c_2, c)\odot F(c_1)\otimes G(c_2)
           \cong \colim_{c_1\wedge c_2\rightarrow c} F(c_1)\otimes G(c_2)
    \end{align*}
   for all $c\in \cC$.
   The monoidal unit $\unit\colon \cC\rightarrow \m{E}$ is then the functor
    \(
    c\longmapsto \coprod_{\cC(\unit_\cC,\, c)} \unit_\m{E}
    \)
    if $\unit_\cC\in \cC$ is the monoidal unit, while the internal hom will be given by:
    \[
    \{F, G\}(c)=\int_{c'\in \cC}[F(c'), G(c\wedge c')]
    \]
    for all $c\in \cC$. See more details in \cite[\S 4]{day-promonoidal}.
\end{remark}

\subsection{Constructing the Day convolution operad}\label{subsection: constructing Day convolution operad}

\begin{definition}
Let $\m{O}^{\otimes}$ be an \operad, and let $p \colon \m{C}^\otimes \rightarrow \m{O}^{\otimes}$ be an $\m{O}$-promonoidal \category. 
By the discussion in \cref{subsection: flat inner fibrations and symmetric promonoidal categories},  $p_{\r{act}}$ is an exponentiable fibration, and the pullback functor 
\[
p^{\ast} \colon (\mathrm{Op}_{\infty})_{/\m{O}^{\otimes}} \rightarrow (\mathrm{Op}_{\infty})_{/\m{C}^{\otimes}},
\]
admits a right adjoint, $p_{\ast}$, called the \textit{norm along $p$}.
The Day convolution functor is 
\[
\Fun_{\m{O}}(\m{C},-)^{\otimes} = p_{\ast} p^{\ast} \colon (\mathrm{Op}_{\infty})_{/\m{O}^{\otimes}} \rightarrow (\mathrm{Op}_{\infty})_{/\m{O}^{\otimes}}.
\]
Note that this is the right adjoint of the functor $p_{!} p^{\ast}$, where $p_{!}$ is given by restriction along $p$.
\end{definition}

\begin{remark}
For each $x \in \m{O}^{\otimes}$, there is a natural equivalence 
\(
\Fun_{\m{O}}(\m{C},\m{E})^{\otimes}_{x} \simeq \Fun(\m{C}_{x}^{\otimes},\m{E}_{x}^{\otimes}), 
\)
but it is not the case that $\Fun_{\m{O}}(\m{C},\m{E})^{\otimes}_{\brak{1}}$ is equivalent to $\Fun_{/\m{O}}(\m{C},\m{E})$. 
Rather, it is the sections of the fibration
\(
\Fun_{\m{O}}(\m{C},\m{E})^{\otimes}_{\brak{1}} \rightarrow \m{O}
\)
which are equivalent to $\Fun_{/\m{O}}(\m{C},\m{E})$; for more details see \cite[2.2.6.5]{HA}. 
\end{remark}

\begin{remark}\label{remark: underlying coCartesian fibration of presentably O-promonoidal}
Let $p\colon \m{C}^{\otimes} \rightarrow \m{O}^{\otimes}$ be a $\m{O}$-promonoidal \category where $p_{\brak{1}}$ is a coCartesian fibration, and let $\m{E}^{\otimes} \rightarrow \Fin_\ast$ be a cocompletely symmetric monoidal \category, so $\m{E}_{\m{O}} = \m{O}\times\m{E}$ is a cocompletely $\m{O}$-monoidal \category. 
Then, by \cite[10.7]{shah2021parametrized} the fibration 
\(
\Fun_{\m{O}}(\m{C},\m{E}_{\m{O}})^{\otimes}_{\brak{1}} \rightarrow \m{O}
\)
is classified by the functor which sends $x$ to $\Fun(\m{C}_{x},\m{E})$ and $\phi$ to $\phi_{!}$, the functor of left Kan extension $\phi_{!} \colon \Fun(\m{C}_x, \m{E}) \rightarrow \Fun(\m{C}_{y},\m{E})$; see also \cite[Remark 3.23]{linskens-nardin-pol} for a discussion of this point. 
\end{remark}

\begin{remark}\label{remark: universal property of day convolution operad}
Let $p \colon \m{C}^{\otimes} \rightarrow \m{O}^{\otimes}$ be a promonoidal \category, and let $\cD^{\otimes} \rightarrow \m{O}^{\otimes}$ and $\m{E}^{\otimes} \rightarrow \m{O}^\otimes$ be maps of \operads. 
Because the formation of the Day convolution \operad is a right adjoint, we immediately obtain the following canonical equivalence of mapping spaces: 
\[
\Map_{(\r{Op}_{\infty})_{/\m{O^{\otimes}}}}(\m{D}^{\otimes}\times_{\m{O}^{\otimes}} \m{C}^{\otimes},\m{E}^{\otimes}) \simeq \Map_{(\r{Op}_{\infty})_{/\m{O}^{\otimes}}}(\m{D}^{\otimes},\Fun_{\m{O}}(\m{C},\m{E})^{\otimes}). 
\]
As noted in \cite[Construction 3.27]{linskens-nardin-pol}, this equivalence can be upgraded to an equivalence of \categories 
\[
\Alg_{\m{D}^{\otimes}/\m{O}^{\otimes}}(\Fun_{\m{O}}(\m{C},\m{D})^{\otimes}) \simeq \Alg_{\m{D}^{\otimes}\times_{\m{O}^{\otimes}}\m{C}^{\otimes}}(\m{D}^{\otimes}). 
\] 
\end{remark}

\subsection{Existence of convolution products}\label{subsection: existence of convolution products}
To determine when the Day convolution \operad is an $\m{O}$-monoidal \category, we recall the following crucial lemma due to Linskens--Nardin--Pol \cite[Lemma 3.21]{linskens-nardin-pol}.

\begin{lemma}\label{equation: multimorphism space formula}
Let $\m{C}^{\otimes} \rightarrow \m{O}^{\otimes}$ be an $\m{O}$-promonoidal \category and $\m{D}^{\otimes} \rightarrow \m{O}^{\otimes}$ an \operad over $\m{O}^{\otimes}$.
For all active morphisms $\phi\colon\{x_i\} \rightarrow y$ in $\m{O}^{\otimes}$, and objects $\{F_i\} \in \Fun_{\m{O}}(\m{C},\m{D})^{\otimes}_{\{x_i\}}$ and $G \in \Fun_{\m{O}}(\m{C},\m{D})^{\otimes}_{y}$, the space of $\phi$-multimorphisms 
\(
\Mul^{\phi}_{\Fun_{\m{O}}(\m{C},\m{D})}(\{F_i\},G)
\)
can be canonically identified with the following end: 
\[
\int_{c'\in \m{C}^{\otimes}_{y}} \int_{\{c_i\} \in (\m{C}^{\otimes}_{\{x_i\}})^{\op}} \Map_{\Spaces}\big(\Mul^{\phi}_{\m{C}}(\{c_i\},c'), \Mul^{\phi}_{\m{D}}(\{F_{i}(c_i)\},G(c'))\big). 
\]
\end{lemma}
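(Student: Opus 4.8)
The plan is to read off the multimorphism space from the norm description $\Fun_{\m{O}}(\m{C},\m{D})^{\otimes}=p_{\ast}p^{\ast}\m{D}^{\otimes}$ of the Day convolution \operad, combined with the Ayala--Francis description of mapping spaces in a pushforward along an exponentiable fibration. By definition, $\Mul^{\phi}_{\Fun_{\m{O}}(\m{C},\m{D})}(\{F_i\},G)$ is the fiber over $\phi$ of the mapping space $\Map_{\Fun_{\m{O}}(\m{C},\m{D})^{\otimes}}(\{F_i\},G)$, so it suffices to compute morphisms in $p_{\ast}p^{\ast}\m{D}^{\otimes}$ lying over the active map $\phi\colon\{x_i\}\to y$. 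Using the fiber identification recorded above, I would first reinterpret $\{F_i\}$ and $G$ as sections of the pullback $q\colon p^{\ast}\m{D}^{\otimes}=\m{D}^{\otimes}\times_{\m{O}^{\otimes}}\m{C}^{\otimes}\to\m{C}^{\otimes}$ over the fibers $\m{C}^{\otimes}_{\{x_i\}}$ and $\m{C}^{\otimes}_{y}$ respectively.

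The engine of the computation is the behavior of the norm $p_{\ast}$ on mapping spaces. The exponentiable fibration $p_{\act}\colon\m{C}^{\otimes}_{\act}\to\m{O}^{\otimes}_{\act}$ encodes, for each active morphism, a profunctor between the corresponding fibers; by construction this profunctor takes the value $\Mul^{\phi}_{\m{C}}(\{c_i\},c')$ on a pair $(\{c_i\},c')$. The Ayala--Francis formula for $p_{\ast}$ \cite{ayala-francis-fibrations} then describes a morphism over $\phi$ between two sections as a coherent family, weighted by this profunctor, of lifts along $q$ of the correspondence points connecting $\{c_i\}$ to $c'$. Concretely, this assembles into the end
\[
\int_{c'\in\m{C}^{\otimes}_{y}}\int_{\{c_i\}\in(\m{C}^{\otimes}_{\{x_i\}})^{\op}}\Map_{\Spaces}\big(\Mul^{\phi}_{\m{C}}(\{c_i\},c'),\,L(\{c_i\},c')\big),
\]
where $L(\{c_i\},c')$ denotes the space of lifts along $q$ over a chosen point of $\Mul^{\phi}_{\m{C}}(\{c_i\},c')$, with source $\{F_i(c_i)\}$ and target $G(c')$.

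It then remains to identify the coefficient $L(\{c_i\},c')$. Since $q$ is a base change of $\m{D}^{\otimes}\to\m{O}^{\otimes}$ along $p$, pinning the $\m{C}$-coordinates to $\{c_i\}$ and $c'$ and requiring the lift to cover $\phi$ leaves exactly the data of a $\phi$-multimorphism in $\m{D}^{\otimes}$; that is, $L(\{c_i\},c')\simeq\Mul^{\phi}_{\m{D}}(\{F_i(c_i)\},G(c'))$. Substituting this into the end above yields the desired formula, and I would finish by checking that the integrand is a genuine bifunctor on $(\m{C}^{\otimes}_{\{x_i\}})^{\op}\times\m{C}^{\otimes}_{\{x_i\}}$ so that the double end is well-posed, the variances being dictated by the profunctor $\Mul^{\phi}_{\m{C}}$ and the evaluation functors. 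The main obstacle is the second step: making precise the Ayala--Francis description of mapping spaces in the norm $p_{\ast}$ and matching it, variances and higher coherences included, with the explicit double end over $c'\in\m{C}^{\otimes}_{y}$ and $\{c_i\}\in(\m{C}^{\otimes}_{\{x_i\}})^{\op}$; once this end formula is in hand, the identification of coefficients and the assembly are routine.
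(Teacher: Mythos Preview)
The paper does not prove this lemma; it simply recalls it from Linskens--Nardin--Pol \cite[Lemma 3.21]{linskens-nardin-pol}, so there is no in-paper argument to compare against. Your sketch is the right shape: the multimorphism space is a fiber of a mapping space in $p_{\ast}p^{\ast}\m{D}^{\otimes}$, and the norm along an exponentiable fibration computes such mapping spaces as an end weighted by the associated profunctor, which here is $\Mul^{\phi}_{\m{C}}$. You also correctly isolate the real work, namely making the Ayala--Francis description of mapping spaces in $p_{\ast}$ precise at the level of \categories, with the correct variances, so that the double end is well-formed; this is exactly what the cited reference carries out, and your proposal does not attempt to fill in those details. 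If you wanted to make this self-contained, the missing ingredient is an explicit identification of $\Fun_{\m{O}}(\m{C},\m{D})^{\otimes}\times_{\m{O}^{\otimes}}\Delta^{1}$ (pulled back along $\phi$) with a relative section category over $\Ar^{\phi}(\m{C})$, from which the end formula drops out by the usual end-as-natural-transformations identification.
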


In light of the formula above, we can isolate conditions under which the Day convolution \operad is $\m{O}$-corepresentable. 
To this end, we introduce the following notation. 

\begin{definition}
    Let $p\colon \m{C}^{\otimes} \rightarrow \m{O}^{\otimes}$ be an $\m{O}$-promonoidal \category and let $\phi\colon \{x_i\} \rightarrow y$ be an active morphism in $\m{O}$. 
    Define
    \[
    \ev^{\phi} \colon \Ar^{\phi}(\m{C}) \rightarrow \m{C}^{\otimes}_{\{x_i\}} \times \m{C}^{\otimes}_{y}
    \]
    to be the bifibration induced by the following pullback square
    \[
    \begin{tikzcd}
    \Ar^{\phi}(\m{C}) \arrow[r] \arrow[d] & \Ar(\m{C}^{\otimes}_{\Act}) \arrow[d,"\Ar(p_{\Act})"] \\
    \Delta^{0} \arrow[r,"\phi"'] & \Ar(\m{O}^{\otimes}_{\Act}) 
    \end{tikzcd}
    \]
    and let $\ev^{\phi}_{i}$ denote $\ev^{\phi}$ followed by projection onto the $i$-th factor, where $i \in \{0,1\}$. 
    Furthermore, given $c \in \m{C}^{\otimes}_{y}$, we let $\Ar^{\phi}(\m{C})_{c} = \Ar^{\phi}(\m{C})\times_{\m{C}^{\otimes}_{y}} \{c\}$, which we note is classified by the functor 
    \[
    \Mul_{\m{C}}^{\phi}(-,c) \colon (\m{C}^{\otimes}_{\{x_i\}})^{\op} \rightarrow \Spaces. 
    \]
\end{definition}

The next lemma follows immediately by unraveling the definitions.  

\begin{lemma}\label{lemma: Day conv is O-corep}
    In the situation of \cref{equation: multimorphism space formula}, the functor 
    \[
    \Mul^{\phi}_{\Fun_{\m{O}}(\m{C},\m{D})}(\{F_i\},-) \colon \Fun(\m{C}_{y},\m{D}_{y}) \rightarrow \Spaces
    \]
    is corepresentable if and only if either of the following equivalent conditions hold:
    \begin{enumerate}
        \item The left Kan extension of
        \[
        \Ar^{\phi}(\m{C}) \xrightarrow{\{F_i\} \circ \ev_{0}^{\phi}} \m{D}_{\{x_i\}}^{\otimes} \xrightarrow{\otimes^{\phi}_{\m{D}}} \m{D}^{\otimes}_{y}
        \]
        along $\ev^{\phi}_{1} \colon \Ar^{\phi}(\m{C}) \rightarrow \m{C}^{\otimes}_{y}$ exists. 
        \item For all $c \in \m{C}^{\otimes}_{y}$, the following colimit exists
        \[
        \underset{\{c_i\} \in \Ar^{\phi}(\m{C})_{c}}{\colim} \bigotimes^{\phi}_{i \in I} \{F_i(c_i)\}.
        \]
    \end{enumerate}
\end{lemma}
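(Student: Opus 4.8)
The plan is to rewrite the end of \cref{equation: multimorphism space formula} as a mapping space in a functor category out of the diagram appearing in (1), so that corepresentability becomes the existence of a left adjoint. Write $S := \otimes^{\phi}_{\m{D}}\circ \{F_i\}\circ \ev^{\phi}_0 \colon \Ar^{\phi}(\m{C}) \to \m{D}^{\otimes}_y$ for the composite diagram in (1); here $\otimes^{\phi}_{\m{D}}(\{d_i\}) = \bigotimes^{\phi}_i d_i$ is the object corepresenting $\Mul^{\phi}_{\m{D}}(\{d_i\},-)$, so that $\Mul^{\phi}_{\m{D}}(\{F_i(c_i)\}, d) \simeq \Map_{\m{D}^{\otimes}_y}(S(\{c_i\}), d)$ naturally in $d$ and in $\{c_i\}$. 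Substituting this into \cref{equation: multimorphism space formula} replaces the inner integrand by $\Theta(\{c_i\}, c') := \Map_{\m{D}^{\otimes}_y}(S(\{c_i\}), G(c'))$, a functor $(\m{C}^{\otimes}_{\{x_i\}})^{\op}\times \m{C}^{\otimes}_y \to \Spaces$ of the same variance as the promonoidal weight $\Mul^{\phi}_{\m{C}}$. The double end is then exactly the space of natural transformations $\Mul^{\phi}_{\m{C}} \Rightarrow \Theta$.

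The crux is to recognize this space of natural transformations as an honest end over the total space $\Ar^{\phi}(\m{C})$. By construction $\ev^{\phi}=(\ev^{\phi}_0,\ev^{\phi}_1)$ is the two-sided fibration classifying the profunctor $\Mul^{\phi}_{\m{C}}$, so the $\infty$-categorical co-Yoneda lemma identifies the weighted limit $\int_{c'}\int_{\{c_i\}}\Map_{\Spaces}(\Mul^{\phi}_{\m{C}}(\{c_i\},c'),\Theta(\{c_i\},c'))$ with $\int_{e\in\Ar^{\phi}(\m{C})}\Theta(\ev^{\phi}(e))$. Since the first argument of $\Theta$ factors through $\ev^{\phi}_0$ (yielding $S$) while $G$ is precomposed with $\ev^{\phi}_1$, this last end is precisely $\Map_{\Fun(\Ar^{\phi}(\m{C}),\m{D}^{\otimes}_y)}(S,(\ev^{\phi}_1)^{\ast}G)$, natural in $G$. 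I expect this co-Yoneda/Fubini identification — trading the weighting against $\Mul^{\phi}_{\m{C}}$ for an end over $\Ar^{\phi}(\m{C})$ — to be the main and essentially only nonformal point; its virtue is that it never tensors $\m{D}^{\otimes}_y$ over spaces, which we are not entitled to do here.

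Granting the natural equivalence $\Mul^{\phi}_{\Fun_{\m{O}}(\m{C},\m{D})}(\{F_i\},G)\simeq \Map_{\Fun(\Ar^{\phi}(\m{C}),\m{D}^{\otimes}_y)}(S,(\ev^{\phi}_1)^{\ast}G)$, the functor $G\mapsto \Mul^{\phi}_{\Fun_{\m{O}}(\m{C},\m{D})}(\{F_i\},G)$ is corepresentable if and only if the partial left adjoint to $(\ev^{\phi}_1)^{\ast}$ is defined at $S$, that is, if and only if the left Kan extension $(\ev^{\phi}_1)_!S$ exists; the corepresenting object is then that Kan extension. This is condition (1). Finally, I would obtain (1)$\Leftrightarrow$(2) from the pointwise formula for Kan extensions: $(\ev^{\phi}_1)_!S$ exists if and only if it exists pointwise, and its value at $c\in \m{C}^{\otimes}_y$ is the colimit of $S$ over the comma \category $\ev^{\phi}_1\downarrow c$. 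As the target projection $\ev^{\phi}_1$ of the bifibration $\ev^{\phi}$ is a coCartesian fibration, the fiber inclusion $\Ar^{\phi}(\m{C})_c\hookrightarrow \ev^{\phi}_1\downarrow c$ is final — each object of the comma \category maps to its coCartesian pushforward into the fiber, which is initial in the relevant slice — so this colimit reduces to $\colim_{\{c_i\}\in\Ar^{\phi}(\m{C})_c}\bigotimes^{\phi}_i F_i(c_i)$, exactly condition (2). Thus all three conditions coincide, and the only substantive verifications are the co-Yoneda identification above and the coCartesianness of $\ev^{\phi}_1$ that feeds the cofinality step.
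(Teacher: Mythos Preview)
Your proposal is correct and is precisely the ``unraveling'' the paper has in mind: the paper offers no argument beyond the remark that the lemma ``follows immediately by unraveling the definitions,'' and your rewriting of the end formula of \cref{equation: multimorphism space formula} as $\Map_{\Fun(\Ar^{\phi}(\m{C}),\m{D}^{\otimes}_{y})}(S,(\ev^{\phi}_{1})^{\ast}G)$, followed by the identification of corepresentability with the existence of $(\ev^{\phi}_{1})_{!}S$, is exactly that unraveling. The equivalence (1)$\Leftrightarrow$(2) via the pointwise Kan extension formula and the cofinality of the fiber inclusion $\Ar^{\phi}(\m{C})_{c}\hookrightarrow (\ev^{\phi}_{1}\downarrow c)$ is likewise the intended route; the paper records the needed adjunction in \cref{appendix A: lemma: local fib adjoints}(1), and the coCartesianness of $\ev^{\phi}_{1}$ is implicit in the paper's designation of $\ev^{\phi}$ as a bifibration.
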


We now turn the condition in the lemma above into a definition. 

\begin{definition}\label{definition: indexing kappa-Day conv prod}
    $\m{O}^{\otimes}$ be an \operad and let $\kappa$ be an infinite regular cardinal. 
    \begin{enumerate}
        \item If $p\colon \m{C}^{\otimes} \rightarrow \m{O}^{\otimes}$ is an $\m{O}$-promonoidal \category, we say $p$ is \textit{$\m{O}$-promonoidally $\kappa$-small} provided that: for each active morphism $\phi\colon \{x_i\} \rightarrow y$ in $\m{O}^{\otimes}$ and for each $c \in \m{C}^{\otimes}_{y}$, the \category $\Ar^{\phi}(\m{C})_{c}$ has an essentially $\kappa$-small cofinal subcategory. 
        In the special case where $\m{O}^{\otimes} = \Fin_\ast$, we say that $\m{C}^{\otimes}$ is \textit{symmetric promonoidally $\kappa$-small}.  
        \item If $q\colon \m{D}^{\otimes} \rightarrow \m{O}^{\otimes}$ is an $\m{O}$-monoidal \category, we say $q$ \textit{is compatible with $\kappa$-small colimits} provided that: for each $x \in \m{O}$, the \category $\m{D}^{\otimes}_{x}$ admits $\kappa$-small colimits and for each active morphism $\phi\colon \{x_i\} \rightarrow y$ in $\m{O}^{\otimes}$, the functor $\otimes^{\phi}_{\m{D}}$ preserves $\kappa$-small colimits separately in each variable.  
    \end{enumerate}
    In the special case where $\kappa = \omega$, we say that $p$ is $\m{O}$-promonoidally finite, and that $q$ is compatible with finite colimits.
\end{definition}

\begin{remark}\label{remark: tensor product arrow categories}
    Let $p\colon \m{C}^{\otimes} \rightarrow \m{O}^{\otimes}$ be a map of \operads and let $\sigma\colon [2] \rightarrow \m{O}^{\otimes}_{\Act}$ represent the commutative triangle
    \[
    \begin{tikzcd}
    & \{y_{j}\}_{j\in J} \arrow[dr,"\psi"] & \\
    \{x_i\}_{i\in I} \arrow[rr,"\xi"'] \arrow[ur,"\phi"] & & z
    \end{tikzcd}
    \]
    where $\phi$ is a lift of a morphism $\alpha \colon I \rightarrow J$ in $\Fin$.
    Let $\Ar^{\sigma}(\m{C}) = \Fun_{/\m{O}^{\otimes}_{\Act}}([2],\m{C}^{\otimes}_{\Act})$ and note that restriction along $\{i<j\} \subseteq [2]$ induces three evaluation functors 
    \[
    \begin{tikzcd}
    & \Ar^{\sigma}(\m{C}) \arrow[dl,"\ev_{\{0<1\}}"'] \arrow[d,"{\ev_{\{1<2\}}}" description] \arrow[dr,"\ev_{\{0<2\}}"] & \\
    \Ar^{\phi}(\m{C}) & \Ar^{\psi}(\m{C}) & \Ar^{\xi}(\m{C}) 
    \end{tikzcd}
    \]
    two of which participate in an equivalence of \categories 
    \[
    (\ev_{\{0<1\}},\ev_{\{1<2\}}) \colon \Ar^{\sigma}(\m{C}) \xrightarrow{\sim} \Ar^{\phi}(\m{C}) \times_{\m{C}^{\otimes}_{\{y_j\}}} \Ar^{\psi}(\m{C}).
    \]
    Furthermore, if $p$ is $\m{O}$-promonoidal with $\sigma$ as above, then by \cite[2.2.8(6)]{ayala-francis-fibrations} the functor $\ev_{\{0<2\}}$ is final, so that $(\ev_{\{0<2\}})_{!}(\ev_{\{0<2\}})^{\ast} \simeq \mathrm{id}$.
\end{remark}

\begin{proposition}\label{prop: existence of Day convolution product}
    Fix an infinite regular cardinal $\kappa$. 
    Let $p\colon \m{C}^{\otimes} \rightarrow \m{O}^{\otimes}$ be an $\m{O}$-promonoidally $\kappa$-small \category, and let $q\colon \m{D}^{\otimes} \rightarrow \m{O}^{\otimes}$ be an $\m{O}$-monoidal category that is compatible with $\kappa$-small colimits.     
    Then, $\pi \colon \Fun_{\m{O}}(\m{C},\m{D})^{\otimes} \rightarrow \m{O}^{\otimes}$ is an $\m{O}$-monoidal \category, compatible with $\kappa$-small colimits, whose convolution product is given by
    \[
    \left(\bigotimes^{\phi}_{i\in I}\{F_{i}\}_{i}\right)(c) \simeq \colim_{\{c_{i}\}_{i} \in \Ar^{\phi}(\m{C})_{c}} \bigotimes^{\phi}_{i\in I} \{F_{i}(c_{i})\}_{i}. 
    \]
    In the case where $\m{O}^{\otimes} = \Fin_{\ast}$, and $q$ is compatible with small colimits, we have 
    \[
    (F\otimes G)(c) \simeq \int^{\{c_1,c_2\} \in \m{C}^{\times 2}} \Mul_{\m{C}}(\{c_1,c_2\},c) \times F(c_1)\otimes F(c_2)
    \]
    and
    \[
    \unit_{\Fun(\m{C},\m{D})} \simeq \Mul_{\m{C}}(\varnothing,-) \times \unit_{\m{D}}. 
    \]
\end{proposition}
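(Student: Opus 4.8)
The strategy is to verify that $\pi\colon\Fun_{\m{O}}(\m{C},\m{D})^{\otimes}\to\m{O}^{\otimes}$ is an $\m{O}$-monoidal \category by first showing it is $\m{O}$-corepresentable---hence locally coCartesian by \cref{lem: corepresentable operad fibration is locally coCartesian}(1)---and then showing that its locally coCartesian morphisms are closed under composition, so that $\pi_{\Act}$ is coCartesian. Throughout I take for granted that $\pi$ is a fibration of \operads, since it is produced as the norm $p_{\ast}p^{\ast}$ of the $\m{O}$-monoidal \category $\m{D}^{\otimes}$ along the exponentiable fibration $p_{\Act}$. The cardinality hypotheses will enter only to guarantee the existence of the relevant colimits.

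First I would establish $\m{O}$-corepresentability. Fix an active morphism $\phi\colon\{x_i\}\to y$ in $\m{O}^{\otimes}$ and objects $\{F_i\}$ over $\{x_i\}$. By \cref{lemma: Day conv is O-corep}, corepresentability of $\Mul^{\phi}_{\Fun_{\m{O}}(\m{C},\m{D})}(\{F_i\},-)$ amounts to the existence, for every $c\in\m{C}^{\otimes}_{y}$, of the colimit $\colim_{\{c_i\}\in\Ar^{\phi}(\m{C})_{c}}\bigotimes^{\phi}_{i\in I}\{F_i(c_i)\}$. By \cref{definition: indexing kappa-Day conv prod}, the $\m{O}$-promonoidal $\kappa$-smallness of $p$ means $\Ar^{\phi}(\m{C})_{c}$ has an essentially $\kappa$-small cofinal subcategory, while compatibility of $q$ with $\kappa$-small colimits means $\m{D}^{\otimes}_{y}$ admits, and $\otimes^{\phi}_{\m{D}}$ preserves, $\kappa$-small colimits. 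Computing over the cofinal subcategory shows the colimit exists; this both yields the displayed convolution formula and, via \cref{lem: corepresentable operad fibration is locally coCartesian}(1), shows $\pi$ is locally coCartesian.

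Next I would upgrade this to an $\m{O}$-monoidal structure by checking that locally coCartesian lifts compose, whence $\pi_{\Act}$ is coCartesian and, since $\pi$ is a fibration of \operads, $\pi$ itself is coCartesian. Given a composable pair of active maps with composite $\xi=\psi\circ\phi$ presented by a triangle $\sigma$ as in \cref{remark: tensor product arrow categories}, I would compute the pushforward along $\psi$ of the pushforwards along the $\phi_j$ and compare it to the pushforward along $\xi$. Using that $\otimes^{\psi}_{\m{D}}$ commutes with the inner $\kappa$-small colimits, the iterated colimit reorganizes into a single colimit indexed by $\Ar^{\phi}(\m{C})\times_{\m{C}^{\otimes}_{\{y_j\}}}\Ar^{\psi}(\m{C})\simeq\Ar^{\sigma}(\m{C})$; the finality of $\ev_{\{0<2\}}$ from \cref{remark: tensor product arrow categories} then reindexes this as the colimit over $\Ar^{\xi}(\m{C})_{c}$, which is precisely the convolution along $\xi$. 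This identifies the composite lift with the lift of $\xi$, so $\pi$ is $\m{O}$-monoidal.

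Finally, compatibility with $\kappa$-small colimits holds because colimits in each fiber $\Fun(\m{C}_x,\m{D}_x)$ are pointwise in $\m{D}_x$ (which has $\kappa$-small colimits), and the convolution formula, being a colimit built from $\otimes_{\m{D}}$, preserves $\kappa$-small colimits separately in each variable. For the special case $\m{O}^{\otimes}=\Fin_{\ast}$ with $q$ compatible with all small colimits, I would rewrite the colimit over $\Ar^{\phi}(\m{C})_{c}$ as the stated coend, using that $\Ar^{\phi}(\m{C})_{c}$ is classified by $\Mul_{\m{C}}(-,c)$, and read off the unit as the instance of the empty active map $\varnothing\to-$. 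I expect the main obstacle to be the reorganization in the third paragraph: commuting the $\kappa$-small colimits past $\otimes^{\psi}_{\m{D}}$ and reindexing along the arrow-category equivalence and the finality of $\ev_{\{0<2\}}$, which is exactly what promotes the merely locally coCartesian fibration to a genuine $\m{O}$-monoidal one.
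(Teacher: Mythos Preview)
Your proposal is correct and follows essentially the same computational path as the paper. The one packaging difference is that the paper invokes \cref{lem: corepresentable operad fibration is locally coCartesian}(2) and therefore checks $\m{O}$-promonoidality of $\pi$ (i.e., that $\pi_{\Act}$ is exponentiable via the criterion of \cite[2.2.8(4)]{ayala-francis-fibrations}), whereas you check directly that locally coCartesian lifts compose. But by Yoneda these are the same verification: the canonical transformation $\widetilde{\sigma}$ in the paper's proof is exactly the comparison map between the composite locally coCartesian lift and the lift of the composite. The paper carries out the reindexing step you describe more formally, writing the source and target as $(\ev_1^{-})_{!}(\ev_0^{-})^{\ast}$ applied to $\otimes_{\m{D}}\circ\{F_i\}$ and invoking the base-change result \cref{proposition: exchange transformation} together with \cref{remark: tensor product arrow categories} to collapse $(\ev_1^{\psi})_{!}(\ev_0^{\psi})^{\ast}(\ev_1^{\phi})_{!}(\ev_0^{\phi})^{\ast}$ to $(\ev_1^{\xi})_{!}(\ev_0^{\xi})^{\ast}$; your informal description of commuting the $\kappa$-small colimit past $\otimes_{\m{D}}^{\psi}$ and reindexing via finality of $\ev_{\{0<2\}}$ is precisely this computation.
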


\begin{proof}
    By \cref{lem: corepresentable operad fibration is locally coCartesian}, it is enough to prove that $\pi$ is $\m{O}$-corepresentable and $\m{O}$-promonoidal. 
    As the hypotheses of \cref{lemma: Day conv is O-corep} are satisfied, in virtue of our assumptions, we find that $\pi$ is $\m{O}$-corepresentable and the claimed formula for the convolution product holds. 
    To see that $\pi_{\Act}$ is exponentiable, we will prove that $\pi_{\Act}$ has the property described in \cite[2.2.8(4)]{ayala-francis-fibrations}.
    To this end, $\phi \colon \{x_{i}\}_{i\in I} \rightarrow \{y_j\}_{j\in J}$ and $\psi \colon\{y_{j}\}_{j\in J} \rightarrow z$ denote active morphisms in $\m{O}^{\otimes}$, and let $\{F_{i}\}_{i} \in \Fun_{\m{O}}(\m{C},\m{D})^{\otimes}_{\{x_{i}\}}$.
    Furthermore, note that $\psi$ is given by a tuple of active morphisms $\{\psi_{i}\colon \{x_{i}\}_{i \in \alpha^{-1}\{j\}} \rightarrow y_{j}\}_{j\in J}$ and determines a map of finite sets $\alpha\colon I \rightarrow J$.
    By the Yoneda lemma combined with the $\m{O}$-corepresentability of $\pi$, we only need to verify that the canonical transformation    
    \[
    \widetilde{\sigma} \colon \bigotimes^{\xi}_{i \in I} \{F_{i}\} \longrightarrow \bigotimes^{\psi}_{j \in J}\bigg\{ \bigotimes^{\phi_{j}}_{i \in \alpha^{-1}\{j\}} \{F_{i}\} \bigg\}_{j} 
    \]
    is an equivalence of functors $\m{C}^{\otimes}_{z} \rightarrow \m{D}^{\otimes}_{z}$. 
    Because $\m{C}^{\otimes}$ is $\m{O}$-promonoidally $\kappa$-small and $\m{D}^{\otimes}$ is $\kappa$-cocompletely $\m{O}$-monoidal, we find the target of $\widetilde{\sigma}$ is equivalent to
    \[
    (\ev_{1}^{\psi})_{!}(\ev_{0}^{\psi})^{\ast}(\ev_{1}^{\phi})_{!}(\ev_{0}^{\phi})^{\ast} \left(\otimes^{\xi}_{\m{D}} \circ \{F_i\}\right),
    \]
    after unwinding the definitions.
    By the discussion in \cref{remark: tensor product arrow categories}, combined with \cref{proposition: exchange transformation} we have 
    \begin{align*}
    (\ev_{1}^{\psi})_{!}(\ev_{0}^{\psi})^{\ast}(\ev_{1}^{\phi})_{!}(\ev_{0}^{\phi})^{\ast} & \simeq (\ev_{1}^{\psi})_{!} (\ev_{\{1<2\}})_{!} (\ev_{\{0<1\}})^{\ast}(\ev_{0}^{\phi})^{\ast}\\
    & \simeq (\ev_{1}^{\xi})_{!} (\ev_{\{0<2\}})_{!} (\ev_{\{0<2\}})^{\ast}(\ev_{0}^{\xi})^{\ast} \\
    & \simeq (\ev_{1}^{\xi})_{!}(\ev_{0}^{\xi})^{\ast}.
    \end{align*}
    As the source of $\widetilde{\sigma}$ is given by $(\ev_{1}^{\xi})_{!}(\ev_{0}^{\xi})^{\ast}\left(\otimes^{\xi}_{\m{D}} \circ \{F_i\}\right)$, the claim follows
    The explicit description in the case where $\m{O}^{\otimes} = \Fin_\ast$ follows directly from the definitions or from \cite[3.29]{linskens-nardin-pol}
\end{proof}

\subsection{Functoriality of the Day convolution operad}\label{subsection: functoriality of convolution operad} 
We now address the extent to which the construction 
\[
(\m{C}^{\otimes} \xrightarrow{p} \m{O}^{\otimes}, \m{E}^{\otimes} \xrightarrow{q} \m{O}^{\otimes}) \mapsto \Fun_{\m{O}}(\m{C},\m{E})^{\otimes} 
\]
is natural in $p\colon \m{C}^{\otimes} \rightarrow \m{O}^{\otimes}$ and $q: \m{E}^{\otimes} \rightarrow \m{O}^{\otimes}$.
Any map of \operads $f\colon \m{E}^{\otimes} \rightarrow \m{F}^{\otimes}$ over $\m{O}^{\otimes}$ induces a map of Day convolution \operads over $\m{O}^{\otimes}$ which we denote by 
\[
f_{\ast} \colon \Fun_{\m{O}}(\m{C},\m{E})^{\otimes} \rightarrow \Fun_{\m{O}}(\m{C},\m{F})^{\otimes}.
\]
The following lemma is standard, and follows almost immediately from \cref{prop: existence of Day convolution product}.

\begin{lemma}\label{lem: Dau convolution preserves maps of operads}
    Let $p\colon \m{C}^{\otimes} \rightarrow \m{O}^\otimes$ be an $\m{O}$-promonoidally $\kappa$-small \category and let $\m{E}^\otimes$ and $\m{F}^{\otimes}$ be $\kappa$-cocomplete $\m{O}$-monoidal \categories. 
    Then, for $f \colon \m{E}^\otimes \rightarrow \m{F}^\otimes$ a map of \operads over $\m{O}^{\otimes}$, the induced functor 
    \(
    f_{\ast} \colon \Fun_{\m{O}}(\m{C},\m{E})^{\otimes}\rightarrow \Fun_{\m{O}}(\m{C},\m{F})^{\otimes}
    \)
    is a map of \operads over $\m{O}^{\otimes}$. 
    If, additionally, $f$ is $\m{O}$-monoidal, and, for each $x \in \m{O}$, the functor $f_{x}\colon \m{E}_{x} \rightarrow \m{F}_{x}$ preserves $\kappa$-small colimits, then $f_{\ast}$ is $\m{O}$-monoidal.
\end{lemma}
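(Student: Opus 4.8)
The first assertion is essentially formal, and I would dispatch it immediately. By construction $f_{\ast}$ is the value of the endofunctor $\Fun_{\m{O}}(\m{C},-)^{\otimes} = p_{\ast}p^{\ast}$ of $(\mathrm{Op}_{\infty})_{/\m{O}^{\otimes}}$ on the morphism $f$. Under the standing hypotheses, \cref{prop: existence of Day convolution product} ensures that $\Fun_{\m{O}}(\m{C},\m{E})^{\otimes}$ and $\Fun_{\m{O}}(\m{C},\m{F})^{\otimes}$ are $\m{O}$-monoidal \categories, hence in particular \operads over $\m{O}^{\otimes}$; since $f_{\ast}$ is by definition a morphism in $(\mathrm{Op}_{\infty})_{/\m{O}^{\otimes}}$, it is a map of \operads over $\m{O}^{\otimes}$.

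For the second assertion, the plan is to reduce $\m{O}$-monoidality to a pointwise computation and then invoke the explicit convolution formula. Recall that a map of \operads between $\m{O}$-monoidal \categories is $\m{O}$-monoidal if and only if it carries coCartesian edges to coCartesian edges \cite[2.1.2.15]{HA}. As $f_{\ast}$ automatically preserves inert edges, it suffices to check preservation of coCartesian lifts of active morphisms, and by the Segal condition this reduces to active maps $\phi\colon \{x_i\} \rightarrow y$ with $y \in \m{O}$. Thus I would only verify that for every such $\phi$ and every $\{F_i\} \in \Fun_{\m{O}}(\m{C},\m{E})^{\otimes}_{\{x_i\}}$ the canonical lax structure map
\[
\bigotimes^{\phi}_{i\in I} \{f_{x_i}\circ F_{i}\} \longrightarrow f_{y}\circ\Bigl(\bigotimes^{\phi}_{i\in I}\{F_i\}\Bigr)
\]
is an equivalence in $\Fun(\m{C}_{y},\m{F}_{y}) \simeq \Fun_{\m{O}}(\m{C},\m{F})^{\otimes}_{y}$; here I use that $(f_{\ast})_{x}$ is identified with post-composition along $f_{x}$. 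Since equivalences of functors are detected pointwise, it is enough to evaluate at each $c \in \m{C}_{y}$.

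Evaluating at $c$ should then be a direct application of \cref{prop: existence of Day convolution product}: the source becomes $\colim_{\{c_i\}\in \Ar^{\phi}(\m{C})_{c}} \bigotimes^{\phi}_{i} f_{x_i}(F_i(c_i))$ and the target becomes $f_{y}\bigl(\colim_{\{c_i\}} \bigotimes^{\phi}_{i} F_i(c_i)\bigr)$, with the tensor products formed in $\m{F}$ and $\m{E}$ respectively. Using that $f$ is $\m{O}$-monoidal, so that $\bigotimes^{\phi}_{i} f_{x_i}(F_i(c_i)) \xrightarrow{\ \sim\ } f_{y}(\bigotimes^{\phi}_{i} F_i(c_i))$, I would exhibit the map at $c$ as the composite
\[
\colim_{\{c_i\}} \bigotimes^{\phi}_{i} f_{x_i}(F_i(c_i)) \xrightarrow{\ \sim\ } \colim_{\{c_i\}} f_{y}\Bigl(\bigotimes^{\phi}_{i} F_i(c_i)\Bigr) \longrightarrow f_{y}\Bigl(\colim_{\{c_i\}}\bigotimes^{\phi}_{i} F_i(c_i)\Bigr),
\]
whose second arrow is the colimit-comparison for $f_{y}$. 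Because $\m{C}^{\otimes}$ is $\m{O}$-promonoidally $\kappa$-small, the \category $\Ar^{\phi}(\m{C})_{c}$ has an essentially $\kappa$-small cofinal subcategory (\cref{definition: indexing kappa-Day conv prod}), and $f_{y}$ preserves $\kappa$-small colimits by assumption, so this arrow is an equivalence; hence so is the whole map, completing the argument.

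The step I expect to be the main obstacle is not the computation but the coherence bookkeeping: one must verify that the pointwise composite above genuinely represents the lax structure morphism of $f_{\ast}$, rather than merely an equivalence between its source and target. Concretely, I would need the identifications of the convolution products with the colimit formula of \cref{prop: existence of Day convolution product} to be compatible with the comparison transformations, so that ``moving $f_{y}$ past the colimit'' and ``moving $f_{y}$ past $\otimes$'' assemble to the structure map itself. This is best handled by tracing the adjunction defining $f_{\ast} = p_{\ast}p^{\ast}f$ together with the corepresentability identifications underlying \cref{lemma: Day conv is O-corep}; once this naturality is in place, the remaining checks are routine.
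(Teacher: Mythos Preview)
Your proposal is correct and aligns with the paper's approach: the paper simply records that the lemma ``follows almost immediately from \cref{prop: existence of Day convolution product},'' and your argument spells out precisely how---reducing to preservation of coCartesian lifts over active maps, applying the explicit colimit formula for the convolution product, and then commuting $f_{y}$ past both $\otimes^{\phi}$ (by $\m{O}$-monoidality of $f$) and the $\kappa$-small colimit (by hypothesis). The coherence concern you flag is legitimate but not an obstruction: the identifications in \cref{lemma: Day conv is O-corep} are natural in the inputs, so the displayed composite is indeed the lax structure map.
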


We recall the following construction from \cite[Construction 3.27]{linskens-nardin-pol}. 

\begin{construction}\label{construction: day convolution functoriality in the source}
    Let $p\colon \m{C}^{\otimes} \rightarrow \m{O}^\otimes$ and $q\colon \m{D}^{\otimes} \rightarrow \m{O}^{\otimes}$ be $\m{O}$-promonoidal \categories and let $f\colon \m{C}^{\otimes} \rightarrow \m{D}^\otimes$ be a map of \operads over $\m{O}^{\otimes}$. 
    We let $f^{\ast}$ denote the natural transformation
    \[
    \Fun_{\m{O}}(\m{D},-)^{\otimes} = q_{\ast}q^{\ast} \rightarrow p_{\ast}p^\ast = \Fun_{\m{O}}(\m{C},-)^{\otimes} 
    \]
    which is adjoint to the composition
    \[
    p_{!}p^{\ast}q_{\ast}q^{\ast} \simeq p_{!}f^{\ast}q^{\ast}q_{\ast} q^{\ast} \rightarrow p_{!} f^{\ast}q^{\ast} \simeq p_{!}p^{\ast} \rightarrow \id.
    \]
    Moreover, given a third $\m{O}$-promonoidal \category $r \colon \m{E}^{\otimes} \rightarrow \m{O}^\otimes$ and a map $g \colon \m{D}^{\otimes} \rightarrow \m{E}^{\otimes}$ of \operads over $\m{O}^{\otimes}$, from the definition above, it is not difficult to verify that we have a natural equivalence 
    \(
    (gf)^{\ast} \simeq f^{\ast}g^{\ast}. 
    \)
\end{construction}

It well-known that left Kan extension along a symmetric monoidal functor is itself a symmetric monoidal construction. 
By work of Day and Street, the same observation is true when left Kan extending along symmetric promonoidal functors, \cite[1 Proposition]{day-street}. 
We prove a version of this fact for $\m{O}$-promonoidal \categories, the cocomplete antecedent of which was established by Linskens--Nardin--Pol \cite[3.34]{linskens-nardin-pol}. 
To accomplish this, we need a fibrational interpretation of when a lax $\m{O}$-promonoidal functor is $\m{O}$-promonoidal. 

\begin{remark}
    Let $f\colon \m{C}^{\otimes} \rightarrow \m{D}^{\otimes}$ be a lax $\m{O}$-promonoidal functor. 
    For any active morphism $\phi\colon \{x_i\} \rightarrow y$ in $\m{O}^{\otimes}$, we obtain a commutative square of \categories 
    \[
    \begin{tikzcd}
    \Ar^{\phi}(\m{C}) \arrow[d,"\ev^{\phi}_{\m{C}}"'] \arrow[rr,"f_{\phi}"] & &\Ar^{\phi}(\m{D}) \arrow[d,"\ev^{\phi}_{\m{D}}"] \\
    \m{C}^{\otimes}_{\{x_i\}} \times \m{C}^{\otimes}_{y} \arrow[rr,"{(f_{\{x_i\}},f_{y})}"'] & & \m{D}^{\otimes}_{\{x_i\}} \times \m{D}^{\otimes}_{y} 
    \end{tikzcd}
    \]
    in which the vertical arrows are bifibrations. 
    This commutative square induces a functor of bifibrations
    \[
    \begin{tikzcd}
    (\id,f_{y})_{!}\Ar^{\phi}(\m{C}) \arrow[rr] \arrow[dr] & & (f_{\{x_i\}},\id)^{\ast}\Ar^{\phi}(\m{D}) \arrow[dl] \\
    & \m{C}_{\{x_i\}}^{\otimes} \times \m{D}^{\otimes}_{y} &    
    \end{tikzcd}
    \]
    which is an equivalence precisely when $f$ is $\m{O}$-promonoidal.   
\end{remark}

\begin{proposition}\label{prop: left kan extension along strong promonoidal is strong monoidal}
    Let $q\colon \m{E}^{\otimes} \rightarrow \m{O}^{\otimes}$ be an $\m{O}$-monoidal \category which is compatible with $\kappa$-small colimits and let $f\colon \m{C}^{\otimes} \rightarrow \m{D}^{\otimes}$ be a lax $\m{O}$-promonoidal functor between $\m{O}$-promonoidally $\kappa$-small \categories. 
    Then, precomposition by $f$ induces a lax $\m{O}$-monoidal functor
    \[
    f^{\ast}\colon\Fun_{\m{O}}(\m{D},\m{E})^{\otimes} \rightarrow \Fun_{\m{O}}(\m{C},\m{E})^{\otimes}.
    \]    
    \noindent If we additionally assume that for each $x \in \m{O} = \m{O}^{\otimes}_{\brak{1}}$, the \categories $\m{C}_{x}^{\otimes}$ and $\m{D}_{x}^{\otimes}$ are $\kappa$-small and $f$ is an $\m{O}$-promonoidal functor, then  $f^{\ast}$ admits an $\m{O}$-monoidal operadic left adjoint 
    \[
    f_{!} \colon \Fun_{\m{O}}(\m{C},\m{E})^{\otimes} \rightarrow \Fun_{\m{O}}(\m{D},\m{E})^{\otimes}
    \]    
    given by left Kan extension. 
\end{proposition}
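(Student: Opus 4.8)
The plan is to treat the two claims separately, with the first being essentially formal and the second requiring a Beck--Chevalley computation that encodes the $\m{O}$-promonoidality of $f$.

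For the first claim, I would note that \cref{construction: day convolution functoriality in the source} already produces $f^{\ast}$ as a map of \operads over $\m{O}^{\otimes}$, being the value at $\m{E}^{\otimes}$ of a natural transformation $q_{\ast}q^{\ast} \to p_{\ast}p^{\ast}$ of endofunctors of $(\mathrm{Op}_{\infty})_{/\m{O}^{\otimes}}$. Under the standing hypotheses---$\m{C}^{\otimes}$ and $\m{D}^{\otimes}$ are $\m{O}$-promonoidally $\kappa$-small and $\m{E}^{\otimes}$ is $\m{O}$-monoidal compatible with $\kappa$-small colimits---\cref{prop: existence of Day convolution product} guarantees that both $\Fun_{\m{O}}(\m{D},\m{E})^{\otimes}$ and $\Fun_{\m{O}}(\m{C},\m{E})^{\otimes}$ are genuine $\m{O}$-monoidal \categories. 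Since a map of \operads over $\m{O}^{\otimes}$ between $\m{O}$-monoidal \categories is, by definition, a lax $\m{O}$-monoidal functor, the first claim follows with no further work.

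For the second claim, I would first produce the candidate left adjoint fiberwise. For each $x \in \m{O}$, the functor $(f^{\ast})_{x}$ is precomposition $(f_{x})^{\ast}\colon \Fun(\m{D}_{x},\m{E}_{x}) \to \Fun(\m{C}_{x},\m{E}_{x})$; because $\m{C}_{x}$ is now assumed $\kappa$-small and $\m{E}_{x}$ admits $\kappa$-small colimits, the pointwise left Kan extension $(f_{x})_{!}$ exists (the relevant slice categories are $\kappa$-small) and is left adjoint to $(f_{x})^{\ast}$. I would then invoke the relative adjoint functor theorem \cite[7.3.2.6]{HA} to conclude that $f^{\ast}$ admits an operadic left adjoint $f_{!}$, computed fiberwise by the $(f_{x})_{!}$ and carrying the oplax $\m{O}$-monoidal structure obtained as the mate of the lax structure on $f^{\ast}$.

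The heart of the argument---and the step I expect to be the main obstacle---is verifying that $f_{!}$ is in fact \emph{strong} $\m{O}$-monoidal, i.e.\ that these oplax structure maps are equivalences. Using the explicit convolution formula of \cref{prop: existence of Day convolution product}, the oplax comparison for an active $\phi\colon \{x_{i}\} \to y$ unwinds, after commuting $\otimes^{\phi}_{\m{E}}$ past the external left Kan extension (legitimate since $\m{E}^{\otimes}$ is compatible with $\kappa$-small colimits separately in each variable), to the Beck--Chevalley equivalence
\[
(f_{\phi})_{!}(\ev^{\phi,\m{C}}_{0})^{\ast} \simeq (\ev^{\phi,\m{D}}_{0})^{\ast}(f_{\{x_{i}\}})_{!}
\]
for the square relating the source-evaluation bifibrations along $f_{\phi}$ and $f_{\{x_{i}\}}$. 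This exchange equivalence is precisely the content of $f$ being $\m{O}$-promonoidal: it is the equivalence of bifibrations $(\id,f_{y})_{!}\Ar^{\phi}(\m{C}) \xrightarrow{\sim} (f_{\{x_{i}\}},\id)^{\ast}\Ar^{\phi}(\m{D})$ recorded in the remark preceding the statement, now fed through the base-change result \cref{proposition: exchange transformation} and the projection formula \cref{lem: projection formula}. The delicate part is the bookkeeping that identifies the mate of the lax structure on $f^{\ast}$ with this bifibration equivalence and cleanly reduces it to the cited exchange and projection lemmas; once this identification is in place, strength---and hence that $f_{!}$ is an $\m{O}$-monoidal operadic left adjoint given by left Kan extension---is immediate.
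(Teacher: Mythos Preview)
Your proposal is correct and follows essentially the same route as the paper: the first claim is dispatched formally via \cref{construction: day convolution functoriality in the source}, and the second by producing fiberwise left Kan extensions and then verifying that the resulting oplax structure is strong via a Beck--Chevalley/exchange argument driven by the bifibration equivalence $(\id,f_{y})_{!}\Ar^{\phi}(\m{C}) \simeq (f_{\{x_{i}\}},\id)^{\ast}\Ar^{\phi}(\m{D})$ encoding $\m{O}$-promonoidality. The only cosmetic differences are that the paper cites \cite[7.3.2.12]{HA} rather than \cite[7.3.2.6]{HA}, and carries out the exchange step by explicitly factoring through the pulled-back arrow category $(f_{\{x_i\}},\id)^{\ast}\Ar^{\phi}(\m{D})$ before invoking \cref{proposition: exchange transformation}; \cref{lem: projection formula} is not actually needed here.
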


\begin{proof}
    The existence of the lax $\m{O}$-monoidal functor $f^{\ast}$ follows from \cref{construction: day convolution functoriality in the source}. 
    To produce $f_{!}$, it is enough to verify that $f^\ast$ satisfies the hypotheses of \cite[Corollary 7.3.2.12]{HA}. 
    Our first assumption guarantees that for all $\{x_i\} \in \m{O}^{\otimes}$, the induced map 
    \[
    \Fun_{\m{O}}(\m{C},\m{E})^{\otimes}_{\{x_i\}} \rightarrow \Fun_{\m{O}}(\m{D},\m{E})^{\otimes}_{\{x_i\}}
    \]
    admits a left adjoint given by left Kan extension. 
    Fixing an active map, $\phi \colon \{x_{i}\} \rightarrow y$ and an object $\{F_{i}\} \in \Fun_{\m{O}}(\m{C},\m{E})^{\otimes}_{\{x_{i}\}}$, it is enough to show that the oplax transformation
    \begin{equation}\label{equation: oplax transformation}
    (f_{y})_{!}\left(\bigotimes^{\phi}_{i\in I}\{F_{i}\}_{i \in I} \right) \rightarrow \bigotimes_{i\in I}^{\phi}\{(f_{x_{i}})_{!}F_{i}\}_{i\in I},
    \end{equation}
    is an equivalence.
    By definition, the source and target in \cref{equation: oplax transformation} are given by 
    \[
    (f_{y})_!({\ev_{\m{C},1}^{\phi}})_{!}({\ev_{\m{C},0}^{\phi}})^{\ast} \left(\otimes^{\phi}_{\m{E}} \circ \{F_i\} \right) \ \ \text{and} \ \ ({\ev_{\m{D},1}^{\phi}})_{!}({\ev_{\m{D},0}^{\phi}})^{\ast}\left( \otimes_{\m{E}}^{\phi} \circ \{(f_{x_i})_{!}F_{i}\} \right), 
    \]
    respectively, where we have decorated $\ev^{\phi}_{i}$ with a $\m{C}$ or $\m{D}$ to denote the relevant evaluation map. 
    Furthermore, we have a canonical equivalence 
    \[
    ({\ev_{\m{D},1}^{\phi}})_{!}({\ev_{\m{D},0}^{\phi}})^{\ast}\left( \otimes_{\m{E}}^{\phi} \circ \{(f_{x_i})_{!}F_{i}\} \right)\simeq ({\ev_{\m{D},1}^{\phi}})_{!}({\ev_{\m{D},0}^{\phi}})^{\ast}(f_{\{x_i\}})_{!}\left(\otimes_{\m{E}}^{\phi}\circ \{F_{x_i}\}\right)
    \]
    because we assumed $\m{E}^{\otimes}$ is compatible with $\kappa$-small colimits. 
    Next, as $f$ is an $\m{O}$-promonoidal functor, we have a commutative diagram
    \[\begin{tikzcd}
	{\m{C}_{y}^{\otimes}} & {\m{D}^{\otimes}_{y}} & {\m{D}^{\otimes}_{y}} \\
	{\Ar^{\phi}(\m{C})} & {(f_{\{x_i\}},\id)^{\ast}\Ar^{\phi}(\m{D})} & {\Ar^{\phi}(\m{D})} \\
	{\m{C}^{\otimes}_{\{x_i\}}} & {\m{C}^{\otimes}_{\{x_i\}}} & {\m{D}^{\otimes}_{\{x_i\}}}
	\arrow["{f_{y}}", from=1-1, to=1-2]
	\arrow["\id", from=1-2, to=1-3]
	\arrow["{\ev_{\m{C},1}^{\phi}}", from=2-1, to=1-1]
	\arrow["\eta", from=2-1, to=2-2]
	\arrow["{\ev_{\m{C},0}^{\phi}}"', from=2-1, to=3-1]
	\arrow["{\widetilde{\ev}_{1}^{\phi}}", from=2-2, to=1-2]
	\arrow["{\tilde{f}_{\phi}}", from=2-2, to=2-3]
	\arrow["{\widetilde{\ev}_{0}^{\phi}}"', from=2-2, to=3-2]
	\arrow["{\ev_{\m{D},1}^{\phi}}"', from=2-3, to=1-3]
	\arrow["{\ev_{\m{D},0}^{\phi}}", from=2-3, to=3-3]
	\arrow["\id"', from=3-1, to=3-2]
	\arrow["{f_{\{x_i\}}}"', from=3-2, to=3-3]
    \end{tikzcd}\]
    in which the bottom right square is a pullback of Cartesian fibrations. 
    By \cref{proposition: exchange transformation} plus the commutativity of the diagram above, we have: 
    \begin{align*}
    ({\ev_{\m{D},1}^{\phi}})_{!}({\ev_{\m{D},0}^{\phi}})^{\ast}(f_{\{x_i\}})_{!}\left(\otimes_{\m{E}}^{\phi}\circ \{F_{x_i}\}\right) & \simeq ({\ev_{\m{D},1}^{\phi}})_{!} (\tilde{f}_{\phi})_{!} ({\widetilde{\ev}}_{0}^{\phi})^{\ast}\left(\otimes_{\m{E}}^{\phi}\circ \{F_{x_i}\}\right) \\
    & \simeq ({\widetilde{\ev}}_{1}^{\phi})_{!}({\widetilde{\ev}}_{0}^{\phi})^{\ast}\left(\otimes_{\m{E}}^{\phi}\circ \{F_{x_i}\}\right).
    \end{align*}
    The claim now follows from the assertion that $(\widetilde{\ev}_{0}^{\phi})^\ast \simeq \eta_{!} (\ev^{\phi}_{\m{C},0})^{\ast}$, which can be easily deduced from the equivalence
    \[
    (\id,f_{y})_{!}\Ar^{\phi}(\m{C}) \simeq (f_{\{x_i\}},\id)^{\ast}\Ar^{\phi}(\m{D})
    \]
    as we have only extended over the second variable. 
\end{proof}

\begin{example}
Suppose $\cC$ and $\m{E}$ are symmetric monoidal \categories, suppose $\cC$ is small and that $\m{E}$ admits colimits indexed by $\cC$.
Then \cref{prop: left kan extension along strong promonoidal is strong monoidal} in particular shows that if $F\colon \cC\to \m{E}$ is a lax $\m{O}$-monoidal functor, then $\colim_\cC F$ is an $\m{O}$-algebra in $\m{E}$.
The analogue of this claim is false for arbitrary symmetric promonoidal \categories, as the functor $\m{O}^{\otimes} \rightarrow \Fin_\ast$ is not always symmetric promonoidal. 
\end{example}

\begin{lemma}\label{lemma: sections and O-monoidal structures}
    Let $\m{O}^{\otimes}$ and $\m{E}^{\otimes}$ be symmetric monoidal \categories, and let $\m{C}^{\otimes}$ be an $\m{O}$-promonoidal \category. 
    \begin{enumerate}
    \item There is a canonical equivalence of \operads 
    \[
    \Fun(\m{C},\m{E})^{\otimes} \xrightarrow{\sim} \Fun(\m{O},\Fun_{\m{O}}(\m{C},\m{E}_{\m{O}}))^{\otimes}.   
    \]
    Furthermore, whenever $\m{O}^{\otimes}$ and $\m{C}^{\otimes}$ are symmetric promonoidally $\kappa$-small, $\m{C}^{\otimes}$ is $\m{O}$-promonoidally $\kappa$-small, and $\m{E}^{\otimes}$ is compatible with $\kappa$-small colimits, this is an equivalence of symmetric monoidal \categories. 
    \item Let $f\colon \m{C}^{\otimes} \rightarrow \m{D}^{\otimes}$ be an $\m{O}$-promonoidal functor between $\m{O}$-promonoidally $\kappa$-small \categories, which are also symmetric promonoidally $\kappa$-small, and assume that $\m{E}^{\otimes}$ is compatible with $\kappa$-small colimits. 
    Then, the $\m{O}$-monoidal functor 
    \[
    f_{!}\colon \Fun_{\m{O}}(\m{C},\m{E}_{\m{O}})^{\otimes}   \rightarrow  \Fun_{\m{O}}(\m{D},\m{E}_{\m{O}})^{\otimes} 
    \]
    preserves $\kappa$-small colimits, and there is a commutative diagram of symmetric monoidal functors:
    \[
    \begin{tikzcd}
    \Fun(\m{C},\m{E})^{\otimes} \arrow[r,"f_{!}"] \arrow[d,"\sim"'] & \Fun(\m{D},\m{E})^{\otimes}\arrow[d,"\sim"] \\
    \Fun(\m{O},\Fun_{\m{O}}(\m{C},\m{E}_{\m{O}}))^{\otimes}   \arrow[r,"(f_{!})_{\ast}"'] & \Fun(\m{O},\Fun_{\m{O}}(\m{D},\m{E}_{\m{O}}))^{\otimes}  
    \end{tikzcd}
    \]
    \end{enumerate}
\end{lemma}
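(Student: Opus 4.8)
The plan is to recognize both sides of the equivalence in (1) as iterated norms and to deduce (2) by a mate argument. Write $r\colon \m{O}^\otimes \to \Fin_\ast$ for the structure map and $p\colon \m{C}^\otimes \to \m{O}^\otimes$ for the given $\m{O}$-promonoidal \category, so that $\m{E}_{\m{O}}^\otimes = \m{O}^\otimes\times_{\Fin_\ast}\m{E}^\otimes = r^\ast \m{E}^\otimes$. Since $\m{O}^\otimes$ is symmetric monoidal, $r_{\act}$ is a coCartesian fibration, hence exponentiable; as $p_{\act}$ is exponentiable, so is the composite $(rp)_{\act}$ (composites of exponentiable fibrations are exponentiable, \cite{ayala-francis-fibrations}). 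Thus $rp\colon \m{C}^\otimes\to\Fin_\ast$ is a symmetric promonoidal \category and the total Day convolution $\Fun(\m{C},\m{E})^\otimes = (rp)_\ast(rp)^\ast\m{E}^\otimes$ is defined. I will use throughout that the outer construction $\Fun(\m{O},-)^\otimes$ is the norm $r_\ast$ along $r$: for an $\m{O}$-monoidal \category $\m{M}^\otimes\to\m{O}^\otimes$, the operad $r_\ast\m{M}^\otimes$ over $\Fin_\ast$ has underlying \category the sections of $\m{M}\to\m{O}$, namely $\Fun_{/\m{O}}(\m{O},\m{M})$, with the symmetric monoidal structure coming from the symmetric monoidal structure on $\m{O}$.

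For (1), the key point is the associativity of the norm. Pullback along a composite factors as $(rp)^\ast \simeq p^\ast r^\ast$, so by uniqueness of right adjoints $(rp)_\ast \simeq r_\ast p_\ast$. Consequently
\[
\Fun(\m{O},\Fun_{\m{O}}(\m{C},\m{E}_{\m{O}}))^\otimes = r_\ast\, p_\ast p^\ast\, r^\ast\, \m{E}^\otimes \simeq (rp)_\ast(rp)^\ast\m{E}^\otimes = \Fun(\m{C},\m{E})^\otimes,
\]
which is the asserted canonical equivalence of operads. To upgrade it to an equivalence of symmetric monoidal \categories, I would apply \cref{prop: existence of Day convolution product} twice: $\m{O}$-promonoidal $\kappa$-smallness of $\m{C}^\otimes$ together with compatibility of $\m{E}^\otimes$ makes $\Fun_{\m{O}}(\m{C},\m{E}_{\m{O}})^\otimes$ an $\m{O}$-monoidal \category compatible with $\kappa$-small colimits, and then symmetric promonoidal $\kappa$-smallness of $\m{O}^\otimes$ (resp.\ of the composite $rp$) makes each side a genuine symmetric monoidal \category. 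An equivalence of operads between two coCartesian fibrations over $\Fin_\ast$ automatically preserves coCartesian edges, hence is symmetric monoidal.

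For (2), first observe that by \cref{prop: left kan extension along strong promonoidal is strong monoidal} applied over $\m{O}$, the $\m{O}$-promonoidal functor $f$ produces an $\m{O}$-monoidal operadic left adjoint $f_!$ to $f^\ast$, whose fibre over $x\in\m{O}$ is the left Kan extension $\Fun(\m{C}_x,\m{E})\to\Fun(\m{D}_x,\m{E})$. Being a left adjoint, this fibrewise functor preserves all colimits that exist, in particular the $\kappa$-small ones computed pointwise in $\m{E}$; this gives the colimit-preservation claim. By \cref{lem: Dau convolution preserves maps of operads}, the $\m{O}$-monoidal, $\kappa$-colimit-preserving functor $f_!$ then induces the symmetric monoidal functor $(f_!)_\ast = r_\ast(f_!)$ on outer Day convolutions. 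To see that the square commutes, I would first establish the analogous square for the right adjoints $f^\ast$: the total and relative $f^\ast$ both arise from \cref{construction: day convolution functoriality in the source} (over $\Fin_\ast$ and over $\m{O}$ respectively), and since the norm-composition $(rp)_\ast\simeq r_\ast p_\ast$ of (1) is natural in the source \category, applying $r_\ast$ to the relative $f^\ast$ recovers the total $f^\ast$ under the equivalences $u,v$ of (1); thus the $f^\ast$-square commutes.

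The $f_!$-square is then obtained by passing to mates. The total $f_!$ is the operadic left adjoint of the top $f^\ast$ (via \cref{prop: left kan extension along strong promonoidal is strong monoidal} with $\m{O}=\Fin_\ast$), while $(f_!)_\ast$ is the operadic left adjoint of the bottom $(f^\ast)_\ast$; since $u$ and $v$ are symmetric monoidal equivalences intertwining the two copies of $f^\ast$, they also intertwine their left adjoints, so the $f_!$-square commutes by uniqueness of operadic left adjoints. The main obstacle I anticipate lies here in part (2): verifying that the induced $f$ is symmetric promonoidal over $\Fin_\ast$ (so that the top adjunction $f_!\dashv f^\ast$ is genuinely available) — which should follow because $\m{O}$-promonoidality of $f$ in the sense of \cref{definition: (lax) O-promonoidal functors} is a fibrewise condition over $\m{O}$ and the active maps of $\Fin_\ast$ are resolved by those of $\m{O}^\otimes$ lying over them — together with carefully matching the total $f^\ast$ with $r_\ast$ of the relative $f^\ast$ so that the mate calculus applies. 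Part (1), by contrast, is comparatively formal once the associativity of the norm and the identification of $\Fun(\m{O},-)^\otimes$ with $r_\ast$ are in hand.
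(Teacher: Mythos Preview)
Your proposal is correct and follows essentially the same approach as the paper. For (1) the paper constructs the equivalence via the evaluation pairing $\Fun(\m{C},\m{E})^{\otimes}\times_{\Fin_\ast}\m{C}^{\otimes}\to\m{E}^{\otimes}$ and invokes the universal property of Day convolution, which is exactly the unwinding of your norm-composition identity $(rp)_\ast\simeq r_\ast p_\ast$; for (2) the paper likewise reduces to compatibility of $f^\ast$ with the pairing and invokes uniqueness of (relative) left adjoints, i.e.\ your mate argument.
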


\begin{proof}
     The canonical equivalence in claim (1) is induced by the map of \operads 
     \[
     \Fun(\m{C},\m{E})^{\otimes} \times_{\Fin_\ast} (\m{O}^{\otimes} \times_{\m{O}^{\otimes}} \m{C}^{\otimes})\simeq \Fun(\m{C},\m{E})^{\otimes} \times_{\Fin_\ast} \m{C}^{\otimes} \rightarrow \m{E}^{\otimes}. 
     \]
     That this is an equivalence of \operads follows from \cref{remark: universal property of day convolution operad}. 
     The only thing to check regarding the second point in claim (1) is that $\m{E}_{\m{O}}^{\otimes}$ is compatible with $\kappa$-small colimits, but this is also clear from the definition. 

     The functor $f_{!}$ is $\m{O}$-monoidal by \cref{prop: left kan extension along strong promonoidal is strong monoidal}, and preserves $\kappa$-small colimits as it is a relative left adjoint. 
     Because $\m{O}^{\otimes}$ is a symmetric monoidal category, $f_{!}\colon \Fun_{\m{O}}(\m{C},\m{E}_{\m{O}})^{\otimes}   \rightarrow  \Fun_{\m{O}}(\m{D},\m{E}_{\m{O}})^{\otimes}$ is a symmetric monoidal functor between symmetric monoidal \categories. 
     This implies that $(f_{!})_{\ast}$ is symmetric monoidal as claimed. 
     To complete the proof, by the uniqueness of relative left adjoints, it is enough to note that the maps $f$ and $f^{\ast} \colon \Fun_{\m{O}}(\m{D},\m{E}_{\m{O}})^{\otimes} \rightarrow \Fun_{\m{O}}(\m{C},\m{E}_{\m{O}})^{\otimes}$ are compatible with the pairing used to establish claim (1).  
\end{proof}

\section{Classification and localization}\label{section: promonoidal localization}
Given a symmetric promonoidal \category $p\colon \m{C}^{\otimes} \rightarrow \Fin_\ast$, it was shown in \cite[Theorem 3.37]{linskens-nardin-pol}, that $p$ determines, and is determined by, the presentably symmetric monoidal structure on $\Fun(\m{C},\Spaces)$ given by the convolution product. 
In this section, we first prove an $\m{O}$-promonoidal analogue of the result above. 
In particular, given an $\m{O}$-promonoidal \category, $p \colon \m{C}^{\otimes} \rightarrow \m{O}^{\otimes}$, whose underlying exponentiable fibration $\m{C} \rightarrow \m{O}$ is coCartesian, we show that $p$ determines, and is determined by, a presentably $\m{O}$-monoidal structure on $\uFun_{\m{O}}(\m{C},\Spaces_{\m{O}}) \rightarrow \m{O}$, which is the appropriate analogue of the \category $\Fun(\m{C},\Spaces)$, relative to $\m{O}$. 
After establishing this classification result, we use it to construct what we call \textit{$\m{O}$-promonoidal localizations}, analogous to the familiar notion of symmetric monoidal localization studied in \cite{hindk} and \cite{HA}. 

\subsection{Fibrational preliminaries} 
In order to classify $\m{O}$-promonoidal structures as indicated above, we need to know how some basic constructions in category theory work relative to an \category $\m{O}$. 
In particular, we now recall the relative analogues of functor categories, opposite categories, and the Yoneda embedding. 

\begin{construction}\label{construction: relative exponential/Cartesian workhorse}
Let $p\colon \cC \rightarrow \m{O}$ be a coCartesian fibration, let $\m{E}$ be an \category, and let $\pr_{1}\colon \m{E}_{\m{O}} = \m{O}\times \m{E} \rightarrow \m{O}$ denote the projection, which is both Cartesian and coCartesian.  
Because $p$ is coCartesian, it is exponentiable, thereby yielding an adjunction
\[
\begin{tikzcd}
    \Cat_{\infty/\oO} \ar[bend left=15, description]{r}{p^*} \ar[phantom, description, xshift=-0.4ex]{r}{\perp}& [3em]\ar[bend left=15]{l}{p_*}\Cat_{\infty/\cC}.
\end{tikzcd}
\]
We let 
\(
q\colon \uFun_{\m{O}}(\m{C},\m{E}_{\m{O}}) \rightarrow \m{O}
\)
denote the object $p_{\ast}p^{\ast}\m{E}_{\m{O}} \in \Cat_{\infty/\oO}$, which we say is the \textit{relative exponential} of $\m{E}_{\oO}$ along $p$. 
Unwinding the definitions, an arrow $\Delta^1 \rightarrow \uFun_{\oO}(\cC, \m{E})$ corresponds to a functor $\eta\colon \m{C}_{\phi} \rightarrow \m{E}_{\m{O}}$, where $p_{\phi} \colon \m{C}_{\phi} = \Delta^{1} \times_{\oO}\cC \rightarrow \Delta^1$ is the coCartesian fibration classified by the functor $\phi_{!} \colon \cC_x \rightarrow \cC_y$. 
The arrow $\eta$ being Cartesian or coCartesian is captured by the behavior of the functor
\[
\m{C}_{\phi} \xrightarrow{\eta} \m{E}_{\m{O}} \xrightarrow{\pr_{2}}\m{E}. 
\]
We have that:
\begin{enumerate} 
\item the arrow $\eta$ is $q$-Cartesian, provided $\pr_{2} \circ \eta$ carries carries $p_{\phi}$-coCartesian arrows to equivalences; 
\item the arrow $\eta$ is $q$-coCartesian, provided $\pr_{2} \circ \eta$ is the left Kan extension of its restriction to the subcategory $\m{C}_{x} \subseteq \m{C}_{\phi}$. 
\end{enumerate}
It is always the case that $q$ is a Cartesian fibration, and if $\m{E}$ admits small colimits, then $q$ is a coCartesian fibration; see \cite[8.6.5.9]{kerodon} and \cite[3.23]{linskens-nardin-pol} for more details. 
\end{construction}

\begin{construction}\label{construction: fiberwise yoneda and map}
If the coCartesian fibration $p \colon \m{C} \rightarrow \m{O}$ is classified by a functor 
\(
F_{p} \colon \m{O} \rightarrow \Cat_\infty, 
\)
then the functor 
\[
(-)^{\op} \circ F_{p} \colon \m{O} \rightarrow \Cat_\infty \xrightarrow{(-)^{\op}} \Cat_\infty, 
\]
classifies a \textit{coCartesian dual fibration}, $p^{\vee} \colon \m{C}^{\vee} \rightarrow \m{O}$,  the construction of which is originally due to Barwick--Glasman--Shah \cite{barwick-glasman-shah-dualizing}. 
We have opted to follow Lurie's treatment of dual coCartesian fibrations from \cite[Section 8.6]{kerodon}, which we now briefly summarize. 

Let $\m{C}^{\vee}$ denote the full subcategory of $\uFun_{\oO}(\cC,\Spaces_{\oO})$ spanned by those objects $(x,F_x)$, where $F_x\colon \cC_x \rightarrow \Spaces$ is a corepresentable functor. 
We denote the inclusion of subcategories by
\[
\cC^{\vee} \xrightarrow{h} \uFun_{\oO}(\cC,\Spaces_{\oO}), 
\]
which is a \textit{contravariant Yoneda embedding}, and it is not hard to see that $h$ is a map of coCartesian fibrations. 
By the universal property of the relative exponential, the identity map of $p_{\ast}p^{\ast}\Spaces_{\oO}$ in $\Cat_{\infty/\m{O}}$ induces a morphism
\[
{\rm ev} \colon \uFun_{\oO}(\cC,\Spaces_{\oO})\times_{\oO}\cC \rightarrow \m{C}\times_{\m{O}} \Spaces_{\oO} \rightarrow \Spaces_{\oO}
\]
in $\Cat_{\infty/\oO}$. 
By precomposing with the inclusion of $\m{C}^{\vee}$, we obtain the \textit{fiberwise mapping space functor}
\begin{equation}\label{equation: fiberwise mapping space}
\Map_{\cC / \oO} \colon \m{C}^{\vee} \times_{\oO}\cC \xrightarrow{(h,{\rm id})} \uFun_{\oO}(\cC,\Spaces_{\oO}) \times_{\oO} \cC \xrightarrow{{\rm ev}} \Spaces_{\oO}.
\end{equation}
This terminology is justified by the fact that over each $x \in \oO$, we have an identification $(\Map_{\cC /\oO})_{x}\simeq \Map_{\cC_{x}} \colon \cC_{x}^{\op} \times \cC_{x} \rightarrow \Spaces$. 
The functor $\Map_{\cC/\oO}$ exhibits $p^{\vee} \colon \m{C}^{\vee} \rightarrow \oO$ as the coCartesian dual of $p$, in the sense of \cite[8.6.4.1]{kerodon}, thereby characterizing uniquely $p^{\vee}$ up to equivalence. 
Furthermore, the inclusion $h$ exhibits the target as the fiberwise cocompletion of the source, by \cite[8.7.2.4]{kerodon}. 
By appropriately handling size issues, as discussed in the introduction, we can form the coCartesian dual of $h$, which gives a morphism of coCartesian fibrations over $\m{O}$:
\[
\m{C} \simeq \m{C}^{\vee \vee} \xrightarrow{h^{\vee}} \uFun_{\oO}(\cC,\Spaces_{\oO})^{\vee}. 
\]
\end{construction}

\subsection{Classifying promonoidal structures}\label{subsection: classifying O-promonoidal structures}
Fibrational preliminaries dispensed with, we now turn to the aforementioned $\m{O}$-promonoidal classification result. 
We first show how presentably $\m{O}$-monoidal structures on fiberwise presheaf categories determine $\m{O}$-promonoidal structures, and then prove these $\m{O}$-promonoidal structures recover the original presentably $\m{O}$-monoidal structures via Day convolution.
For our arguments below, it will be convenient to introduce the following definition.

\begin{definition}
    Let $\m{O}^\otimes$ be an \operad and let $p \colon \m{C} \rightarrow \m{O}$ be a functor of \categories which is a coCartesian fibration. 
    We define an \textit{$\m{O}$-promonoidal structure extending $p$} to be a morphism of \operads $p^{\otimes}\colon \m{C}^{\otimes} \rightarrow \m{O}^{\otimes}$ such that:
    \begin{enumerate}
        \item the morphism of \operads $\m{C}^{\otimes} \rightarrow \m{O}^{\otimes}$ exhibits $\m{C}^{\otimes}$ as $\m{O}$-promonoidal;
        \item the induced exponentiable fibration $p^{\otimes}_{\brak{1}} \colon \m{C}^{\otimes}_{\brak{1}} \rightarrow \m{O}^{\otimes}_{\brak{1}}$ is equivalent to the coCartesian fibration $p$. 
    \end{enumerate}
    When $p^{\otimes}$ is a coCartesian fibration, we say that $p^{\otimes}$ is an \textit{$\m{O}$-monoidal structure extending $p$}. 
\end{definition}

\begin{example}
    The \category of spaces can be viewed as a symmetric monoidal \category via the coproduct or product, which we denote by $\Spaces^{\sqcup}$ and $\Spaces^{\times}$, respectively. 
    Both of these symmetric monoidal \categories are symmetric monoidal structures extending the unique coCartesian fibration $\Spaces \rightarrow \ast$. 
\end{example}

\begin{example}
    Let $\mathbb{CP}^{\ast} \colon \nn \rightarrow \Spaces$ denote the filtered space 
    \[
    \mathbb{CP}^{0} \rightarrow \mathbb{CP}^{1} \rightarrow \mathbb{CP}^{2} \rightarrow \cdots 
    \]
    which classifies a left fibration we denote by $p\colon \nn\ltimes \mathbb{CP}^{\ast} \rightarrow \nn$. 
    By the results of \cite[5.3]{lurie-rotation}, the filtered space above is the two-fold (filtered) bar construction of the filtered commutative monoid
    \[
    \{0\} \subseteq \{0,1\} \subseteq \{0,1,2\} \subseteq \cdots \subseteq \nn
    \]
    which we denote by $\nn_{\leq \ast}$. 
    Because the bar construction always preserves $\ee{\infty}$-algebras, there is an extension of $\mathbb{CP}^{\ast}$ to a lax symmetric monoidal functor $\mathbb{CP}^{\ast,\otimes} \colon \nn^{\otimes} \rightarrow \Spaces^{\times}$, which encodes a filtered commutative algebra in $\Spaces$; for a precise definition of $\nn^{\otimes}$, see the beginning of section \cref{section: EZ map on skeleton}.
    This is classified by a left fibration $p^{\otimes}\colon (\nn \ltimes \mathbb{CP}^{\ast})^{\otimes} \rightarrow \nn^{\otimes}$, which makes $p^{\otimes}$ an $\nn$-monoidal structure extending $p$.
\end{example}

\begin{proposition}\label{proposition: closed O-monoidal structures are O-promonoidal}
    Let $\m{O}^{\otimes}$ be an \operad, let $p\colon \m{C} \rightarrow \m{O}$ a coCartesian fibration, let $q \colon \uFun_{\m{O}}(\m{C},\Spaces_{\m{O}}) \rightarrow \m{O}$ be the relative exponential coCartesian fibration, and let
    \[
    q^{\otimes} \colon \uFun_{\m{O}}(\m{C},\Spaces_{\m{O}})^{\otimes} \rightarrow \m{O}^{\otimes}
    \]
    be a presentably $\m{O}$-monoidal structure extending $q$.
    Define $\m{C}^{\otimes}$ to be the full suboperad of 
    \[
    \uFun_{\m{O}}(\m{C},\Spaces_{\m{O}})^{\otimes,\vee} \rightarrow \m{O}^{\otimes}
    \]
    spanned by the objects of $\m{C}$ and let
    \[
    (h^{\vee})^{\otimes}\colon \m{C}^{\otimes} \rightarrow \uFun_{\m{O}}(\m{C},\Spaces_{\m{O}})^{\otimes,\vee} 
    \]
    denote the inclusion of \operads over $\m{O}^{\otimes}$. 
    Then $\m{C}^{\otimes}$ is an $\m{O}$-promonoidal \category. 
\end{proposition}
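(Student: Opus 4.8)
The plan is to verify directly that the active part $(\m{C}^\otimes)_{\Act}\to(\m{O}^\otimes)_{\Act}$ is an exponentiable fibration, rather than to invoke \cref{proposition: sub operad of promonoidal is promonoidal}. The latter is tempting, since $\m{C}^\otimes$ is a full suboperad of an ambient $\m{O}$-monoidal \category; however, its criterion is equivalent to the conjunction ``$\m{C}^\otimes$ is $\m{O}$-promonoidal and $(h^\vee)^\otimes$ is an $\m{O}$-promonoidal functor,'' and the second clause fails in general. Indeed, the extracted profunctor remembers strictly less than the embedding $h^\vee$, so $(h^\vee)^\otimes$ is not promonoidal even when $\m{C}^\otimes$ is; the point example $\m{C}=\{0,1\}$ with the pointwise product on $\Fun(\{0,1\},\Spaces)$ already exhibits this. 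Thus I would avoid that proposition and argue exponentiability by hand.

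First I would record the ambient structure. As a coCartesian dual (\cref{construction: fiberwise yoneda and map}), $q^{\otimes,\vee}\colon\uFun_{\m{O}}(\m{C},\Spaces_{\m{O}})^{\otimes,\vee}\to\m{O}^\otimes$ is again a coCartesian fibration, hence $\m{O}$-monoidal, with fiber over $\{x_i\}$ equal to $\prod_i\Fun(\m{C}_{x_i},\Spaces)^\op$ and active pushforward along $\phi$ given by $(\otimes^\phi)^\op$, where $\otimes^\phi$ is the presentably $\m{O}$-monoidal product. Because $\m{C}\simeq\m{C}^{\vee\vee}$, the full suboperad $\m{C}^\otimes$ is a fibration of \operads over $\m{O}^\otimes$ whose underlying exponentiable fibration over $\m{O}$ is $p$, so only exponentiability of the active part remains to be checked. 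Using fullness together with the Yoneda lemma, I would identify the multimorphism spaces
\[
\Mul^{\phi}_{\m{C}}(\{F_{c_i}\},F_c)\simeq\Map_{\Fun(\m{C}_y,\Spaces)}\big(F_c,\ \otimes^{\phi}\{F_{c_i}\}\big)\simeq\big(\otimes^{\phi}\{F_{c_i}\}\big)(c),
\]
where $F_c=\Map_{\m{C}_y}(c,-)$ is the corepresentable attached to $c\in\m{C}_y$.

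By \cite[Lemma 2.2.8]{ayala-francis-fibrations}, exponentiability of $(\m{C}^\otimes)_{\Act}$ is equivalent to the statement that, for composable active morphisms $\phi\colon\{x_i\}_{i\in I}\to\{y_j\}_{j\in J}$ and $\psi\colon\{y_j\}_{j\in J}\to z$ (with $\phi$ lying over $\alpha\colon I\to J$), the canonical map
\[
\int^{\{d_j\}\in\prod_{j}\m{C}_{y_j}} \prod_{j\in J}\Mul^{\phi_j}_{\m{C}}(\{F_{c_i}\}_{i\in\alpha^{-1}(j)},F_{d_j})\times\Mul^{\psi}_{\m{C}}(\{F_{d_j}\},-)\longrightarrow\Mul^{\psi\circ\phi}_{\m{C}}(\{F_{c_i}\},-)
\]
is an equivalence of functors $\m{C}_z\to\Spaces$. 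Substituting the identification above, the left-hand side at $c$ becomes a coend whose intermediate factors are $\big(\otimes^{\phi_j}\{F_{c_i}\}\big)(d_j)$. The co-Yoneda lemma collapses the coend over $\{d_j\}$, replacing each $F_{d_j}$ by $\otimes^{\phi_j}\{F_{c_i}\}$; crucially, since the structure is \emph{presentably} $\m{O}$-monoidal, the product $\otimes^{\psi}$ preserves colimits in each variable, which is precisely what licenses commuting the coend past $\otimes^{\psi}$ and past evaluation at $c$. Associativity of the $\m{O}$-monoidal structure then identifies the result with $\big(\otimes^{\psi\circ\phi}\{F_{c_i}\}\big)(c)\simeq\Mul^{\psi\circ\phi}_{\m{C}}(\{F_{c_i}\},F_c)$. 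Closedness is indispensable here: for a product failing to preserve colimits in each variable (the coproduct on $\Spaces$ being the paradigm), the co-Yoneda collapse is invalid and the comparison map breaks.

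The main obstacle is promoting this pointwise coend calculation to a coherent equivalence of fibrations over $\m{O}^\otimes$, with all homotopies tracked. Following the pattern of the proof of \cref{prop: existence of Day convolution product}, I would re-express every multimorphism functor through the arrow-category bifibrations $\Ar^{\phi}(\m{C})$ and their evaluation maps $\ev^{\phi}_{0},\ev^{\phi}_{1}$, so that the coend over $\{d_j\}$ and its collapse become an instance of base-change. The exchange transformation \cref{proposition: exchange transformation}, together with the projection formula \cref{lem: projection formula}, then supply the coherent equivalences, while colimit-preservation of $\otimes^{\psi}$ guarantees that the relevant left Kan extensions commute with the monoidal product. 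Assembling these identifications along the simplex $\sigma\colon[2]\to(\m{O}^\otimes)_{\Act}$ encoding $(\phi,\psi)$ yields the exponentiability criterion in its fibrational form, which completes the proof.
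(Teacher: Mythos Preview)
Your approach is essentially the same as the paper's: both verify the Ayala--Francis exponentiability criterion \cite[Lemma 2.2.8]{ayala-francis-fibrations} directly, identify the multimorphism spaces $\Mul^{\phi}_{\m{C}}(\{c_i\},c)$ with $\Map_{\Fun(\m{C}_y,\Spaces)}\bigl(h^{\vee}(c),\bigotimes^{\phi}h^{\vee}(c_i)\bigr)\simeq\bigl(\bigotimes^{\phi}h^{\vee}(c_i)\bigr)(c)$ via Yoneda, and then reduce the coend condition to associativity of $\otimes$ using presentability (colimit-preservation in each variable) and the co-Yoneda lemma. Your opening remark that \cref{proposition: sub operad of promonoidal is promonoidal} does not apply is correct and worth noting; the paper does not invoke it here either.

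One small over-complication: your final paragraph treats ``promoting this pointwise coend calculation to a coherent equivalence'' as the main obstacle, but the Ayala--Francis criterion is already a pointwise condition---it asks only that for each composable pair $(\phi,\psi)$ the comparison map of functors $\m{C}_z\to\Spaces$ be an equivalence. No further coherence over $\m{O}^{\otimes}$ is required, and the paper's proof accordingly stops once the coend identity is established. The fibrational reformulation via $\Ar^{\phi}(\m{C})$ and exchange transformations is not needed here.
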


\begin{proof}
    It is enough to verify \cite[Lemma 2.2.8(4)]{ayala-francis-fibrations}. 
    Fix a composable pair of active maps 
    \[
    \{x_i\}_{i\in I}\xrightarrow{\phi} \{y_{j}\}_{j\in J} \xrightarrow{\psi} z
    \]
    in $\m{O}^{\otimes}$, let $f \colon I \rightarrow J$ denote the image of $\phi$ in $\Fin$, and for each $j \in J$, write $\phi_{j}\colon \{x_i\}_{i\in f^{-1}\{j\}} \rightarrow y_{j}$ for the component of $\phi$ which lies over $f^{-1}\{j\} \rightarrow \{j\}$.  
    Now, let $\{c_i\} \in\m{C}^{\otimes}_{\{x_i\}}$, and consider the natural transformation
    \[
    \int^{\{c_j\} \in \m{C}^{\otimes}_{\{y_j\}}} \Mul_{\m{C}}^{\psi}(\{c_j\},-) \times \prod_{j \in J} \Mul^{\phi_{j}}_{\m{C}}(\{c_i\}_{i\in f^{-1}\{j\}},c_{j}) \rightarrow \Mul_{\m{C}}^{\psi\circ \phi}(\{c_i\},-)
    \]
    of functors $\m{C}_{y} \rightarrow \Spaces$, which we claim is an equivalence. 
    By definition, we can identify each of these multimorphism spaces with mapping spaces in $\uFun_{\m{O}}(\m{C},\Spaces_{\m{O}})$, so the above transformation is equivalent to
    \[
    \int^{\{c_j\} \in \m{C}^{\otimes}_{\{y_j\}}} \Map_{\Fun(\m{C}_z,\Spaces)}\left(h^{\vee}(-),\bigotimes_{j\in J}^{\psi}h^{\vee}(c_j) \right)\times \prod_{j \in J} \Map_{\Fun(\m{C}_z,\Spaces)}\left(h^{\vee}(c_j),\bigotimes^{\phi_j}_{i\in f^{-1}\{j\}}h^{\vee}(c_i) \right)
    \]
    and
    \[
    \Map_{\Fun(\m{C}_z,\Spaces)}\left(h^{\vee}(-),\bigotimes_{i\in I}^{\psi\circ \phi}h^{\vee}(c_i) \right) 
    \] 
    respectively. 
    By the Yoneda lemma, combined with the presentability of the $\m{O}$-monoidal structure on $\uFun_{\m{O}}(\m{C},\Spaces_{\m{O}})$, the transformation in question is an equivalence if and only if the induced map
    \[
    \bigotimes_{j\in J}^{\psi} \left(\int^{c_{j} \in \m{C}^{\otimes}_{y_{j}}} h^{\vee}(c_{j}) \times \Map_{\Fun(\m{C}_{y_j},\Spaces)}\left(h^{\vee}(c_j),\bigotimes_{i\in f^{-1}\{j\}}^{\phi_{j}}h^{\vee}(c_i)\right) \right) \rightarrow \bigotimes_{i\in I}^{\psi \circ \phi}h^{\vee}(c_i) 
    \]
    is an equivalence in $\Fun(\m{C}_z,\Spaces)$. 
    This follows from the Yoneda lemma for coends, together with the equivalence of functors
    \[
    \bigotimes_{i\in I}^{\psi\circ \phi} \simeq\bigotimes^{\psi}_{j\in J} \bigotimes^{\phi_j}_{i\in f^{-1}\{j\}} \colon \prod_{i\in I} \Fun(\m{C}_{x_i},\Spaces) \rightarrow \Fun(\m{C}_{z},\Spaces). \qedhere
    \]
\end{proof}

To investigate the Day convolution \operads indexed by the $\m{O}$-promonoidal structures produced in \cref{proposition: closed O-monoidal structures are O-promonoidal}, we need an $\m{O}$-monoidal analogue of the relative mapping space functor from \cref{construction: fiberwise yoneda and map}.

\begin{construction}\label{construction: O-monoidal mapping space functor}
Let $\m{C}^{\otimes}$ be an \operad and let $p\colon \m{C}^{\otimes} \rightarrow \m{O}^{\otimes}$ be an $\m{O}$-monoidal \category. 
By \cref{construction: fiberwise yoneda and map}, we have a fiberwise mapping space functor 
\[
\Map_{\m{C}^{\otimes}/\m{O}^{\otimes}} \colon \m{C}^{\otimes,\vee}\times_{\m{O}^{\otimes}} \m{C}^{\otimes} \longrightarrow \cS. 
\]
Given $I \in \Fin$, the inert maps induce an equivalence 
\[
\m{C}^{\otimes,\vee}\times_{\m{O}^{\otimes}} \m{C}^{\otimes}\times_{\Fin_\ast} I_+ \simeq \prod_{i\in I} \m{C}^{\vee} \times_{\m{O}} \m{C}
\]
so that an object over $I_+ \in \Fin_\ast$ can be identified with a set of tuples $\{(c_i,x_i,c_i')\}_{i\in I}$, where $c_i,c_i' \in \m{C}_{x_i}$. 
Subsequently, we have a natural map 
\[
\Map_{\m{C}^{\otimes}/\m{O}^{\otimes}}(\{(c_i,x_i,c_i')\}_{i\in I}) \rightarrow \prod_{i\in I} \Map_{\m{C}_{x_i}}(c_i,c_i')
\]
which is an equivalence, as the composition 
\[
\prod_{i\in I} \m{C}^{\op}_{x_i} \times \m{C}_{x_i} \rightarrow \m{C}^{\otimes,\vee}\times_{\m{O}^{\otimes}} \m{C}^{\otimes}\xrightarrow{\Map_{\m{C}^{\otimes}/\m{O}^{\otimes}}} \Spaces
\]
is equivalent to the mapping space functor for the \category $\prod_{i\in I} \m{C}_{x_i}$. 
Therefore, by \cite[2.4.1.7]{HA}, the functor $\Map_{\m{C}^{\otimes}/\m{O}^{\otimes}}$ (essentially) uniquely extends to a map of \operads 
\[
(\Map_{\m{C}^{\otimes}/\m{O}^{\otimes}})^{\otimes} \colon \m{C}^{\otimes,\vee}\times_{\m{O}^{\otimes}} \m{C}^{\otimes} \rightarrow \Spaces^{\times}.
\]
However, as the source is $\m{O}$-monoidal, we obtain a map of \operads over $\m{O}^{\otimes}$ denoted by
\[
\begin{tikzcd}
\m{C}^{\otimes,\vee}\times_{\m{O}^{\otimes}} \m{C}^{\otimes} \arrow[rr,"\Map_{\m{C}/\m{O}}^{\otimes}"] \arrow[dr] & & \Spaces_{\m{O}}^{\otimes} \arrow[dl] \\
& \m{O}^{\otimes} &
\end{tikzcd}
\]
where $\Spaces_{\m{O}}^{\otimes} = \m{O}^{\otimes} \times_{\Fin_\ast} \Spaces^{\times}$. 
We call the functor $\Map_{\m{C}/\m{O}}^{\otimes}$ the \textit{$\m{O}$-monoidal mapping space of $\m{C}^{\otimes}$}. 
\end{construction}

\begin{theorem}\label{proposition: Day conv recovers presentably O-monoidal str}
    Let $\m{O}^{\otimes}$ be an \operad, let $p\colon \m{C} \rightarrow \m{O}$ a coCartesian fibration, let $q \colon \uFun_{\m{O}}(\m{C},\Spaces_{\m{O}}) \rightarrow \m{O}$ be the relative exponential coCartesian fibration, let
    \(
    q^{\otimes} \colon \uFun_{\m{O}}(\m{C},\Spaces_{\m{O}})^{\otimes} \rightarrow \m{O}^{\otimes}
    \)
    be a presentably $\m{O}$-monoidal structure extending $q$, and let $\m{C}^{\otimes}$ be the $\m{O}$-promonoidal structure associated to $q^{\otimes}$ by \cref{proposition: closed O-monoidal structures are O-promonoidal}. 
    Then, there is a canonical equivalence of presentably $\m{O}$-monoidal \categories 
    \[
    \uFun_{\m{O}}(\m{C},\Spaces_{\m{O}})^{\otimes} \xrightarrow{\sim} \Fun_{\m{O}}(\m{C},\Spaces_{\m{O}})^{\otimes}
    \]
    which is the identity on underlying coCartesian fibrations.     
\end{theorem}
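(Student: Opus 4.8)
The plan is to produce an explicit comparison $\m{O}$-monoidal functor
\[
\Phi\colon \uFun_{\m{O}}(\m{C},\Spaces_{\m{O}})^{\otimes} \longrightarrow \Fun_{\m{O}}(\m{C},\Spaces_{\m{O}})^{\otimes}
\]
using the universal property of the Day convolution \operad, to observe that $\Phi$ is the identity on underlying coCartesian fibrations, and finally to check that its lax structure maps are equivalences by reducing to corepresentable functors. Since a map of \operads over $\m{O}^{\otimes}$ which is strong $\m{O}$-monoidal and an equivalence on underlying fibrations is automatically an equivalence of $\m{O}$-monoidal \categories, this suffices.

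To build $\Phi$, recall from \cref{remark: universal property of day convolution operad} that maps of \operads $\uFun_{\m{O}}(\m{C},\Spaces_{\m{O}})^{\otimes} \to \Fun_{\m{O}}(\m{C},\Spaces_{\m{O}})^{\otimes}$ over $\m{O}^{\otimes}$ correspond to pairings
\[
\uFun_{\m{O}}(\m{C},\Spaces_{\m{O}})^{\otimes} \times_{\m{O}^{\otimes}} \m{C}^{\otimes} \longrightarrow \Spaces_{\m{O}}^{\otimes}.
\]
I would take this pairing to be the $\m{O}$-monoidal evaluation: start from the $\m{O}$-monoidal mapping space functor of \cref{construction: O-monoidal mapping space functor} for the presentably $\m{O}$-monoidal \category $\uFun_{\m{O}}(\m{C},\Spaces_{\m{O}})^{\otimes}$, and restrict its first variable along the inclusion $(h^{\vee})^{\otimes}\colon \m{C}^{\otimes} \hookrightarrow \uFun_{\m{O}}(\m{C},\Spaces_{\m{O}})^{\otimes,\vee}$ of \cref{proposition: closed O-monoidal structures are O-promonoidal}. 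Over $\brak{1}$ this pairing sends a pair $(c,F)$ to $\Map(h^{\vee}(c),F) \simeq F(c)$ by the (co)Yoneda lemma, so that the functor $\Phi$ it classifies sends $F$ to $c\mapsto F(c)$; that is, $\Phi$ is the identity on underlying coCartesian fibrations.

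It remains to verify that $\Phi$ is strong $\m{O}$-monoidal, i.e.\ that for every active $\phi\colon \{x_i\}_{i\in I} \to y$ in $\m{O}^{\otimes}$ and all inputs $F_i \in \Fun(\m{C}_{x_i},\Spaces)$, the lax structure map comparing the Day convolution tensor product of the $F_i$ with their tensor product in the given presentably $\m{O}$-monoidal structure is an equivalence in $\Fun(\m{C}_y,\Spaces)$. Both tensor functors preserve fiberwise colimits separately in each variable --- the Day convolution by \cref{prop: existence of Day convolution product}, and the given structure by presentability --- and the comparison is natural, so by the fiberwise cocompletion property of $h$ recalled in \cref{construction: fiberwise yoneda and map} it suffices to check the claim when each $F_i = h^{\vee}(a_i)$ is corepresentable. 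For such inputs, the Day convolution formula of \cref{prop: existence of Day convolution product} (with $\m{D} = \Spaces_{\m{O}}$, whose fiberwise tensor is the Cartesian product) gives
\[
\Bigl(\bigotimes^{\phi}_{i\in I}h^{\vee}(a_i)\Bigr)(c) \simeq \colim_{\{c_i\}\in \Ar^{\phi}(\m{C})_c} \prod_{i\in I}\Map_{\m{C}_{x_i}}(a_i,c_i) \simeq \Mul^{\phi}_{\m{C}}(\{a_i\},c),
\]
the last equivalence being the co-Yoneda lemma applied to the functor $\Mul^{\phi}_{\m{C}}(-,c)$ that classifies $\Ar^{\phi}(\m{C})_c$. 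On the other hand, the functor $\Mul^{\phi}_{\m{C}}(\{a_i\},-)$ is exactly the tensor product of the $h^{\vee}(a_i)$ computed in $\uFun_{\m{O}}(\m{C},\Spaces_{\m{O}})^{\otimes}$, by the very construction of the $\m{O}$-promonoidal structure $\m{C}^{\otimes}$ in \cref{proposition: closed O-monoidal structures are O-promonoidal}; unwinding the pairing shows the comparison map realizes precisely this identification.

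I expect the main obstacle to be the bookkeeping in the second paragraph: assembling the pointwise evaluation into a genuine $\m{O}$-monoidal pairing compatible with the dual Yoneda inclusion, and confirming that the functor it classifies really is the identity on underlying fibrations rather than merely equivalent to it. The reduction to corepresentables and the co-Yoneda computation are routine given \cref{prop: existence of Day convolution product}, but the coherence of the comparison --- ensuring the lax structure maps are the expected ones across all active morphisms and their operadic composites, so that $\Phi$ is genuinely an $\m{O}$-monoidal functor and not merely a fiberwise equivalence of tensor products --- is where the care is required.
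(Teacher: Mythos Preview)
Your proposal is correct and follows essentially the same approach as the paper: construct $\Phi$ by restricting the $\m{O}$-monoidal mapping space functor (\cref{construction: O-monoidal mapping space functor}) along the Yoneda-type inclusion $(h^{\vee})^{\otimes}$ and then invoke the universal property of the Day convolution \operad, observe that $\Phi$ is the identity on underlying coCartesian fibrations, and verify strong $\m{O}$-monoidality by reducing to corepresentables and appealing to the formula of \cref{prop: existence of Day convolution product} together with the definition of the $\m{O}$-promonoidal structure on $\m{C}^{\otimes}$. The paper's proof is a terser version of exactly this argument.
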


\begin{proof}
    We begin by constructing the desired functor. 
    Precomposing the $\m{O}$-monoidal mapping space for $\uFun_{\m{O}}(\m{C},\Spaces_{\m{O}})$ with the inclusion $\m{C}^{\otimes} \rightarrow \uFun_{\m{O}}(\m{C},\Spaces_{\m{O}})^\otimes$, we obtain a map of \operads over $\m{O}^\otimes$:
    \[
    \m{C}^{\otimes}\times_{\m{O}^{\otimes}}\uFun_{\m{O}}(\m{C},\Spaces_{\m{O}})^{\otimes} \rightarrow \uFun_{\m{O}}(\m{C},\Spaces_{\m{O}})^{\otimes,\vee}\times_{\m{O}^{\otimes}}\uFun_{\m{O}}(\m{C},\Spaces_{\m{O}})^{\otimes} \xrightarrow{\Map_{/\m{O}}^{\otimes}} \Spaces_{\m{O}}^{\otimes}.
    \]
    As $\m{C}^{\otimes}$ is promonoidal, the universal property of the Day convolution \operad guarantees a lax $\m{O}$-monoidal functor 
    \[
    \Phi^{\otimes}\colon \uFun_{\m{O}}(\m{C},\Spaces_{\m{O}})^{\otimes} \rightarrow  \Fun_{\m{O}}(\m{C},\Spaces_{\m{O}})^{\otimes}
    \]   
    which is the identity functor on underlying coCartesian fibrations by \cref{remark: underlying coCartesian fibration of presentably O-promonoidal}.
    Therefore, it remains to show that $\Phi^{\otimes}$ is an $\m{O}$-monoidal functor.
    To prove that the natural transformation 
    \[
    \bigotimes^{\phi}_{i\in I} \circ \ \Phi^{\otimes}_{\{x_i\}_{i\in I}} \rightarrow \Phi^{\otimes}_{y}\circ \bigotimes^{\phi}_{i\in I}
    \]
    is an equivalence for all active maps $\phi\colon \{x_i\}_{i \in I} \rightarrow y$, it is enough to check on the representable objects as both $\m{O}$-monoidal \categories are compatible with small colimits.
    This follows by combining the definition of the $\m{O}$-promonoidal structure on \category $\m{C}^{\otimes}$ together with \cref{prop: existence of Day convolution product} (or \cite[Proof of Corollary 3.29]{linskens-nardin-pol}).
\end{proof}

\begin{remark}
    It seems plausible to the authors that this classification result might hold in even greater generality, e.g., when the underlying fibration is not assumed coCartesian. 
\end{remark}

\subsection{Fiberwise localization, after Hinich}
In this subsection, we review some of the basic theory around fiberwise localization as studied by Hinich in \cite{hindk}, and later by Lurie in \cite{kerodon}.
We also fix some notation and terminology to be used later on. 

\begin{definition}
    Recall that there is a fully faithful functor $\Spaces \hookrightarrow \Cat_\infty$. 
    This inclusion admits a left adjoint called \textit{the groupoid completion functor}
    \[
    \gpd{-}\colon \Cat_\infty \rightarrow \Spaces, 
    \]
    and a right adjoint called \textit{the groupoid core functor}
    \[
    (-)^{\simeq} \colon \Cat_\infty \rightarrow \Spaces. 
    \]
    Given an \category $\m{C}$, there are canonical functors 
    \(
    \m{C} \rightarrow \gpd{\m{C}}
    \)
    and 
    \(
    \m{C}^{\simeq} \rightarrow \m{C}
    \)
    obtained from the unit and counit of the relevant adjunctions. 
    The latter of these exhibits $\m{C}^{\simeq}$ as the wide subcategory containing only invertible morphisms.
\end{definition}

\begin{remark}
    From the point of view of simplicial sets, the groupoid completion of an \category $\m{C}$ can be obtained from its fibrant replacement with respect to the Quillen model structure on simplicial sets. 
    Explicitly, the map of simplicial sets $\m{C} \rightarrow \mathrm{Sing}\lvert\m{C}\rvert$ exhibits the target as the groupoid completion of the source.  
\end{remark}

\begin{definition}
    Let $\m{C}$ be a small \category and let $W$ be a collection of edges in $\m{C}$. 
    We say that $W$ is \textit{saturated} if there is a functor $\m{C} \rightarrow \m{D}$ such that $W$ is the preimage of the collection of equivalences in $\m{C}$, thereby defining a subcategory $W \subseteq \m{C}$. 
    The \textit{Dwyer--Kan localization of $\m{C}$ with respect to $W$}, denoted by $\m{C}[W^{-1}]$, is the \category given by the pushout
    \[
    \begin{tikzcd}
    W \arrow[d] \arrow[r] & \gpd{W} \arrow[d] \\
    \m{C} \arrow[r,"L"'] & \m{C}[W^{-1}] 
    \end{tikzcd}
    \]
    in $\Cat_\infty$. 
    Unwinding the definitions, the functor $L\colon\m{C} \rightarrow \m{C}[W^{-1}]$ induces an equivalence of \categories 
    \[
    \Fun(\m{C}[W^{-1}],\m{E}) \xrightarrow{\sim} \Fun^{W}(\m{C},\m{E}) \subseteq \Fun(\m{C},\m{E}), 
    \]
    where $\Fun^{W}(\m{C},\m{E})$ denotes the full subcategory of functors $F\colon \m{C} \rightarrow \m{E}$ which carry morphisms in $W$ to equivalences.    
    We say that a functor $F$ with this property is \textit{$W$-local}.
\end{definition}

To fix notation for later, we state a well-known lemma that follows from the results of \cite[5.5.2]{HTT}. 

\begin{lemma}\label{lemma: local functors define a reflective subcategory}
    Let $\m{C}$ be a small \category, let $W \subseteq \m{C}$ be a saturated collection of morphisms, and let $\m{E}$ be a presentable \category. 
    Then, the functor 
    \[
    L^{\ast} \colon \Fun(\m{C}[W^{-1}],\m{E}) \rightarrow \Fun(\m{C},\m{E})
    \]
    is fully faithful and admits a left adjoint given by $L_{!}$. 
    Therefore, the composition $L^{\ast}\circ L_{!}$ defines a reflective localization of $\Fun(\m{C},\m{E})$. 

    Let $\widehat{W} \subseteq \Fun(\m{C},\Spaces)$ denote the strongly saturated class generated by the image of $W^{\op} \subseteq \m{C}^{\op} \rightarrow \Fun(\m{C},\Spaces)$, where the second map is the Yoneda embedding. 
    Then, the left adjoint $L_{!}$ induces an equivalence of \categories 
    \[
    \Fun(\m{C},\Spaces)[\widehat{W}^{-1}] \xrightarrow{\sim} \Fun(\m{C}[W^{-1}],\Spaces), 
    \]
    so $\Fun(\m{C}[W^{-1}],\Spaces)$ is an $\omega$-accessible localization in that the functor $L^{\ast}$ preserves filtered colimits. 
\end{lemma}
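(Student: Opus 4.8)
The plan is to deduce both claims from the universal property of the Dwyer--Kan localization recalled just above the statement, together with the theory of localizations at strongly saturated classes in \cite[5.5.4]{HTT}.

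For the first claim, recall that the universal property identifies $L^{\ast}$ with the inclusion of the full subcategory $\Fun^{W}(\m{C},\m{E}) \subseteq \Fun(\m{C},\m{E})$ of $W$-local functors, so $L^{\ast}$ is fully faithful. Since $\m{E}$ is presentable it admits all small colimits, so left Kan extension $L_{!}$ along $L$ exists and is left adjoint to $L^{\ast}$. A fully faithful right adjoint always exhibits its source as a reflective localization of its target, so $L^{\ast}L_{!}$ is the localization endofunctor of $\Fun(\m{C},\m{E})$ onto $\Fun^{W}(\m{C},\m{E})$. There is essentially no obstacle here: the content is formal once one has the universal property and the existence of $L_{!}$.

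For the second claim I would specialize to $\m{E} = \Spaces$ and first identify the $\widehat{W}$-local objects. Writing $j\colon \m{C}^{\op} \to \Fun(\m{C},\Spaces)$ for the Yoneda embedding $c \mapsto \Map_{\m{C}}(c,-)$, a morphism $w\colon c \to c'$ in $W$ is carried to $j(w)\colon \Map_{\m{C}}(c',-) \to \Map_{\m{C}}(c,-)$, and by the Yoneda lemma $\Map(j(c),X) \simeq X(c)$ for every $X$. Thus the map $\Map(j(c),X) \to \Map(j(c'),X)$ induced by $j(w)$ is identified with $X(w)\colon X(c) \to X(c')$, so $X$ is local with respect to the generators of $\widehat{W}$ if and only if $X$ is $W$-local. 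Because $\m{C}$, and hence $W$, is small, $\widehat{W}$ is a strongly saturated class of small generation, so by \cite[5.5.4.15]{HTT} the localization $\Fun(\m{C},\Spaces) \to \Fun(\m{C},\Spaces)[\widehat{W}^{-1}]$ is an accessible reflective localization whose essential image is exactly this subcategory of $\widehat{W}$-local, hence $W$-local, functors. By the first part, that same subcategory is the essential image of $L^{\ast}$, so both $\Fun(\m{C},\Spaces)[\widehat{W}^{-1}]$ and $\Fun(\m{C}[W^{-1}],\Spaces)$ are reflective localizations of $\Fun(\m{C},\Spaces)$ with identical local objects, and are therefore canonically equivalent. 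To see that this equivalence is the one induced by $L_{!}$, I would check that $L_{!}$ inverts $\widehat{W}$: left Kan extension carries representables to representables, giving $L_{!}\,\Map_{\m{C}}(c,-) \simeq \Map_{\m{C}[W^{-1}]}(Lc,-)$, and $Lw$ is an equivalence in $\m{C}[W^{-1}]$, so $L_{!}$ inverts each generator $j(w)$ and hence all of $\widehat{W}$, being a colimit-preserving functor.

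Finally, for $\omega$-accessibility I would observe that each representable $\Map_{\m{C}}(c,-)$ is a compact object of $\Fun(\m{C},\Spaces)$, since $\Map(\Map_{\m{C}}(c,-),-) \simeq \ev_{c}$ preserves filtered colimits---indeed all colimits, computed pointwise. As the generators of $\widehat{W}$ therefore have compact domain and codomain, the full subcategory of local objects is closed under filtered colimits, whence $L^{\ast}$ preserves filtered colimits. The only genuinely delicate points are bookkeeping ones---tracking the variance of the Yoneda embedding so that $\widehat{W}$-locality matches $W$-locality, and confirming that the abstractly produced localization is the one induced by $L_{!}$---rather than any substantial difficulty.
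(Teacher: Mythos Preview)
Your proof is correct and essentially follows the same route as the paper, which simply states the lemma as well-known and cites \cite[5.5.2]{HTT} without further argument. You have unpacked precisely the content behind that citation: identifying $\widehat{W}$-local objects with $W$-local functors via Yoneda, invoking \cite[5.5.4.15]{HTT} for the accessible localization, and checking compactness of representables for $\omega$-accessibility.
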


\begin{remark}
    The conclusion of the preceding lemma holds in many situations where $\m{E}$ is not presentable.
    Indeed, it is enough that $L^{\ast}$ admits a left adjoint, which will be the case whenever $\m{E}$ admits colimits of shape $\m{C}\times_{\m{C}[W^{-1}]} \m{C}[W^{-1}]_{/y}$ for all $y \in \m{C}$. 
    We will study such a situation at the end of this subsection.
\end{remark}

We now introduce some terminology and state the main technical result from \cite{hindk}. 

\begin{definition}
    Let $p\colon \m{C} \rightarrow \m{O}$ be a coCartesian fibration of \categories. 
    We say that $f\colon c \rightarrow d$ in $\m{C}$ is \textit{vertical}, if $p(f)$ is an equivalence in $\m{O}$, and \textit{horizontal} if $f$ is a coCartesian lift of the morphism $p(f)$.
    
    We denote the collection of all vertical edges in $X$ by $W_{p}$. 
    Because both $\m{C}$ and $\m{O}$ are \categories, then $W_{p}$ is a saturated collection of morphisms in $\m{C}$. 
\end{definition}

\begin{definition}
    Let $p\colon \m{C} \rightarrow \m{O}$ be a coCartesian fibration of \categories and let $W \subseteq \m{C}$ be a saturated collection of arrows in $\m{C}$. 
    For each $x \in \m{O}$ let $W_{x} = W\times_{\m{O}}\{x\}$.
    We say that $W$ is \textit{$\m{O}$-saturated} provided:
    \begin{enumerate}
        \item the morphisms in $W$ are vertical; 
        \item for each $\phi\colon x \rightarrow y$ in $\m{O}$, the functor $\phi_{!}\colon \m{C}_x \rightarrow \m{C}_{y}$ carries $W_{x}$ into $W_{y}$.
    \end{enumerate}    
\end{definition}

\begin{example}
    For $p$ a coCartesian fibration of \categories, the collection of all vertical arrows, $W_{p}$, is $\m{O}$-saturated. 
\end{example}

\begin{proposition}[{\cite[2.1.4]{hindk}}]\label{proposition: Hinich fiberwise loc}
    Let $p\colon \m{C} \rightarrow \m{O}$ be a coCartesian fibration of small \categories, let $F_{p} \colon \m{O} \rightarrow \Cat_\infty$ be the essentially unique functor which classifies $p$, and let $W \subseteq \m{C}$ be an $\m{O}$-saturated collection of arrows. 
    Then, the Dwyer--Kan localization, $\m{C}[W^{-1}]$, fits into a commutative diagram 
    \[
    \begin{tikzcd}
    \m{C} \arrow[rr,"L"] \arrow[dr,"p"'] & & \m{C}[W^{-1}] \arrow[dl,"p_W"] \\
    & \m{O} & 
    \end{tikzcd}
    \]
    of \categories such that:
    \begin{enumerate}
        \item the map $p_{W}$ is a coCartesian fibration; 
        \item the assignment $L$ is a map of coCartesian fibrations; and
        \item for each $x \in \m{O}$, the induced functor 
        \[
        L_{x}\colon \m{C}_{x} \rightarrow \m{C}[W^{-1}]_{x}
        \]
        exhibits the target as the localization of the source with respect to $W_{x}$, i.e., we have a canonical equivalence $\m{C}_{x}[W_{x}^{-1}] \xrightarrow{\sim} \m{C}[W^{-1}]_{x}$.
    \end{enumerate}
    In the special case where $W = W_{p}$, the coCartesian fibration $p_{W}$ is a left fibration.
\end{proposition}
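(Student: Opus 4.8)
The plan is to construct the fiberwise localization by hand as an unstraightening, and then identify it with the Dwyer--Kan localization via the universal property of the latter. First I would assemble the individual localizations $\m{C}_x[W_x^{-1}]$ into one functor. The $\m{O}$-saturation hypothesis is exactly what is needed: since each transition functor $\phi_!\colon \m{C}_x \to \m{C}_y$ carries $W_x$ into $W_y$, the classifying functor $F_p$ refines to a functor $\m{O} \to \Cat_\infty^{\mathrm{mk}}$ valued in marked \categories (pairs $(\m{A},S)$ of an \category and a collection of arrows), sending $x \mapsto (\m{C}_x,W_x)$. Postcomposing with the localization functor $(\m{A},S)\mapsto \m{A}[S^{-1}]$, which is the left adjoint to the inclusion marking only the equivalences, yields a functor $\overline{F_p}\colon \m{O} \to \Cat_\infty$ sending $x \mapsto \m{C}_x[W_x^{-1}]$ and $\phi \mapsto \overline{\phi_!}$. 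Unstraightening $\overline{F_p}$ produces a coCartesian fibration $p_W\colon \m{D} \to \m{O}$ whose fibers are the fiberwise localizations, and the units of the localization adjunction assemble, by naturality, into a map of coCartesian fibrations $L\colon \m{C}\to \m{D}$ over $\m{O}$ restricting to $L_x$ on each fiber. This establishes properties (1), (2), and (3) for $\m{D}$ by construction; it then remains to produce an equivalence $\m{D}\simeq \m{C}[W^{-1}]$ over $\m{O}$.

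Next I would check that $L\colon \m{C}\to \m{D}$ exhibits $\m{D}$ as the Dwyer--Kan localization at $W$. As every arrow of $W$ is vertical, it lies in some fiber $\m{C}_x$, where $L$ restricts to $L_x$ and hence inverts it; so $L$ inverts $W$ and factors through a comparison $\Psi\colon \m{C}[W^{-1}]\to \m{D}$ over $\m{O}$. To see $\Psi$ is an equivalence it suffices to verify that $L^*\colon \Fun(\m{D},\m{E})\to \Fun(\m{C},\m{E})$ is fully faithful with essential image the $W$-local functors, for every \category $\m{E}$. Verticality of $W$ means that a functor $\m{C}\to \m{E}$ is $W$-local precisely when each of its restrictions to the fibers $\m{C}_x$ is $W_x$-local, so $\Fun^W(\m{C},\m{E})$ is cut out fiberwise. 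I would then use that the total \category of the coCartesian fibration $p_W$ is the lax colimit of $\overline{F_p}$, so that mapping out of $\m{D}$ into $\m{E}$ is governed by the fiberwise functor \categories $\Fun(\m{C}_x[W_x^{-1}],\m{E})$ glued along the $\overline{\phi_!}$; the same description holds for $\m{C}$ with $F_p$ in place of $\overline{F_p}$. Under these descriptions $L^*$ is assembled from the fiberwise restrictions $L_x^*\colon \Fun(\m{C}_x[W_x^{-1}],\m{E}) \xrightarrow{\sim} \Fun^{W_x}(\m{C}_x,\m{E})$, each an equivalence by the universal property of localization, so $L^*$ is fully faithful with image the fiberwise-local, hence $W$-local, functors.

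The main obstacle is precisely this identification: localization does not commute with passage to fibers in general, and the real content of the proposition is that $\m{O}$-saturation is exactly the hypothesis forcing $\m{C}[W^{-1}]_x \simeq \m{C}_x[W_x^{-1}]$ and forcing the comparison to respect the transition functors $\overline{\phi_!}$. Making the lax-colimit bookkeeping of the previous paragraph rigorous is the technical heart, and it is carried out by Hinich in \cite{hindk} using the marked-simplicial-set model, where these manipulations become tractable; I would simply invoke that work. Finally, for the special case $W = W_p$, every arrow of each fiber $\m{C}_x$ is vertical, so $W_x$ is all of $\m{C}_x$ and $\m{C}_x[W_x^{-1}] = \gpd{\m{C}_x}$ is an $\infty$-groupoid; since a coCartesian fibration all of whose fibers are $\infty$-groupoids is a left fibration, $p_W$ is a left fibration, as claimed.
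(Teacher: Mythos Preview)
The paper does not prove this proposition at all; it is stated as a recollection of a result of Hinich, cited directly as \cite[2.1.4]{hindk}, with no accompanying argument. Your proposal is therefore not competing with any proof in the paper --- there is none --- and your eventual move of invoking \cite{hindk} for the technical heart is exactly what the paper does, only more tersely.

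That said, your outline is a correct and standard description of how the result is established: refine $F_p$ to a diagram of marked \categories using $\m{O}$-saturation, localize fiberwise, unstraighten, and then verify the universal property of Dwyer--Kan localization via the lax-colimit description of the total space of a coCartesian fibration. The one point worth tightening is the claim that ``a functor $\m{C}\to\m{E}$ is $W$-local precisely when each of its restrictions to the fibers is $W_x$-local'': this is true because $W$ consists of vertical arrows, but the converse direction of your lax-colimit comparison requires knowing that the comparison map on oplax-limit-of-functor-categories is an equivalence, which is precisely the content of Hinich's argument you defer to. Your handling of the special case $W=W_p$ is correct and complete.
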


\begin{definition}
    Let $p\colon \m{C} \rightarrow \m{O}$ be a coCartesian fibration between \categories and let $W$ be an $\m{O}$-saturated class of arrows. 
    The \textit{fiberwise localization of $p$ with respect to $W$} is defined to be the coCartesian fibration 
    \[
    p_{W}\colon \m{C}[W^{-1}] \rightarrow \m{O}.
    \]
    When $W=W_{p}$, we say that $p_{W}$ is the \textit{fiberwise groupoid completion} of $p$. 
\end{definition}

\begin{example}\label{example: fiberwise gpd of nnDelta}
Consider the coCartesian fibration $p\colon \nnDelta \rightarrow \nn$ which is classified by the filtration $\Deltaleq{\ast}^{\op}$ of $\Delta^{\op}$ as in \cref{const: skeleton functor and the I category}. 
Forming the fiberwise groupoid completion of $p$, which we denote by 
\[
\nn\ltimes\gpd{\Deltaleq{\ast}^{\op}} = (\nnDelta) [W_{p}^{-1}]
\]
we obtain a commutative diagram 
\[
\begin{tikzcd}
\nnDelta \arrow[rr,"L"] \arrow[dr,"p"'] & & \nn\ltimes\gpd{\Deltaleq{\ast}^{\op}} \arrow[dl,"p_W"] \\
& \nn. & 
\end{tikzcd}
\]
In fact, because $\gpd{\Deltaleq{b}^{\op}} \simeq \ast$ for all $b \geq 0$, the functor $p_{W}$ is an equivalence of \categories.
\end{example}

\begin{definition}
    Let $p\colon \m{C} \rightarrow \m{O}$ be a coCartesian fibration, and let $\m{E}$ be an \category. 
    We say that $\m{E}$ admits \textit{fiberwise left Kan extensions along $p$}, provided that, for each arrow $\phi \colon x \rightarrow y$ in $\m{O}$, the functor 
    \[
    \phi_{!}^{\ast} \colon \Fun(\m{C}_{y},\m{E}) \rightarrow \Fun(\m{C}_{x},\m{E}),
    \]
    given by restriction along $\phi_{!} \colon \m{C}_x \rightarrow \m{C}_{y}$, admits a left adjoint. 
    Similarly, given a map of coCartesian fibrations
    \[
    \begin{tikzcd}
    \m{C} \arrow[rr,"f"] \arrow[dr,"p"'] & & \m{D} \arrow[dl,"q"] \\
    & \m{O} & 
    \end{tikzcd}
    \]
    we say that $\m{E}$ admits \textit{fiberwise left Kan extensions along $f$} provided that for each $x \in \m{O}$, the functor 
    \[
    f_{x}^{\ast} \colon  \Fun(\m{D}_{x},\m{E}) \rightarrow \Fun(\m{C}_{x},\m{E})
    \]
    given by restriction along $f_{x} \colon \m{C}_x \rightarrow \m{D}_x$, admits a left adjoint. 
\end{definition}

\begin{construction}\label{construction: fiberwise local functors}
Let $p\colon \m{C} \rightarrow \m{O}$ be a coCartesian fibration of \categories and let $W \subseteq \m{C}$ be an $\m{O}$-saturated collection of arrows, so that by \cref{proposition: Hinich fiberwise loc}, there is a map of coCartesian fibrations 
\[
\begin{tikzcd}
\m{C} \arrow[rr,"L"] \arrow[dr,"p"'] & & \m{C}[W^{-1}] \arrow[dl,"p_W"] \\
& \m{O} & 
\end{tikzcd}
\]
By \cref{construction: relative exponential/Cartesian workhorse}, we have a map of Cartesian fibrations 
\[
\begin{tikzcd}
    \uFun_{\m{O}}(\m{C}[W^{-1}],\m{E}_{\m{O}}) \arrow[rr,"L^{\ast}"] \arrow[dr,"q_{W}"'] & & \uFun_{\m{O}}(\m{C},\m{E}_{\m{O}})  \arrow[dl,"q"] \\
    &\m{O}&
\end{tikzcd}
\]
for any \category $\m{E}$. 
Furthermore, we let $\uFun_{\m{O}}^{W}(\m{C},\m{E}_{\m{O}})$ denote the full subcategory of $\uFun_{\m{O}}(\m{C},\m{E}_{\m{O}})$ spanned by objects $(x,F_x)$ with the property that $F_x\colon \m{C}_{x} \rightarrow \m{E}$ inverts all the edges in $W_{x}$. 
It is not hard to see that the essential image of $L^{\ast}$ is equivalent to the full subcategory $\uFun_{\m{O}}^{W}(\m{C},\m{E}_{\m{O}})$. 
\end{construction}

\begin{proposition}\label{proposition: fiberwise local functors}
    Let $\m{E}$ be an \category which admits fiberwise left Kan extensions along $p$, $p_{W}$, and $L$. 
    Then, the Cartesian fibrations $q\colon \uFun_{\m{O}}(\m{C},\m{E}_{\m{O}}) \rightarrow \m{O}$ and $q_{W} \colon \uFun_{\m{O}}(\m{C},\m{E}_{\m{O}}) \rightarrow \m{O}$ are biCartesian, and $L^{\ast}$ admits a relative left adjoint
    \[\begin{tikzcd}
	{\uFun_{\m{O}}(\m{C},\m{E}_{\m{O}}) } && {\uFun_{\m{O}}(\m{C}[W^{-1}],\m{E}_{\m{O}})} \\
	& {\m{O}}
	\arrow["{L_{!}}", from=1-1, to=1-3]
	\arrow["q"', from=1-1, to=2-2]
	\arrow["{q_{W}}", from=1-3, to=2-2]
    \end{tikzcd}\]
    with the following properties: 
    \begin{enumerate}
        \item For each $x \in \m{O}$, the functor $(L_{!})_{x}$ is equivalent to $(L_{x})_{!}$, the functor of left Kan extension along $L_{x}\colon \m{C}_{x} \rightarrow \m{C}[W^{-1}]_{x} \simeq \m{C}_{x}[W_{x}^{-1}]$. 
        \item The counit of the relative adjunction, $L_{!}\circ L^{\ast} \rightarrow \mathrm{id}$ is an equivalence, so that $L^{\ast}$ is fully faithful. 
        \item $L_{!}$ carries $q$-coCartesian arrows to $q_{W}$-coCartesian arrows.
    \end{enumerate}
    Furthermore, the functor $L_{!}$ induces a relative equivalence of \categories over $\m{O}$
    \[
    L_{!}\colon \uFun_{\m{O}}^{W}(\m{C},\m{E}_{\m{O}}) \xrightarrow{\sim} \uFun_{\m{O}}(\m{C}[W^{-1}],\m{E}_{\m{O}})
    \]  
    thereby exhibiting $\uFun_{\m{O}}^{W}(\m{C},\m{E}_{\m{O}})$ as a relative reflective localization of $\uFun_{\m{O}}(\m{C},\m{E}_{\m{O}})$ over $\m{O}$. 
\end{proposition}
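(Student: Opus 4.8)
The plan is to reduce the proposition to fiberwise assertions together with a single Beck--Chevalley argument, using the relative adjoint functor theorem. First I would establish that $q$ and $q_{W}$ are biCartesian. By \cref{construction: relative exponential/Cartesian workhorse} both are automatically Cartesian fibrations, so it remains to produce coCartesian lifts. For $q$ and an edge $\phi\colon x \to y$ of $\m{O}$, the Cartesian transition functor is restriction $\phi_!^{\ast}\colon \Fun(\m{C}_y,\m{E}) \to \Fun(\m{C}_x,\m{E})$ along $\phi_!$, and a coCartesian lift over $\phi$ exists exactly when this restriction admits a left adjoint; this is precisely the hypothesis that $\m{E}$ admits fiberwise left Kan extensions along $p$, and the resulting coCartesian transition functor is $\mathrm{LKE}_{\phi_!}$. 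The identical argument applied to $p_W$ shows $q_W$ is coCartesian, with coCartesian transition functors given by left Kan extension along the induced maps of localized fibers.

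Next I would produce $L_!$. Since $L^{\ast}$ is a map of Cartesian fibrations preserving Cartesian edges (\cref{construction: fiberwise local functors}), the relative adjoint functor theorem \cite[\S 7.3.2]{HA} (applied to the opposite fibrations) guarantees a relative left adjoint $L_!$ over $\m{O}$ as soon as each fiber $L_x^{\ast}\colon \Fun(\m{C}[W^{-1}]_x,\m{E}) \to \Fun(\m{C}_x,\m{E})$ admits a left adjoint, which is the hypothesis that $\m{E}$ admits fiberwise left Kan extensions along $L$. Its fiber over $x$ is this left adjoint, namely left Kan extension along $L_x$, which under the identification $\m{C}[W^{-1}]_x \simeq \m{C}_x[W_x^{-1}]$ of \cref{proposition: Hinich fiberwise loc} is $(L_x)_!$; this gives property (1). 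For property (2), the counit of a relative adjunction is an equivalence if and only if every fiberwise counit $(L_x)_!(L_x)^{\ast} \to \mathrm{id}$ is, and since $L_x$ is a localization at $W_x$ the functor $(L_x)^{\ast}$ is fully faithful by \cref{lemma: local functors define a reflective subcategory}, so these counits are equivalences and $L^{\ast}$ is fully faithful.

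The heart of the argument, and the step I expect to be the main obstacle, is property (3): that $L_!$ carries $q$-coCartesian arrows to $q_W$-coCartesian arrows. This is the only place where the two coCartesian structures, rather than just the fibers, must be compared, and it amounts to checking that the exchange transformation associated to each $\phi\colon x\to y$ is an equivalence. Writing $\phi_!\colon \m{C}_x \to \m{C}_y$ and $\psi\colon \m{C}[W^{-1}]_x \to \m{C}[W^{-1}]_y$ for the coCartesian transition functors of $p$ and $p_W$ over $\phi$, this exchange map is
\[
\mathrm{LKE}_{L_y}\circ \mathrm{LKE}_{\phi_!} \longrightarrow \mathrm{LKE}_{\psi}\circ \mathrm{LKE}_{L_x}.
\]
By composability of left Kan extensions the source computes $\mathrm{LKE}_{L_y\circ \phi_!}$ and the target computes $\mathrm{LKE}_{\psi\circ L_x}$; since $L$ is a map of coCartesian fibrations by \cref{proposition: Hinich fiberwise loc}, its naturality square gives $L_y\circ \phi_! \simeq \psi\circ L_x$, so the two sides agree. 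Formally, the identification of this exchange transformation with preservation of coCartesian edges is a base-change statement, supplied by \cref{proposition: exchange transformation}.

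Finally, for the concluding equivalence, \cref{construction: fiberwise local functors} identifies the essential image of $L^{\ast}$ with $\uFun_{\m{O}}^{W}(\m{C},\m{E}_{\m{O}})$. Because $L^{\ast}$ is fully faithful with this essential image and $L_! \dashv L^{\ast}$ over $\m{O}$, the relative adjunction restricts to an equivalence $L_!\colon \uFun_{\m{O}}^{W}(\m{C},\m{E}_{\m{O}}) \xrightarrow{\sim} \uFun_{\m{O}}(\m{C}[W^{-1}],\m{E}_{\m{O}})$ of \categories over $\m{O}$, exhibiting the source as a relative reflective localization of $\uFun_{\m{O}}(\m{C},\m{E}_{\m{O}})$; this last step is formal given property (2).
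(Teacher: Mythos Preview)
Your proposal is correct and follows essentially the same approach as the paper: biCartesianness from \cref{construction: relative exponential/Cartesian workhorse}, the relative left adjoint via \cite[7.3.2.6]{HA}, fiberwise identification of $(L_!)_x$ with $(L_x)_!$, the counit check reduced to \cref{lemma: local functors define a reflective subcategory}, and preservation of coCartesian arrows via the commuting square $L_y\circ\phi_! \simeq \phi_{W,!}\circ L_x$ together with composability of left Kan extensions. One small remark: your appeal to \cref{proposition: exchange transformation} at the end of the coCartesian step is unnecessary and slightly misplaced---that proposition concerns pullback squares with a locally Cartesian leg, whereas here you only need the elementary fact that a relative left adjoint preserves coCartesian edges iff the associated Beck--Chevalley transformation is invertible, which you have already verified directly from composability of left Kan extensions.
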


\begin{proof}
    That $q$ and $q_{W}$ are coCartesian follows directly from our assumptions combined with the description of coCartesian arrows of  relative exponentials from \cref{construction: relative exponential/Cartesian workhorse}.
    The existence of the relative left adjoint $L_{!}$ follows from \cite[7.3.2.6]{HA}, as $L^{\ast}$ carries $q_{W}$-Cartesian arrows to $q$-Cartesian arrows and $L_{x}^{\ast}$ admits a left adjoint for each $x \in \m{O}$. 
    
    By \cite[7.3.2.5]{HA}, the natural transformation $\mathrm{id} \rightarrow L^{\ast}\circ L_{!}$ induces, for each $x \in \m{O}$, a transformation
    \(
    \mathrm{id}_{x} \rightarrow L^{\ast}_{x} \circ (L_{x})_{!}
    \)
    which is the unit of an adjunction between $\Fun(\m{C}_x,\m{E})$ and $\Fun(\m{C}_{y},\m{E})$. 
    As $L^{\ast}_{x}$ is given by restriction along $L_{x}$, we can identify $(L_{!})_{x}$ with the left Kan extension along $L_{x}$ as claimed. 
    To see that the counit is an equivalence, it is enough to check that $L^{\ast}\circ L_{!} \rightarrow \mathrm{id}$ is an equivalence fiberwise, which follows from \cref{lemma: local functors define a reflective subcategory}.       
    
    To see that $L_{!}$ preserves coCartesian arrows, let $(x,F_x) \rightarrow (y,F_y)$ be a $q$-coCartesian arrow, with image under $L_{!}$ given by 
    \(
    (x,(L_{x})_!F_x) \rightarrow (y,(L_{y})_{!}F_y), 
    \)
    where $(\phi_{W,!})_{!}(L_{x})_!F_x \rightarrow (L_{y})_{!}F_y$ is given by the composition 
    \[
    (\phi_{W,!})_{!}(L_{x})_!F_x \simeq (L_{y})_{!}(\phi_{!})_{!}F_{x} \rightarrow (L_{y})_{!}F_y.
    \]
    This, however, is an equivalence because $(x,F_x) \rightarrow (y,F_y)$ was assumed to be $q$-coCartesian.    
\end{proof}

\begin{remark}
    Any category $\m{E}$ which admits small colimits satisfies the hypotheses of the preceding lemma. 
\end{remark}

\subsection{Existence of promonoidal localization}
We now turn to the construction of what we call \textit{$\m{O}$-promonoidal localizations.}
The following result is crucial for the proofs of \cref{introtheorem: EZ structure} and \cref{introtheorem: products of skeleta} below.

\begin{theorem}\label{theorem: promonoidal structures exist}
    Let $p\colon \m{C} \rightarrow \m{O}$ be a coCartesian fibration, let $W \subseteq \m{C}$ be an $\m{O}$-saturated collection of arrows, and let $p^{\otimes} \colon \m{C}^{\otimes} \rightarrow \m{O}^{\otimes}$ be an $\m{O}$-promonoidal structure extending $p$. 
    Assume that 
    \begin{description}
    \item[$(\ast)$] for each active morphism $\phi\colon x_{I} \rightarrow y$ in $\m{O}^{\otimes}$, the functor 
    \[
    \bigotimes^{\phi}_{i\in I} \colon \prod_{i\in I} \Fun(\m{C}_{x_i},\Spaces) \rightarrow \Fun(\m{C}_{y},\Spaces)
    \]
    carries $\prod_{i\in I} W_{x_i}$ into $\widehat{W}_{y}$.
    \end{description}
    Then, there exists an $\m{O}$-promonoidal structure extending $p_{W}$, $\m{C}[W^{-1}]^{\otimes} \rightarrow \m{O}^{\otimes}$, together with an $\m{O}$-promonoidal functor
    \(
    L^{\otimes} \colon \m{C}^{\otimes} \rightarrow \m{C}[W^{-1}]^{\otimes}. 
    \)
    Furthermore, there is a commutative diagram of $\m{O}$-promonoidal \categories and $\m{O}$-promonoidal functors
\[\begin{tikzcd}
	{\m{C}^{\otimes}} & {\Fun_{\m{O}}(\m{C},\Spaces_{\m{O}})^{\otimes,\vee}} \\
	{\m{C}[W^{-1}]^{\otimes}} & {\Fun_{\m{O}}(\m{C}[W^{-1}],\Spaces_{\m{O}})^{\otimes,\vee}}
	\arrow[from=1-1, to=1-2]
	\arrow["{L^{\otimes}}"', from=1-1, to=2-1]
	\arrow["{L_{!}^{\otimes,\vee}}", from=1-2, to=2-2]
	\arrow[from=2-1, to=2-2]
\end{tikzcd}\]
    where the horizontal maps are inclusions of full suboperads. 
\end{theorem}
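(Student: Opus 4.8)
The plan is to move the entire problem across the classification equivalence of \cref{proposition: Day conv recovers presentably O-monoidal str}, perform the localization on the side of fiberwise copresheaves where the reflective machinery of \cref{proposition: fiberwise local functors} is available, and then transport back. First, by \cref{proposition: Day conv recovers presentably O-monoidal str}, the given $\m{O}$-promonoidal structure $p^{\otimes}$ extending $p$ is classified by a presentably $\m{O}$-monoidal structure $\uFun_{\m{O}}(\m{C},\Spaces_{\m{O}})^{\otimes}$ on the relative copresheaf category, whose convolution product $\bigotimes^{\phi}$ is computed fiberwise by left Kan extension along the functors $\phi_{!}$, as in \cref{remark: underlying coCartesian fibration of presentably O-promonoidal}. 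On the other side, \cref{proposition: Hinich fiberwise loc} produces the coCartesian fibration $p_{W}\colon \m{C}[W^{-1}] \rightarrow \m{O}$, and applying \cref{proposition: fiberwise local functors} with $\m{E} = \Spaces$ exhibits $\uFun_{\m{O}}^{W}(\m{C},\Spaces_{\m{O}})$ as a relative reflective localization of $\uFun_{\m{O}}(\m{C},\Spaces_{\m{O}})$ over $\m{O}$, whose reflector $L_{!}$ induces a relative equivalence $L_{!}\colon \uFun_{\m{O}}^{W}(\m{C},\Spaces_{\m{O}}) \xrightarrow{\sim} \uFun_{\m{O}}(\m{C}[W^{-1}],\Spaces_{\m{O}})$.

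The central step is to descend the presentably $\m{O}$-monoidal structure to the local subcategory, i.e.\ to run the fiberwise analogue of Lurie's monoidal localization \cite[2.2.1.9]{HA} (compare \cite{hindk}). Concretely, it suffices to check that for each active $\phi\colon x_{I} \rightarrow y$ the product $\bigotimes^{\phi}$ carries a tuple in which one slot is a $\widehat{W}$-local equivalence and the others are arbitrary to a $\widehat{W}_{y}$-local equivalence. The subtlety is that hypothesis $(\ast)$ only supplies this for the generating Yoneda images of $W$, whereas $\widehat{W}$ is the strongly saturated class they generate (\cref{lemma: local functors define a reflective subcategory}). To bridge the gap I would bootstrap: fixing all slots but one, the class of arrows $f$ with $\bigotimes^{\phi}(\dots,f,\dots) \in \widehat{W}_{y}$ is strongly saturated, because the presentably $\m{O}$-monoidal product preserves colimits separately in each variable and $\widehat{W}_{y}$ is strongly saturated; a secondary generation over the remaining objects (representables generate under colimits) reduces to the case recorded by $(\ast)$. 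Hence this class contains all of $\widehat{W}_{x_i}$, the localization is compatible with the $\m{O}$-monoidal structure, and we obtain a presentably $\m{O}$-monoidal structure on $\uFun_{\m{O}}^{W}(\m{C},\Spaces_{\m{O}}) \simeq \uFun_{\m{O}}(\m{C}[W^{-1}],\Spaces_{\m{O}})$ for which the reflector upgrades to an $\m{O}$-monoidal functor $L_{!}^{\otimes}$.

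With this presentably $\m{O}$-monoidal structure on $\uFun_{\m{O}}(\m{C}[W^{-1}],\Spaces_{\m{O}})$ in hand, I would feed it into \cref{proposition: closed O-monoidal structures are O-promonoidal} to produce the desired $\m{O}$-promonoidal structure $\m{C}[W^{-1}]^{\otimes} \rightarrow \m{O}^{\otimes}$ extending $p_{W}$, realized as the full suboperad of $\Fun_{\m{O}}(\m{C}[W^{-1}],\Spaces_{\m{O}})^{\otimes,\vee}$ on the objects of $\m{C}[W^{-1}]$; by \cref{proposition: Day conv recovers presentably O-monoidal str} this promonoidal structure classifies the monoidal structure we began with, closing the loop. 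The functor $L^{\otimes}$ is then the coCartesian dual $L_{!}^{\otimes,\vee}$ restricted to these suboperads of representables: since $L_{!}$ (left Kan extension along $L$) sends the corepresentable at $c$ to the corepresentable at $Lc$, the dual $L_{!}^{\otimes,\vee}$ carries representables to representables and restricts to $L\colon \m{C} \rightarrow \m{C}[W^{-1}]$ on underlying fibrations. That $L^{\otimes}$ is $\m{O}$-promonoidal (not merely lax) follows because $L_{!}^{\otimes}$ is $\m{O}$-monoidal and the contravariant Yoneda embeddings of \cref{construction: fiberwise yoneda and map} intertwine the two classifications; the asserted commutative square is exactly the naturality of the inclusions $(h^{\vee})^{\otimes}$ of \cref{proposition: closed O-monoidal structures are O-promonoidal} with respect to $L_{!}^{\otimes}$, whose dual is the right-hand vertical arrow.

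The main obstacle is the descent step of the second paragraph: assembling a genuinely relative (over $\m{O}^{\otimes}$) version of the monoidal localization theorem and verifying that the strongly saturated generation argument interacts correctly with the relative reflective localization of \cref{proposition: fiberwise local functors}. One must ensure that the reflector $L_{!}$, a priori only an $\m{O}$-indexed family of fiberwise left Kan extensions, is compatible with the convolution product simultaneously across all fibers and all active morphisms $\phi$, rather than merely fiber by fiber; this is where the fact that $(\ast)$ controls the generators of $\widehat{W}$ together with colimit-preservation of $\bigotimes^{\phi}$ does the real work. Everything else --- the two invocations of the classification theorem and the identification of $L^{\otimes}$ with the dual of $L_{!}^{\otimes}$ --- is then formal naturality.
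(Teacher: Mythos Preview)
Your proposal is correct and follows essentially the same strategy as the paper: classify the $\m{O}$-promonoidal structure via \cref{proposition: Day conv recovers presentably O-monoidal str}, localize the presentably $\m{O}$-monoidal Day convolution using the fiberwise reflector from \cref{proposition: fiberwise local functors} together with Lurie's monoidal-localization criterion, and then transport back via \cref{proposition: closed O-monoidal structures are O-promonoidal}.

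The only minor difference is in how the compatibility condition is verified. You check the variant from \cite[2.2.1.9]{HA} (each $\bigotimes^{\phi}$ sends one-slot $\widehat{W}$-local equivalences to $\widehat{W}_{y}$-local equivalences) and bootstrap from the generators via a strongly-saturated-class argument. The paper instead checks the equivalent condition from \cite[2.2.1.6]{HA} directly: it shows the unit map $\bigotimes^{\phi}\{F_{x_i}\} \to \bigotimes^{\phi}\{L_{x_i}^{\ast}(L_{x_i})_{!}F_{x_i}\}$ is a $\widehat{W}_{y}$-local equivalence by first producing the descended functor $\widetilde{\bigotimes}^{\phi}$ on the localized categories and then invoking $(L_{x_i})_{!}L_{x_i}^{\ast} \simeq \id$. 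Your concern about assembling a ``relative'' monoidal localization is not really an obstacle: \cite[2.2.1.6]{HA} is already stated for arbitrary $\m{O}$-monoidal \categories, so no extra gluing is required.
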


\begin{proof}
    By \cite[Corollary 3.29]{linskens-nardin-pol}, the Day convolution \operad 
    \(
    q^{\otimes}\colon\Fun_{\m{O}}(\m{C},\Spaces_{\m{O}})^{\otimes} \rightarrow \m{O}^{\otimes}
    \)
    is a presentably $\m{O}$-monoidal structure which extends the coCartesian fibration 
    \(
    \uFun_{\m{O}}(\m{C},\Spaces_{\m{O}}) \rightarrow \m{O}
    \)
    obtained from \cref{construction: fiberwise local functors} and \cref{proposition: fiberwise local functors}. 
    Furthermore, again by \cref{proposition: fiberwise local functors}, we have a family of localization functors 
    \[
    \{L_{x}^{\ast}(L_{x})_{!}\colon \Fun(\m{C}_{x},\Spaces) \rightarrow \Fun(\m{C}_{x},\Spaces)\}_{x \in \m{O}}
    \]
    and we claim this family is compatible with the $\m{O}$-monoidal structure on $\uFun_{\m{O}}(\m{C},\Spaces_{\m{O}})$ in the sense of \cite[2.2.1.6]{HA}. 
    To establish the claim, it is enough to show, for each active morphism $\phi\colon x_{I} \rightarrow y$ in $\m{O}^{\otimes}$, the map 
    \[
    \bigotimes_{i\in I}^{\phi}\{F_{x_i}\} \rightarrow \bigotimes_{i\in I}^{\phi}\{L_{x_i}^{\ast}(L_{x_i})_{!}F_{x_i}\}
    \]
    is an $L_{y}^{\ast}(L_{y})_{!}$-local equivalence. 
    To see why, first note that by \cref{lemma: local functors define a reflective subcategory}, the localization functor $L_{x}^{\ast}(L_{x})_{!}$ determines (and is determined by) the reflective subcategory $\Fun(\m{C}_{x}[W_{x}^{-1}],\Spaces) \simeq \Fun(\m{C}_{x},\Spaces)[\widehat{W}_{x}^{-1}]$. 
    Subsequently, the hypothesis above implies there is an essentially unique functor $\widetilde{\bigotimes}^{\phi}_{i\in I}$ and a commutative diagram
    \[\begin{tikzcd}
	{\prod_{i\in I}\Fun(\m{C}_{x_i},\Spaces)} && {\Fun(\m{C}_{y},\Spaces)} \\
	{\prod_{i\in I}\Fun(\m{C}_{x_i}[W^{-1}_{x_i}],\Spaces)} && {\Fun(\m{C}_{y}[W_{y}^{-1}],\Spaces)}
	\arrow["{\bigotimes_{i\in I}^{\phi}}", from=1-1, to=1-3]
	\arrow["{\prod_{i\in I} (L_{x_i})_{!}}"', from=1-1, to=2-1]
	\arrow["{(L_{y})_{!}}", from=1-3, to=2-3]
	\arrow["{\widetilde{\bigotimes}_{i\in I}^{\phi}}"', dashed, from=2-1, to=2-3]
    \end{tikzcd}\]
    Therefore, after applying $(L_y)_{!}$, the morphism in question is given by 
    \[
    \widetilde{\bigotimes}^{\phi}_{i\in I} \{(L_{x_i})_{!} F_{x_i}\} \rightarrow \widetilde{\bigotimes}^{\phi}_{i\in I} \{(L_{x_i})_{!}L_{x_i}^{\ast}(L_{x_i})_{!}F_{x_i}\} 
    \]
    which is an equivalence because $(L_{x_i})_{!}L_{x_i}^{\ast} \simeq \mathrm{id}$. 
    Therefore, writing  
    \[
    q_{W}^{\otimes}\colon \uFun^{W}_{\m{O}}(\m{C},\Spaces_{\m{O}})^{\otimes} \xrightarrow{L^{\ast,\otimes}}\Fun_{\m{O}}(\m{C},\Spaces_{\m{O}})^{\otimes}  \xrightarrow{q^{\otimes}} \m{O}^{\otimes}
    \]
    for the full suboperad spanned by those objects belonging to $\uFun^{W}_{\m{O}}(\m{C},\Spaces_{\m{O}})$, \cite[2.2.1.6]{HA} implies that $q_{W}^{\otimes}$ is a presentably $\m{O}$-monoidal structure extending the coCartesian fibration
    \(
    \uFun^{W}_{\m{O}}(\m{C},\Spaces_{\m{O}}) \rightarrow \m{O}
    \)
    and $L^{\ast,\otimes}$ admits a relative left adjoint, $L_{!}^{\otimes}$, which is $\m{O}$-monoidal.
    
    Under the equivalence of \cref{proposition: fiberwise local functors}, we have produced a presentably $\m{O}$-monoidal structure which extends $q_{W}$ from \cref{proposition: fiberwise local functors}, which, by abuse of notation, we now denote by 
    \[
    q^{\otimes}_{W}\colon \uFun_{\m{O}}(\m{C}[W^{-1}],\Spaces_{\m{O}})^{\otimes} \rightarrow \m{O}^{\otimes}.
    \]    
    Therefore, by \cref{proposition: closed O-monoidal structures are O-promonoidal} and \cref{proposition: Day conv recovers presentably O-monoidal str}, there exists an $\m{O}$-promonoidal structure $p_{W}^{\otimes}\colon\m{C}[W^{-1}]^{\otimes} \rightarrow \m{O}$ extending $p_{W}$ together with an $\m{O}$-promonoidal inclusion 
    \[
    \m{C}[W^{-1}]^{\otimes} \rightarrow \uFun_{\m{O}}(\m{C}[W^{-1}],\Spaces_{\m{O}})^{\otimes,\vee}
    \]
    inducing an equivalence of presentably $\m{O}$-monoidal \categories
    \[
    \uFun_{\m{O}}(\m{C}[W^{-1}],\Spaces_{\m{O}})^{\otimes} \xrightarrow{\sim} \Fun_{\m{O}}(\m{C}[W^{-1}],\Spaces_{\m{O}})^{\otimes}.
    \]
    The existence of the commutative diagram 
    \[\begin{tikzcd}
	{\m{C}^{\otimes}} & {\Fun_{\m{O}}(\m{C},\Spaces_{\m{O}})^{\otimes,\vee}} \\
	{\m{C}[W^{-1}]^{\otimes}} & {\Fun_{\m{O}}(\m{C}[W^{-1}],\Spaces_{\m{O}})^{\otimes,\vee}}
	\arrow[from=1-1, to=1-2]
	\arrow["{L^{\otimes}}"', from=1-1, to=2-1]
	\arrow["{L_{!}^{\otimes,\vee}}", from=1-2, to=2-2]
	\arrow[from=2-1, to=2-2]
    \end{tikzcd}\]
    follows immediately from the fact that left Kan extension preserves representables, and the $\m{O}$-promonoidality of $L^{\otimes}$ follows directly from the $\m{O}$-monoidality of $L_{!}^{\otimes,\vee}$.
\end{proof}

\begin{remark}\label{remark: promonoidal loc existence criteria}
    The assumption $(\ast)$ in \cref{theorem: promonoidal structures exist} admits an equivalent reformulation: 
    \begin{description}
    \item[$(\ast)$] for each active morphism $\phi\colon x_{I} \rightarrow y$ in $\m{O}^{\otimes}$, and for each tuple of morphisms $(f_{i}\colon c_{i} \rightarrow c_{i}')_{i\in I}$ belonging to $\prod_{i\in I} W_{x_i} \subseteq \prod_{i\in I}\m{C}_{x_i}$, the induced transformation 
    \[
    \Mul^{\phi}_{\m{C}}(\{c_{i}'\}_{i\in I},-) \rightarrow \Mul^{\phi}_{\m{C}}(\{c_{i}\}_{i\in I},-)
    \]
    is an equivalence after left Kan extension along $L_{y} \colon \m{C}_{y} \rightarrow \m{C}[W^{-1}]_{y} \simeq \m{C}_{y}[W_{y}^{-1}]$. 
    \end{description}
    This can be deduced directly from the definition combined with \cref{lemma: local functors define a reflective subcategory}. 
\end{remark}

\begin{remark}
    We expect that the $\oO$-promonoidal \category, $\m{C}[W^{-1}]^{\otimes}$, constructed above is the initial $\m{O}$-promonoidal \category equipped with a lax $\m{O}$-promonoidal functor from $\m{C}^{\otimes}$ which inverts the arrows in $W$. 
    However, as we do not require this characterization of $\m{C}[W^{-1}]^{\otimes}$ for our work in this article, we will not pursue such a description here. 
    In spite of this, we introduce the following convenient terminology. 
\end{remark}

\begin{definition}
    Let $p\colon \m{C} \rightarrow \m{O}$ be a coCartesian fibration, let $W \subseteq \m{C}$ be an $\m{O}$-saturated collection of arrows, and let $p^{\otimes} \colon \m{C}^{\otimes} \rightarrow \m{O}^{\otimes}$ be an $\m{O}$-promonoidal structure extending $p$ which satisfies the condition of \cref{theorem: promonoidal structures exist}. 
    We call the resulting $\m{O}$-promonoidal \category 
    \(
    \m{C}[W^{-1}]^{\otimes} \rightarrow \m{O}^{\otimes}
    \)
    the \textit{$\m{O}$-promonoidal localization of $\m{C}^{\otimes} \rightarrow \m{O}^{\otimes}$ with respect to $W$}. 
    If $W=W_{p}$, then we call this the \textit{$\m{O}$-promonoidal groupoid completion of $\m{C}^{\otimes} \rightarrow \m{O}^{\otimes}$}. 
\end{definition}

\section{The skeletal filtration and the Dold--Kan correspondence}\label{section: the skeletal filtration}

We recall in this section the $\infty$-categorical Dold--Kan correspondence established by Lurie in \cite[1.2.4.1]{HA}, as well as its relationship with
the classical Dold--Kan correspondence that asserts an equivalence between simplicial and non-negative chain complexes in an abelian category. 

\subsection{Filtered objects are chain complexes}\label{subsec: filtered obj=chains}
Lurie argues in \cite{HA} that the analogues of non-negative chain complexes in a stable \category $\cE$ are the filtered objects in $\cE$, i.e., functors $\nn\to \cE$.
Informally, given a filtered object $F_\ast$ in $\cE$, we can define a chain complex in its homotopy category $h\cE$, viewed as an additive category:
\[
\begin{tikzcd}
\cdots \ar{r} & \Sigma^{-2}F_2/F_1 \ar{r} & \Sigma^{-1} F_1/F_0 \ar{r} & F_0 \ar{r} & 0.
\end{tikzcd}
\]
The differentials are given using cofiber sequences such as:
\[
\begin{tikzcd}
    F_0\ar{r} & F_1 \ar{r} & F_1/F_0 \ar{r} & \Sigma F_0
\end{tikzcd}
\]
and
\[
\begin{tikzcd}
    F_1/F_0 \ar{r} & F_2/F_0 \ar{r} & F_2/F_1 \ar{r} & \Sigma F_1/F_0
\end{tikzcd}
\]
and so on, induced from the filtration. 
We shall review how one can formalize the construction above so that the \category $\Fun(\nn, \cE)$ of (non-negative) filtered objects in $\cE$  can be regarded as equivalent to (non-negative) homotopy coherent chain complexes in $\cE$. 

\begin{definition}[{\cite[35.1]{Joyal}, \cite[2.1]{stefano}}]
   Denote by $\ch$ the pointed category whose objects are $\zz \cup \{\ast \}$, and whose set of morphisms is defined as:
   \[
   \ch(m,n)= \begin{cases}
       \{\partial, 0\}, & \text{if }m=n+1,\\
       \{\id, 0\}, & \text{if }m=n,\\
       \{0\}, & \text{otherwise}.
   \end{cases}
   \]
   By construction, we have that $\partial \circ \partial= 0$.
   A homotopy coherent chain complex in a pointed \category $\cE$ is a pointed functor $\ch\to \cE$. We denote by $\ch(\cE)$ the full subcategory of $\Fun(\ch, \cE)$ spanned by the pointed functors.
   
   Analogously, we define $\chp$ as the same pointed category whose objects are $\nn \cup \{\ast \}$, and define homotopy coherent non-negative chain complexes in $\cE$ in a similar fashion, that assemble into an \category $\chp(\cE)$.
\end{definition}

\begin{remark}\label{remark: nerve of chain is chain of nerves}
    If $\cE$ is (the nerve of) an abelian category $\m{A}$ then $\ch(\m{A})$ is equivalent to (the nerve of) the category of chain complexes in $\m{A}$. We can observe a similar result for non-negative chain complexes.
\end{remark}

In \cite[4.7, 4.8]{stefano}, Ariotta determines an adjunction of \categories:
\[
\begin{tikzcd}
    \Fun(\zz\,, \cE) \ar[bend left]{r}{} \ar[phantom, description, xshift=-0.8ex]{r}{\perp}& \ar[bend left]{l}{}\ch(\cE)
\end{tikzcd}
\]
for any complete stable \category $\cE$. It becomes an equivalence of \categories if we restrict to the full subcategory of $\Fun(\zz\,, \cE)$ spanned by complete filtered objects, i.e.\ functors $F_*\colon \zz\to \cE$ such that $\lim_{\zz}F_*\simeq 0$. Tracing through the arguments, one obtain a equivalence of $\infty$-categories on non-negative objects:
\begin{align*}
 \Fun(\nn, \cE)&\stackrel{\simeq}\longrightarrow \chp(\cE)\\
 F_*&\longmapsto \Sigma^{-*}F_*/F_{*-1}
\end{align*}
for any complete stable $\infty$-category $\cE$.

\begin{remark}
    In \cite{stefano}, Ariotta proved his results for cochain complexes rather than chain complexes, but his approach is entirely dualizable since the category $\ch$ is self-dual, i.e.\ $\ch^\op\simeq \ch$. In other words, the data of a chain complex:
    \[
    \begin{tikzcd}
        \cdots \ar{r} & C_2 \ar{r} & C_1 \ar{r} & C_0 \ar{r} & C_{-1} \ar{r} & C_{-2} \ar{r} & \cdots
    \end{tikzcd}
    \]
    is equivalent to the following cochain complex where $C_n'=C_{-n}$:
     \[
    \begin{tikzcd}
        \cdots \ar{r} & C_{-2}' \ar{r} & C_{-1}' \ar{r} & C_0' \ar{r} & C_{1}' \ar{r} & C_{2}' \ar{r} & \cdots.
    \end{tikzcd}
    \]
    
\end{remark}

\subsection{The higher categorical Dold--Kan correspondence}
We shall recall first how to obtain a filtered object from any simplicial object.

\begin{construction}\label{const: skeleton functor and the I category}
Denote by $\Deltaleq{n}$ the full subcategory of the simplex category $\Delta$ spanned by the objects $[0],\ldots, [n]$. This defines a filtration of $\Delta^\op$:
\[
\Deltaleq{0}^{\op} \subset \Deltaleq{1}^{\op} \subset \Deltaleq{2}^{\op} \subset \cdots,
\]
and therefore determines a functor $\nn\rightarrow \Cat$. The functor classifies a coCartesian fibration $p\colon\nnDelta\rightarrow \nn$.
The category $\nnDelta$ appears in \cite[1.2.4.17]{HA} (denoted $\m{I}$ therein) and can be described explicitly as the full subcategory of $\nn\times \Delta^\op$ spanned by pairs $(a, [n])$ for which $n\leq a$.
Then the coCartesian fibration $p\colon \nnDelta\rightarrow\nn$ is the projection. Denote by $q\colon \nnDelta\rightarrow \Delta^\op$ the other projection. Given an $\infty$-category $\cE$ that is complete and finitely cocomplete, we obtain the following adjunctions:
\[
\begin{tikzcd}
\Fun(\Delta^\op, \cE) \ar[bend left]{r}{q^*} \ar[phantom, description, xshift=0.8ex]{r}{\perp}& \ar[bend left]{l}{q_*}\Fun(\nnDelta, \cE)\ar[bend left]{r}{p_!}\ar[phantom, description, xshift=-1.5ex]{r}{\perp} & \ar[bend left]{l}{p^*} \Fun(\nn, \cE).
\end{tikzcd}
\]
The skeleton filtration $\sk^\cE_\ast\colon \Fun(\Delta^\op, \cE)\rightarrow \Fun(\nn, \cE)$ is defined as the composition of the above left adjoints: $\sk^\cE_\ast=p_!\circ q^*$.
Of course, the skeleton filtration exists if we only assume $\cE$ to be finitely cocomplete, and is given on a simplicial object $X\colon \Delta^\op\to \cE$ as:
\[
\sk^\cE_n(X) \simeq \colim_{\Delta^\op_{\leq n}} \iota_n^*(X) 
\]
where $\iota_n\colon \Delta^\op_n\hookrightarrow \Delta^\op$ is the full subcategory functor.
If we assume $\cE$ to be also complete, then $\sk^\cE_*$ admits a right adjoint denoted $\Gamma^\cE_\bullet\colon \Fun(\nn, \cE)\to \Fun(\Delta^\op, \cE)$ that is defined as the composition  of the right adjoints $\Gamma^\cE_\bullet=q_*\circ p^*$. When $\cE$ is the \category of spectra $\Sp$, we write simply $\sk_*$.
\end{construction}

In \cref{section: appendix EZ model cat}, we show that the above skeletal filtration on the \category of spaces $\cS$ can be regarded as the geometric realization of the usual skeleton filtration of bisimplicial sets.

\begin{remark}
    Let $(\cE, \otimes, \unit)$ be a  symmetric monoidal \category that is finitely cocomplete.
    Denote by $\unit_*[\Delta^n]$ the filtered object in $\cE$ obtained by the tensoring  $\unit \in \cE$ with the filtered space $\sk_*^\cS(\Delta^n)$ where $\Delta^n$ is the standard $n$-simplex.
    Then we may think of the skeleton filtration $\sk^\cE_*\colon \Fun(\Delta^\op, \cE)\to \Fun(\nn, \cE)$ as given by the coend formula:
    \[
    \sk^\cE_*(X)\simeq \int^{[n]\in \Delta} \unit_*[\Delta^n]\odot X_n.
    \]
    If we further assume that $\cE$ is closed monoidal and complete, there is an explicit description of the right adjoint of the skeleton filtration $\Gamma^\cE_\bullet\colon \Fun(\nn, \cE)\to \Fun(\Delta^\op, \cE)$ given for all filtered object $F_*$ in $\cE$ by the formula:
    \[
\Gamma^\cE_n(F_*)=\Hom(\unit_*[\Delta^n], F_*), 
    \]
    where $\Hom(-,-)$ denotes the induced enrichment of $\Fun(\nn, \cE)$ over $\cE$. 
\end{remark}

While the skeleton filtration can be defined for any finitely cocomplete \category, the functor becomes an equivalence of \categories if we assume stability. 

\begin{theorem}[{\cite[1.2.4.1]{HA}}]\label{theorem: Lurie DK-theorem}
    Let $\cE$ be a stable \category.
    Then the skeleton filtration induces an equivalence of \categories:
    \[
    \sk_*^\cE\colon \Fun(\Delta^\op, \cE) \stackrel{\simeq}\longrightarrow \Fun(\nn, \cE).
    \]
\end{theorem}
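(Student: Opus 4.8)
The functor $\sk_*^\cE = p_!\circ q^*$ is a composite of a restriction and a left Kan extension, so it preserves colimits; since $\cE$ is stable it is in fact exact. The plan is first to reduce to the case where $\cE$ is presentable. Any small stable \category admits an exact, fully faithful embedding into a presentable stable \category (for instance $\cE \hookrightarrow \mathrm{Ind}(\cE)$), and such an embedding is compatible with $\sk_*^\cE$ because each $\sk_n^\cE$ is a \emph{finite} colimit, hence preserved. As the embedding induces fully faithful functors on $\Fun(\Delta^\op,-)$ and $\Fun(\nn,-)$, and the essential image on each side is generated from $\cE$ by the respective colimit formulas, the statement for $\mathrm{Ind}(\cE)$ would descend to $\cE$; the usual cardinality bookkeeping reduces large $\cE$ to the small case. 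I therefore assume $\cE$ presentable, so that both functor \categories are presentable stable and $\sk_*^\cE$ is colimit-preserving.

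Next I would compare the two sides through compatible filtrations. On the source, let $\Fun(\Delta^\op,\cE)_{\leq n}$ be the full subcategory of \emph{$n$-skeletal} objects, namely those left Kan extended from $\Delta^\op_{\leq n}$ along $\iota_n$; every simplicial object is the colimit of its skeletal approximations. On the target, let $\Fun(\nn,\cE)_{\leq n}$ consist of those filtered objects $F$ for which $F_m \to F_{m+1}$ is an equivalence for all $m\geq n$. One checks directly that if $X$ is $n$-skeletal then $\sk_*^\cE(X)$ is constant in degrees $\geq n$ (being equal to $\colim_{\Delta^\op} X$ there), so $\sk_*^\cE$ carries $\Fun(\Delta^\op,\cE)_{\leq n}$ into $\Fun(\nn,\cE)_{\leq n}$; I would prove it restricts to an equivalence between these by induction on $n$. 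The base case $n=0$ is the identity equivalence $\cE\simeq\cE$: a $0$-skeletal object is the constant simplicial object on $X_0$, and since $\gpd{\Delta^\op_{\leq m}}\simeq \ast$ its skeletal filtration is constant at $X_0$.

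The inductive step rests on identifying the layers of the two filtrations, and this is where I expect the main difficulty. Passing from the $(n-1)$-skeleton to the $n$-skeleton is governed by the boundary inclusion $\partial\Delta^n \hookrightarrow \Delta^n$, and in the stable setting the degeneracy maps split, so that $X_n$ decomposes via its latching object $L_n X$ into a complementary normalized piece $\overline{N}_n X = \cofib(L_n X \to X_n)$. The crux is the cofiber sequence
\[
\sk_{n-1}^\cE(X) \longrightarrow \sk_n^\cE(X) \longrightarrow \Sigma^{n}\,\overline{N}_n X,
\]
which exhibits the $n$-th graded piece of the skeletal filtration as $\Sigma^n$ of the normalized part. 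Concretely this comes from a pushout square attaching $\Delta^n\otimes X_n$ to $\sk_{n-1}^\cE(X)$ along $\partial\Delta^n\otimes X_n$ together with the latching data, using $S^n \simeq \Delta^n/\partial\Delta^n$; it is the \categorical avatar of the classical fact that the normalized chain complex sees only nondegenerate simplices, and it is exactly the point at which the combinatorics of the degeneracy maps in $\Delta$ must be organized coherently.

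Granting this computation, the induction closes immediately: the degree-$n$ graded piece of a filtered object is its cofiber $F_n/F_{n-1}$, so $\sk_*^\cE$ matches graded pieces degree by degree, and since everything lives in a stable \category the comparison is an equivalence in degree $n$ by the inductive hypothesis together with the long exact/five-lemma argument for cofiber sequences. Taking the colimit over $n$ then yields the equivalence on all of $\Fun(\Delta^\op,\cE)$. I would regard this intrinsic, skeletal-induction route as preferable to the alternative of reducing to $\cE=\Sp$ and invoking the classical normalized-chains equivalence, since identifying the \categorical $\sk_*^\cE$ with classical normalized chains is itself nontrivial work (carried out for spaces in the appendix), whereas the cofiber identification above is self-contained once the splitting of degeneracies is made homotopy-coherent.
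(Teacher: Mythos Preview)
The paper does not give its own proof of this theorem: it is stated with a citation to \cite[1.2.4.1]{HA} and then used as a black box, so there is no in-paper argument to compare your proposal against.

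For what it is worth, your outline is close in spirit to Lurie's proof in \cite[\S1.2.4]{HA}: filter both sides by skeletal degree, identify the successive layers via the cofiber sequence
\[
\sk_{n-1}^\cE(X) \longrightarrow \sk_n^\cE(X) \longrightarrow \Sigma^{n}\,\cofib(L_nX\to X_n),
\]
and induct. (That cofiber sequence is correct and is in fact invoked later in the paper, in the proof of \cref{lemma: skeletal filtration is t-exact for beilinson}.) The one place your sketch is genuinely loose is the inductive step: ``matching graded pieces'' plus a five-lemma is not by itself a proof that an exact functor between stable \categories is an equivalence---you have not yet produced a map to apply the five-lemma \emph{to}. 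You need either to construct the right adjoint $\Gamma^\cE_\bullet = q_*p^*$ and check that unit and counit are equivalences level by level (this is what Lurie actually does), or to phrase the step as a comparison of recollements: both $\Fun(\Delta^\op_{\leq n},\cE)$ and $\Fun(\{0\leq\cdots\leq n\},\cE)$ are glued from their $(n{-}1)$-truncated subcategories and a copy of $\cE$, and $\sk_*^\cE$ respects that gluing. Either route closes the induction; your write-up should name which one you are using. The preliminary reduction to presentable $\cE$ via $\mathrm{Ind}(\cE)$ is harmless but not needed for Lurie's argument.
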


Recall from \cite[1.2.2.9, 1.2.2.14]{HA} that every filtered object $F\colon \nn\to \Sp$ defines a spectral sequence 
\[
E_1^{p,q}=\pi_{p+q}(F_p/F_{p-1})\Rightarrow \pi_{p+q}(\colim F_*).
\]
Therefore it is useful to think of $\sk_p(X)/\sk_{p-1}(X)$ as computing the homotopy type of the geometric realization $|X|$ for any simplicial spectrum $X$.

\subsection{Relationship to the classical Dold-Kan correspondence}
The last theorem relates to the classical Dold--Kan correspondence of abelian categories as explained in \cite[1.2.4.3]{HA}. 
The Dold--Kan correspondence $\sAb\simeq \choo$ between simplicial abelian groups and non-negatively graded chain complexes is given by the normalization functor $\norm_*\colon\sAb \rightarrow \choo$ that we now recall.
Let $X$ be a simplicial abelian group. For all $n\geq 0$, define $C_n(X)=X_n$ and its differentials are given by the alternating sum $d=\sum_{i=0}^n (-1)^i d_i$ of the face maps of $X$.
The chain complex $C_*(X)$ has a natural sub chain complex $D_*(X)$ given by the degenerate simplices. We define the normalization of $X$ to be the quotient in $\choo$:
\[
\norm_*(X)=C_*(X) / D_*(X).
\]
It is useful to notice that $D_*(X)$ is acyclic and thus $\norm_*(X)$ is quasi-isomorphic to $C_*(X)$. Importantly, we have $H_*(\norm_*(X))\cong \pi_*(X)$ for any simplicial abelian group $X$. 

There are other definitions of the normalization functor, such as the Moore complex of a simplicial abelian group, but they are all equivalent. 
In the next lemma, which is likely well-known, we prove that any functor $\m{F}_{\ast} \colon \sAb \rightarrow \choo$ which is an equivalence of categories, is naturally isomorphic to the normalization functor. 

\begin{lemma}\label{lemma: uniqueness of DoldKan Equivalence}
If $\m{F}_*\colon \sAb\to \choo$ is an equivalence of categories, then there exists a natural isomorphism from the normalization $\norm_*$ to $\m{F}_{\ast}$.
\end{lemma}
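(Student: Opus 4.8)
The plan is to reduce the statement to a rigidity property of $\choo$. Since the classical normalization $\norm_*\colon \sAb \to \choo$ is an equivalence, it admits a quasi-inverse $\revers\colon \choo \to \sAb$ with $\revers\circ \norm_* \cong \id_{\sAb}$, and any equivalence $\m{F}_*$ then yields a self-equivalence $G = \m{F}_*\circ \revers \colon \choo \to \choo$. It suffices to produce a natural isomorphism $G \cong \id_{\choo}$, for then $\m{F}_* \cong G\circ \norm_* \cong \norm_*$. Because $G$ is an equivalence between abelian categories, it is automatically additive and exact and preserves all limits and colimits; in particular it preserves the zero object, biproducts, indecomposability, and the class of compact projective generators.

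The key preliminary step is to pin down the compact projective generators of $\choo$ concretely. The degree functors $(-)_n\colon \choo \to \Ab$ are exact, jointly conservative, and corepresented by $\zz[0]$ (the complex concentrated in degree $0$) for $n=0$, and by the disk complexes $D^n = (\zz \xrightarrow{\id} \zz)$ concentrated in degrees $n,n-1$ for $n\geq 1$. Hence $\{\zz[0]\}\cup\{D^n\}_{n\geq 1}$ is a set of compact projective generators, each indecomposable since its endomorphism ring is $\zz$. I would then compute the full subcategory $\mathcal{B}$ of $\choo$ spanned by these objects: writing $P_0 = \zz[0]$ and $P_n = D^n$, one finds $\End(P_n)=\zz$, $\Hom(P_n,P_{n+1})=\zz$, while all other Hom-groups between distinct generators vanish and every length-two composite is zero. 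Thus $\mathcal{B}$ is the ``directed, radical-square-zero'' linear quiver $P_0 \to P_1 \to P_2 \to \cdots$.

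Since $G$ permutes the indecomposable compact projectives, it induces an automorphism of the quiver $\mathcal{B}$, which I would show is the identity on objects by induction. Here $P_0$ is characterized as the unique generator receiving no nonzero morphism from any other generator, so $G(P_0)\cong P_0$; and once $P_0,\dots,P_n$ are fixed, $P_{n+1}$ is the unique generator receiving a nonzero morphism from $P_n$ (as $\Hom(P_n,P_k)\neq 0$ only for $k=n+1$), forcing $G(P_{n+1})\cong P_{n+1}$. Using that the $P_n$ form a set of compact projective generators, so that $\choo$ is reconstructed as additive functors on $\mathcal{B}$, the self-equivalence $G$ is induced by a self-equivalence $g$ of $\mathcal{B}$ fixing every object up to isomorphism. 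Such a $g$ acts on each generating group $\Hom(P_n,P_{n+1})=\zz$ by a sign $\epsilon_n\in\{\pm 1\}$, and solving $\delta_{n+1}=\epsilon_n\delta_n$ with $\delta_0=1$ assembles these signs into a natural isomorphism $\id_{\mathcal{B}}\cong g$, whence $G \cong \id_{\choo}$.

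The main obstacle I anticipate is the middle step: establishing the Hom-computation that exhibits $\mathcal{B}$ as a directed quiver, and then carefully promoting an object-fixing self-equivalence of $\mathcal{B}$ to an honest natural isomorphism $G\cong\id$ via the reconstruction of $\choo$ from its compact projective generators. Once the directedness and the $\End=\zz$ computation are in hand, the object-level rigidity and the final sign-matching are routine.
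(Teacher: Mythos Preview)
Your proposal is correct and follows essentially the same strategy as the paper: both identify the indecomposable compact projective generators $D^n$ of $\choo$, compute that the full subcategory they span is the linear quiver $P_0\to P_1\to\cdots$ with $\End(P_n)=\zz$ and $\Hom(P_n,P_{n+1})=\zz$, deduce that any equivalence must fix each $D^n$ up to isomorphism, and then build the natural isomorphism on generators by an inductive sign-matching before extending by exactness. The only packaging difference is that you compose with a quasi-inverse to reduce to a self-equivalence of $\choo$, whereas the paper compares $\norm_*$ and $\m{F}_*$ directly on the generators $\mathbb{D}^n=\Gamma(D^n)$ of $\sAb$; this is cosmetic.
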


\begin{proof}  
    We begin with a few observations about abelian categories. 
    First, if $S$ is a set of indecomposable projective generators in an abelian category $\m{A}$, then any indecomposable projective object $P \in \m{A}$ must be isomorphic to some object in $S$; indeed, $P$ must be a summand of a sum of objects in $S$, but $P$ is also indecomposable. 
    Second, any equivalence of abelian categories between $\m{A}$ and $\m{B}$ must carry $S$ to a set of indecomposable projective generators of $\m{B}$.
    Third, any additive and right exact functor $\m{A} \rightarrow \m{B}$ is uniquely determined by its restriction to the full subcategory spanned by $S$, and the same is true for natural transformations between such functors.  

    For $n\geq 0$, let $D^n$ denote the non-negative chain complex that is $\zz$ in degrees $n$ and $n-1$ and zero elsewhere, whose only nontrivial differential is the identity map; as $D^{n}$ is concentrated in non-negative degrees, this implies $D^0$ is chain complex given by $\zz$ concentrated in degree $0$. 
    The collection $\{D^{n}\}_{n \geq 0}$ forms a set of indecomposable projective generators of $\choo$.
    Let $\Gamma\colon \choo\to \sAb$ be the inverse of $\norm_*$, where the $n$-simplices are given, for any non-negative chain complex $C_*$, by $\Gamma(C_*)_n=\Hom_{\choo}(\norm_*(\zz[\Delta^n]), C_*)$ for all $n\geq 0$.
    Writing $\mathbb{D}^{n} = \Gamma(D^n)$, the collection $\{\mathbb{D}^{n}\}_{n \geq 0}$ is a set of indecomposable projective generators for $\sAb$, and the collections $\{\norm_{\ast}(\mathbb{D}^{n})\}_{n \geq 0}$ and $\{\m{F}_{\ast}(\mathbb{D}^{n})\}_{n\geq 0}$ are sets of indecomposable projective generators for $\choo$.
    Note that the counit of the equivalence induces isomorphisms $\norm_*(\mathbb{D}^n)\cong D^n$, natural in $\mathbb{D}^{n}$, and by our observations above, for each $n \geq 0$, there is an isomorphism $\m{F}_{\ast}(\mathbb{D}^{n}) \cong D^{n_{\m{F}}}$ for some $n_{\m{F}} \geq 0$.  
    From the isomorphisms 
    \begin{align*}
        \Hom_{\sAb}(\mathbb{D}^{m},\mathbb{D}^{n})\cong\Hom_{\choo}(\m{F}_{\ast}(\mathbb{D}^{m}), \m{F}_*(\mathbb{D}^n)) & \cong \Hom_{\choo}(D^{m_{\m{F}}}, D^{n_{\m{F}}}) \\
        &=\begin{cases}
        \zz & \text{if } n=m, n=m+1\\
        0 & \text{otherwise}.
    \end{cases}
    \end{align*}
    it follows that $n = n_{\m{F}}$ for all $n \geq 0$.     
        

     By our previous discussion, to construct a natural isomorphism of functors $\phi\colon \norm_*\Rightarrow \m{F}_*$, it is enough to do so upon restriction to the full subcategory spanned by $\{\mathbb{D}^n\}_{n\geq 0}$. 
     Therefore we need to argue we 
     can choose isomorphisms $\phi_n\colon  \norm_*(\mathbb{D}^n)\cong D^n \stackrel{\cong}\rightarrow \m{F}_*(\mathbb{D}^n)$ for all $n\geq 0$ so that the diagrams 
     \begin{equation}\label{eq: uniqueness of DK}
\begin{tikzcd}
    \norm_*(\mathbb{D}^n)\cong D^n \ar{r}{\phi_n} \ar{d}[swap]{\norm_*(f)} & \m{F}_*(\mathbb{D}^n) \ar{d}{\m{F}_*(f)}\\
    \norm_*(\mathbb{D}^k)\cong D^k \ar{r}{\phi_k} & \m{F}_*(\mathbb{D}^k)
\end{tikzcd}
\end{equation}
commute for any map $f\colon \mathbb{D}^n\to \mathbb{D}^k$ in $\sAb$. Notice the only non-trivial cases are for $k=n, n+1$.

We build $\phi_n\colon D^n\to \m{F}_*(\mathbb{D}^n)$ inductively on $n\geq 0$.
    Denote by 
    $\alpha_n$ the generator of $\Hom_{\choo}(D^{n}, D^{n+1})$ for all $n\geq 0$. Denote by 
    $\overline{\alpha}_n$ a chosen generator of $\Hom_{\sAb}(\mathbb{D}^n, \mathbb{D}^{n+1}))$ so that 
    $\norm_*(\overline{\alpha}_n)=\alpha_n$.
    For $n=0$, choose a generator $\phi_0$ of $\Hom_{\choo}(D^0, \m{F}_*(\mathbb{D}^0))\cong \zz$.
    Inductively, suppose we have the isomorphism $\phi_n\colon D^n\stackrel{\cong}\rightarrow\m{F}_*(\mathbb{D}^n)$ for some $n\geq 0$, we want to define the isomorphism $\phi_{n+1}\colon D^{n+1}\stackrel{\cong}\rightarrow \m{F}_*(\mathbb{D}^{n+1})$ such that the following diagram commutes:
    \[
    \begin{tikzcd}
        D^n \ar{r}{\phi_n} \ar{d}[swap]{{\alpha}_n} & \m{F}_*(\mathbb{D}^n) \ar{d}{\m{F}_*(\overline{\alpha}_n)}\\
        D^{n+1} \ar{r}{\phi_{n+1}} & \m{F}_*(\mathbb{D}^{n+1}),
    \end{tikzcd}
    \]
    i.e.\ $\phi_{n+1}\circ {\alpha}_n=\m{F}_*(\overline{\alpha}_n)\circ \phi_n$.
    Notice the group homomorphism
    \begin{align*}
        {\alpha}_n^*\colon \Hom_{\choo}(D^{n+1}, \m{F}_*(\mathbb{D}^{n+1})) & \longrightarrow \Hom_{\choo}(D^n, \m{F}_*(\mathbb{D}^{n+1}))\\
        f & \longmapsto f\circ {\alpha}_n
    \end{align*}
is equivalent to the differential $\m{F}_{n+1}(\mathbb{D}^{n+1})\to \m{F}_n(\mathbb{D}^{n+1})$ which must be an isomorphism. 
Thus we may define the isomorphism $\phi_{n+1}$ as
\[
\phi_{n+1}=({\alpha}_n^*)^{-1}(\m{F}_*(\overline{\alpha}_n) \circ \phi_n).
\]
By construction, we see that our choice of $\phi_n$ forces the diagrams of \cref{eq: uniqueness of DK} to commute.
This builds the natural isomorphism $\phi\colon \norm_* \Rightarrow \m{F}_*$ on the indecomposable projective generators, and hence on all objects of $\sAb$ and $\choo$.
\end{proof}

\begin{remark}
    Let $\mathrm{Eqv}(\sAb,\choo)$ denote the groupoid whose objects are equivalences of categories and whose morphisms are natural isomorphisms of functors. 
    The above result shows that this groupoid is connected, which implies we have equivalences  
    \[
    \mathrm{Eqv}(\sAb,\choo) \simeq \mathrm{Eqv}(\choo,\choo) \simeq B\mathrm{Aut}(\mathrm{id}_{\choo}) \simeq B\zz/2
    \]
    The last equivalence follows from the fact that there are exactly two automorphisms of the abelian group $\zz$. 
\end{remark}

We now relate the skeleton filtration with the normalization.
Following our conversation in \cref{subsec: filtered obj=chains}, given a stable $\infty$-category $\cE$, the skeleton filtration induces an equivalence:
\[
    \sk_*^\cE\colon \Fun(\Delta^\op, \cE) \stackrel{\simeq}\longrightarrow \Fun(\nn, \cE)\simeq \chp(\cE)
    \]
by \cref{theorem: Lurie DK-theorem}, see also Joyal's approach in \cite[35.3]{Joyal}.
Recall the notion of $t$-structure of a stable $\infty$-category $\cE$, which is notably determined by a pair of full subcategories $\cE_{\geq 0}$ and $\cE_{\leq 0}$, see \cite[1.2.1.4]{HA}. A $t$-structure also determines subcategories $\cE_{\geq n}$ and $\cE_{\leq n}$ after $n$ amount of shifts. For the case $\cE=\Sp$, we will use the standard $t$-structure on spectra \cite[1.4.3.6]{HA}.
Suppose now that $\cE$ is a complete stable $\infty$-category with a $t$-structure. 
Then there is a $t$-structure on $\Ch(\cE)$ given levelwise, for which $\Ch(\cE)_{\geq 0}$ is spanned by objects in $\Ch(\cE_{\geq 0})$, and $\Ch(\cE)_{\leq 0}$ is spanned  by objects in $\Ch(\cE_{\leq 0})$ \cite[5.1]{stefano}.
One can similarly define a $t$-structure on non-negative chain complexes $\chp(\cE)$.

\begin{warning}
    It is important to not confuse the stable $\infty$-category $\chp(\cE)$ of non-negative chain complexes in $\cE$ with the (non-stable) $\infty$-category $\Ch(\cE)_{\geq 0}$ of chain complexes in $\cE_{\geq 0}$.
\end{warning}

Given certain assumptions on the $t$-structure of $\cE$, there is a $t$-structure on filtered objects $\Fun(\zz, \cE)$ called the \textit{Beilinson $t$-structure}, for which $\Fun(\zz, \cE)_{\geq 0}$ is spanned by filtered objects $F_*$ such that $F_n/F_{n-1}$ is in $\cE_{\geq n}$, while $\Fun(\zz, \cE)_{\leq 0}$ is spanned by filtered objects such that $F_i$ is in $\cE_{\leq i}$. 
This $t$-structure on filtered objects is compatible with the $t$-structure on $\Ch(\cE)$, see \cite[5.5, 5.12]{stefano}. The observation can be carried over non-negative chain complexes so that there exists a $t$-structure on $\Fil(\nn, \cE)$ that is equivalent to the one on $\chp(\cE)$, which in particular induces an equivalence on the hearts \cite[1.2.11]{HA}: $\Fil(\cE)^\heartsuit\simeq \Ch(\cE^\heartsuit)$, see \cite[5.4]{stefano}.
One can also define a levelwise $t$-structure on simplicial objects $\Fun(\Delta^\op, \cE)$, and can check the equivalence $\Fun(\Delta^\op, \cE)\simeq \Fun(\nn, \cE)$ is compatible with the choice of $t$-structures.

\begin{lemma}\label{lemma: skeletal filtration is t-exact for beilinson}
    The skeletal filtration 
    \(
    \sk_\ast \colon \Fun(\Delta^{\op},\Sp) \rightarrow \Fun(\nn,\Sp)
    \)
    is $t$-exact with respect to the pointwise $t$-structure on the source and the Beilinson $t$-structure on the target. 
\end{lemma}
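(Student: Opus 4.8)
The plan is to verify the two half-exactness conditions separately, using the structure of the associated graded of the skeletal filtration together with the characterizations of the Beilinson $t$-structure recalled above. Recall that for a simplicial spectrum $X$, the skeletal filtration sits in cofiber sequences
\[
\sk_{n-1}(X) \longrightarrow \sk_n(X) \longrightarrow \Sigma^n N_n(X),
\]
where $N_n(X) = \cofib(L_n X \to X_n)$ is the cofiber of the $n$-th latching map; this is the standard cellular description of the colimit filtration and can be extracted from Lurie's treatment of the Dold--Kan correspondence in \cite[\S 1.2.3, 1.2.4]{HA}. The crucial point I will exploit is that, because $\Sp$ is additive and idempotent complete, the latching map $L_n X \to X_n$ is a split monomorphism, so that $X_n \simeq L_n X \oplus N_n(X)$ and in particular $N_n(X)$ is a retract of $X_n$.

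First I would treat the connective (right $t$-exact) direction. Suppose $X \in \Fun(\Delta^\op, \Sp)_{\geq 0}$, i.e.\ $X_m \in \Sp_{\geq 0}$ for all $m$. Since $\Sp_{\geq 0}$ is closed under colimits, the latching object $L_n X$ is connective, and hence so is $N_n(X)$, being the cofiber of a map between connective spectra. Therefore $\sk_n(X)/\sk_{n-1}(X) \simeq \Sigma^n N_n(X) \in \Sp_{\geq n}$ for every $n$, which is exactly the condition characterizing $\sk_\ast(X) \in \Fun(\nn, \Sp)_{\geq 0}$ for the Beilinson $t$-structure. Observe that this direction does not even require the splitting, only that cofibers of connective spectra are connective.

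Next I would treat the coconnective (left $t$-exact) direction, which is where the splitting is essential. Suppose $X \in \Fun(\Delta^\op, \Sp)_{\leq 0}$, so $X_m \in \Sp_{\leq 0}$ for all $m$. Here a colimit need not remain coconnective, so instead I use that $N_n(X)$ is a retract of $X_n$: since $\Sp_{\leq 0}$ is closed under retracts, $N_n(X) \in \Sp_{\leq 0}$, and hence $\Sigma^n N_n(X) \in \Sp_{\leq n}$. I then induct on $i$ using the cofiber sequences above. The base case is $\sk_0(X) = X_0 \in \Sp_{\leq 0}$; and given $\sk_{i-1}(X) \in \Sp_{\leq i-1} \subseteq \Sp_{\leq i}$, the cofiber sequence $\sk_{i-1}(X) \to \sk_i(X) \to \Sigma^i N_i(X)$ exhibits $\sk_i(X)$ as an extension of two objects of $\Sp_{\leq i}$; as $\Sp_{\leq i}$ is closed under extensions, $\sk_i(X) \in \Sp_{\leq i}$. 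This is precisely the condition characterizing $\sk_\ast(X) \in \Fun(\nn, \Sp)_{\leq 0}$.

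Having shown that $\sk_\ast$ carries connective objects to connective objects and coconnective objects to coconnective objects, it is $t$-exact, completing the argument. The main obstacle is the coconnective direction: unlike connectivity, coconnectivity is not preserved by the colimits computing $\sk_n(X)$ and $L_n X$, so the proof genuinely relies on the Dold--Kan splitting $X_n \simeq L_n X \oplus N_n(X)$ to conclude that the normalized pieces $N_n(X)$ inherit coconnectivity as retracts of $X_n$.
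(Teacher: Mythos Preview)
Your proof is correct. The connective direction is essentially identical to the paper's argument, using the same cofiber sequences and the closure of $\Sp_{\geq 0}$ under colimits.

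In the coconnective direction, however, your approach differs from the paper's. You invoke the Dold--Kan splitting $X_n \simeq L_n X \oplus N_n(X)$, valid in any idempotent-complete additive $\infty$-category, to conclude that $N_n(X)$ is a retract of $X_n$ and hence lies in $\Sp_{\leq 0}$; then you induct along the skeletal filtration using closure of $\Sp_{\leq i}$ under extensions. The paper instead avoids the splitting entirely: it observes that the barycentric-subdivision functor $P_{\neq\varnothing}([n])^{\op} \to \Delta_{\leq n}^{\op}$ is cofinal, so $\sk_n(Y)$ can be computed as an $(n{+}1)$-cubical colimit of values of $Y$, and then argues by iterated cofibers that such a cubical colimit of $0$-coconnective objects is $n$-coconnective. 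Your argument is shorter and more conceptual, relying on a standard structural fact about simplicial objects in stable $\infty$-categories; the paper's argument is more self-contained in that it does not appeal to the splitting, but at the cost of a cofinality computation and an iterated-cofiber bound. Either route is perfectly adequate here.
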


\begin{proof}
    Let $X \in \Fun(\Delta^{\op},\Sp)_{\geq 0} \simeq \Fun(\Delta^{\op},\Sp_{\geq 0})$. 
    The fact that $\sk_{\ast}(X)$ belongs to $\Fun(\nn,\Sp)_{\geq 0}$ follows from the cofiber sequences 
    \[
    \sk_{n-1}(X) \rightarrow \sk_{n}(X) \rightarrow \Sigma^{n}X_{n}/L_{n}(X)
    \]
    where $L_n(X)$ denotes the usual $n$-th latching object of $X$,
    together with the fact that any small colimit of connective spectra remains connective; therefore, we have $\Sigma^{n}X_{n}/L_{n}(X) \in \Sp_{\geq n}$. 

    Now, let $Y \in \Fun(\Delta^{\op},\Sp)_{\leq0} \simeq \Fun(\Delta^{\op},\Sp_{\leq 0})$. 
    For each $n \geq 0$, let $P_{\neq \varnothing}([n])$ denote the set of nonempty $S \subseteq [n]$, with the partial order given by inclusion. 
    There is a functor $P_{\neq \varnothing}([n]) \rightarrow \Delta_{\sleq{n}}$ given by sending some subset $S=\{s_{0}<\cdots <s_{k}\}$ to $[k]$ and by sending an inclusion $S\setminus\{s_i\} \subseteq S$ to the relevant face operator $[k-1] \rightarrow [k]$; this functor is final, so that $\tau\colon P_{\neq \varnothing}([n])^{\op} \rightarrow \Delta_{\sleq{n}}^{\op}$ is cofinal.
    Therefore, for each $n\geq 0$, the spectrum $\sk_{n}(Y)$ is a cubical colimit of the diagram $\tau^{\ast}Y$ and can be calculated iteratively over smaller cubical diagrams. 
    Additionally, recall that the cofiber of a map of $k$-coconnective spectra, is a $(k+1)$-coconnective spectrum.
    Combining these observations, we see that $\sk_{n}(Y) \in \Sp_{\leq n}$ for all $n\geq 0$, completing the proof.    
\end{proof}

In particular, as the heart of $\Fun(\Delta^\op, \cE)$ is $\Fun(\Delta^\op, \cE^\heartsuit)$, we obtain that the skeleton filtration induces an equivalence on the hearts:
\[
\sk^{\cE,\heartsuit}_*\colon \Fun(\Delta^\op, \cE^\heartsuit) \stackrel{\simeq}\longrightarrow \Fun(\nn, \cE)^\heartsuit\simeq \chp(\cE^\heartsuit).
\]
Choosing $\cE$ to be the \category $\Sp$ of spectra with its usual $t$-structure, its heart $\Sp^\heartsuit$ is precisely the category $\mathrm{Ab}$ of abelian groups. 
Combining with \cref{lemma: uniqueness of DoldKan Equivalence},
this observation leads to the following result, see also \cite[1.2.4.3]{HA}.

\begin{proposition}
    The equivalence $\sk_*\colon \Fun(\Delta^\op, \Sp)\stackrel{\simeq}\rightarrow \chp(\Sp)$ of \categories recovers on its heart the usual Dold--Kan correspondence given by the normalization functor seen as an equivalence of categories
    \[
   \norm_*\colon \sAb\stackrel{\simeq}\longrightarrow \choo.
    \]
\end{proposition}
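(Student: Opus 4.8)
The plan is to show that restricting $\sk_*$ to hearts yields an equivalence of ordinary categories $\sAb \xrightarrow{\sim} \choo$, and then to identify this equivalence with $\norm_*$ using the rigidity of the Dold--Kan equivalence established in \cref{lemma: uniqueness of DoldKan Equivalence}.

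First I would specialize to $\cE = \Sp$, which is complete and stable, so that \cref{theorem: Lurie DK-theorem} together with Ariotta's identification furnishes the equivalence $\sk_*\colon \Fun(\Delta^\op,\Sp) \xrightarrow{\sim} \chp(\Sp)$. By \cref{lemma: skeletal filtration is t-exact for beilinson}, this functor is $t$-exact for the pointwise $t$-structure on the source and the Beilinson $t$-structure on the target, and hence restricts to an equivalence on hearts. Next I would pin down the two hearts: the heart of the pointwise $t$-structure is $\Fun(\Delta^\op,\Sp^\heartsuit) = \Fun(\Delta^\op,\mathrm{Ab}) = \sAb$, while the heart of $\chp(\Sp)$ is $\chp(\Sp^\heartsuit) = \chp(\mathrm{Ab})$, which by \cref{remark: nerve of chain is chain of nerves} is the nerve of $\choo$. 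Since both hearts are nerves of ordinary categories, the resulting equivalence of \categories is the nerve of a genuine equivalence of categories $\sk_*^{\heartsuit}\colon \sAb \to \choo$. Finally, \cref{lemma: uniqueness of DoldKan Equivalence} applies verbatim to produce a natural isomorphism $\norm_* \cong \sk_*^{\heartsuit}$, which is exactly the claim.

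The key point of this argument is that we never have to compute the heart functor $\sk_*^{\heartsuit}$ by hand: all of the genuine content is absorbed into \cref{lemma: uniqueness of DoldKan Equivalence}, whose whole purpose is to let us recognize any equivalence $\sAb \to \choo$ as normalization without knowing its explicit formula. Consequently the main obstacle is not the final identification but the correct computation of the two hearts; in particular, one must confirm that the heart of $\chp(\Sp)$ for the Beilinson $t$-structure really is $\chp(\mathrm{Ab}) = \choo$, which is precisely the subtlety isolated by the warning above, rather than some $1$-categorical variant of chain complexes in connective spectra. This is the only place where the careful bookkeeping of $t$-structures from the preceding discussion is genuinely used.
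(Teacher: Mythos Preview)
Your proposal is correct and follows essentially the same approach as the paper: the paper deduces the proposition from the preceding discussion by using $t$-exactness (\cref{lemma: skeletal filtration is t-exact for beilinson}) to obtain an equivalence on hearts, identifying the hearts as $\sAb$ and $\choo$, and then invoking \cref{lemma: uniqueness of DoldKan Equivalence} to recognize the resulting equivalence as $\norm_*$. Your additional remark about the bookkeeping needed to identify the heart of $\chp(\Sp)$ is well placed and matches the paper's surrounding discussion.
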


\subsection{The classical Eilenberg--Zilber homomorphism}
The normalization functor $\norm_*\colon\sAb\rightarrow \choo$ is a unital lax symmetric monoidal functor via the Eilenberg--Zilber homomorphism that we now recall, see more details in \cite[8.5.4]{weibel} or \cite[\href{https://kerodon.net/tag/00RF}{Subsection 00RF}]{kerodon}.
\begin{definition}
    A \textit{$(p,q)$-shuffle} $\sigma\colon [p+q]\to [p]\times [q]$ is a strictly increasing map of partially ordered sets, i.e.\ a permutation of the set $\{0,\ldots , p + q- 1\}$ which leaves the first $p$ elements and the last $q$ elements in their natural order. It is useful to think of $(p,q)$-shuffles as the non-degenerate simplices of $\Delta^p\times \Delta^q$.
We shall denote $\sigma=(\sigma_{-}, \sigma_+)$ and we let $I_{-}$ be the set of integers $1\leq i \leq p+q$ such that $\sigma_{-}(i-1)<\sigma_-(i)$ and $I_+$ the set of integers $1\leq i \leq p+q$ such that $\sigma_+(i-1)<\sigma_+(i)$.
We let $S_{pq}$ be the set of all $(p,q)$-shuffles.
We define the sign of $\sigma$ as:
\[
\mathrm{sgn}(\sigma)=\prod_{(i,j)\in I_-\times I_+} \begin{cases}
    1 & \text{if }i<j\\
    -1 & \text{if }i>j.
\end{cases}
\]
\end{definition}

For all $p,q\geq 0$, define the \textit{unnormalized shuffle product} as:
\begin{align*}
    \nabla_{pq}\colon X_p\otimes Y_q& \longrightarrow  X_{p+q}\otimes Y_{p+q}\\
    x\otimes y & \longmapsto \sum_{\sigma\in S_{pq}} \mathrm{sgn}(\sigma)\sigma^*_-(x)\otimes \sigma^*_+(y),
\end{align*}
where given a $(p,q)$-shuffle $\sigma=(\sigma_-, \sigma_+)$,  we wrote $\sigma^*_-\colon X_p\to X_{p+q}$ and $\sigma^*_+\colon Y_q\to Y_{p+q}$ the induced simplicial group homomorphisms.
The unnormalized shuffle product map preserves the subcomplex of degenerate simplices and thus induces a unique homomorphism $\nabla\colon \norm_p(X)\otimes \norm_q(Y)\to \norm_{p+q}(X\otimes Y)$ called the \textit{shuffle product}.
The shuffle product is natural in $X$ and $Y$, and induces a chain homomorphism \[\nabla\colon \norm_*(X)\otimes \norm_*(Y)\longrightarrow \norm_*(X\otimes Y)\]
that we call the \textit{Eilenberg--Zilber homomorphism}, and is the desired unital lax symmetric monoidal transformation on $\norm_*$ introduced in \cite{EZ}. 

It is a classical fact that the shuffle product is unique: there is a unique collection of homomorphisms $\nabla\colon \norm_p(X)\otimes \norm_q(Y)\to \norm_{p+q}(X\otimes Y)$ that are natural in $X$ and $Y$, satisfying the Leibniz rule (i.e. induces a chain homomorphism), and such that $\nabla(x\otimes y)=x\otimes y$ for all $x\in X_0$ and $y\in  Y_0$, see for instance \cite[\href{https://kerodon.net/tag/00RG}{Proposition 00RG}]{kerodon}.
We strengthen this observation building on \cref{lemma: uniqueness of DoldKan Equivalence}. 

\begin{proposition}\label{prop:unicity of classical EZ}
Suppose $\m{F}_*\colon \sAb\to \choo$ is an equivalence of categories that is unital lax symmetric monoidal.
Then $\m{F}_*$ is uniquely naturally isomorphic to the normalization $\norm_*\colon \sAb\to \choo$ so that its lax symmetric monoidal structure is equivalent to the Eilenberg--Zilber homomorphism. 
\end{proposition}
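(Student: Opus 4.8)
The plan is to leverage \cref{lemma: uniqueness of DoldKan Equivalence} to reduce the statement to a sign-bookkeeping problem on the normalization functor, and then invoke the classical uniqueness of the shuffle product. First I would apply \cref{lemma: uniqueness of DoldKan Equivalence} to produce a natural isomorphism of underlying functors $\alpha\colon \m{F}_* \Rightarrow \norm_*$. I would then transport the unital lax symmetric monoidal structure $(\mu^{\m{F}}, u^{\m{F}})$ of $\m{F}_*$ across $\alpha$ in the standard way, setting $\mu'_{X,Y} = \alpha_{X\otimes Y}\circ \mu^{\m{F}}_{X,Y}\circ(\alpha_X^{-1}\otimes \alpha_Y^{-1})$ and $u' = \alpha_{\unit}\circ u^{\m{F}}$. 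Since $\alpha$ is a natural isomorphism, the coherence axioms transport verbatim, so $(\norm_*, \mu', u')$ is again a unital lax symmetric monoidal functor and $\alpha$ upgrades to a monoidal natural isomorphism $\m{F}_* \cong (\norm_*, \mu')$. This reduces the problem to identifying $(\norm_*, \mu')$ with $(\norm_*, \nabla)$ as unital lax symmetric monoidal functors; note that the symmetry and associativity constraints are \emph{properties} of the lax structure, which are carried along automatically and will play no further role.

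Next I would analyze the degree-zero component. Since $\mathbb{D}^0 = \unit_{\sAb}$ corepresents the functor $X\mapsto X_0$, the same generators-and-naturality reasoning as in the proof of \cref{lemma: uniqueness of DoldKan Equivalence} shows that the degree-zero map $\mu'_{0}\colon X_0\otimes Y_0\to (X\otimes Y)_0 = X_0\otimes Y_0$ is multiplication by a fixed integer $c$, and that the unit map $u'\colon D^0\to \norm_*(\mathbb{D}^0)=D^0$ is multiplication by some integer $\epsilon$. Evaluating the unitality axiom in degree zero forces $c\,\epsilon = 1$, so $c = \epsilon \in \{\pm 1\}$.

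Then I would correct the sign using the rigidity computed in the remark following \cref{lemma: uniqueness of DoldKan Equivalence}, namely $\Aut(\mathrm{id}_{\choo})\simeq \zz/2$, which transports to $\Aut(\norm_*)\simeq\zz/2$, generated by the global sign $\beta = -\mathrm{id}$. Twisting $(\norm_*,\mu')$ by $\beta$ replaces $\mu'$ by $\beta_{X\otimes Y}\circ\mu'\circ(\beta_X^{-1}\otimes\beta_Y^{-1}) = -\mu'$ and $u'$ by $-u'$, hence flips the sign of both $c$ and $\epsilon$; thus, possibly after applying $\beta$, I may assume $c = \epsilon = 1$. With $c = 1$ the map $\mu'$ is a collection of natural chain maps $\norm_p(X)\otimes\norm_q(Y)\to\norm_{p+q}(X\otimes Y)$ whose degree-zero part is the identity; here the chain-map property is exactly the Leibniz rule, since $\mu'$ is a morphism in $\choo$. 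The classical uniqueness of the shuffle product \cite[\href{https://kerodon.net/tag/00RG}{Proposition 00RG}]{kerodon} then identifies $\mu'$ with the Eilenberg--Zilber homomorphism $\nabla$, and composing the monoidal isomorphisms yields $\m{F}_* \cong (\norm_*, \nabla)$ as unital lax symmetric monoidal functors.

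Finally, for the uniqueness clause I would observe that a monoidal natural automorphism $\beta$ of $(\norm_*,\nabla)$ must satisfy $\beta_{X\otimes Y}\circ\nabla = \nabla\circ(\beta_X\otimes\beta_Y)$; writing $\beta = \pm\mathrm{id}$, the right-hand side is $\nabla$ while the left-hand side is $\pm\nabla$, so $\beta = \mathrm{id}$ is the only monoidal automorphism. Hence any two monoidal natural isomorphisms $\m{F}_*\cong(\norm_*,\nabla)$ differ by the identity, giving uniqueness. I expect the main obstacle to be the careful sign bookkeeping in the degree-zero unitality computation, and in particular verifying that twisting by the global sign $\beta$ genuinely resolves the $c=-1$ case without disturbing the remaining coherences, rather than any conceptual difficulty: once the degree-zero normalization is achieved, the associativity and symmetry constraints are irrelevant and the classical uniqueness takes over.
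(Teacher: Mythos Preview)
Your proposal is correct and takes a genuinely different route from the paper. The paper fixes the underlying natural isomorphism $\phi$ at the outset by declaring $\phi_0=\eta$ (forced by the unit axiom), and then runs an explicit induction on $p+q$ over the generators $\mathbb{D}^p\otimes\mathbb{D}^q$: it writes down $D^p\otimes D^q$ concretely, shows that the two candidate maps $\theta_{p,q}=\phi\circ\nabla$ and $\zeta_{p,q}=\mu\circ(\phi_p\otimes\phi_q)$ agree on the lower-degree generators by induction, and then uses injectivity of the relevant differential in $\norm_*(\mathbb{D}^p\otimes\mathbb{D}^q)$ to pin down the top component. Your approach instead transports the lax structure to $\norm_*$, reduces to a single degree-zero scalar $c=\epsilon\in\{\pm1\}$, absorbs the sign via the global automorphism $-\mathrm{id}$, and then invokes the classical shuffle uniqueness \cite[\href{https://kerodon.net/tag/00RG}{00RG}]{kerodon} as a black box. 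This is cleaner and more conceptual, at the cost of outsourcing the inductive step; the paper's argument is more self-contained and in effect reproves that uniqueness in this particular setting. For the uniqueness clause, the paper gets it for free because $\phi_0$ is forced, while you obtain it by computing that the monoidal automorphism group of $(\norm_*,\nabla)$ is trivial---both are fine. One small point worth tightening: your claim that $\mu'_0$ is globally multiplication by a fixed integer $c$ deserves a sentence of justification (e.g.\ both bifunctors are additive and right exact in each variable, so the natural endomorphism is determined by its value on $(\mathbb{D}^0,\mathbb{D}^0)$), rather than just a gesture toward \cref{lemma: uniqueness of DoldKan Equivalence}.
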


\begin{proof}
Let $D^n$ and $\mathbb{D}^n$ be as in \cref{lemma: uniqueness of DoldKan Equivalence}.
Notice that $D^0$ and $\mathbb{D}^0$ are the monoidal units of $\choo$ and $\sAb$ respectively.
Let us denote by $\mu\colon \m{F}_{\ast}(-)\otimes \m{F}_{\ast}(-)\to \m{F}_{\ast}(- \otimes -)$ and $\eta\colon D^0 \to \m{F}_*(\mathbb{D}^0)$, the multiplication and unit maps which exhibit $\m{F}_{\ast}$ as lax symmetric monoidal. 

Recall from the proof of \cref{lemma: uniqueness of DoldKan Equivalence} there is a natural isomorphism $\phi\colon \norm_*\Rightarrow \m{F}_*$, which was uniquely determined by the choice of a homomorphism $\phi_0\colon D^0\to \m{F}_*(\mathbb{D}^0)$. 
We now show that we can choose exactly one such $\phi$ so that it becomes a unital lax symmetric monoidal natural isomorphism from $\norm_\ast$ to $\m{F}_{\ast}$.

Notice by unitality, we must choose $\phi_0=\eta$.
    We now need to verify that $\phi$ is compatible with $\mu$ and $\nabla$. 
    By exactness, it is enough to check that for all $p,q\geq 0$, we have the commutative diagram:
    \[
    \begin{tikzcd}
    D^p\otimes D^q  \ar[phantom, "\cong"]{r}\ar{d}[swap]{\phi_p\otimes \phi_q}& [-3em] \norm_*(\mathbb{D}^p)\otimes \norm_*(\mathbb{D}^q)  \ar{r}{\nabla} & \norm_*(\mathbb{D}^p\otimes \mathbb{D}^q)\ar{d}{\phi}\\
     \m{F}_*(\mathbb{D}^p)\otimes \m{F}_*(\mathbb{D}^q) \ar{rr}{\mu} & & \m{F}_*(\mathbb{D}^p\otimes \mathbb{D}^q).
    \end{tikzcd}
    \]
    We denote by $\theta_{p,q}\colon D^p\otimes D^q\to \m{F}_*(\mathbb{D}^p\otimes \mathbb{D}^q)$ the chain map obtained as $\phi\circ \nabla$ as above, and $\zeta_{p,q}\colon D^p\otimes D^q\to \m{F}_*(\mathbb{D}^p\otimes \mathbb{D}^q)$ the one obtained as $\mu\circ(\phi_p\otimes \phi_q)$. 
    We must show $\theta_{p,q}=\zeta_{p,q}$ for all $p,q\geq 0$. 
    
    We do this by induction on $p+q$. If $p+q=0$, then $p=0$ and $q=0$, and  $\theta_{0,0}=\zeta_{0,0}$ follows from our choice of $\phi_0$.
    Suppose we have shown $\theta_{p,q}=\zeta_{p,q}$ for all $0\leq p+q\leq n-1$, for some $n\geq 1$. Suppose now $p+q=n$.
    Suppose either $p=0$ or $q=0$, then the desired equality follows again by our choice of $\phi_0$, so let us assume $p,q>0$. 

    Recall that the chain complex $D^p\otimes D^q$ is precisely
     \[
    \begin{tikzcd}
        \cdots \ar{r} & 0 \ar{r} & \mathbb{Z} \ar{r}{(1,(-1)^{p})}& [2em]\zz^{\oplus 2} \ar{r}{(1,(-1)^{p+1}) } & [3em] \zz \ar{r} & 0 \ar{r} & \cdots
    \end{tikzcd}
    \]
    where $\zz^{\oplus 2}$ is in degree $p+q-1$.
    Given a chain complex $C_*$, notice a chain map $f_{p,q}\colon D^p\otimes D^q\to C_*$ is entirely determined by a choice of elements $x\in C_{p+q}$, $y,z\in C_{p+q-1}$ such that $d(x)=y+(-1)^pz$ and $d(y)=(-1)^{p+1}d(z)$. Denote $f_{p,q}^x\coloneqq x$, $f_{p,q}^y\coloneqq y$, and $f_{p,q}^z\coloneqq z$.
    Suppose we are given chain maps $f_{p,q},g_{p,q}\colon D^p\otimes D^q\to C_*$ for all $p,q \geq 0$,
    such that for all $p\geq 1$: \[f_{p,q}\circ (\alpha_{p-1}\otimes 1)=g_{p,q} \circ(\alpha_{p-1}\otimes 1)\]
    where $\alpha_{p-1}\otimes 1\colon D^{p-1}\otimes D^q\to D^p\otimes D^q$ is defined from the inclusion $\alpha_{p-1}\colon D^{p-1}\to D^q$, see our choice in \cref{lemma: uniqueness of DoldKan Equivalence}. Then in that case, we obtain $f_{p,q}^y=g_{p,q}^y$. 
    Similarly, we obtain $f_{p,q}^z=g_{p,q}^z$ when the chain maps equalize after pre-composing with $1\otimes \alpha_{q-1}\colon D^p\otimes D^{q-1}\to D^p\otimes D^q$.

    We apply this discussion for the maps $\theta_{p,q}$ and $\zeta_{p,q}$.
    From the inductive definition of $\phi$ in \cref{lemma: uniqueness of DoldKan Equivalence}, we obtain:
    \[
    \theta_{p,q} \circ (\alpha_{p-1}\otimes 1) = F(\overline{\alpha}_{p-1}\otimes 1)\circ \theta_{p-1, q}, \quad \quad \zeta_{p,q}\circ (\alpha_{p-1}\otimes 1) = F(\overline{\alpha}_{p-1}\otimes 1) \circ \zeta_{p-1, q}
    \]
    where $F(\overline{\alpha}_{p-1}\otimes 1)\colon \m{F}_*(\mathbb{D}^{p-1}\otimes \mathbb{D}^q)\to \m{F}_*(\mathbb{D}^p\otimes \mathbb{D}^q)$, following notation of \cref{lemma: uniqueness of DoldKan Equivalence}.
    By induction, we know that $\theta_{p-1, q}=\zeta_{p-1, q}$. 
    Therefore:
    \[
    \theta_{p,q}\circ (\alpha_{p-1}\otimes 1)=\zeta_{p,q} \circ(\alpha_{p-1}\otimes 1).
    \]
    By our discussion, this implies $\theta_{p,q}^y=\zeta_{p,q}^y$. Similarly, we obtain $\theta_{p,q}^z=\zeta_{p,q}^z$.    
    It only remains to show that $\theta_{p,q}^x=\zeta_{p,q}^x$.
    Observe that given any chain complex $P_*$ and $Q_*$, we obtain the isomorphism:
    \[
    \norm_k(\Gamma(P_*)\otimes \Gamma(Q_*))\cong \bigoplus_{S^k_{p,q}} P_p\otimes Q_q
    \]
    where $S^k_{p,q}$ denotes the set of strictly increasing maps $[k]\to [p]\times [q]$ of partially ordered sets where composing with either projection is surjective (when $k=p+q$, this is the set of $(p,q)$-shuffles).
    Applying for $P=D^p$ and $Q=D^q$, we obtain in particular that the level $p+q+1$ of the chain complex $\norm_*(\mathbb{D}^p\otimes \mathbb{D}^q)\cong \m{F}_*(\mathbb{D}^p\otimes \mathbb{D}^q)$ is zero because there are no strictly increasing maps $[p+q+1]\to [i]\times [j]$ for $i\in \{p-1, p\}$ and $j\in \{q-1, q\}$.
    Moreover, as $D^p\otimes D^q$ is acyclic, then so is $\norm_*(\mathbb{D}^p\otimes \mathbb{D}^q)$ by the Eilenberg--Zilber theorem, thus the differential $d\colon \norm_{p+q}(\mathbb{D}^p\otimes \mathbb{D}^q)\to \norm_{p+q-1}(\mathbb{D}^p\otimes \mathbb{D}^q)$ must be injective.
    Notice we must have
    \[
d(\theta_{p,q}^x)=\theta_{p,q}^y+(-1)^p \theta_{p,q}^z = \zeta_{p,q}^y+(-1)^p\zeta_{p,q}^z=d(\zeta_{p,q}^x)
    \]
    where $d$ denotes the differential $\m{F}_{p+q}(\mathbb{D}^p\otimes \mathbb{D}^q)\to \m{F}_{p+q-1}(\mathbb{D}^p\otimes \mathbb{D}^q)$, which must also be injective, thus $\theta_{p,q}^x=\zeta_{p,q}^x$. 
    Thus $\theta_{p,q}=\zeta_{p,q}$ for all $p,q \geq 0$.
\end{proof}

\section{Eilenberg--Zilber monoidal structures}\label{section: EZ map on skeleton}

In this section, we apply the technical results from \cref{section: Day convolution and props} and \cref{section: promonoidal localization} to establish \cref{introtheorem: EZ structure,introtheorem: products of skeleta} from the introduction. 
This is accomplished by isolating specific criteria under which a filtered \category admits an $\nn$-promonoidal structure, which we apply to the special case of $\nnDelta$. 

\subsection{Pointwise and filtered tensor products}
Let $\m{C}$ be a small \category and let $\m{E}$ be a symmetric monoidal \category.
By \cite[2.1.3.4]{HA}, the category $\Fun(\m{C},\m{E})$ admits a symmetric monoidal structure, given by the coCartesian fibration
\[
\Fun(\m{C},\m{E})^{\otimes}_{\rm pt} = \Fun(\m{C},\m{E}^\otimes) \times_{\Fun(\m{C},\Fin_\ast)} \Fin_\ast \rightarrow \Fin_\ast
\]
whose associated symmetric monoidal product $F \otimes G \colon \m{C} \rightarrow \m{E}$, is given by $(F\otimes G)(c) \simeq F(c)\otimes G(c)$.  
In particular, the Cartesian symmetric monoidal structure on the \category of spaces induces a pointwise symmetric monoidal structure 
\[
\Fun(\m{C},\Spaces)^{\times} = \Fun(\m{C},\Spaces)^{\otimes}_{\rm pt} \rightarrow \Fin_\ast. 
\]
See \cite[2.4.1, 4.8.1]{HA} for more details. 

\begin{definition}\label{def: cartesian promonoidal structure}
    Let $\m{C}$ be a small \category. 
    We define the \textit{pointwise symmetric promonoidal structure on $\m{C}$}, denoted by
    \(
    \cC^{\rm pt} \rightarrow \Fin_{\ast}
    \)
    to be the symmetric promonoidal \category determined by the presentably symmetric monoidal \category, $\Fun(\m{C},\Spaces)^{\times}$, in virtue of  \cref{proposition: closed O-monoidal structures are O-promonoidal}.
    Furthermore,  by \cref{proposition: Day conv recovers presentably O-monoidal str}, we have an equivalence of presentably symmetric monoidal \categories 
    \[
    \Fun(\m{C},\Spaces)^{\times} \xrightarrow{\sim} \Fun(\m{C},\Spaces)^{\otimes}
    \] 
\end{definition}

\begin{remark}\label{remark: multimorphism spaces in pointwise promonoidal}
    It follows directly from the definitions that for any finite set $I$, we can identify 
    \[
    \Mul_{\m{C}}(\{c_i\}_{i\in I},-) \simeq \prod_{i\in I} \Map_{\m{C}}(c_i ,-) \colon \m{C} \rightarrow \Spaces
    \]
    and that this identification is natural in $\{c_i\}_{i\in I}$. 
    Furthermore, it is not hard to see that $\m{C}^{\rm pt}$ is equivalent to the coCartesian \operad, $\m{C}^{\sqcup}$, from \cite[2.4.3.3]{HA}, though we will not need this here. 
    In other words, the coCartesian \operad arises from the Cartesian symmetric monoidal structure on $\Fun(\m{C},\Spaces)$. 
\end{remark}

We now show that pointwise symmetric promonoidal structures recover the pointwise tensor product on $\Fun(\m{C},\m{E})$ for \textit{any} symmetric monoidal \category $\m{E}$. 

\begin{proposition}\label{prop: Day convolution gives pointwise products}
    Let $\m{C}$ be a small \category, let $\m{C}^{\rm pt}$ be the pointwise symmetric promonoidal structure on $\m{C}$, let $q\colon \m{E}^{\otimes} \rightarrow \Fin_{\ast}$ be a symmetric monoidal \category. 
    \begin{enumerate}
    \item For each map of finite sets $\phi \colon I \rightarrow \ast$, the \category $\Ar^{\phi}(\m{C})$ has a terminal object. 
    This implies that $\m{C}^{\rm pt}$ is symmetric promonoidally $\kappa$-small for all infinite regular cardinals $\kappa$
    \item The functor $\Fun(\m{C},\m{E})^{\otimes} \rightarrow \Fin_\ast$ is a coCartesian fibration whose symmetric monoidal product and unit are given by the formulae 
    \[
    (F\otimes G)(c) \simeq F(c)\otimes G(c), 
    \]
    and 
    \[
    \unit_{\Fun(\m{C},\m{E})}(c) \simeq \unit_{\m{E}}, 
    \]
    which are both natural in $c$.   
    \end{enumerate}
\end{proposition}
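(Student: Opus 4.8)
The plan is to reduce the entire statement to a single observation: the \categories indexing the Day convolution all have terminal objects, so every relevant colimit exists and is computed by evaluation --- even though $\m{E}$ is only assumed symmetric monoidal, with no cocompleteness.

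For part (1), I would first unwind the definition of $\Ar^{\phi}(\m{C})_{c}$. For an active map $\phi\colon I \to \ast$ in $\Fin_\ast$, \cref{remark: multimorphism spaces in pointwise promonoidal} gives $\Mul_{\m{C}}^{\phi}(\{c_i\}_{i\in I},c) \simeq \prod_{i\in I}\Map_{\m{C}}(c_i,c)$. Since $\Ar^{\phi}(\m{C})_{c}$ is the \category classified by $\Mul^\phi_\m{C}(-,c)\colon (\m{C}^{\otimes}_{\{x_i\}})^{\op} \to \Spaces$, and the slice \category $\m{C}_{/c}$ is the one classified by $\Map_{\m{C}}(-,c)$, this yields an equivalence
\[
\Ar^{\phi}(\m{C})_{c} \simeq \prod_{i\in I} \m{C}_{/c},
\]
which evidently has a terminal object $(\id_c)_{i\in I}$. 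Letting $c$ vary, the assignment $c \mapsto (\id_c)_{i\in I}$ assembles into a section $s$ of $\ev^{\phi}_{1}$; I would check directly on mapping spaces that a morphism into $s(c)$ is the same datum as a morphism of targets to $c$, so that $s$ is a fully faithful right adjoint to $\ev^{\phi}_{1}\colon \Ar^{\phi}(\m{C}) \to \m{C}^{\otimes}_{y}$. Because a \category with a terminal object admits a cofinal one-point subcategory, which is essentially $\kappa$-small for every infinite regular cardinal $\kappa$, it follows that $\m{C}^{\rm pt}$ is symmetric promonoidally $\kappa$-small for all $\kappa$.

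For part (2), the difficulty is that $\m{E}$ is not assumed cocomplete, so \cref{prop: existence of Day convolution product} does not apply directly. The key point is that the right adjoint $s$ makes the left Kan extension $(\ev^{\phi}_{1})_{!}$ exist for \emph{any} $\m{E}$: the Kan-extension-along-a-left-adjoint formula gives $(\ev^{\phi}_{1})_{!}H \simeq H\circ s$, i.e.\ evaluation at the fiberwise terminal objects. (The same holds for an arbitrary active $\phi$, not only those with target $\langle 1\rangle$, since the fibers of $\ev^{\phi}_1$ are then products of slice categories and still have terminal objects.) Consequently the colimit in \cref{lemma: Day conv is O-corep}(2) exists and is computed at $(\id_c)_{i\in I}$, so $\Fun(\m{C},\m{E})^{\otimes} \to \Fin_\ast$ is $\m{O}$-corepresentable. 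I would then run the argument of \cref{prop: existence of Day convolution product} verbatim: its only use of cocompleteness is to guarantee the existence of the iterated Kan extensions $(\ev^{\psi}_{1})_{!}(\ev^{\psi}_{0})^{\ast}(\ev^{\phi}_{1})_{!}(\ev^{\phi}_{0})^{\ast}$, which here exist by the preceding observation; the exchange argument of \cref{remark: tensor product arrow categories} and \cref{proposition: exchange transformation} then shows $\pi_{\Act}$ is exponentiable. Thus $\Fun(\m{C},\m{E})^{\otimes}$ is both $\m{O}$-corepresentable and $\m{O}$-promonoidal, so by \cref{lem: corepresentable operad fibration is locally coCartesian} it is $\m{O}$-monoidal, i.e.\ $\pi$ is a coCartesian fibration.

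Finally, the explicit formulae drop out by evaluating the corepresenting colimit at the terminal object. In the binary case $I=\{1,2\}$ the source of $s(c)$ is the pair $(c,c)$, giving $(F\otimes G)(c) \simeq F(c)\otimes G(c)$; in the nullary case $I=\varnothing$ we have $\Ar^{\phi}(\m{C})_{c}\simeq \ast$ and the empty tensor product yields $\unit_{\Fun(\m{C},\m{E})}(c)\simeq \unit_{\m{E}}$. Naturality in $c$ is automatic, since the terminal objects --- equivalently the section $s$ --- depend functorially on $c$. I expect the only real obstacle to be the bookkeeping needed to confirm that cocompleteness of $\m{E}$ is genuinely inessential in \cref{prop: existence of Day convolution product}; once the terminal objects are in hand, every Day convolution colimit collapses to an evaluation and the cocompleteness hypothesis can simply be removed.
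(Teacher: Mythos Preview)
Your proof is correct and follows essentially the same route as the paper: identify $\Ar^{\phi}(\m{C})_{c} \simeq (\m{C}_{/c})^{\times I}$, note its terminal object, and feed this into \cref{lemma: Day conv is O-corep} and \cref{prop: existence of Day convolution product}. The paper's proof of part (2) simply cites those two results and stops, glossing over the point you carefully address --- namely that \cref{prop: existence of Day convolution product} literally requires $\m{E}$ to be compatible with $\kappa$-small colimits, whereas the present statement assumes nothing about colimits in $\m{E}$. Your observation that $\ev^{\phi}_{1}$ admits a fully faithful right adjoint (so that $(\ev^{\phi}_{1})_{!}$ exists and is given by restriction along that adjoint, for \emph{any} target $\m{E}$) is exactly what is needed to make the citation honest; the paper evidently intends this reading but does not spell it out. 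In short: same argument, but you have filled a small gap the paper leaves implicit.
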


\begin{proof}
Let $\phi \colon I \rightarrow \ast$ be the unique map of finite sets. 
To establish the first claim, note that for any $c \in \m{C}$, the right fibration $\Ar^{\phi}(\m{C})_{c} \rightarrow \m{C}^{\times I}$ is equivalent to the right fibration $(\m{C}_{/c})^{\times I} \rightarrow \m{C}^{\times I}$, and that $(\m{C}_{/c})^{\times I}$ has a terminal object. 
Therefore, $\m{C}^{\rm pt}$ is symmetric promonoidally $\kappa$-small for all infinite regular cardinals $\kappa$. 
The second claim follows directly from \cref{lemma: Day conv is O-corep} and \cref{prop: existence of Day convolution product}. 
\end{proof}

\begin{proposition}
    Let $f\colon \m{C} \rightarrow \m{D}$ be a functor of small \categories. 
    Then, there is an induced lax symmetric promonoidal functor 
    \(
    f^{\rm pt}\colon \m{C}^{\rm pt} \rightarrow \m{D}^{\rm pt}. 
    \)
    Furthermore, for any symmetric monoidal \category $\m{E}^{\otimes}$, restriction along $f^{\rm pt}$ induces a symmetric monoidal functor 
    \[
    (f^{\rm pt})^{\ast} \colon \Fun(\m{D},\m{E})^{\otimes} \rightarrow \Fun(\m{C},\m{E})^{\otimes}
    \]
    which is given by $f^{\ast}\colon \Fun(\m{D},\m{E}) \rightarrow \Fun(\m{C},\m{E})$ on underlying \categories. 
\end{proposition}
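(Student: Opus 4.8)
The plan is to handle the concrete ``Furthermore'' claim first, and then exhibit the resulting restriction functor as arising from a lax promonoidal functor. For the concrete part, recall that by \cite[2.1.3.4]{HA} the assignment $\m{C}\mapsto \Fun(\m{C},\m{E})^{\otimes}_{\rm pt}$ is functorial under precomposition, so $f$ induces a functor $\Fun(\m{D},\m{E})^{\otimes}_{\rm pt}\to \Fun(\m{C},\m{E})^{\otimes}_{\rm pt}$ which is $f^{\ast}$ on underlying \categories. This functor is strong symmetric monoidal because the pointwise tensor is computed objectwise and precomposition commutes with it: $(f^{\ast}F\otimes f^{\ast}G)(c)\simeq F(fc)\otimes G(fc)\simeq f^{\ast}(F\otimes G)(c)$. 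Since \cref{prop: Day convolution gives pointwise products} identifies the Day convolution operad $\Fun(\m{C},\m{E})^{\otimes}$ attached to $\m{C}^{\rm pt}$ with $\Fun(\m{C},\m{E})^{\otimes}_{\rm pt}$ (and likewise for $\m{D}$), this already produces a strong symmetric monoidal functor $\Fun(\m{D},\m{E})^{\otimes}\to \Fun(\m{C},\m{E})^{\otimes}$ restricting to $f^{\ast}$.

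Next I would construct $f^{\rm pt}$ itself. By \cref{def: cartesian promonoidal structure} (via \cref{proposition: closed O-monoidal structures are O-promonoidal}), $\m{C}^{\rm pt}$ is the full suboperad of $\Fun(\m{C},\Spaces)^{\times,\vee}$ spanned by the representables, embedded through the contravariant Yoneda $h^{\vee}$ of \cref{construction: fiberwise yoneda and map}, and similarly for $\m{D}^{\rm pt}$. The construction rests on the naturality of Yoneda with respect to left Kan extension: a standard co-Yoneda computation gives $f_{!}\big(\Map_{\m{C}}(c,-)\big)\simeq \Map_{\m{D}}(f(c),-)$, so $f_{!}$ carries corepresentables to corepresentables and restricts to $f$ on objects. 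Combining this with the multimorphism identification $\Mul_{\m{C}}(\{c_i\},c')\simeq \prod_i \Map_{\m{C}}(c_i,c')$ of \cref{remark: multimorphism spaces in pointwise promonoidal} and the functoriality of mapping spaces, the maps $\prod_i \Map_{\m{C}}(c_i,c')\to \prod_i \Map_{\m{D}}(fc_i,fc')\simeq \Mul_{\m{D}}(\{fc_i\},fc')$ should assemble into a map of \operads $f^{\rm pt}\colon \m{C}^{\rm pt}\to \m{D}^{\rm pt}$ over $\Fin_\ast$ extending $f$; by \cref{definition: (lax) O-promonoidal functors} any such map of \operads over $\Fin_\ast$ is precisely a lax symmetric promonoidal functor. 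The coherent assembly --- verifying that these mapping-space comparisons organize into a genuine operad morphism rather than a merely compatible family --- is the main obstacle, and I would carry it out using \cite[2.4.1.7]{HA} together with the dual-fibration packaging of \cref{construction: fiberwise yoneda and map}.

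Finally I would identify the two functors and upgrade lax to strong. By \cref{construction: day convolution functoriality in the source}, restriction along $f^{\rm pt}$ yields a lax symmetric monoidal functor $(f^{\rm pt})^{\ast}\colon \Fun(\m{D},\m{E})^{\otimes}\to \Fun(\m{C},\m{E})^{\otimes}$ (this is valid without cocompleteness of $\m{E}$, since it is the adjoint morphism of Day convolution functors, and the relevant convolution products exist by \cref{prop: Day convolution gives pointwise products}), whose underlying functor is precomposition by $f$, namely $f^{\ast}$. It then remains to check that its lax structure maps are equivalences. For an active map over $\Fin_\ast$ and objects $\{F_i\}$, the structure map is on underlying \categories the comparison $\bigotimes_i f^{\ast}F_i\to f^{\ast}\big(\bigotimes_i F_i\big)$; evaluating at $c$ and using that both convolution products are pointwise by \cref{prop: Day convolution gives pointwise products}, both sides compute to $\bigotimes_i F_i(fc)$ and the map is an equivalence. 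Hence $(f^{\rm pt})^{\ast}$ is strong symmetric monoidal, agreeing with the functor produced in the first paragraph. The essential difficulty throughout is the second step: rigorously promoting the hands-on mapping-space description of $f^{\rm pt}$ to a genuine morphism of \operads in the $\infty$-categorical, dual-fibration setting; once $f^{\rm pt}$ is in hand, the remaining identifications are pointwise computations.
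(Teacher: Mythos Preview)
Your proposal is correct and shares its core observation with the paper---namely that $f_{!}\colon \Fun(\m{C},\Spaces)\to\Fun(\m{D},\Spaces)$ preserves corepresentables---but the paper obtains the operad map $f^{\rm pt}$ by a cleaner route that bypasses the ``coherent assembly'' step you flag as the main obstacle. Rather than assembling the multimorphism comparisons by hand, the paper notes that $f^{\ast}\colon \Fun(\m{D},\Spaces)^{\times}\to\Fun(\m{C},\Spaces)^{\times}$ is a symmetric monoidal right adjoint between presentable symmetric monoidal \categories, so its left adjoint $f_{!}$ is automatically oplax symmetric monoidal; passing to coCartesian duals, $(f_{!})^{\vee}$ becomes a \emph{lax} symmetric monoidal functor $\Fun(\m{C},\Spaces)^{\times,\vee}\to\Fun(\m{D},\Spaces)^{\times,\vee}$, and since it carries the full suboperad $\m{C}^{\rm pt}$ into $\m{D}^{\rm pt}$ (exactly your corepresentable-preservation observation), restriction yields $f^{\rm pt}$ with its lax structure for free. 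This adjunction-plus-dualization argument delivers the coherent operad morphism without invoking \cite[2.4.1.7]{HA} or any hand-built compatibility checks. Your treatment of the ``Furthermore'' claim---reducing to a pointwise computation via \cref{prop: Day convolution gives pointwise products}---matches what the paper does (the paper is terser, saying only that it ``follows by a check of the definitions'').
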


\begin{proof}
    To construct $f^{\rm pt}$, first note that restriction along $f$ induces a symmetric monoidal functor 
    \[
    f^{\ast}\colon \Fun(\m{D},\Spaces)^\times \rightarrow \Fun(\m{C},\Spaces)^{\times}
    \]
    between presentable fibrations, which is a symmetric monoidal right adjoint, thereby admitting an oplax symmetric monoidal left adjoint given by $f_{!}$. 
    Forming the coCartesian dual, the induced functor 
    \[
    (f^{\ast})^{\vee} \colon \Fun(\m{D},\Spaces)^{\times,\vee} \rightarrow \Fun(\m{C},\Spaces)^{\times,\vee}
    \]
    remains symmetric monoidal but becomes a left adjoint, so that the functor 
    \[
    (f_{!})^{\vee} \colon \Fun(\m{C},\Spaces)^{\times,\vee} \rightarrow \Fun(\m{D},\Spaces)^{\times,\vee}
    \]
    is a lax symmetric monoidal right adjoint. 
    As $(f_{!})^{\vee}$ carries $\m{C}^{\rm pt}$ into $\m{D}^{\rm pt}$, this completes the construction of $f^{\rm pt}$. 
    It is clear that $(f^{\rm pt})^{\ast}$ is given by $f^{\ast}$ on underlying \categories, and the fact that $(f^{\rm pt})^{\ast}$ is symmetric follows by a check of the definitions.     
\end{proof}

We now recall results of a similar flavor, replacing $\m{C}^{\rm pt}$ with the symmetric monoidal \category determined by the commutative monoid $(\nn,+)$. 

\begin{definition}
    Let $\nn^{\otimes} \rightarrow \Fin_{\ast}$ denote the symmetric monoidal \category associated to the commutative monoid $(\nn,+)$.
    Explicitly, $\nn^{\otimes}$ can be identified with the nerve of the $1$-category described as follows.
    \begin{enumerate}
        \item An object is a tuple $\{a_{i}\}_{i\in I}$ for $I$ a finite set.
        \item A morphism $\{a_{i}\}_{i\in I} \rightarrow \{b_{j}\}_{j\in J}$ is defined to be a map of finite pointed sets $\alpha \colon I_+ \rightarrow J_+$ plus the relations $\sum_{i\in \alpha^{-1}\{j\}} a_{i} \leq b_{j}$ for each $j \in J$.
        \item The composition rule is the obvious one inherited from $\Fin_{\ast}$ and $\nn$. 
    \end{enumerate}
\end{definition}

\begin{remark}\label{rem: whay is an N-monoidal cat}
    Unwinding the definitions, an $\nn$-monoidal \category $\m{C}^{\otimes} \rightarrow \nn^{\otimes}$ encodes the following structure:
    \begin{enumerate}
        \item An $\nn$-shaped diagram in $\Cat_\infty$, 
        \[
        \m{C}_{0} \rightarrow \m{C}_{1}\rightarrow \m{C}_{2} \rightarrow \cdots 
        \]
        which we refer to as a \textit{filtered \category}. 
        \item A distinguished object $\ast \xrightarrow{\unit} \m{C}_{0}$, which we call the unit. 
        \item For any multimorphism $\phi \colon \{a_i\}_{i\in I} \rightarrow b$ in $\nn^{\otimes}$ (which amounts to the relation $\sum_{i} a_i \leq b$), there is a functor 
        \[
        \bigotimes^{\phi}_{i\in I} \colon \prod_{i \in I} \m{C}_{a_i}\rightarrow  \m{C}_{b}. 
        \]
        \item A huge amount of coherence data, encoding diagrams involving each of the functors $\bigotimes^{\phi}_{i\in I}$ which express symmetry, associativity, and unitality. 
        For instance, the following diagram of \categories is required to commute
        \[
        \begin{tikzcd}
        \m{C}_{a} \times \m{C}_{b} \times \m{C}_{c} \arrow[r] \arrow[d] & \m{C}_{a} \times \m{C}_{b+c} \arrow[d] \\
        \m{C}_{a+b} \times \m{C}_{c} \arrow[r] & \m{C}_{a+b+c}
        \end{tikzcd}
        \]
        for all $a,b,c \in \nn$. 
    \end{enumerate}
\end{remark}

We also have the following well-known result, whose proof we include for completeness. 

\begin{proposition}\label{proposition: filtered objects omega small}
    The \category $\nn^{\otimes}$ is promonoidally $\omega$-small, so that whenever $\m{E}^{\otimes}$ is is a finitely cocomplete symmetric monoidal \category, $\Fun(\nn,\m{E})^{\otimes} \rightarrow \Fin_\ast$ is a symmetric monoidal \category. 
    Furthermore, we have
    \[
    (F\otimes G)(c) \simeq \colim_{a+b\leq c} F(a)\otimes G(b), 
    \]
    and 
    \[
    \unit_{\Fun(\nn,\m{E})}(c) \simeq \unit_{\m{E}}, 
    \]
    naturally in $c$. 
\end{proposition}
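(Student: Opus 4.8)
The plan is to verify the two hypotheses needed to quote \cref{prop: existence of Day convolution product} with $\kappa = \omega$. Since $\nn^{\otimes}$ is the symmetric monoidal \category attached to the commutative monoid $(\nn,+)$, it is in particular symmetric monoidal, hence symmetric promonoidal. Thus the only substantive point is to check that $\nn^{\otimes}$ is symmetric promonoidally $\omega$-small in the sense of \cref{definition: indexing kappa-Day conv prod}; the symmetric monoidal structure on $\Fun(\nn,\cE)^{\otimes}$, its compatibility with finite colimits, and the convolution formula $\left(\bigotimes^{\phi}_{i\in I}\{F_i\}\right)(c)\simeq \colim_{\{c_i\}\in\Ar^{\phi}(\nn)_c}\bigotimes^{\phi}_{i\in I}F_i(c_i)$ then follow directly from that proposition.

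First I would compute the indexing categories $\Ar^{\phi}(\nn)_c$. It suffices to treat an active morphism $\phi\colon \{a_i\}_{i\in I}\to c$ whose target lies over $\brak{1}$, since by the Segal condition an active morphism over $\brak{n}$ splits into $n$ such components and the associated arrow category is the corresponding finite product. Because $\nn^{\otimes}$ is the nerve of a $1$-category and $\nn$ is a poset, every multimorphism space is $(-1)$-truncated: concretely $\Mul^{\phi}_{\nn^{\otimes}}(\{a_i\}_{i\in I},c)$ is contractible when $\sum_{i\in I}a_i\le c$ and empty otherwise. As $\Ar^{\phi}(\nn)_c$ is classified by $\Mul^{\phi}_{\nn^{\otimes}}(-,c)\colon (\nn^{\times I})^{\op}\to\Spaces$, it is therefore equivalent to the full subposet of $\nn^{\times I}$ spanned by those tuples $\{a_i\}_{i\in I}$ with $\sum_{i\in I}a_i\le c$. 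For fixed finite $I$ and $c\in\nn$ this subposet is finite, hence essentially $\omega$-small and cofinal in itself, which is exactly the required smallness.

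With the smallness in hand, \cref{prop: existence of Day convolution product} supplies the symmetric monoidal structure together with the colimit formula, and it remains only to specialize. Taking $I=\{1,2\}$ and the active map $\phi\colon\brak{2}\to\brak{1}$ identifies $\Ar^{\phi}(\nn)_c$ with the finite poset $\{(a,b)\mid a+b\le c\}$, yielding $(F\otimes G)(c)\simeq \colim_{a+b\le c}F(a)\otimes G(b)$. Taking $I=\varnothing$ and the nullary map $\brak{0}\to\brak{1}$, the subposet of the point $\nn^{\times\varnothing}$ cut out by the vacuously true condition $0\le c$ is the point itself, so the unit is $\colim_{\ast}\unit_{\cE}\simeq\unit_{\cE}$. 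Both formulae are natural in $c$ because the inclusions $\{a+b\le c\}\subseteq\{a+b\le c'\}$ for $c\le c'$ induce the structure maps of the filtered object, which is part of the output of \cref{prop: existence of Day convolution product}. The one place to be careful is the unit, where one must confirm that the empty tuple always satisfies the defining inequality so that the indexing poset is contractible; this, together with the Segal-condition reduction to targets over $\brak{1}$, is the main (and only mild) obstacle.
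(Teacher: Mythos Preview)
Your proposal is correct and follows essentially the same approach as the paper: identify $\Ar^{\phi}(\nn)_{c}$ with the finite subposet $\{(a_i)_{i\in I} \in \nn^{\times I} \mid \sum_i a_i \le c\}$ (which the paper writes as the pullback $\bigl(\prod_{i\in I}\nn\bigr)\times_{\nn}\nn_{/c}$ along the addition map), and then invoke \cref{prop: existence of Day convolution product}. Your treatment is slightly more explicit about the Segal reduction to targets over $\brak{1}$ and the nullary case for the unit, but there is no substantive difference; one minor notational slip is that you write $\phi\colon\{a_i\}_{i\in I}\to c$ as if $\phi$ were a morphism in $\nn^{\otimes}$, whereas in the paper's conventions $\phi$ is the underlying map $I\to\ast$ in $\Fin$ and $c\in\nn$ is a separate choice of fiber element---but this does not affect the argument.
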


\begin{proof}
    It is easy to see that $\nn^{\otimes}$ is promonoidally $\omega$-small, as the \category $\Ar^{\phi}(\nn)_{b}$ is finite for any map of finite sets $\phi\colon I \rightarrow \ast$ and for any $b \in \nn$.
    In fact, for each $b \in \nn$, we have an equivalence of \categories 
    \[
    \Ar^{\phi}(\nn)_{b} \simeq \left(\prod_{i\in I} \nn \right)\times_{\nn} \nn_{/b}, 
    \]
    where $\prod_{i\in I} \nn \rightarrow \nn$ is given by addition of natural numbers. 
    As in the case of \cref{prop: Day convolution gives pointwise products}, the formulae above follow from from \cref{lemma: Day conv is O-corep} and \cref{prop: existence of Day convolution product}. 
\end{proof}

\subsection{Skeletally promonoidal filtrations}

\begin{definition}
    Let $\m{C}$ be a small \category equipped with an $\nn$-indexed filtration by full subcategories
    \[
    \m{C}_{\sleq{\ast}} = \m{C}_{\sleq{0}} \rightarrow \m{C}_{\sleq{1}} \rightarrow \cdots \rightarrow \m{C}_{\sleq{a}} \rightarrow \cdots 
    \]
    such that $\colim_{a \in \nn} \m{C}_{\sleq{a}} \simeq \m{C}$. 
    For each $a \in \nn$,
    we let $i_{a} \colon \m{C}_{\sleq{a}}
    \rightarrow \m{C}$ denote the inclusion. 
    Define 
    \[
    \nn\ltimes \m{C}_{\sleq \ast} \subseteq \nn\times \m{C}
    \]
    to be the full subcategory spanned by objects $(a,c)$ where $c \in \m{C}_{\sleq{a}}$. 
    It is straightforward to see that the projection onto $\nn$ yields a coCartesian fibration 
    \(
    p\colon \nn\ltimes\m{C}_{\sleq \ast} \rightarrow \nn
    \)
    classified by the functor $\m{C}_{\sleq{\ast}}\colon \nn \rightarrow \Cat_\infty$. 
    We denote the fiberwise groupoid completion of $p$ by 
    \[
    \nn\ltimes\gpd{\m{C}_{\sleq{\ast}}} = (\nn\ltimes\m{C}_{\sleq \ast})[W_{p}^{-1}] \rightarrow \nn.
    \]
\end{definition}

\begin{definition}
    Let $(\nn\times \m{C})^{\otimes}$ denote the $\nn$-monoidal \category determined by the pullback diagram
    \[
    \begin{tikzcd}
    (\nn\times \m{C})^{\otimes} \arrow[r] \arrow[d] & \m{C}_{\rm pt}^{\otimes} \arrow[d] \\
    \nn^{\otimes} \arrow[r] & \Fin_{\ast}
    \end{tikzcd}
    \]
    We define $(\nn\ltimes\m{C}_{\sleq{\ast}})^{\otimes}$ to be the full suboperad of $(\nn\times\m{C})^{\otimes}$ spanned by the objects of $\nn\ltimes\m{C}_{\sleq{\ast}}$. 
\end{definition}

\begin{remark}\label{remark: formula for Mul}
    Let $I \in \Fin$ and let $\{(a_i,c_i)\}_{i\in I} \in (\nn\times\m{C})^{\otimes}_{I}$.
    It follows directly from the definition that there is an identification of functors $\nn\times \m{C} \rightarrow \Spaces$
    \[
    \Mul_{\nn\times\m{C}}(\{(a_i,c_i)\}_{i\in I},-) \simeq \Map_{\nn}(\sum_{i} a_i,-) \times \prod_{i\in I} \Map_{\m{C}}(c_i ,-).
    \]
    Consequently, given a multimorphism, $\phi\colon \{a_i\}_{i\in I} \rightarrow b$, in $\nn^{\otimes}$, we have an equivalence of functors $\m{C} \rightarrow \Spaces$
    \[
    \Mul^{\phi}_{\nn\times\m{C}}(\{(a_i,c_i)\}_{i\in I},(b,-)) \simeq \Map_{\nn}(\sum_{i} a_i,b) \times \prod_{i\in I} \Map_{\m{C}}(c_i ,-) \simeq \prod_{i\in I} \Map_{\m{C}}(c_i ,-).
    \]
    Therefore, given $\{(a_i,c_i)\}_{i\in I} \in (\nn\ltimes\m{C}_{\sleq{\ast}})^{\otimes}_{I} \simeq \prod_{i\in I}\m{C}_{\sleq{a_i}}$, the $\phi$-multimorphism space
    \[
    \Mul^{\phi}_{\nn\ltimes\m{C}_{\sleq{\ast}}}(\{(a_i,c_i)\}_{i\in I},(b,-))\colon \m{C}_{\sleq{b}} \rightarrow \Spaces
    \]
    can be computed as 
    \[
    \Mul^{\phi}_{\nn\ltimes\m{C}_{\sleq{\ast}}}(\{(a_i,c_i)\}_{i\in I},(b,-)) \simeq i_{b}^{\ast}\left(\prod_{i\in I} \Map_{\m{C}}(c_i , -)\right).
    \]    
\end{remark}

\begin{lemma}\label{lemma: existence of EZ promonoidal structure}
    The map of \operads 
    \((\nn\ltimes\m{C}_{\sleq{\ast}})^{\otimes} \rightarrow \nn^{\otimes}
    \)
    exhibits the source as $\nn$-promonoidal, if and only if, for each active morphism $\phi\colon \{a_i\}_{i\in I} \rightarrow b$ in $\nn^{\otimes}$ and for each $(c_i)_{i\in I} \in \prod_{i\in I} \m{C}_{\sleq{a_i}}$, the functor 
    \[
    \prod_{i\in I} \Map_{\m{C}}(c_i,-) \colon \m{C} \rightarrow \Spaces
    \]
    is left Kan extended from $\m{C}_{\sleq{b}}$.
\end{lemma}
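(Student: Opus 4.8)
The plan is to realize $(\nn\ltimes\m{C}_{\sleq{\ast}})^{\otimes}$ as a full suboperad of a known $\nn$-promonoidal \category and apply \cref{proposition: sub operad of promonoidal is promonoidal}. The ambient object is $(\nn\times\m{C})^{\otimes}\to\nn^{\otimes}$: this is $\nn$-monoidal by construction, hence $\nn$-promonoidal (alternatively, its active part is the base change of the exponentiable fibration $(\m{C}^{\rm pt})_{\act}\to\Fin$ along $\nn^{\otimes}_{\act}\to\Fin$, and exponentiable fibrations are stable under base change). Its underlying fibration over $\brak{1}$ is $\nn\times\m{C}\to\nn$, of which $\nn\ltimes\m{C}_{\sleq{\ast}}$ is by definition a full subcategory, so $(\nn\ltimes\m{C}_{\sleq{\ast}})^{\otimes}$ is precisely the full suboperad on its objects. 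This puts us squarely in the setting of \cref{proposition: sub operad of promonoidal is promonoidal}.

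Next I would translate condition (2) of \cref{proposition: sub operad of promonoidal is promonoidal} into the stated hypothesis. For an active $\phi\colon\{a_i\}_{i\in I}\to b$ and $\{(a_i,c_i)\}_{i\in I}$, that condition asks the comparison
\[
(i_{b})_{!}\,\Mul^{\phi}_{\nn\ltimes\m{C}_{\sleq{\ast}}}(\{(a_i,c_i)\},-)\longrightarrow\Mul^{\phi}_{\nn\times\m{C}}(\{(a_i,c_i)\},-)
\]
to be an equivalence. By \cref{remark: formula for Mul} these two functors are $i_{b}^{\ast}\big(\prod_{i\in I}\Map_{\m{C}}(c_i,-)\big)$ and $\prod_{i\in I}\Map_{\m{C}}(c_i,-)$, so the comparison is the counit of $(i_b)_{!}\dashv i_b^{\ast}$ evaluated on $\prod_{i\in I}\Map_{\m{C}}(c_i,-)$. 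As $i_b$ is fully faithful, this counit is an equivalence exactly when $\prod_{i\in I}\Map_{\m{C}}(c_i,-)$ is left Kan extended from $\m{C}_{\sleq{b}}$. Thus condition (2) is literally the hypothesis of the lemma, and \cref{proposition: sub operad of promonoidal is promonoidal} gives the ``if'' direction: the hypothesis forces $(\nn\ltimes\m{C}_{\sleq{\ast}})^{\otimes}$ to be $\nn$-promonoidal.

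For the ``only if'' direction I would argue directly from the definition of promonoidality, since \cref{proposition: sub operad of promonoidal is promonoidal}(1) bundles promonoidality of the suboperad together with promonoidality of the inclusion $\iota$. Promonoidality of $(\nn\ltimes\m{C}_{\sleq{\ast}})^{\otimes}$ means its active part is exponentiable over $\nn^{\otimes}_{\act}$, which by \cite[Lemma 2.2.8(4)]{ayala-francis-fibrations} is a flatness equivalence of coends for every composable pair of active morphisms. I would specialize this to the pairs $\{a_i\}_{i\in I}\xrightarrow{\phi}b\xrightarrow{\psi}z$ with $\psi$ unary, one for each $z\geq b$. Feeding the formulas of \cref{remark: formula for Mul} into the coend and using that the unary factor $\Mul^{\psi}_{\nn\ltimes\m{C}_{\sleq{\ast}}}((b,d'),(z,-))\simeq i_{z}^{\ast}\Map_{\m{C}}(d',-)$ is corepresentable, the coend over the intermediate fiber $\m{C}_{\sleq{b}}$ collapses to $(i_b)_{!}i_b^{\ast}$, and the flatness equivalence becomes the assertion that the counit $(i_{b})_{!}\,i_{b}^{\ast}\prod_{i\in I}\Map_{\m{C}}(c_i,-)\to\prod_{i\in I}\Map_{\m{C}}(c_i,-)$ is an equivalence after restriction to $\m{C}_{\sleq{z}}$, for every $z\geq b$. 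Since $\colim_{a}\m{C}_{\sleq{a}}\simeq\m{C}$, every object of $\m{C}$ lies in some $\m{C}_{\sleq{z}}$, so letting $z$ grow recovers the counit equivalence over all of $\m{C}$ --- again the left Kan extension hypothesis.

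The step I expect to require the most care is the bookkeeping in this last specialization: one must confirm that the coend is taken over $\m{C}_{\sleq{b}}$ and that the unary $\psi$ contributes exactly the corepresentable $\Map_{\m{C}}(d',-)$ needed to turn the coend into $(i_b)_{!}i_b^{\ast}$, together with the passage to the colimit over $z$. Everything else is a direct translation through \cref{remark: formula for Mul} and the cited results.
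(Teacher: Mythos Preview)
Your proposal is correct and follows essentially the same route as the paper: invoke \cref{proposition: sub operad of promonoidal is promonoidal} with ambient $\nn$-promonoidal category $(\nn\times\m{C})^{\otimes}$, and translate condition (2) there via \cref{remark: formula for Mul} into the counit $(i_b)_{!}i_b^{\ast}\prod_i\Map_{\m{C}}(c_i,-)\to\prod_i\Map_{\m{C}}(c_i,-)$ being an equivalence. Your treatment of the ``only if'' direction is in fact more careful than the paper's: the paper cites the proposition for both implications, but since condition (1) of \cref{proposition: sub operad of promonoidal is promonoidal} packages promonoidality of the suboperad \emph{together with} promonoidality of the inclusion $\iota$, the bare assumption ``$(\nn\ltimes\m{C}_{\sleq{\ast}})^{\otimes}$ is $\nn$-promonoidal'' does not literally give (1). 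Your direct argument---specializing the Ayala--Francis flatness condition to composites $\{a_i\}\xrightarrow{\phi}b\xrightarrow{\psi}z$ with $\psi$ unary, recognizing the resulting coend as $(i_b)_{!}i_b^{\ast}$ restricted to $\m{C}_{\sleq{z}}$, and then exhausting $\m{C}$ by letting $z$ grow---supplies exactly the missing step and is correct.
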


\begin{proof}
    By \cref{proposition: sub operad of promonoidal is promonoidal} and \cref{remark: formula for Mul}, the map of \operads $(\nn\ltimes\m{C}_{\sleq{\ast}})^{\otimes} \rightarrow \nn^{\otimes}$ determines an $\nn$-promonoidal structure, if and only if, for each active morphism $\phi\colon \{a_i\}_{i\in I} \rightarrow b$ in $\nn^{\otimes}$ and for each $(c_i)_{i\in I} \in \prod_{i\in I} \m{C}_{\sleq{a_i}}$ the canonical transformation
    \[
    (i_{b})_{!}i_{b}^{\ast}\left(\prod_{i\in I} \Map_{\m{C}}(c_i,-)\right) \rightarrow \prod_{i\in I} \Map_{\m{C}}(c_i,-)
    \]
    is a natural equivalence, whence the claim. 
\end{proof}

\begin{definition}
    Whenever $\m{C}_{\sleq{\ast}}$ satisfies the hypothesis of \cref{lemma: existence of EZ promonoidal structure}, we call the resulting $\nn$-promonoidal \category
    \((\nn\ltimes\m{C}_{\sleq{\ast}})^{\otimes} \rightarrow \nn^{\otimes}
    \)
    the \textit{Eilenberg--Zilber structure on $\m{C}_{\sleq{\ast}}$}.
\end{definition}

\begin{lemma}
    Let $\m{C}_{\sleq{\ast}}$ be a filtration satisfying the hypotheses of \cref{lemma: existence of EZ promonoidal structure}. 
    Further assume that for each multimorphism $\phi\colon \{a_i\}_{i\in I} \rightarrow b$ in $\nn^{\otimes}$, and for every tuple $(f_i)_{i\in I} \in \prod_{i\in I}\Map_{\m{C}_{\sleq{a_i}}}(c_i,c'_{i})$, the induced transformation of functors $\m{C}_{\sleq{b}} \rightarrow \Spaces$
    \[
    \prod_{i\in I} \Map_{\m{C}}(c'_i,-) \rightarrow \prod_{i\in I} \Map_{\m{C}}(c_{i},-)
    \]
    is an equivalence after realization. 
    Then, the $\nn$-promonoidal groupoid of the Eilenberg--Zilber structure, which we denote by $(\nn\ltimes\gpd{\m{C}_{\sleq{\ast}}})^{\otimes}$, exists, and there is an $\nn$-promonoidal functor 
    \[
    (\nn\ltimes \m{C}_{\sleq{\ast}})^{\otimes}\rightarrow (\nn\ltimes\gpd{\m{C}_{\sleq{\ast}}})^{\otimes}
    \]
    extending the fiberwise groupoid completion functor.   
\end{lemma}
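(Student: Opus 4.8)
The plan is to realize this Lemma as the special case $W = W_p$ of \cref{theorem: promonoidal structures exist}. Concretely, I would take $\m{O}^{\otimes} = \nn^{\otimes}$, let $p\colon \nn\ltimes\m{C}_{\sleq{\ast}} \to \nn$ be the defining coCartesian fibration, let the $\nn$-promonoidal structure extending $p$ be the Eilenberg--Zilber structure $(\nn\ltimes\m{C}_{\sleq{\ast}})^{\otimes}$ produced by \cref{lemma: existence of EZ promonoidal structure} (available since $\m{C}_{\sleq{\ast}}$ is assumed to satisfy its hypotheses), and take $W = W_p$, the $\nn$-saturated class of vertical arrows. Because $\nn$ is the nerve of a poset, its only equivalences are identities, so $(W_p)_{a}$ is the collection of \emph{all} morphisms of the fiber $\m{C}_{\sleq{a}}$; consequently the fiberwise localization $p_{W_p}$ is the fiberwise groupoid completion, with fibers $\m{C}_{\sleq{a}}[(W_p)_{a}^{-1}] \simeq \gpd{\m{C}_{\sleq{a}}}$. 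Granting the hypothesis of \cref{theorem: promonoidal structures exist}, its output is an $\nn$-promonoidal structure on $\nn\ltimes\gpd{\m{C}_{\sleq{\ast}}}$ together with an $\nn$-promonoidal functor extending the fiberwise groupoid completion, which is exactly the claimed conclusion.

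It remains to check hypothesis $(\ast)$ of the theorem, and I would verify it in the reformulation of \cref{remark: promonoidal loc existence criteria}: for every active $\phi\colon \{a_i\}_{i\in I} \to b$ in $\nn^{\otimes}$ and every tuple $(f_i\colon c_i \to c_i')_{i\in I} \in \prod_{i\in I}(W_p)_{a_i}$, the transformation of functors $\m{C}_{\sleq{b}} \to \Spaces$
\[
\Mul^{\phi}_{\nn\ltimes\m{C}_{\sleq{\ast}}}(\{c_i'\}_{i\in I},-) \to \Mul^{\phi}_{\nn\ltimes\m{C}_{\sleq{\ast}}}(\{c_i\}_{i\in I},-)
\]
must be an equivalence after left Kan extension along $L_b\colon \m{C}_{\sleq{b}} \to \gpd{\m{C}_{\sleq{b}}}$. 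Since the objects here live over the $a_i$ and $(W_p)_{a_i}$ is all of $\m{C}_{\sleq{a_i}}$, these tuples are precisely those appearing in the present hypothesis. Feeding in the multimorphism-space formula of \cref{remark: formula for Mul}, both sides are identified with $i_b^{\ast}\big(\prod_{i\in I}\Map_{\m{C}}(c_i,-)\big)$ and its $c_i'$ analogue, so the displayed transformation becomes the map
\[
\prod_{i\in I}\Map_{\m{C}}(c_i',-) \to \prod_{i\in I}\Map_{\m{C}}(c_i,-)
\]
from the hypothesis. As left Kan extension along the groupoid completion $L_b$ is the realization appearing in the hypothesis, the assumed condition is exactly $(\ast)$, completing the application.

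The one step demanding care — and the thing I would pin down first — is the identification of ``equivalence after realization'' in the hypothesis with ``equivalence after $(L_b)_!$'' in condition $(\ast)$: I would confirm that left Kan extension along the fiberwise groupoid completion $L_b$ computes the realization used to phrase the hypothesis, and that the restriction $i_b^{\ast}$ built into the formula of \cref{remark: formula for Mul} causes no trouble, since the transformation is tested only on functors already defined over $\m{C}_{\sleq{b}}$. Everything else is a formal unwinding of the cited results, so I do not anticipate any genuine obstacle beyond this matching of terminology.
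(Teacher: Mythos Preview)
Your proposal is correct and matches the paper's proof essentially line for line: the paper also reduces to \cref{theorem: promonoidal structures exist} with $W=W_p$, invokes \cref{remark: promonoidal loc existence criteria} together with the multimorphism formula of \cref{remark: formula for Mul}, and identifies ``equivalence after left Kan extension along $\m{C}_{\sleq b}\to\gpd{\m{C}_{\sleq b}}$'' with ``equivalence after realization.'' The only difference is that the paper dispatches your final caveat with a terse ``i.e., after realization,'' treating that identification as definitional.
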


\begin{proof}
    It is enough to verify that $(\nn\ltimes\m{C}_{\sleq{\ast}})^{\otimes}$ and the saturated collection of arrows $W_{p}$ in $\nn\ltimes\m{C}_{\sleq{\ast}}$ satisfy the hypothesis of \cref{theorem: promonoidal structures exist}.
    However, by \cref{remark: promonoidal loc existence criteria} and \cref{remark: formula for Mul}, this will be the case exactly when the transformation 
    \[
    \prod_{i\in I} \Map_{\m{C}}(c'_i,-) \rightarrow \prod_{i\in I} \Map_{\m{C}}(c_{i},-)
    \]
    is an equivalence after left Kan extension along $\m{C}_{\leq b} \rightarrow \gpd{\m{C}_{\leq b}}$, i.e., after realization. 
\end{proof}

\subsection{The simplicial skeletal filtration} 
We now prove our main results from the introduction by carefully studying the category $\nnDelta$. 
Our analysis is powered by a simple fact about products of simplicial spaces, which we now explain. 

Let $\mathrm{PoSet}$ denote the $1$-category of partially ordered sets and monotone maps between them, of which $\Delta$ is a full subcategory; denote this inclusion by $\varepsilon \colon \Delta \rightarrow \PoSet$
While the category $\Delta$ does not admit finite products, the $1$-category $\PoSet$ does admit such limits, using the product poset structure. 

\begin{construction}\label{construction: product of simplicies}
    Given $[n_1],\dots,[n_{k}] \in \Delta$, let $P$ denote the product poset $\prod_{i} [n_i]$. 
    Define
    \[
    \Delta_{/P} = \Delta\times_{\PoSet} \PoSet_{/P},
    \]
    which can be identified with the full subcategory of $\PoSet_{/P}$ spanned by maps of posets of the form $[m]\rightarrow P$. 
    This admits a further full subcategory 
    \[
    \Delta^{\mathrm{inj}}_{/P} \hookrightarrow \Delta_{/P}
    \]
    consisting of injective monotone functions $\sigma\colon[m] \rightarrow P$ (equivalently, \textit{strictly} monotone functions). 
    This yields a functor 
    \[
    \Delta^{\mathrm{inj}}_{/P} \rightarrow \Delta \rightarrow \Fun(\Delta^{\op},\Spaces)
    \]
    which we denote by $\sigma \mapsto \Delta^{\sigma}$. 
    By definition, we obtain a natural transformation 
    \[
    \colim_{\sigma \in \Delta^{\mathrm{inj}}_{/P}} \Delta^{\sigma} \rightarrow \varepsilon^{\ast} \Map_{\PoSet}(-,P),
    \]
    and as $\PoSet$ admits finite products, we have a canonical equivalence 
    \[
    \varepsilon^{\ast} \Map_{\PoSet}(-,P) \simeq \prod_{i} \Delta^{n_i}.
    \]
\end{construction}

\begin{lemma}\label{lemma: product of simplices is skeletal}
    The natural transformation of simplicial spaces 
    \[
    \colim_{\sigma \in \Delta^{\mathrm{inj}}_{/P}} \Delta^{\sigma} \rightarrow  \prod_{i} \Delta^{n_i}. 
    \]
    from \cref{construction: product of simplicies} is an equivalence.     
    Consequently, the simplicial space $\prod_{i} \Delta^{n_i}$ is $\sum_{i}n_i$-skeletal. 
\end{lemma}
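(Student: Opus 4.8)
The plan is to recognize the target as a nerve and then combine the $\infty$-categorical density theorem with a cofinality argument. First I would observe that for every $[m]\in\Delta$ there is a natural identification $\bigl(\prod_i \Delta^{n_i}\bigr)_m \simeq \prod_i \Map_{\PoSet}([m],[n_i]) \simeq \Map_{\PoSet}([m],P)$, so that $\prod_i \Delta^{n_i} \simeq \varepsilon^{\ast}\Map_{\PoSet}(-,P)$ is levelwise discrete and equal to the nerve $\N(P)$ of the product poset. Under this identification, $\Delta_{/P}$ is exactly the category of elements (the simplex category) of the presheaf $\N(P)$, while $\Delta^{\mathrm{inj}}_{/P}$ is its full subcategory of \emph{nondegenerate} simplices, i.e.\ the strictly monotone chains in $P$.

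By the density theorem \cite[5.1.5.3]{HTT}, the canonical map $\colim_{\tau\in\Delta_{/P}}\Delta^{\tau}\to \N(P)$ is an equivalence, where $\tau\mapsto\Delta^{\tau}$ sends a simplex to the corresponding representable. Since the natural transformation in the statement is obtained from this one by restricting the indexing diagram along $j\colon \Delta^{\mathrm{inj}}_{/P}\hookrightarrow \Delta_{/P}$, it suffices to show $j$ is cofinal. As both sides are nerves of $1$-categories, by \cite[4.1.3.1]{HTT} it is enough to verify that for each object $\tau\colon [\ell]\to P$ the comma category $\Delta^{\mathrm{inj}}_{/P}\times_{\Delta_{/P}}(\Delta_{/P})_{\tau/}$ has weakly contractible nerve; I would in fact exhibit an initial object.

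The key step is to produce this initial object from the (epi--mono) Eilenberg--Zilber factorization $\tau=\sigma_{0}\circ\eta$, where $\eta\colon [\ell]\twoheadrightarrow [m_{0}]$ collapses the repetitions of $\tau$ and $\sigma_{0}\colon [m_{0}]\to P$ is the injective chain of distinct values; the pair $(\sigma_{0},\eta)$ is an object of the comma category. To see it is initial, take any object $(\sigma\colon [m]\to P,\ \beta\colon [\ell]\to [m])$ with $\sigma$ injective and $\sigma\beta=\tau$, factor $\beta=\iota_{\beta}\circ\pi_{\beta}$ as an epimorphism followed by a monomorphism in $\Delta$, and note that $\sigma\iota_{\beta}$ is again injective since it is a monomorphism composed with the injective $\sigma$. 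Then $\tau=(\sigma\iota_{\beta})\circ\pi_{\beta}$ is an epi--mono factorization of $\tau$, so uniqueness of such factorizations forces $\sigma\iota_{\beta}=\sigma_{0}$ and $\pi_{\beta}=\eta$; hence $\iota_{\beta}$ is the required morphism $(\sigma_{0},\eta)\to(\sigma,\beta)$, and it is unique because $\eta$ is epic. An initial object makes the nerve contractible, so $j$ is cofinal and the first assertion follows.

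For the final claim I would use that every object of the colimit is a representable $\Delta^{\sigma}=\Delta^{m}$ with $\sigma\colon [m]\to P$ injective, so $m$ is bounded by the length of the longest strict chain in $P=\prod_i[n_i]$, namely $\sum_i n_i$; each such $\Delta^{m}$ with $m\leq \sum_i n_i$ is $\bigl(\sum_i n_i\bigr)$-skeletal. Since $\sk_{d}=(i_{d})_{!}\,i_{d}^{\ast}$ preserves colimits and the $d$-skeletal objects are exactly those on which the counit $\sk_{d}\to\mathrm{id}$ is an equivalence, the class of $d$-skeletal simplicial spaces is closed under colimits; hence $\prod_i\Delta^{n_i}$, being a colimit of $\bigl(\sum_i n_i\bigr)$-skeletal objects, is itself $\bigl(\sum_i n_i\bigr)$-skeletal. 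The main obstacle is the cofinality verification, and specifically the need to establish weak contractibility rather than mere connectedness of the comma categories; exhibiting an honest initial object is precisely what upgrades the classical (connectedness) argument to the required $\infty$-categorical statement.
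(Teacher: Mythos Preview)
Your proof is correct and takes essentially the same approach as the paper: both reduce the claim to the cofinality of $\Delta^{\mathrm{inj}}_{/P}\hookrightarrow\Delta_{/P}$ and verify this via Quillen's Theorem~A by exhibiting the epi--mono factorization of a simplex $\tau\colon[\ell]\to P$ as an initial object of the relevant comma category. The only cosmetic difference is that you invoke the density theorem \cite[5.1.5.3]{HTT} directly, while the paper phrases the reduction through mapping spaces and the right Kan extension formula for $\varepsilon_{\ast}$; your spelling-out of the skeletality consequence (closure of $d$-skeletal objects under colimits) is also a bit more explicit than the paper's.
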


\begin{proof}
    To prove the claim, we show that the induced map 
    \[
    \Map_{\m{P}(\Delta)}(\prod_{i\in I}\Delta^{n_i},X) \rightarrow \Map_{\m{P}(\Delta)}(\colim_{\sigma \in \Delta^{\mathrm{inj}}_{/P}} \Delta^{\sigma}, X)
    \]
    is an equivalence for all simplicial presheaves $X$. 
    Under the identification 
    \[
    \varepsilon^{\ast}\Map_{\PoSet}(-,P) \simeq \prod_{i\in I}\Delta^{n_i}
    \]
    it is enough to prove the canonical map 
    \(
    \varepsilon_{\ast}X(P) \rightarrow \lim_{\sigma \in (\Delta^{\mathrm{inj}}_{/P})^{\op}} X(\sigma) 
    \)
    is an equivalence for all $X$, where $\varepsilon_{\ast}$ denotes right Kan extension along $\varepsilon$. 
    Let $\Delta^{\op}_{P/} = \Delta^{\op}\times_{\PoSet^{\op}}(\PoSet^{\op})_{P/}$, so by the formula for right Kan extension we have
    \[
    \varepsilon_{\ast}X(P) \simeq \lim_{\tau \in \Delta^{\op}_{P/}} X(\tau). 
    \]
    Therefore, it will be enough to prove that
    \(
    \Delta^{\mathrm{inj}}_{/P} \rightarrow \Delta_{/P}
    \)
    is cofinal, as we have $(\Delta^{\op}_{P/})^{\op} \simeq \Delta_{/P}$.
    To this end, fix $\tau \colon [k] \rightarrow P$ in $\Delta_{/P}$ and consider the product category 
    \[
    (\Delta^{\mathrm{inj}}_{/P})_{\tau/} = \Delta^{\mathrm{inj}}_{/P} \times_{\Delta_{/P}}(\Delta_{/P})_{\tau/}. 
    \]
    The map of posets $\tau\colon [k] \rightarrow P$ can be uniquely factored as $[k] \rightarrow [\ell] \hookrightarrow P$, so that $[\ell]$ is isomorphic to the subposet given by ${\rm im}(\tau)$. 
    We see that $[\ell] \hookrightarrow P$ determines an initial object of $(\Delta^{\mathrm{inj}}_{/P})_{\tau/}$, meaning this category has contractible realization. 
    By Quillen's Theorem A for \categories \cite[4.1.3.1]{HTT}, the claim follows. 
\end{proof}

\begin{remark}
    One can even do a little bit better in \cref{lemma: product of simplices is skeletal}. 
    Writing $N = \sum_{i} n_{i}$, the subcategory of strictly monotone maps $[m] \rightarrow P$, where $N-1 \leq m \leq N$ is also cofinal. 
    In the case where $P = [n] \times [m]$, this cofinal subcategory amounts to finding all the nondegenerate $(n+m)$-simplices, and how they are glued together. 
\end{remark}

Recall that by \cref{prop: Day convolution gives pointwise products}, the pointwise promonoidal structure $(\Delta^{\op})^{\otimes}_{\rm pt}$ classifies the pointwise Cartesian product on $\Fun(\Delta^{\op},\Spaces)$, so that 
\[
\Fun(\Delta^{\op},\Spaces)^{\times} \simeq \Fun(\Delta^{\op},\Spaces)^{\otimes},
\]
where the latter presentably symmetric monoidal \category is the one arising from Day convolution. 
Due to the combinatorial nature of $\Delta^{\op}$, there is a concrete formula for the multimorphism spaces of $(\Delta^{\op})_{\rm pt}^{\otimes}$.

\begin{remark}\label{remark: shuffles and multimorphisms}
    For $I$ a finite set, we have by \cref{lemma: product of simplices is skeletal}
    \[
    \Mul_{\Delta^\op}(\{[n_i]\}_{i\in I},[m]) \simeq \Map_{\m{P}(\Delta)}(\Delta^{m},\prod_{i\in I}\Delta^{n_i}) \simeq \Map_{\mathrm{PoSet}}([m],\prod_{i\in I}[n_i]).     
    \]
    By the discussion in \cref{lemma: product of simplices is skeletal}, each morphism $[m] \rightarrow \prod_{i\in I}[n_i]$ factors through a strictly monotone map $[n] \rightarrow \prod_{i\in I}[n_i]$ which yields a nondegenerate simplex in the product. 
    Therefore, any multimorphism in $(\Delta^{\op})_{\rm pt}^{\otimes}$ is ``generated'' by one of these nondegenerate simplices. 
\end{remark}

Similarly, we can give a concrete formula for the multimorphism spaces of $(\nnDelta)^{\otimes}$. 

\begin{remark}
    Therefore, given $\{[n_i]\}_{i\in I} \in \prod_{i} \Delta^{\op}_{\sleq{a_i}}$, the $\phi$-multimorphism space
    \[
    \Mul^{\phi}_{\nnDelta}(\{[n_i])\}_{i\in I},-)\colon \Delta^{\op}_{\sleq{b}} \rightarrow \Spaces
    \]
    is given by  
    \[
    \Mul^{\phi}_{\nnDelta}(\{[n_i])\}_{i\in I},-) \simeq i_{b}^{\ast}\left(\prod_{i\in I} \Map_{\Delta^{\op}}([n_i], -)\right).
    \]    
\end{remark}

\begin{proposition}\label{proposition: nnDelta main prop}
    Let $\iota^{\otimes} \colon (\nnDelta)^{\otimes} \rightarrow (\nn\times \Delta^{\op})^{\otimes}$ denote the inclusion of \operads. 
    \begin{enumerate}    
        \item The commutative diagram 
        \[
        \begin{tikzcd}
        (\nnDelta)^{\otimes} \arrow[rr] \arrow[dr] & & (\nn\times \Delta^{\op})^{\otimes} \arrow[dl] \\
        & \nn^{\otimes} &
        \end{tikzcd}
        \]
        exhibits $(\nnDelta)^{\otimes}$ as a $\nn$-promonoidal subcategory of $(\nn\times\Delta^\op)^{\otimes}$. 
        \item The $\nn$-promonoidal groupoid completion of $(\nnDelta)^{\otimes}$ exists, and there is an $\nn$-promonoidal functor 
        \[
        (\nnDelta)^{\otimes} \rightarrow (\nn\ltimes\gpd{\Delta^{\op}_{\sleq{\ast}}})^{\otimes}.     
        \]
        Furthermore, the induced map 
        \((\nn\ltimes\gpd{\Delta^{\op}_{\sleq{\ast}}})^{\otimes} \rightarrow \nn^{\otimes}
        \)
        is an equivalence of $\nn$-promonoidal \categories. 
        \item The $\nn$-promonoidal \categories, $(\nnDelta)^{\otimes}$ and $(\nn\ltimes\gpd{\Delta^{\op}_{\sleq{\ast}}})^{\otimes}$, are $\nn$-promonoidally finite.
        Furthermore, when viewed as symmetric promonoidal \categories, they are also symmetric promonoidally finite. 
    \end{enumerate}
\end{proposition}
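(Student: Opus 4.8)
All three parts will follow by specializing the machinery of the preceding subsection to the filtration $\m{C}_{\sleq{\ast}} = \Delta^{\op}_{\sleq{\ast}}$ of $\m{C} = \Delta^{\op}$, the only substantive geometric input being the skeletality result \cref{lemma: product of simplices is skeletal}. For part (1), the plan is to check the criterion of \cref{lemma: existence of EZ promonoidal structure}. Fixing an active morphism $\phi\colon \{a_i\}_{i\in I} \rightarrow b$ in $\nn^{\otimes}$, so that $\sum_{i} a_i \leq b$, and a tuple $([n_i])_{i\in I} \in \prod_{i} \Delta^{\op}_{\sleq{a_i}}$, I would use the representability $\Map_{\Delta^{\op}}([n_i],-) \simeq \Delta^{n_i}$ to identify the functor appearing in the criterion with $\prod_{i} \Map_{\Delta^{\op}}([n_i],-) \simeq \prod_{i}\Delta^{n_i}$. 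By \cref{lemma: product of simplices is skeletal} this simplicial space is $(\sum_i n_i)$-skeletal, and since $n_i \leq a_i$ forces $\sum_i n_i \leq \sum_i a_i \leq b$, it is $b$-skeletal, i.e.\ left Kan extended from $\Delta^{\op}_{\sleq{b}}$. This is exactly the required condition, so (together with \cref{proposition: sub operad of promonoidal is promonoidal}, on which \cref{lemma: existence of EZ promonoidal structure} rests) the inclusion exhibits $(\nnDelta)^{\otimes}$ as an $\nn$-promonoidal subcategory of $(\nn\times\Delta^{\op})^{\otimes}$.

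For part (2), I would invoke the $\nn$-promonoidal groupoid-completion lemma established just above (via \cref{theorem: promonoidal structures exist} and \cref{remark: promonoidal loc existence criteria}). Its hypothesis requires that, for each $\phi$ as above and each tuple of arrows $f_i\colon [n_i] \rightarrow [n_i']$ in $\Delta^{\op}_{\sleq{a_i}}$, the induced map $\prod_i \Delta^{n_i'} \rightarrow \prod_i \Delta^{n_i}$ become an equivalence after left Kan extension along $\Delta^{\op}_{\sleq{b}} \rightarrow \gpd{\Delta^{\op}_{\sleq{b}}}$. By \cref{example: fiberwise gpd of nnDelta} the target is contractible, so this realization is $\colim_{\Delta^{\op}_{\sleq{b}}}$; both source and target are $b$-skeletal (again $\sum_i n_i, \sum_i n_i' \leq b$), so this colimit computes the geometric realization, and since $\Delta^{\op}$ is sifted and $|\Delta^{n}| \simeq \ast$, both realizations are contractible and the map is an equivalence. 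This produces the $\nn$-promonoidal groupoid completion together with the asserted promonoidal functor. For the final assertion, \cref{example: fiberwise gpd of nnDelta} shows the underlying fibration $\nn\ltimes\gpd{\Delta^{\op}_{\sleq{\ast}}} \rightarrow \nn$ is an equivalence; as the multimorphism spaces of the groupoid completion are precisely the contractible realizations just computed, they match the point-valued multimorphism spaces of $\nn^{\otimes}$, so the structure map $(\nn\ltimes\gpd{\Delta^{\op}_{\sleq{\ast}}})^{\otimes} \rightarrow \nn^{\otimes}$ is an equivalence of $\nn$-promonoidal \categories.

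For part (3), the plan is a finiteness count on the relevant arrow categories. For $(\nnDelta)^{\otimes}$, an active $\phi\colon \{a_i\} \rightarrow b$, and target $(b,[m])$, the fiber $\Ar^{\phi}(\nnDelta)_{(b,[m])}$ is, by \cref{remark: formula for Mul} and \cref{remark: shuffles and multimorphisms}, the category of elements of the $\Set$-valued functor sending $\{[n_i]\}$ (with $n_i \leq a_i$) to $\Hom_{\PoSet}([m],\prod_i [n_i])$; this is a finite $1$-category, since there are finitely many such $[n_i]$ and finitely many poset maps out of the finite poset $[m]$. It is therefore essentially finite and cofinal in itself, giving $\nn$-promonoidal finiteness. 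Regarded as a symmetric promonoidal \category over $\Fin_\ast$, the grades $a_i$ are no longer fixed, but the constraint $\sum_i a_i \leq b$ still leaves only finitely many tuples $\{(a_i,[n_i])\}$, so the analogous fiber is again essentially finite; this gives symmetric promonoidal finiteness. For $(\nn\ltimes\gpd{\Delta^{\op}_{\sleq{\ast}}})^{\otimes}$, part (2) identifies it with $\nn^{\otimes}$, for which $\nn$-promonoidal finiteness is immediate (all fibers and multimorphism spaces are points) and symmetric promonoidal finiteness is \cref{proposition: filtered objects omega small}.

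The only genuinely substantive ingredient is \cref{lemma: product of simplices is skeletal}; everything else is an application of the promonoidal-localization package assembled in \cref{section: promonoidal localization} together with bookkeeping. I expect the main thing to watch is exactly that bookkeeping in part (3): one must confirm that fixing the target grade $b$ bounds all of the indexing data even after forgetting the $\nn$-grading down to $\Fin_\ast$, so that the arrow categories remain essentially finite in the symmetric case as well.
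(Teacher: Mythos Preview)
Your proposal is correct and follows essentially the same route as the paper: part (1) via \cref{lemma: existence of EZ promonoidal structure} with the skeletality input \cref{lemma: product of simplices is skeletal}, part (2) via the groupoid-completion lemma with the same contractibility argument, and part (3) via a direct finiteness count on the arrow categories together with the identification with $\nn^{\otimes}$ and \cref{proposition: filtered objects omega small}. If anything, you spell out the symmetric-promonoidal finiteness of $(\nnDelta)^{\otimes}$ more explicitly than the paper does, by observing that the constraint $\sum_i a_i \leq b$ bounds the grades and hence the tuples.
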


\begin{proof}
    The first claim follows by combining \cref{lemma: existence of EZ promonoidal structure} with \cref{lemma: product of simplices is skeletal} and \cref{remark: shuffles and multimorphisms}. 
    To prove the second claim, let $\phi\colon \{a_i\}_{i\in I} \rightarrow b$ in $\nn^{\otimes}$, and let $(f_i)_{i\in I} \in \prod_{i\in I}\Map_{\Delta^{\op}_{\sleq{a_i}}}([n_i],[m_i])$. 
    These maps induce a map of functors $\Delta^{\op}_{\sleq{b}} \rightarrow \Spaces$
    \[
    \prod_{i\in I} \Map_{\Delta^{\op}}([m_i],-) \rightarrow \prod_{i\in I} \Map_{\Delta^{\op}}([n_i],-). 
    \]
    However, as both the source and target are $b$-skeletal by \cref{lemma: product of simplices is skeletal}, this natural transformation realizes to a map between contractible spaces, which must be an equivalence.  
    That the map to $\nn^{\otimes}$ is an equivalence of $\nn$-promonoidal \categories follows from \cref{example: fiberwise gpd of nnDelta}. 
    To establish the third claim, fix a multimorphism $\phi\colon\{a_i\}_{i\in I} \rightarrow b$ in $\nn^{\otimes}$, let $[n] \in \Delta^{\op}_{\sleq{b}}$, and consider the \category 
    \(
    \Ar^{\phi}(\nnDelta)_{(b,[n])}.
    \)
    Unwinding the definitions, this is equivalent to 
    \[
    \prod_{i\in I} \Delta_{\sleq{a_i}}^{\op}\times_{\Deltaleq{b}^{\op}} (\Delta_{\sleq{b}}^{\op})_{/[n]},
    \]
    hence a finite \category. 
    The claim for the groupoid completion follows from the fact $\nn^{\otimes}$ is symmetric promonoidally small \cref{proposition: filtered objects omega small}, together with the $\nn$-promonoidal equivalence
    \((\nn\ltimes\gpd{\Delta^{\op}_{\sleq{\ast}}})^{\otimes} \rightarrow \nn^{\otimes}
    \)
    established above.
\end{proof}

\begin{theorem}\label{theorem: products of skeletal objects are skeletal}
    Let $\m{E}$ be a finitely cocomplete symmetric monoidal \category, and let $\m{E}^{\otimes}_{\nn} = \nn^{\otimes} \times_{\Fin_\ast}\m{E}^{\otimes}$.
    Then 
    \[
    \Fun_{\nn}(\nnDelta,\m{E}_{\nn})^{\otimes} \ \  \text{and} \ \ \Fun_{\nn}(\nn\ltimes \gpd{\Deltaleq{\ast}^{\op}},\m{E}_{\nn})^{\otimes}
    \]
    are $\nn$-monoidal \categories.
    Furthermore, left Kan extension along $\nn$-monoidal groupoid completion induces an $\nn$-monoidal functor 
    \[
    \Fun_{\nn}(\nnDelta,\m{E}_{\nn})^{\otimes} \rightarrow \Fun_{\nn}(\nn\ltimes \gpd{\Deltaleq{\ast}^{\op}},\m{E}_{\nn})^{\otimes}.
    \]
\end{theorem}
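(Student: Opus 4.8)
The plan is to deduce the theorem almost formally from \cref{proposition: nnDelta main prop} together with the general Day convolution machinery of \cref{section: Day convolution and props}; the genuine mathematical content---that products of skeleta are skeletal---has already been absorbed into \cref{lemma: product of simplices is skeletal} and \cref{proposition: nnDelta main prop}, so at this stage no new homotopy-coherence data needs to be constructed. First I would record that $\m{E}^{\otimes}_{\nn} = \nn^{\otimes}\times_{\Fin_\ast}\m{E}^{\otimes}$ is an $\nn$-monoidal \category compatible with finite colimits in the sense of \cref{definition: indexing kappa-Day conv prod}. Indeed, the projection $\m{E}^{\otimes}_{\nn} \rightarrow \nn^{\otimes}$ is a coCartesian fibration whose fiber over each $a \in \nn$ is $\m{E}$, which admits finite colimits by hypothesis, and for each active morphism $\phi\colon \{a_i\}_{i\in I} \rightarrow b$ in $\nn^{\otimes}$ the functor $\otimes^{\phi}\colon \prod_{i\in I}\m{E} \rightarrow \m{E}$ is the iterated tensor product, which preserves finite colimits separately in each variable precisely because $\m{E}$ is finitely cocomplete symmetric monoidal.

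Next, by \cref{proposition: nnDelta main prop}(3), both $(\nnDelta)^{\otimes}$ and $(\nn\ltimes\gpd{\Deltaleq{\ast}^{\op}})^{\otimes}$ are $\nn$-promonoidally finite, i.e.\ $\nn$-promonoidally $\omega$-small. Applying \cref{prop: existence of Day convolution product} with $\kappa = \omega$ to each of these as source and to $\m{E}^{\otimes}_{\nn}$ as target immediately yields that
\[
\Fun_{\nn}(\nnDelta,\m{E}_{\nn})^{\otimes} \quad \text{and} \quad \Fun_{\nn}(\nn\ltimes\gpd{\Deltaleq{\ast}^{\op}},\m{E}_{\nn})^{\otimes}
\]
are $\nn$-monoidal \categories, proving the first assertion.

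For the comparison functor I would apply \cref{prop: left kan extension along strong promonoidal is strong monoidal} to the $\nn$-promonoidal functor
\[
f\colon (\nnDelta)^{\otimes} \rightarrow (\nn\ltimes\gpd{\Deltaleq{\ast}^{\op}})^{\otimes}
\]
furnished by \cref{proposition: nnDelta main prop}(2). Both source and target are $\nn$-promonoidally $\omega$-small and $\m{E}^{\otimes}_{\nn}$ is compatible with $\omega$-small colimits, so precomposition already provides a lax $\nn$-monoidal functor $f^{\ast}$. To promote this to the left adjoint $f_{!}$ running in the desired direction, I must verify the additional fiberwise-smallness hypothesis: the fiber $(\nnDelta)_{a} = \Deltaleq{a}^{\op}$ is a finite category, while $(\nn\ltimes\gpd{\Deltaleq{\ast}^{\op}})_{a} \simeq \gpd{\Deltaleq{a}^{\op}} \simeq \ast$ by \cref{example: fiberwise gpd of nnDelta}, both of which are $\omega$-small. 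Since $f$ is genuinely $\nn$-promonoidal rather than merely lax, \cref{prop: left kan extension along strong promonoidal is strong monoidal} then produces the $\nn$-monoidal operadic left adjoint $f_{!}$, computed fiberwise by left Kan extension along the groupoid completions, which is exactly the claimed $\nn$-monoidal functor.

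The nearest thing to an obstacle here is purely organizational: one must check the finiteness hypotheses of the two cited propositions hold simultaneously for the source, the target, and the coefficient \category, and in particular confirm that the fibers of the groupoid-completed object remain $\omega$-small. All three checks are immediate---two from \cref{proposition: nnDelta main prop}(3) and the third from the definition of $\m{E}^{\otimes}_{\nn}$---so the theorem is in essence a repackaging of \cref{proposition: nnDelta main prop} through the Day convolution formalism.
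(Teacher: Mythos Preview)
Your proposal is correct and follows essentially the same route as the paper: the paper's proof is a one-line appeal to \cref{proposition: nnDelta main prop} combined with \cref{prop: existence of Day convolution product} and \cref{prop: left kan extension along strong promonoidal is strong monoidal}, and you have simply unpacked the hypothesis-checking that makes those citations legitimate. Your explicit verification that $\m{E}^{\otimes}_{\nn}$ is compatible with finite colimits and that the fibers are $\omega$-small is exactly what is needed to invoke those results.
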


\begin{proof}
    The theorem follows from \cref{proposition: nnDelta main prop} combined with \cref{prop: existence of Day convolution product} and \cref{prop: left kan extension along strong promonoidal is strong monoidal}. 
\end{proof}

\begin{remark}
    The $\nn$-monoidal \category, $\Fun_{\nn}(\nnDelta,\m{E}_{\nn})^{\otimes}$, endows
    \[
    \Fun(\Deltaleq{\ast}^{\op},\m{E})\colon \nn \rightarrow \Cat_\infty
    \]
    with a lax symmetric monoidal structure. 
    Informally, this lax symmetric monoidal structure expresses that for any natural numbers $a,b,c \in \nn$, with $a+b\leq c$,  there is an essentially unique factorization of the pointwise tensor product on $\Fun(\Delta^{\op},\m{E})$:
    \[\begin{tikzcd}
	{\Fun(\Deltaleq{a}^{\op},\m{E})\times \Fun(\Deltaleq{b}^{\op},\m{E})} & [1em] {\Fun(\Deltaleq{c}^{\op},\m{E})} \\
	{\Fun(\Delta^\op,\m{E})\times\Fun(\Delta^{\op},\m{E})} & {\Fun(\Delta^{\op},\m{E}).}
	\arrow["{\otimes_{a+b\leq c}}", dashed, from=1-1, to=1-2]
	\arrow[hook, from=1-1, to=2-1]
	\arrow[hook, from=1-2, to=2-2]
	\arrow["\otimes"', from=2-1, to=2-2]
    \end{tikzcd}\]
    Here, $\otimes_{a+b\leq c}$ denotes the functor associated to the multimorphism $\{a,b\} \rightarrow c$ in $\nn^{\otimes}$. 
    In other words, the tensor product of an $a$-skeletal object with a $b$-skeletal object being $(a+b)$-skeletal is a completely generic phenomenon. 
\end{remark}

We can now deduce \cref{introtheorem: EZ structure} from the introduction. 

\begin{theorem}\label{theorem: main theorem on EZ for sk}
    Let $\m{E}$ be a finitely cocomplete symmetric monoidal \category. 
    The composition of lax symmetric monoidal functors
    \[
    \Fun(\Delta^{\op},\m{E})^{\otimes} \xrightarrow{q^{\ast}}\Fun(\nnDelta,\m{E})^{\otimes} \xrightarrow{q_{!}} \Fun(\nn\ltimes \gpd{\Deltaleq{\ast}^{\op}},\m{E})^{\otimes} \simeq \Fun(\nn,\m{E})^{\otimes}
    \]
    endows $\sk_{\ast}$ with a canonical lax symmetric monoidal structure. 
\end{theorem}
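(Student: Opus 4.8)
The plan is to realize the displayed composite as a composition of lax symmetric monoidal functors whose underlying functor is $\sk_\ast$, so that the lax structure on $\sk_\ast$ is inherited. Recall from \cref{const: skeleton functor and the I category} that $\sk_\ast = p_! \circ q^\ast$ for the two projections $q\colon \nnDelta \to \Delta^\op$ and $p\colon \nnDelta \to \nn$. Writing $L\colon \nnDelta \to \nn\ltimes\gpd{\Deltaleq{\ast}^{\op}}$ for the fiberwise groupoid completion and $p_W$ for the projection $\nn\ltimes\gpd{\Deltaleq{\ast}^{\op}} \to \nn$, we have $p = p_W \circ L$, and $p_W$ is an equivalence by \cref{example: fiberwise gpd of nnDelta}. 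Consequently $p_! \simeq (p_W)_! \circ L_!$, with $(p_W)_!$ the equivalence $\Fun(\nn\ltimes\gpd{\Deltaleq{\ast}^{\op}},\m{E})^\otimes \simeq \Fun(\nn,\m{E})^\otimes$ appearing at the end of the display; hence on underlying functors the composite recovers $(p_W)_! \circ L_! \circ q^\ast \simeq p_! \circ q^\ast = \sk_\ast$, where the middle arrow $q_!$ of the display is precisely the left Kan extension $L_!$ along the fiberwise groupoid completion. It therefore remains only to equip each of the three factors with a lax symmetric monoidal structure.

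For the restriction $q^\ast$, I would exhibit it as restriction along a lax symmetric promonoidal functor. By definition $(\nn\times\Delta^\op)^\otimes = \nn^\otimes \times_{\Fin_\ast} (\Delta^\op)^\otimes_{\rm pt}$, so projecting onto the second factor and precomposing with the inclusion of \operads $(\nnDelta)^\otimes \hookrightarrow (\nn\times\Delta^\op)^\otimes$ produces a map of \operads over $\Fin_\ast$
\[
\bar q\colon (\nnDelta)^\otimes \longrightarrow (\Delta^\op)^\otimes_{\rm pt}
\]
whose underlying functor is $q$. A map of \operads over $\Fin_\ast$ is exactly a lax symmetric promonoidal functor (\cref{definition: (lax) O-promonoidal functors}); moreover, by \cref{proposition: nnDelta main prop} the source $(\nnDelta)^\otimes$ is symmetric promonoidally finite, while $(\Delta^\op)^\otimes_{\rm pt}$ is symmetric promonoidally $\kappa$-small for every $\kappa$ by \cref{prop: Day convolution gives pointwise products}. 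Applying the lax monoidal half of \cref{prop: left kan extension along strong promonoidal is strong monoidal} with $\m{O}^\otimes = \Fin_\ast$ and $\kappa = \omega$ (using that $\m{E}$ is finitely cocomplete) then shows that $q^\ast = \bar q^\ast$ is lax symmetric monoidal.

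For the middle factor, I would invoke \cref{lemma: sections and O-monoidal structures}. By \cref{proposition: nnDelta main prop} the fiberwise groupoid completion underlies an $\nn$-promonoidal functor $L\colon (\nnDelta)^\otimes \to (\nn\ltimes\gpd{\Deltaleq{\ast}^{\op}})^\otimes$ between \categories that are both $\nn$-promonoidally finite and symmetric promonoidally finite, so part (2) of \cref{lemma: sections and O-monoidal structures}, applied with $\kappa = \omega$, identifies the $\nn$-monoidal left Kan extension of \cref{theorem: products of skeletal objects are skeletal} with a symmetric monoidal functor
\[
L_!\colon \Fun(\nnDelta,\m{E})^\otimes \longrightarrow \Fun(\nn\ltimes\gpd{\Deltaleq{\ast}^{\op}},\m{E})^\otimes,
\]
compatibly with the sections equivalences of part (1). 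The final equivalence $\Fun(\nn\ltimes\gpd{\Deltaleq{\ast}^{\op}},\m{E})^\otimes \simeq \Fun(\nn,\m{E})^\otimes$ is symmetric monoidal because $p_W$ is an equivalence of $\nn$-promonoidal \categories (\cref{proposition: nnDelta main prop}). Composing these three lax symmetric monoidal functors then furnishes the desired lax symmetric monoidal structure on $\sk_\ast$.

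The step I expect to require the most care is reconciling the two monoidal frameworks in play. The factor $q^\ast$ is naturally a symmetric promonoidal construction over $\Fin_\ast$, since $q\colon \nnDelta \to \Delta^\op$ is a projection to $\Delta^\op$ and not a functor over $\nn$; by contrast, the left Kan extension $L_!$ is produced in the $\nn$-relative Day convolution framework of \cref{theorem: products of skeletal objects are skeletal}. The bridge between them is \cref{lemma: sections and O-monoidal structures}(1), which identifies $\Fun(\m{C},\m{E})^\otimes$ with the symmetric monoidal \category of sections $\Fun(\nn,\Fun_{\nn}(\m{C},\m{E}_{\nn}))^\otimes$; the essential point is to verify that these identifications are compatible with both $\bar q^\ast$ and $L_!$, so that the symmetric monoidal structures splice together along the composite.
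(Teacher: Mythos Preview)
Your proposal is correct and follows essentially the same route as the paper: $q^\ast$ is lax symmetric monoidal by \cref{prop: left kan extension along strong promonoidal is strong monoidal}, and the middle factor is obtained from \cref{theorem: products of skeletal objects are skeletal} via \cref{lemma: sections and O-monoidal structures}. Your discussion of the bridge between the $\Fin_\ast$-level and $\nn$-relative frameworks via \cref{lemma: sections and O-monoidal structures}(1) makes explicit a compatibility that the paper's proof leaves implicit, but the argument is otherwise the same.
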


\begin{proof}
    The functor $q^{\ast}$ is lax symmetric monoidal by \cref{prop: left kan extension along strong promonoidal is strong monoidal}. 
    That the functor $p_{!}$ is lax symmetric monoidal follows from \cref{theorem: products of skeletal objects are skeletal} and \cref{lemma: sections and O-monoidal structures} by applying $\Fun(\nn,-)^{\otimes}$ to the $\nn$-monoidal functor 
    \[
    \Fun_{\nn}(\nnDelta,\m{E}_{\nn})^{\otimes} \rightarrow \Fun_{\nn}(\nn\ltimes \gpd{\Deltaleq{\ast}^{\op}},\m{E}_{\nn})^{\otimes}
    \]
    from \cref{theorem: products of skeletal objects are skeletal}.
\end{proof}

\begin{remark}
    The methods in this section apply directly to other kinds of generalized Reedy categories, such as $\Lambda^{\op}_{\leq \ast}$ and $\Lambda^{\op}_{\infty,\leq \ast}$, for example, which is the subject of ongoing work by the authors. 
\end{remark}

A characterizing property of the Eilenberg--Zilber map on chain complexes is that the map $\nabla\colon \norm_0(A)\otimes \norm_0(B)\to \norm_0(A\otimes B)$ is the identity, for any simplicial abelian groups $A$ and $B$. 
The following corollary is a categorification of this property; the skeletal filtration recovers the identity functor on $\m{E}$ upon restriction to the full subcategory of $0$-skeletal simplicial objects. 

\begin{proposition}\label{proposition: sk is unital}
    Let $\m{E}$ be a finitely cocomplete symmetric monoidal \category, and let 
    \[
    (i_{0})_{!} \colon \m{E} \rightarrow \Fun(\Delta^{\op},\m{E}) \ \ \text{and} \ \ (j_{0})_{!} \colon \m{E} \rightarrow \Fun(\nn,\m{E}) 
    \]
    denote the functors of left Kan extension along $i_{0}\colon \Delta^{\op}_{\sleq 0} \rightarrow \Delta^{\op}$ and $j_{0}\colon \{0\} \rightarrow \nn$. 
    Both of these functors are fully faithful and symmetric monoidal, and there is a canonical commutative diagram of symmetric monoidal \categories 
    \[
    \begin{tikzcd}
    & \m{E}^{\otimes} \arrow[dl,"(i_{0})_{!}"'] \arrow[dr,"(j_{0})_{!}"] &  \\
    \Fun(\Delta^{\op},\m{E})^{\otimes} \arrow[rr,"\sk_\ast"'] & & \Fun(\nn,\m{E})^{\otimes} 
    \end{tikzcd}
    \]   
    Consequently, $\sk_{\ast}$ is unital, and, for each $X,Y \in \Fun(\Delta^{\op},\m{E})$, the Eilenberg--Zilber map induces an equivalence 
    \[
    \left(\sk_{\ast}(X)\otimes \sk_{\ast}(Y)\right)_{0} \xrightarrow{\sim} \sk_{0}(X\otimes Y). 
    \]
\end{proposition}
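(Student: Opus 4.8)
The plan is to identify both left Kan extensions with constant-diagram functors and then transport the lax symmetric monoidal structure of $\sk_\ast$ (\cref{theorem: main theorem on EZ for sk}) along them. Since $\Deltaleq{0}^\op$ and $\{0\}$ are each the terminal category, $i_0$ and $j_0$ are the inclusions of the initial objects $[0]\in\Delta^\op$ and $0\in\nn$; in particular they are fully faithful, so the units $\mathrm{id}\to i_0^\ast(i_0)_!$ and $\mathrm{id}\to j_0^\ast(j_0)_!$ are equivalences and both Kan extensions are fully faithful. Writing $\pi_\Delta\colon\Delta^\op\to\ast$ and $\pi_\nn\colon\nn\to\ast$ for the projections, the inclusion of an initial object is left adjoint to the projection, whence $(i_0)_!\simeq\pi_\Delta^\ast$ and $(j_0)_!\simeq\pi_\nn^\ast$ are the constant-diagram functors. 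The defining colimits are indexed by the trivial comma categories $i_0\downarrow[n]\simeq\ast$ and $j_0\downarrow a\simeq\ast$, so these functors exist for any finitely cocomplete $\m{E}$.

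Next I would establish symmetric monoidality, treating the two cases separately because the targets carry products of different origin. The target of $(i_0)_!$ is the pointwise tensor product (\cref{prop: Day convolution gives pointwise products}), and restriction along any functor is strong symmetric monoidal for pointwise structures; thus $(i_0)_!\simeq\pi_\Delta^\ast$ is symmetric monoidal. For $(j_0)_!$, whose target is Day convolution, I would argue by doctrinal adjunction: its right adjoint is $j_0^\ast=\ev_0$, which is strong symmetric monoidal since the colimit formula of \cref{proposition: filtered objects omega small} gives $(F\Day G)(0)\simeq F(0)\otimes G(0)$ (the indexing poset $\{a+b\le 0\}$ being a point). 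The left adjoint of a strong symmetric monoidal functor is canonically oplax symmetric monoidal, and here it is strong because the comparison $(j_0)_!(E\otimes E')\to (j_0)_!E\Day(j_0)_!E'$ is the evident equivalence of constant filtered objects. This route sidesteps the size hypotheses of \cref{prop: left kan extension along strong promonoidal is strong monoidal}, which do not literally apply since $\nn$ is not $\omega$-small.

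For the commuting triangle, the object $(0,[0])$ is initial in $\nnDelta$, giving a section $s\colon\ast\to\nnDelta$ with $q\circ s=i_0$ and $p\circ s=j_0$, where $p,q$ are the projections of \cref{const: skeleton functor and the I category}. On underlying functors the triangle is immediate: $(i_0)_!E$ is the constant simplicial object, so $\sk_a((i_0)_!E)=\colim_{\Deltaleq{a}^\op}\mathrm{const}_E\simeq E$ because $\gpd{\Deltaleq{a}^\op}\simeq\ast$ (\cref{example: fiberwise gpd of nnDelta}), and the transition maps are equivalences, giving $\sk_\ast((i_0)_!E)\simeq\mathrm{const}_E=(j_0)_!E$. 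The main obstacle is to promote this to an equivalence of symmetric monoidal functors: $\sk_\ast\simeq p_!\circ q^\ast$ is assembled from a lax functor $q^\ast$ and a Kan extension $p_!$, and the source (pointwise) and target (Day) structures are of different origin, so the coherence is not formal. I would handle this by lifting the computation to the $\nn$-monoidal level, applying $\Fun(\nn,-)^\otimes$ together with \cref{lemma: sections and O-monoidal structures} to the $\nn$-monoidal functor of \cref{theorem: products of skeletal objects are skeletal}, and using the base-change compatibilities of \cref{section: base-change and Kan extensions} to identify $q^\ast\pi_\Delta^\ast\simeq\pi_{\nnDelta}^\ast$, $\pi_{\nnDelta}^\ast\simeq s_!$, and $p_!\,s_!\simeq(ps)_!=(j_0)_!$ as equivalences of (lax) monoidal functors.

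Finally, the two consequences follow. Unitality is immediate from the monoidal triangle: the unit of $\Fun(\Delta^\op,\m{E})$ is the constant object $(i_0)_!\unit_\m{E}$, which is carried to $(j_0)_!\unit_\m{E}$, the Day-convolution unit of $\Fun(\nn,\m{E})$ (\cref{proposition: filtered objects omega small}). For the degree-zero statement I would argue concretely: by \cref{proposition: filtered objects omega small} the Day convolution at $0$ has trivial indexing, so $(\sk_\ast(X)\otimes\sk_\ast(Y))_0\simeq\sk_0(X)\otimes\sk_0(Y)=X_0\otimes Y_0$, while $\sk_0(X\otimes Y)=(X\otimes Y)_0=X_0\otimes Y_0$. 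The Eilenberg--Zilber map at filtration degree $0$ is governed by the multimorphism data of $\nnDelta$ in degree $0$, i.e. by the single nondegenerate simplex of $\Delta^0\times\Delta^0=\Delta^0$ (\cref{lemma: product of simplices is skeletal}, \cref{remark: shuffles and multimorphisms}), so it is the canonical equivalence $X_0\otimes Y_0\xrightarrow{\sim}(X\otimes Y)_0$, exactly as in the classical case where $\nabla$ is the identity on $\norm_0$.
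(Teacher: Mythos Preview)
Your argument is broadly correct but takes a different route from the paper, and the difference is instructive. You build the monoidal structures on $(i_0)_!$ and $(j_0)_!$ by hand---the first via restriction for pointwise products, the second by doctrinal adjunction from the strong monoidality of $\ev_0$---and then try to glue these to the lax monoidal structure on $\sk_\ast$ coming from \cref{theorem: main theorem on EZ for sk}. The paper instead works entirely at the level of the promonoidal indexing categories: it writes down a single commutative diagram involving $(\Delta^{\op}_{\sleq 0})^{\rm pt}$, $(\Delta^{\op})^{\rm pt}$, $(\nnDelta)^\otimes$ and $\nn^\otimes$, observes that the relevant square is a pullback with promonoidal vertical maps, applies $\Fun(-,\m{E})^\otimes$ once, and then passes to left adjoints via \cref{prop: left kan extension along strong promonoidal is strong monoidal}. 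This produces the triangle, with all monoidal structures compatible by construction, in one stroke.

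The advantage of the paper's approach is precisely at the step you flag as the ``main obstacle'': showing that $\sk_\ast\circ(i_0)_!\simeq(j_0)_!$ as \emph{monoidal} functors, not merely as functors. Your proposed fix (lift to the $\nn$-monoidal level, invoke base-change from \cref{section: base-change and Kan extensions}) is plausible in spirit but imprecise: the appendix concerns Beck--Chevalley squares, not monoidal coherence, and your chain $q^\ast\pi_\Delta^\ast\simeq\pi_{\nnDelta}^\ast\simeq s_!$, $p_!s_!\simeq(j_0)_!$ would need to be established as an equivalence of \emph{lax monoidal} functors, where each monoidal structure arises from a different source (pointwise, Day, doctrinal). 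The paper sidesteps this bookkeeping by never leaving the promonoidal framework. Conversely, your approach has the virtue of making the size issue explicit: you correctly note that \cref{prop: left kan extension along strong promonoidal is strong monoidal} does not literally apply with $\kappa=\omega$ since $\nn$ and $\nnDelta$ are not finite, and you give a workaround (constant Kan extensions need no colimits). The paper's invocation of that proposition glosses over this, though the conclusion holds for the same reason you give. Your direct computation for the degree-$0$ claim is fine and matches the paper's derivation from the triangle.
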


\begin{proof}
    Consider the following commutative diagram of promonoidal \categories and lax promonoidal functors: 
    \[\begin{tikzcd}
	{(\Delta^{\op}_{\sleq 0})^{\otimes}_{\rm pt}} & {(\Delta^{\op}_{\sleq 0})^{\otimes}_{\rm pt}} & {(\gpd{\Delta^{\op}_{\sleq 0}})^{\otimes}_{\rm pt} } \\
	{(\Delta^{\op})^{\otimes}_{\rm pt}} & {(\nnDelta)^{\otimes}} & {\nn^{\otimes}}
	\arrow[from=1-1, to=2-1]
	\arrow[equals, from=1-2, to=1-1]
	\arrow["\sim", from=1-2, to=1-3]
	\arrow[from=1-2, to=2-2]
	\arrow[from=1-3, to=2-3]
	\arrow[from=2-2, to=2-1]
	\arrow[from=2-2, to=2-3]
    \end{tikzcd}\]
    By applying $\Fun(-,\m{E})^{\otimes}$, we obtain a commutative diagram of symmetric monoidal \categories and lax symmetric monoidal functors:
    \[\begin{tikzcd}
	{\Fun(\Delta^{\op}_{\sleq 0},\m{E})^{\otimes}} & {\Fun(\Delta^{\op}_{\sleq 0},\m{E})^{\otimes}} & {\Fun(\gpd{\Delta^{\op}_{\sleq 0}},\m{E})^{\otimes}} \\
	{\Fun(\Delta^{\op},\m{E})^{\otimes}} & {\Fun(\nnDelta,\m{E})^{\otimes}} & {\Fun(\nn^{\otimes},\m{E})^{\otimes}}
	\arrow[equals, from=1-2, to=1-1]
	\arrow["\sim"', from=1-3, to=1-2]
	\arrow[from=2-1, to=1-1]
	\arrow[from=2-1, to=2-2]
	\arrow[from=2-2, to=1-2]
	\arrow[from=2-3, to=1-3]
	\arrow[from=2-3, to=2-2]
    \end{tikzcd}\]
    in virtue of \cref{prop: existence of Day convolution product} and \cref{prop: left kan extension along strong promonoidal is strong monoidal}.
    Again by \cref{prop: left kan extension along strong promonoidal is strong monoidal}, the lax monoidal functors in the rightmost square all admit symmetric monoidal left adjoints. 
    To complete the proof, we note that the square 
    \[\begin{tikzcd}
	{(\Delta^{\op}_{\sleq{0}})^{\otimes}_{\rm pt}} & {(\Delta^{\op}_{\sleq{0}})^{\otimes}_{\rm pt}} \\
	{(\Delta^{\op})^{\otimes}_{\rm pt}} & {(\nnDelta)^{\otimes}}
	\arrow["\sim", from=1-1, to=1-2]
	\arrow[from=1-1, to=2-1]
	\arrow[from=1-2, to=2-2]
	\arrow[from=2-1, to=2-2]
    \end{tikzcd}\]
    is a pullback, and that the vertical maps in this square are promonoidal functors. 
    Combining these observations, we obtain a commutative diagram 
    \[\begin{tikzcd}
	{\Fun(\Delta^{\op}_{\sleq 0},\m{E})^{\otimes}} && {\Fun(\gpd{\Delta^{\op}_{\sleq 0}},\m{E})^{\otimes}} \\
	{\Fun(\Delta^{\op},\m{E})^{\otimes}} && {\Fun(\nn^{\otimes},\m{E})^{\otimes}}
	\arrow["{\mathrm{id}}", from=1-1, to=1-3]
	\arrow["{(i_{0})_{!}}"', hook, from=1-1, to=2-1]
	\arrow["{(j_{0})_{!}}"', hook, from=1-3, to=2-3]
	\arrow["{\sk_{\ast}}"', from=2-1, to=2-3]
    \end{tikzcd}\]
    as desired. 
    The remaining claims follow immediately from the symmetric monoidal equivalence of  symmetric monoidal functors $(j_{0})_{!} \simeq \sk_{\ast} \circ (i_{0})_{!}$. 
\end{proof}

\begin{proposition}
    The skeletal filtration $\sk_\ast \colon \Fun(\Delta^{\op},\Sp) \rightarrow \Fun(\nn,\Sp)$ induces a lax symmetric monoidal functor
    \[
    \sk_{\ast}^{\heartsuit}\colon \Fun(\Delta^{\op},\mathrm{Ab}) \simeq \Fun(\Delta^{\op},\Sp)^{\heartsuit} \rightarrow \Fun(\nn,\Sp)^{\heartsuit} \simeq \mathrm{Ch}_{\geq 0}(\mathrm{Ab}),
    \]
    which is an equivalence of categories.  
    Furthermore, for any simplicial abelian groups $A,B$, the induced map of chain complexes 
    \(
    \sk_{\ast}^{\heartsuit}(A) \otimes \sk^{\heartsuit}_{\ast}(B) \rightarrow \sk_{\ast}^{\heartsuit}(A\otimes B)
    \)
    is an isomorphism in degree $0$.
    Consequently, there is a canonical lax symmetric monoidal equivalence of lax symmetric monoidal functors 
    \(
    \sk_{\ast}^{\heartsuit} \simeq \norm_{\ast}.
    \)
\end{proposition}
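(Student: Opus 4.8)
The plan is to obtain all three assertions by transporting the \category-level structures established earlier down to the hearts, and then to invoke the rigidity result \cref{prop:unicity of classical EZ}. First I would produce the lax symmetric monoidal equivalence \(\sk_\ast^\heartsuit\). By \cref{theorem: main theorem on EZ for sk} applied to \(\m{E} = \Sp\), the functor \(\sk_\ast \colon \Fun(\Delta^\op,\Sp) \to \Fun(\nn,\Sp)\) is lax symmetric monoidal, and by \cref{theorem: Lurie DK-theorem} it is an equivalence of \categories. By \cref{lemma: skeletal filtration is t-exact for beilinson} it is moreover \(t\)-exact for the pointwise \(t\)-structure on the source and the Beilinson \(t\)-structure on the target. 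A \(t\)-exact lax symmetric monoidal functor restricts to a lax symmetric monoidal functor on hearts: the connective truncation \(\pi_0\) is symmetric monoidal on connective objects, the heart inclusion is lax symmetric monoidal, and \(\sk_\ast^\heartsuit\) is obtained as their composite with \(\sk_\ast\). Under the identifications \(\Fun(\Delta^\op,\Sp)^\heartsuit \simeq \sAb\) (with the pointwise tensor product of simplicial abelian groups) and \(\Fun(\nn,\Sp)^\heartsuit \simeq \choo\) (with the classical tensor product of chain complexes, arising as the Beilinson heart of the Day convolution as recorded after \cref{lemma: skeletal filtration is t-exact for beilinson}), \(t\)-exactness makes \(\sk_\ast^\heartsuit\) an equivalence of categories.

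Next I would verify unitality and compute the degree-\(0\) behavior. By \cref{proposition: sk is unital}, \(\sk_\ast\) is unital and the Eilenberg--Zilber map induces an equivalence \((\sk_\ast(A)\otimes \sk_\ast(B))_0 \xrightarrow{\sim} \sk_0(A\otimes B)\) for all \(A,B\). Passing to hearts, the degree-\(0\) chain group of \(\sk_\ast^\heartsuit(-)\) is \(\pi_0\) of the \(0\)-th filtered piece, namely \(\pi_0(\sk_0(-))\), which for a simplicial abelian group \(A\) is \(A_0\); and for filtered objects the Day convolution satisfies \((F\otimes G)_0 \simeq F_0 \otimes G_0\), since \(a+b\leq 0\) forces \(a=b=0\). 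Applying \(\pi_0\) to the equivalence above and using \(t\)-exactness then identifies the degree-\(0\) component of the heart-level Eilenberg--Zilber map \(\sk_\ast^\heartsuit(A)\otimes \sk_\ast^\heartsuit(B) \to \sk_\ast^\heartsuit(A\otimes B)\) with the isomorphism \(A_0\otimes B_0 \to (A\otimes B)_0\); unitality descends by the same argument.

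Finally, having shown that \(\sk_\ast^\heartsuit \colon \sAb \to \choo\) is a unital lax symmetric monoidal equivalence of categories, I would invoke \cref{prop:unicity of classical EZ} directly: it furnishes a unique natural isomorphism \(\sk_\ast^\heartsuit \simeq \norm_\ast\) under which the lax symmetric monoidal structure on \(\sk_\ast^\heartsuit\) is carried to the Eilenberg--Zilber homomorphism, which is exactly the asserted conclusion. The hard part will be the first step: carefully justifying that the spectral-level lax symmetric monoidal structure descends to the heart and, crucially, that the two induced symmetric monoidal structures on the hearts agree on the nose with the classical pointwise tensor product on \(\sAb\) and the classical graded tensor product on \(\choo\). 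Only with these identifications in place are the hypotheses of \cref{prop:unicity of classical EZ} literally met, so this compatibility — rather than any new computation — is where the real content of the argument lies.
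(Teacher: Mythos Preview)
Your proposal is correct and follows essentially the same route as the paper: invoke \cref{lemma: skeletal filtration is t-exact for beilinson} to restrict the lax symmetric monoidal $\sk_\ast$ to hearts, use \cref{proposition: sk is unital} for the degree-$0$ isomorphism, and conclude via \cref{prop:unicity of classical EZ}. The compatibility you flag as the hard part---that the induced monoidal structures on the hearts match the classical ones on $\sAb$ and $\choo$---is exactly what the paper dispatches by citing \cite[6.5, 6.14]{stefano}.
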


\begin{proof}
    As both the pointwise and Beilinson t-structures are compatible with the relevant symmetric monoidal structures, the pointwise tensor product and Day convolution product endow the hearts with symmetric monoidal structures \cite[6.5, 6.14]{stefano}; we denote these by $\Fun(\Delta^{\op},\Sp)^{\heartsuit,\otimes}$ and $\Fun(\nn,\Sp)^{\heartsuit,\otimes}$. 
    Furthermore, by \cref{lemma: skeletal filtration is t-exact for beilinson}, $\sk_\ast$ restricts to a lax symmetric monoidal functor 
    \[
    \sk_{\ast}^{\heartsuit}\colon  \Fun(\Delta^{\op},\Sp)^{\heartsuit,\otimes} \rightarrow \Fun(\nn,\Sp)^{\heartsuit,\otimes}.
    \]
    That the map 
    $
    \sk_{\ast}^{\heartsuit}(A) \otimes \sk^{\heartsuit}_{\ast}(B) \rightarrow \sk_{\ast}^{\heartsuit}(A\otimes B)
    $
    is an isomorphism follows directly from \cref{proposition: sk is unital}, and the lax symmetric monoidal equivalence follows from \cref{prop:unicity of classical EZ}.
\end{proof}

In her thesis \cite[II.3.2]{hedenlund-thesis}, Hedenlund discusses joint work with Krause and Nikolaus, which endows $\mathrm{SSEQ}$, the $1$-category of bigraded spectral sequences of abelian groups, with the structure of an \operad; the key point is that \textit{pairings} of spectral sequences comprise the multimorphisms for the \operad structure. 
Furthermore, in \cite[Theorem II.3.5]{hedenlund-thesis}, it is shown that the construction which takes a filtered spectrum to its associated spectral sequence can be refined to a map of \operads:
\[
\mathrm{Fil}(\Sp)^{\otimes} = \Fun(\zz^{\op},\Sp)^\otimes \rightarrow \mathrm{SSEQ}^{\otimes}. 
\]
Here, we follow the convention in \cite{hedenlund-thesis} so that the $E_{1}$-page of the associated spectral sequence is the $E_{2}$-page of the spectral sequence considered in \cite[1.2.2]{HA}. 
This sort of reindexing convention goes back to work of Maunder on the Atiyah--Hirzebruch spectral sequence \cite{maunder}. 
It is the understanding of the authors, based on \cite{hedenlund-thesis}, and the summary  in \cite{BM-poitou}, that these results will appear in forthcoming work by Hedenlund--Krause--Nikolaus \cite{hedenlund-krause-nikolaus}. 
These results neatly package all the relevant structure involved in manipulating spectral sequences, so that certain well-known facts (which tend to be tediously checked by hand) can be deduced immediately by formal argumentation. 
For example, it is well-known that differentials in the spectral sequence associated to a filtered $\mathbb{E}_\infty$-ring spectrum satisfy the Leibniz rule, and this property holds as soon as there is a map of \operads $\Fin_\ast \rightarrow {\rm SSEQ}^{\otimes}$. 
Combining \cref{introtheorem: EZ structure} with the discussion above, we immediately obtain the following result.  

\begin{corollary}
    There is a map of \operads 
    \(\Fun(\Delta^{\op},\Sp)^{\otimes} \rightarrow\mathrm{SSEQ}^{\otimes}. 
    \)
    Consequently, for any $0 \leq n \leq \infty$, a simplicial $\mathbb{E}_{n}$-ring spectrum determines an $\mathbb{E}_{n}$-algebra in $\mathrm{SSEQ}^{\otimes}$.
    In particular, the differentials in the spectral sequence associated to a simplicial $\ee{\infty}$-ring spectrum satisfy the Leibniz rule. 
\end{corollary}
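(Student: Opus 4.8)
The plan is to exhibit the asserted map of \operads as a composite of three maps of \operads and then transport algebras along it. By \cref{theorem: main theorem on EZ for sk}, the skeletal filtration $\sk_{\ast}$ is lax symmetric monoidal for the pointwise tensor product on $\Fun(\Delta^{\op},\Sp)$ and the Day convolution product on $\Fun(\nn,\Sp)$. Since a lax symmetric monoidal functor between symmetric monoidal \categories is precisely a map of \operads over $\Fin_{\ast}$, this furnishes the first of three maps of \operads,
\[
\Fun(\Delta^{\op},\Sp)^{\otimes} \longrightarrow \Fun(\nn,\Sp)^{\otimes},
\]
the last being the Hedenlund--Krause--Nikolaus map $\Fil(\Sp)^{\otimes} = \Fun(\zz^{\op},\Sp)^{\otimes} \to \mathrm{SSEQ}^{\otimes}$ recalled above.

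The middle map, linking $\Fun(\nn,\Sp)^{\otimes}$ to $\Fil(\Sp)^{\otimes}$, is the one requiring care. The assignment $n \mapsto -n$ is a strong symmetric monoidal functor $\nn^{\otimes} \to (\zz^{\op})^{\otimes}$ between the symmetric monoidal \categories associated to the additive monoids $(\nn,+)$ and $(\zz,+)$: it is additive on the nose and monotone, since the order on the target is reversed. Applying \cref{prop: left kan extension along strong promonoidal is strong monoidal} with $\m{O}^{\otimes} = \Fin_{\ast}$ and $\m{E} = \Sp$ (which admits all small colimits), left Kan extension along this functor is symmetric monoidal for the two Day convolution products. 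Concretely it extends an $\nn$-indexed filtered spectrum by the zero object in negative degrees, an operation that alters neither the associated graded nor the colimit. Composing the three maps produces the desired map of \operads $\Fun(\Delta^{\op},\Sp)^{\otimes} \to \mathrm{SSEQ}^{\otimes}$.

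For the remaining assertions I would appeal to the functoriality of algebras. A simplicial $\mathbb{E}_{n}$-ring spectrum is by definition an $\mathbb{E}_{n}$-algebra in the pointwise symmetric monoidal \category $\Fun(\Delta^{\op},\Sp)^{\otimes}$, and every map of \operads $\m{O}^{\otimes} \to \m{P}^{\otimes}$ induces a functor $\Alg_{\mathbb{E}_{n}}(\m{O}) \to \Alg_{\mathbb{E}_{n}}(\m{P})$. Pushing the given simplicial $\mathbb{E}_{n}$-ring forward along the composite thus yields an $\mathbb{E}_{n}$-algebra in $\mathrm{SSEQ}^{\otimes}$. When $n = \infty$, such an algebra is the same datum as a map of \operads $\Fin_{\ast} \to \mathrm{SSEQ}^{\otimes}$, which, as recalled before the statement, is exactly the assertion that the differentials in the associated spectral sequence satisfy the Leibniz rule.

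The principal obstacle is the middle comparison: one must confirm that reindexing and extending $\nn$-filtrations into $\Fun(\zz^{\op},\Sp)$ respects the monoidal structures \emph{and} matches the spectral-sequence conventions, so that the output of the composite agrees, up to the Maunder reindexing recalled above, with the skeletal spectral sequence of the simplicial spectrum. Once this bookkeeping is discharged, everything else is the formal fact that maps of \operads compose and preserve $\mathbb{E}_{n}$-algebras.
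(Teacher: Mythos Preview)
Your proposal is correct and matches the paper's approach: the paper treats the corollary as immediate from the main theorem (lax monoidality of $\sk_{\ast}$) composed with the Hedenlund--Krause--Nikolaus map, and you have simply made explicit the reindexing step between $\Fun(\nn,\Sp)^{\otimes}$ and $\Fun(\zz^{\op},\Sp)^{\otimes}$ that the paper leaves implicit. One minor quibble: your phrase ``extends by the zero object in negative degrees'' is convention-dependent---under $n \mapsto -n$ the left Kan extension places zeros at the positive integers of $\zz^{\op}$, which become the ``empty'' end of the decreasing filtration---but this does not affect the argument, and you rightly flag the convention bookkeeping as the only point needing care.
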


\begin{example}
    Let $\mathrm{B}^{\mathrm{cyc}}(A)$ denote the cyclic bar construction of an $\mathbb{E}_{\infty}$-ring spectrum which is an $\mathbb{E}_{\infty}$-algebra in $\Fun(\Delta^{\op},\Sp)$; consequently, $\sk_\ast \mathrm{B}^{\mathrm{cyc}}(A)$ is an $\ee{\infty}$-algebra in $\nn$-filtered spectra. 
    As the base-change functor $\mathbb{F}_{p} \otimes(-) \colon \Sp \rightarrow \Mod_{\mathbb{F}_{p}}$ is symmetric monoidal and colimit-preserving
    \(
    \mathbb{F}_{p} \otimes \sk_\ast \mathrm{B}^{\mathrm{cyc}}(A)
    \)
    is a filtered $\ee{\infty}$-$\mathbb{F}_{p}$-algebra. 
    Therefore, the B\"{o}kstedt spectral sequence defines a commutative algebra in $\mathrm{SSEQ}^{\otimes}$. 
    While the results of our work are able to recover this aforementioned multiplicative structure, the B\"{o}kstedt spectral sequence carries a great deal more structure, such as a $\sigma$-operator and a Hopf algebra structure; see \cite{angeltveit-rognes} for details. 
    The $\sigma$-operator is in fact a shadow of a cyclic analogue of the skeletal filtration, which encodes how the circle action shifts the filtration degree of the (simplicial) skeletal filtration; see \cite[Section 2]{malkiewich-cyclotomic} for a nice account of the cyclic skeletal filtration.  
    Categorified versions of these results are the subject of forthcoming work by the authors.
\end{example}

\appendix
\section{Base-change and Kan extensions}\label{section: base-change and Kan extensions}
In this appendix, we explain two technical results \cref{proposition: exchange transformation} and \cref{lem: projection formula} which are critically used in \cref{section: promonoidal cat} and \cref{section: Day convolution and props}. 
Both of these are well-known, but we were unable to find the specific generality of \cref{proposition: exchange transformation} in the literature. 
While \cref{lem: projection formula} does appear in the literature, we have opted to include a proof it in this appendix for the sake of completeness.

\subsection{Adjointability}
Given a pullback square of \categories 
\begin{equation*}
\begin{tikzcd}
	X \pull & Y \\
	S & T
	\arrow["\psi", from=1-1, to=1-2]
	\arrow["p"', from=1-1, to=2-1]
	\arrow["q", from=1-2, to=2-2]
	\arrow["\varphi"', from=2-1, to=2-2]
\end{tikzcd}
\end{equation*}
and an \category $\m{C}$, we have an induced commutative square, 
\begin{equation}\label{equation: left adjointable square}
    \begin{tikzcd}
	{\Fun(T,\m{C})} & {\Fun(S,\m{C})} \\
	{\Fun(Y,\m{C})} & {\Fun(X,\m{C})}
	\arrow["{\varphi^{\ast}}", from=1-1, to=1-2]
	\arrow["{q^{\ast}}"', from=1-1, to=2-1]
	\arrow["{p^{\ast}}", from=1-2, to=2-2]
	\arrow["{\psi^{\ast}}"', from=2-1, to=2-2]
\end{tikzcd}
\end{equation}
which we denote by $\sigma$. 
In many situations, the functors $\varphi^{\ast}$ and $\psi^{\ast}$ admit left adjoints, denoted by $\varphi_{!}$ and $\psi_{!}$, respectively.
The (co)units of these adjunctions combined with $\sigma$ induce an \textit{exchange transformation}\footnote{This is also called a Beck--Chevalley transformation.} 
\[
\r{Ex}_{\sigma} \colon \psi_{!}p^{\ast} \rightarrow q^{\ast}\varphi_{!}.
\]
In the situation where $\r{Ex}_{\sigma}$ is an equivalence, the square $\sigma$ is said to be \textit{left adjointable} (\cite[4.7.4.13]{HA}). 
If $\m{C}$ is presentable and $q$ is a coCartesian fibration, it is well-known that $\sigma$ is a left adjointable square. 
In fact, the same result is true when $q$ is only locally coCartesian \cite[Lemma 3.2.6]{ayala-francis-fibrations}.

Alternatively, if $\varphi^{\ast}$ and $\psi^{\ast}$ admit right adjoints, there is an entirely dual story in which $\sigma$ is \textit{right adjointable} whenever $q$ is a locally Cartesian fibration. 
Even more generally, Clausen--Jansen prove that the square $\sigma$ is right adjointable when $q$ is a \textit{proper functor} \cite[Definition 2.22, Theorem 2.27]{clausen-jansen}; in particular, locally Cartesian fibrations between \categories are always proper.

The main result of this appendix is to identify a class of pullback diagrams which induce left adjointable squares of the form \cref{equation: left adjointable square}. 
We now recall a well-known fact to be used in the the course of our arguments, the proof of which appears a \cite[Lemma 3.2.6]{ayala-francis-fibrations}.

\begin{lemma}\label{appendix A: lemma: local fib adjoints}
    Let $p\colon X \rightarrow S$ be a functor of \categories. 
    \begin{enumerate}
        \item If $p$ is a locally coCartesian fibration, then the inclusion 
        \(
        X_{s} \rightarrow X_{/s} = X\times_{S} S_{/s}
        \)
        is a right adjoint. 
        If $(x,s' \xrightarrow{\alpha} s) \in X_{/s}$, the value of this right adjoint is $\alpha_{!}(x) \in X_{s}$, where $\alpha_{!} \colon X_{s'} \rightarrow X_{s}$ denotes the covariant transport of $\alpha$. 
        \item If $p$ is a locally Cartesian fibration, then the inclusion 
        \(
        X_{s} \rightarrow X_{s/} = X\times_{S} S_{s/}
        \)
        is a left adjoint. 
        If $(x,s \xrightarrow{\alpha} s') \in X_{/s}$, the value of this right adjoint is equivalent to $\alpha_{\ast}(x) \in X_{s}$, where $\alpha_{\ast} \colon X_{s} \rightarrow X_{s'}$ denotes the functor induced by $\alpha$. 
    \end{enumerate}
\end{lemma}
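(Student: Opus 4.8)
The plan is to prove part (1) by hand and obtain part (2) by a formally dual argument. For part (1) it suffices, by the pointwise criterion for the existence of adjoints \cite[5.2.4.2]{HTT}, to show that the inclusion $\iota\colon X_{s}\hookrightarrow X_{/s}$ admits a left adjoint, which amounts to checking that for every object $(x,\alpha\colon s'\to s)$ of $X_{/s}=X\times_{S}S_{/s}$ the functor $\Map_{X_{/s}}\big((x,\alpha),\iota(-)\big)\colon X_{s}\to\Spaces$ is corepresentable; the corepresenting object and the unit are then read off directly.

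First I would compute mapping spaces out of $(x,\alpha)$ into the image of $\iota$. Since the source projection $S_{/s}\to S$ is a right fibration, the space $\Map_{S_{/s}}(\alpha,\mathrm{id}_{s})$ is the fiber of $\mathrm{id}_{s}\circ(-)\colon\Map_{S}(s',s)\to\Map_{S}(s',s)$ over $\alpha$, hence contractible with underlying edge $\alpha$. Expressing mapping spaces in the fiber product $X\times_{S}S_{/s}$ as a fiber product of mapping spaces then gives, naturally in $y\in X_{s}$, the equivalence
\[
\Map_{X_{/s}}\big((x,\alpha),\iota(y)\big)\;\simeq\;\Map_{X}(x,y)\times_{\Map_{S}(s',s)}\{\alpha\},
\]
i.e.\ the space $\Map_{X}^{\alpha}(x,y)$ of edges $x\to y$ in $X$ lying over $\alpha$.

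Next I would produce the candidate adjoint value. As $p$ is locally coCartesian, the edge $\alpha$ admits a locally $p$-coCartesian lift $\widetilde{\alpha}\colon x\to\alpha_{!}(x)$ with $\alpha_{!}(x)\in X_{s}$; the triangle $\alpha=\mathrm{id}_{s}\circ\alpha$ promotes $\widetilde{\alpha}$ to a morphism $(x,\alpha)\to\iota(\alpha_{!}(x))$ in $X_{/s}$. The defining property of the locally coCartesian edge $\widetilde{\alpha}$ is exactly that composition with $\widetilde{\alpha}$ induces an equivalence $\Map_{X_{s}}(\alpha_{!}(x),y)\xrightarrow{\sim}\Map_{X}^{\alpha}(x,y)$ for every $y\in X_{s}$. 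Combining this with the displayed equivalence exhibits $\alpha_{!}(x)$ as the corepresenting object, with unit $\widetilde{\alpha}$; invoking \cite[5.2.4.2]{HTT} assembles these data into a left adjoint $L$ with $L(x,\alpha)\simeq\alpha_{!}(x)$, proving (1). Part (2) is the formal dual: now $S_{s/}\to S$ is a left fibration, so $\Map_{S_{s/}}(\mathrm{id}_{s},\alpha)$ is contractible, mapping spaces in $X_{s/}=X\times_{S}S_{s/}$ again reduce to fibers of the form $\Map_{X}^{\alpha}(-,-)$, and the locally Cartesian lift of $\alpha$ exhibits the contravariant transport as the value of the right adjoint to $\iota\colon X_{s}\hookrightarrow X_{s/}$.

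I expect the main obstacle to be the bookkeeping in the mapping-space computation: one must verify that the fiber-product formula for $\Map_{X\times_{S}S_{/s}}$ genuinely collapses, via the right fibrancy of $S_{/s}\to S$, to the single fiber $\Map_{X}^{\alpha}(x,y)$, and that this identification is compatible with the universal property of the locally coCartesian edge, so that pointwise corepresentability is natural in $y$ and in $(x,\alpha)$ and therefore yields an honest adjunction rather than a mere levelwise equivalence of mapping spaces.
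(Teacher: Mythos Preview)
Your argument is correct. The mapping-space computation in the fiber product is exactly right: since $\mathrm{id}_{s}$ is terminal in $S_{/s}$, the space $\Map_{S_{/s}}(\alpha,\mathrm{id}_{s})$ is contractible and maps to $\alpha\in\Map_{S}(s',s)$, collapsing the pullback to $\Map_{X}^{\alpha}(x,y)$; the universal property of a locally $p$-coCartesian lift then gives corepresentability by $\alpha_{!}(x)$, and \cite[5.2.4.2]{HTT} assembles this into the required adjunction. The dual argument for (2) goes through unchanged.

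The paper itself does not prove this lemma: it simply records it as well-known and points to \cite[Lemma~3.2.6]{ayala-francis-fibrations} for a proof. Your approach, by contrast, gives a self-contained argument directly from the definitions, which has the virtue of making the identification of the adjoint with covariant transport completely transparent (and of not depending on the reader having access to that reference). The worry you flag about naturality is not really an obstacle: the pointwise criterion \cite[5.2.4.2]{HTT} only requires, for each object, a morphism with the universal property, and handles the coherence for you.
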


\begin{lemma}\label{lemma: pullback of right fibrations}
Let
\begin{equation*}
\begin{tikzcd}
	X \pull & Y \\
	S & T
	\arrow["\psi", from=1-1, to=1-2]
	\arrow["p"', from=1-1, to=2-1]
	\arrow["q", from=1-2, to=2-2]
	\arrow["\varphi"', from=2-1, to=2-2]
\end{tikzcd}
\end{equation*}
be a pullback square of \categories where $q$ is a locally Cartesian fibration. 
Then, for any $y \in Y$, the following induced map is cofinal:
\[
(\varphi,\psi) \colon X \pullb_{Y} Y_{/y} \longrightarrow S \pullb_{T} T_{/q(y)}.
\] 
\end{lemma}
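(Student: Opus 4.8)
The plan is to set $t=q(y)$, unwind the functor $(\varphi,\psi)$ explicitly, construct a right adjoint to it out of $q$-Cartesian lifts (this is exactly where the locally Cartesian hypothesis is used), and then deduce cofinality from the fact that the unit of this adjunction deformation-retracts each relevant comma \category onto a \category with an initial object.

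First I would identify the source and target. Since $X=S\pullb_{T}Y$, there is a canonical identification $X\pullb_{Y}Y_{/y}\simeq S\pullb_{T}Y_{/y}$, where $Y_{/y}\to T$ is $\ev_0$ followed by $q$; an object is a pair $(s,\alpha\colon y'\to y)$ together with an equivalence $\varphi(s)\simeq q(y')$. The target $S\pullb_{T}T_{/t}$ has objects $(s,\beta\colon\varphi(s)\to t)$, and under these identifications the functor $G:=(\varphi,\psi)$ sends $(s,\alpha\colon y'\to y)$ to $(s,\beta)$, where $\beta$ is the composite $\varphi(s)\simeq q(y')\to t$ of the structure equivalence with $q(\alpha)$.

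Next I would produce a right adjoint $R$ to $G$. By the pointwise criterion for adjoint functors \cite[5.2.4.2]{HTT}, it suffices to show that for each $(s,\beta)\in S\pullb_{T}T_{/t}$ the functor $\Map_{S\pullb_{T}T_{/t}}(G(-),(s,\beta))$ is corepresentable; I claim it is corepresented by $R(s,\beta):=(s,\bar\beta\colon\beta^{*}y\to y)$, where $\bar\beta$ is a $q$-Cartesian lift of $\beta\colon\varphi(s)\to t$ with target $y$. Such a lift exists precisely because $q$ is locally Cartesian. Indeed, for an object $(s',\alpha'\colon y''\to y)$ of the source, a morphism $(s',\alpha')\to(s,\bar\beta)$ consists of a map $g\colon s'\to s$ in $S$ together with a map $y''\to\beta^{*}y$ over $y$ lying over $\varphi(g)$, and the universal property of the $q$-Cartesian edge $\bar\beta$ makes this data contractible exactly when $q(\alpha')\simeq\beta\circ\varphi(g)$ — which is precisely a morphism $G(s',\alpha')\to(s,\beta)$ in $S\pullb_{T}T_{/t}$. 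This exhibits $G\dashv R$. Moreover $q(\bar\beta)\simeq\beta$ gives $G\circ R\simeq\mathrm{id}$, so the counit is an equivalence and $R$ is fully faithful.

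Finally I would conclude via \cite[4.1.3.1]{HTT}. For $w\in S\pullb_{T}T_{/t}$, set $\mathcal{F}_{w}:=(X\pullb_{Y}Y_{/y})\pullb_{(S\pullb_{T}T_{/t})}(S\pullb_{T}T_{/t})_{w/}$. The unit $\mathrm{id}\Rightarrow R\circ G$ induces a natural transformation on $\mathcal{F}_{w}$ from the identity to the endofunctor induced by $R\circ G$, whose image lies in the full sub-\category $\mathcal{D}\subseteq\mathcal{F}_{w}$ spanned by objects in the essential image of $R$; on realizations this transformation exhibits $\mathcal{D}\hookrightarrow\mathcal{F}_{w}$ as a deformation retract, so $\lvert\mathcal{F}_{w}\rvert\simeq\lvert\mathcal{D}\rvert$. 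Because the counit $G\circ R\simeq\mathrm{id}$ is an equivalence, $G$ identifies $\mathcal{D}$ with $(S\pullb_{T}T_{/t})_{w/}$, which has an initial object and is therefore weakly contractible. Thus every comma \category $\mathcal{F}_{w}$ is weakly contractible, and $G=(\varphi,\psi)$ is cofinal. The main obstacle is the rigorous construction of $R$: one must verify the per-object corepresentability through the universal property of $q$-Cartesian edges, and it is exactly this point that forces the locally Cartesian hypothesis (guaranteeing both the existence of the lifts $\beta^{*}y$ and their factorization property). Once the pointwise adjoint is in hand, assembling $R$ into a functor and passing from the localization to cofinality are formal.
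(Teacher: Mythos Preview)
Your construction of a global right adjoint $R$ to $G$ does not go through under the stated hypothesis. The lift $\bar\beta\colon\beta^{*}y\to y$ furnished by a \emph{locally} Cartesian fibration is only locally $q$-Cartesian: its universal property says that any arrow $y''\to y$ lying over $\beta$ itself factors uniquely through $\bar\beta$ via a \emph{vertical} map $y''\to\beta^{*}y$. It does \emph{not} say that an arrow $\alpha'\colon y''\to y$ lying over a composite $\beta\circ\varphi(g)$ factors through $\bar\beta$ via a map over $\varphi(g)$. But this stronger factorization is exactly what you invoke when you claim that ``the universal property of the $q$-Cartesian edge $\bar\beta$ makes this data contractible'': in your adjunction check the test object $(s',\alpha')$ lives over $\varphi(s')$, not over $\varphi(s)$, so the lift $h\colon y''\to\beta^{*}y$ you need must cover the nondegenerate edge $\varphi(g)\colon\varphi(s')\to\varphi(s)$. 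That is the full Cartesian property, and a locally Cartesian fibration need not satisfy it (indeed, this is precisely the obstruction to a locally Cartesian fibration being Cartesian). Concretely, take $S=T=\Delta^{2}$, $\varphi=\mathrm{id}$, and any $q\colon Y\to\Delta^{2}$ which is locally Cartesian but not Cartesian; then with $y$ over $2$ and $\beta=(1\to 2)$ your candidate $R(\beta)=\bar\beta$ fails to corepresent maps out of objects lying over $0$.

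If $q$ were assumed Cartesian your argument would be correct --- and in that case simpler than you make it, since a left adjoint is automatically cofinal and the entire ``Finally'' paragraph is unnecessary. The paper's proof avoids the global adjoint altogether: it applies Quillen's Theorem~A directly, and for each fixed $w=(s,\alpha)$ identifies the comma \category $\mathcal{F}_{w}$ with a pullback involving $Y_{\varphi(s)/}$; then \cref{appendix A: lemma: local fib adjoints} (which only needs the \emph{locally} Cartesian hypothesis, since the relevant factorizations there are vertical) exhibits the slice $(Y_{\varphi(s)})_{/\alpha_{*}(y)}$ as a left-adjoint retract of $\mathcal{F}_{w}$, and this slice has a terminal object. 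The key difference is that the paper never compares objects lying over two distinct points of $T$ through a single Cartesian lift.
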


\begin{proof}
As above, we will write $X_{/y} = X \pullb_{Y} Y_{/y}$ and $S_{/q(y)} = S \pullb_{T} T_{/q(y)}$.
By Joyal's \categorical Quillen's Theorem A \cite[4.1.3.1]{HTT}, we need to verify that for any object $(s,\varphi(s)\xrightarrow{\alpha} q(y)) \in S_{/q(y)}$ the geometric realization of the \category 
\[
(X_{/y})_{(s,\varphi(s)\rightarrow q(y))/} = X_{/y} \pullb_{S_{/q(y)}} (S_{/q(y)})_{(s,\varphi(s)\rightarrow q(y))/}
\]
is weakly contractible. 
Unwinding the definitions, we have equivalences of \categories 
\[
(X_{/y})_{(s,\varphi(s)\rightarrow q(y))/} \simeq X_{s/} \pullb_{Y_{\varphi(s)/}} (Y_{\varphi(s)/})_{/(y,\varphi(s) \rightarrow q(y))}
\]
and 
\[
(Y_{\varphi(s)})_{/\alpha_{\ast}(y)} \xrightarrow{\sim} X_{s} \pullb_{Y_{\varphi(s)}} (Y_{\varphi(s)})_{/\alpha_{\ast}(y)}, 
\]
from which we can conclude the inclusion 
\[
(Y_{\varphi(s)})_{/\alpha_{\ast}(y)} \rightarrow (X_{/y})_{(s,\varphi(s)\rightarrow q(y))/} 
\]
is a left adjoint by \cref{appendix A: lemma: local fib adjoints}; as the source has a terminal object, the claim follows. 
\end{proof}

\begin{proposition}\label{proposition: exchange transformation}
Let 
\begin{equation*}
\begin{tikzcd}
	X \pull & Y \\
	S & T
	\arrow["\psi", from=1-1, to=1-2]
	\arrow["p"', from=1-1, to=2-1]
	\arrow["q", from=1-2, to=2-2]
	\arrow["\varphi"', from=2-1, to=2-2]
\end{tikzcd}
\end{equation*}
be a pullback square of \categories with $q$ a locally Cartesian fibration
We denote this square by $\sigma$. 
Let $\m{C}$ be an \category which which admits $S\times_{T}T_{/t}$-indexed colimits for all $t \in T$. 
Then, the induced square
\[\begin{tikzcd}
	{\Fun(T,\m{C})} & {\Fun(S,\m{C})} \\
	{\Fun(Y,\m{C})} & {\Fun(X,\m{C})}
	\arrow["{\varphi^{\ast}}", from=1-1, to=1-2]
	\arrow["{q^{\ast}}"', from=1-1, to=2-1]
	\arrow["{p^{\ast}}", from=1-2, to=2-2]
	\arrow["{\psi^{\ast}}"', from=2-1, to=2-2]
\end{tikzcd}\]
is left adjointable. 
That is, $\varphi^{\ast}$ and $\psi^{\ast}$ admit left adjoints, denoted by $\varphi_{!}$ and $\psi_{!}$, respectively, and the following exchange transformation is an equivalence:
\[
\r{Ex}_{\sigma} \colon \psi_{!}p^{\ast} \rightarrow q^{\ast} \varphi_{!}.
\]
\end{proposition}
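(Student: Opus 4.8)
The plan is to reduce the entire statement to a pointwise computation of left Kan extensions and then feed in the cofinality assertion of \cref{lemma: pullback of right fibrations}. First I would produce $\varphi_{!}$. Since $\m{C}$ admits $S\pullb_{T}T_{/t}$-indexed colimits for every $t\in T$, the pointwise criterion for left Kan extensions (\cite[4.3.2.15, 4.3.3.7]{HTT}) shows that $\varphi^{\ast}\colon \Fun(T,\m{C})\to\Fun(S,\m{C})$ admits a left adjoint $\varphi_{!}$, computed by the formula $(\varphi_{!}F)(t)\simeq \colim_{S\pullb_{T}T_{/t}} F|$ for $F\in\Fun(S,\m{C})$ and $t\in T$.

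Next I would set up $\psi_{!}p^{\ast}$ and the exchange transformation together. Using the pullback identification $X\simeq S\pullb_{T}Y$, the comma \category governing the pointwise formula for $\psi_{!}$ at an object $y\in Y$ becomes $X\pullb_{Y}Y_{/y}\simeq S\pullb_{T}Y_{/y}$. The key observation is that, for a diagram of the form $p^{\ast}F$, its restriction to $X\pullb_{Y}Y_{/y}$ is obtained by precomposing the diagram on $S\pullb_{T}T_{/q(y)}$ computing $(\varphi_{!}F)(q(y))$ along the comparison functor $(\varphi,\psi)\colon X\pullb_{Y}Y_{/y}\to S\pullb_{T}T_{/q(y)}$; this is exactly where the commutativity $\varphi\circ p\simeq q\circ\psi$ of the original square enters. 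Since $(\varphi,\psi)$ is cofinal by \cref{lemma: pullback of right fibrations}, the colimit of $(p^{\ast}F)|$ over $X\pullb_{Y}Y_{/y}$ exists and the canonical comparison
\[
\colim_{X\pullb_{Y}Y_{/y}} (p^{\ast}F)| \;\longrightarrow\; \colim_{S\pullb_{T}T_{/q(y)}}F| \;\simeq\; (\varphi_{!}F)(q(y)) = (q^{\ast}\varphi_{!}F)(y)
\]
is an equivalence. This simultaneously shows that $\psi_{!}p^{\ast}$ is defined and that, pointwise at each $y\in Y$, its value agrees with that of $q^{\ast}\varphi_{!}$.

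The decisive step, which I expect to be the main obstacle, is to confirm that this pointwise equivalence \emph{is} the exchange transformation $\r{Ex}_{\sigma}$, rather than merely an abstract equivalence between its source and target. To do this I would unwind the construction of $\r{Ex}_{\sigma}$ from the unit and counit of the adjunctions $(\varphi_{!},\varphi^{\ast})$ and $(\psi_{!},\psi^{\ast})$ and match it, term by term, with the colimit comparison map induced by $(\varphi,\psi)$. Concretely, the naturality of the pointwise colimit formula identifies the value of $\r{Ex}_{\sigma}$ at $y$ with the map on colimits induced by $(\varphi,\psi)$ together with the canonical identification $(p^{\ast}F)|\simeq (\varphi,\psi)^{\ast}(F|)$; once this bookkeeping is carried out, the cofinality of $(\varphi,\psi)$ forces $\r{Ex}_{\sigma}$ to be an equivalence. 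Since equivalences of functors are detected pointwise, this completes the proof.
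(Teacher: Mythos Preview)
Your proposal is correct and follows essentially the same approach as the paper: both reduce to the pointwise formula for left Kan extensions and then invoke \cref{lemma: pullback of right fibrations} to show that the colimit comparison map induced by $(\varphi,\psi)$ is an equivalence. The paper is slightly terser, asserting without further comment that the pointwise map ``given by colimit interchange'' is the exchange transformation, whereas you flag this identification as the main bookkeeping obstacle; you are also a bit more careful in deducing the existence of $\psi_{!}$ from cofinality rather than directly from the hypothesis on $\m{C}$.
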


\begin{proof}
Our assumption about the existence of $S\times_{T}T_{/t}$-shaped colimits in $\m{C}$ guarantees the existence of $\varphi_{!}$ and $\psi_{!}$. 
To see that $\r{Ex}_{\sigma}$ is an equivalence, note that for any $F \colon S \rightarrow \m{C}$ and any $y \in Y$, the induced map 
\[
(\psi_{!}p^{\ast}F)(y) \simeq \colim_{X\times_{Y}Y_{/y}} (F \circ p)|_{X\times_{Y}Y_{/y}} \rightarrow \colim_{S\times_{T}T_{/q(y)}} F|_{S\times_{T}T_{/q(y)}} \simeq (q^{\ast}\varphi_{!}F)(y),
\]
given by colimit interchange, is an equivalence by \cref{lemma: pullback of right fibrations}.
\end{proof}

\begin{remark}
    After writing the arguments above, we learned an alternative proof of \cref{proposition: exchange transformation} from Peter Haine in the case where $\m{C}$ is presentable.     
    The square $\sigma$ in \cref{proposition: exchange transformation} is the transposition of a square to which we can apply \cite[Theorem 2.27]{clausen-jansen}.
    By \cite[4.7.4.14]{HA} the left adjointability of $\sigma$ is tantamount to the right adjointability of the aforementioned transposed square.     
\end{remark}

\subsection{A projection formula for coends}
One interesting consequence of the preceding results is a well-known projection formula for weighted colimits. 
Before stating this result, we recall some of the basic theory of weighted colimits. 

\begin{definition}
    Let $W \colon S^\op \rightarrow \Spaces$ and let $F \colon S \rightarrow \m{C}$ be functors of \categories, where $W$ is classified by the right fibration $q \colon X \rightarrow S$.
    Then, the \textit{$W$-weighted colimit of $F$}, if it exists, is defined as
    \[
    \colim_{S,W} F = \colim_{X} F \circ q.
    \]
\end{definition}

In favorable cases, such as when $\m{C}$ is a presentable \category, weighted colimits can be described in terms of coends. 
If $\m{C}$ admits colimits indexed by spaces, we let $X \odot C$ denote the colimit of the constant diagram $X \rightarrow \{C\} \rightarrow \m{C}$. 
Additionally, if $\m{C}$ is presentable, we will use the same notation for the tensor of $\m{C}$ over the \category of spaces. 
The following lemma is standard. 

\begin{lemma}\label{lemma: coend formula for weighted colimit}
    Let $W \colon S^\op \rightarrow \Spaces$ and let $F \colon S \rightarrow \m{C}$ be functors of \categories, where $W$ is classified by the right fibration $q \colon X \rightarrow T$. 
    Furthermore, assume that $\m{C}$ admits colimits indexed by spaces. 
    Then, if the $W$-weighted colimit of $F$ exists, there is a natural equivalence
    \[
    \colim_{S,W} F \simeq \int^{s \in S} W(s) \odot F(s),  
    \]
    where $\int^{s \in S} W(s) \odot F(s)$ is the coend of the functor $W \odot F \colon S^{\op} \times S \rightarrow \m{C}$. 
\end{lemma}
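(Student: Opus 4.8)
The statement to prove is \cref{lemma: coend formula for weighted colimit}: that the $W$-weighted colimit of $F$ agrees with the coend $\int^{s\in S} W(s)\odot F(s)$. The strategy is to rewrite both sides as colimits over a common indexing \category, and to recognize the coend as itself a weighted colimit, so that the two descriptions coincide by inspection. The only real content is to pass from the twisted-arrow (coend) description to the right-fibration (weighted colimit) description, and this will follow from a cofinality argument.

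First I would unwind the definition of the coend. Recall that for a functor $H\colon S^{\op}\times S\to \m{C}$, the coend $\int^{s\in S} H(s,s)$ is computed as the colimit of $H$ restricted along the twisted arrow \category $\mathrm{Tw}(S)\to S^{\op}\times S$; that is, $\int^{s}H(s,s)\simeq \colim_{\mathrm{Tw}(S)} H\circ (\mathrm{src},\mathrm{tgt})$. Applying this to $H = W\odot F$, whose value on $(s,s')$ is $W(s)\odot F(s')$, gives
\[
\int^{s\in S} W(s)\odot F(s)\simeq \colim_{\mathrm{Tw}(S)}\big(W\circ \mathrm{src}\big)\odot \big(F\circ \mathrm{tgt}\big).
\]
On the other side, by definition $\colim_{S,W}F = \colim_{X} F\circ q$, where $q\colon X\to S$ is the right fibration classifying $W$.

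The key step is then to produce a cofinal functor relating $X$ (or a suitable thickening of it) to the indexing category appearing in the coend. The cleanest route is to observe that since $\m{C}$ admits colimits indexed by spaces, the tensoring $W(s)\odot(-)$ can itself be written as a colimit over the fiber $X_s = q^{-1}(s)$, because $X_s\simeq W(s)$ as a space. Substituting this and using the fact that colimits commute, the coend becomes a colimit over the total space of a fibration whose fibers assemble the twisted-arrow data; a standard straightening/unstraightening computation, or a direct cofinality check in the spirit of \cref{lemma: pullback of right fibrations}, then identifies this indexing \category with $X$ up to a cofinal (indeed, right-adjoint) inclusion. Concretely, one shows the inclusion $X\hookrightarrow X\times_S \mathrm{Tw}(S)$ picking out identity arrows is cofinal, so the two colimits agree. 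The main obstacle I anticipate is bookkeeping the equivalence $X_s\simeq W(s)$ coherently in $s$ and verifying the relevant comma \categories are weakly contractible, rather than any conceptual difficulty; this is precisely the type of cofinality argument already deployed in \cref{lemma: pullback of right fibrations}, so I expect the proof to be short once that identification is set up correctly.
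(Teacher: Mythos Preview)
Your approach is correct and is essentially the standard direct argument: unwind the coend as a colimit over the twisted-arrow \category, expand each $W(s)\odot F(s')$ as a colimit over the fiber $X_s$, and then use cofinality to collapse the resulting iterated colimit to $\colim_X F\circ q$. The paper, by contrast, does not spell out any argument at all: it simply cites the dual statement from Haugseng's paper on coends (\cite[Proposition 4.3]{haugseng-coends}). So your proposal is more self-contained than what the paper does, and the content you sketch is precisely what one would find upon unwinding that reference. One minor remark: your final cofinality step is most cleanly phrased as showing that the projection $X\times_S \mathrm{Tw}(S)\to X$ (rather than the identity-section inclusion) is cofinal, since for each $x\in X$ the relevant comma \category is $\mathrm{Tw}(S)_{q(x)/}$, which has an initial object; this is the same flavor of argument you indicate, just with the direction clarified.
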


\begin{proof}
    The dual of this claim is proved in \cite[Proposition 4.3]{haugseng-coends}, for example.
\end{proof}

\begin{lemma}\label{lem: projection formula}
    Let $S$ and $T$ be small \categories and let $\m{C}$ be a presentable \category. 
    Let $F \colon S \rightarrow \m{C}$, $\varphi\colon S \rightarrow T$, and $W \colon T^{\op} \rightarrow \Spaces$ be functors, where $W$ is classified by the right fibration $q \colon Y \rightarrow T$.
    Then, we have a natural equivalence 
    \[
    \int^{s\in S} (\varphi^{\op})^{\ast}W(s) \odot F(s) \simeq \colim_{S,W\circ \varphi^{\op}} F \stackrel{\simeq}\longrightarrow \colim_{T,W} \varphi_{!}F \simeq \int^{t\in T} W(t) \odot \varphi_{!}F(t).    
    \]
\end{lemma}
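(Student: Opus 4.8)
The plan is to decompose the displayed chain into three equivalences, treating the two outer ones and the middle one separately. The two outer equivalences,
\[
\int^{s\in S} (\varphi^{\op})^{\ast}W(s) \odot F(s) \simeq \colim_{S,W\circ \varphi^{\op}} F
\qquad\text{and}\qquad
\colim_{T,W} \varphi_{!}F \simeq \int^{t\in T} W(t) \odot \varphi_{!}F(t),
\]
are direct instances of \cref{lemma: coend formula for weighted colimit}: the first applies that lemma to the weight $W\circ\varphi^{\op}\colon S^{\op}\to\Spaces$ and the functor $F\colon S\to\m{C}$, while the second applies it to the weight $W\colon T^{\op}\to\Spaces$ and the functor $\varphi_{!}F\colon T\to\m{C}$. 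Since $\m{C}$ is presentable it admits all the colimits indexed by spaces that the lemma requires, so its hypotheses hold.

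All the content therefore sits in the middle equivalence $\colim_{S,W\circ\varphi^{\op}} F \simeq \colim_{T,W}\varphi_{!}F$. First I would unwind both weighted colimits in terms of total categories of right fibrations. Writing $q\colon Y\to T$ for the right fibration classifying $W$, the weight $W\circ\varphi^{\op}$ is classified by the pullback right fibration $q'\colon X := S\times_{T} Y \to S$, which produces a pullback square
\[
\begin{tikzcd}
X \arrow[r, "\psi"] \arrow[d, "q'"'] & Y \arrow[d, "q"] \\
S \arrow[r, "\varphi"'] & T
\end{tikzcd}
\]
in which $q$, being a right fibration, is in particular a locally Cartesian fibration. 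By the definition of the weighted colimit, the left-hand side is then $\colim_{X} F\circ q'$ and the right-hand side is $\colim_{Y}(\varphi_{!}F)\circ q$.

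The key step is to identify these two colimits using the adjointability established in \cref{proposition: exchange transformation}. Its hypotheses are met: $q$ is locally Cartesian and $\m{C}$, being presentable, admits all small colimits (in particular the $S\times_T T_{/t}$-indexed ones). Applying it to the square above yields that the exchange transformation $\psi_{!}(q')^{\ast}\xrightarrow{\sim} q^{\ast}\varphi_{!}$ is an equivalence. Writing $r_X\colon X\to\ast$ and $r_Y\colon Y\to\ast$ for the terminal maps and recalling that $\colim$ is left Kan extension to the point, I would then compute
\[
\colim_{T,W}\varphi_{!}F \simeq (r_Y)_{!}\,q^{\ast}\varphi_{!}F \simeq (r_Y)_{!}\,\psi_{!}(q')^{\ast}F \simeq (r_X)_{!}(q')^{\ast}F \simeq \colim_{S,W\circ\varphi^{\op}}F,
\]
where the second equivalence is the exchange transformation and the third uses $r_X = r_Y\circ\psi$ together with functoriality of left Kan extension, $(r_Y)_{!}\psi_{!}\simeq (r_Y\circ\psi)_{!}$. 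Naturality of the whole chain is inherited from naturality of the coend formula and of the exchange transformation. The main obstacle I anticipate is bookkeeping rather than conceptual: correctly identifying the weight $W\circ\varphi^{\op}$ with the pullback right fibration $S\times_T Y\to S$, so that the left-hand weighted colimit really is $\colim_X F\circ q'$, and making sure the terminal-map identity $r_X = r_Y\circ\psi$ threads naturally through the exchange equivalence. Once the pullback square is set up, verifying the hypotheses of \cref{proposition: exchange transformation} is immediate from presentability of $\m{C}$ and the fact that right fibrations are locally Cartesian.
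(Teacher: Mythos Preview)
Your proposal is correct and follows essentially the same approach as the paper: both arguments reduce to applying \cref{proposition: exchange transformation} to the pullback square with $q$ a right fibration, after noting that the pullback $S\times_T Y\to S$ classifies $W\circ\varphi^{\op}$. The paper's proof is a one-line appeal to that proposition, whereas you have (correctly) unpacked the terminal-map bookkeeping and separately invoked \cref{lemma: coend formula for weighted colimit} for the outer identifications.
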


\begin{proof}
    This follows immediately from \cref{proposition: exchange transformation} by observing that the right fibration $S\times_{T} Y \rightarrow S$, where the pullback is taken along $\varphi$, classifies the functor $W \circ \varphi^{\op}$.
\end{proof}

\section{The Day convolution on a monoidal model category}\label{sec: Appendix Day convo agree}

We show here that the derived Day convolution product in a monoidal model category agrees with the $\infty$-categorical Day convolution reviewed in \cref{section: Day convolution and props}.

First, we show that the notions of Day convolutions in ordinary categories  and in $\infty$-categories agree. 
Recall from \cite[\href{https://kerodon.net/tag/002Z}{Proposition 002Z}]{kerodon} that for $\cC$ a small category and $\cD$ a category, we get an equivalence of $\infty$-categories:
\[
\ \r{N}(\Fun(\cC, \cD))\simeq\Fun(\r{N}(\cC), \r{N}(\cD)),
\]
and in fact, it is an isomorphism of simplicial sets.
Recall that if $\cD$ is a symmetric monoidal category, the nerve on its operator category defines an $\infty$-category $\r{N}(\cD^\otimes)$ whose underlying $\infty$-category is precisely $\r{N}(\cD)$, see \cite[2.1.2.21]{HA}.
Recall we proved a similar result if $\cD$ is replaced by a promonoidal category $\cC$, see \cref{cor: promonoidal nerve}.
On one hand, we can define the Day convolution monoidal structure on the ordinary category $\Fun(\cC, \cD)$ as in \cref{construction: Day convolution in ordinary categories}, and denote its category of operators  $\Fun(\cC, \cD)^\Day$, on the other hand we can also define the Day convolution on the $\infty$-category $\Fun(\r{N}(\cC), \r{N}(\cD))$ as in \cref{section: Day convolution and props}.

\begin{lemma}\label{lem: Day convolution underived}
    Let $\cC$ be a small promonoidal category.
    Let $\cD$ be a finitely cocomplete symmetric monoidal category.
    Then we obtain an equivalence of symmetric monoidal \categories:
    \[
    \r{N} \big( \Fun(\cC, \cD)^\Day\big) \simeq \Fun\left(\r{N}(\cC), \r{N}(\cD)\right)^\Day.
    \]
    In other words, we have the equivalence $\Fun(\r{N}(\cC), \r{N}(\cD))_\r{Day}\simeq \r{N}(\Fun(\cC, \cD)_\r{Day})$.
\end{lemma}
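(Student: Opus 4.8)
The plan is to compare the two symmetric monoidal structures directly, as coCartesian fibrations over $\Fin_\ast$ on the common underlying \category $\r{N}(\Fun(\cC,\cD)) \simeq \Fun(\r{N}(\cC),\r{N}(\cD))$. On the classical side, the Day convolution of \cref{construction: Day convolution in ordinary categories} is a symmetric monoidal $1$-category whose category of operators $\Fun(\cC,\cD)^\Day \to \Fin_\ast$ is a coCartesian fibration of $1$-categories, so applying the nerve yields a coCartesian fibration of \categories $\r{N}(\Fun(\cC,\cD)^\Day) \to \Fin_\ast$ (using \cite[2.1.2.21]{HA}). On the $\infty$-categorical side, \cref{prop: existence of Day convolution product} produces the Day convolution coCartesian fibration $\Fun(\r{N}(\cC),\r{N}(\cD))^\otimes \to \Fin_\ast$. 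First I would produce a lax symmetric monoidal comparison functor over $\Fin_\ast$: Day's construction equips $\Fun(\cC,\cD)^\Day$ with a canonical evaluation map of operator categories $\Fun(\cC,\cD)^\Day \times_{\Fin_\ast} \cC^\otimes \to \cD^\otimes$, whose nerve, via the universal property of the Day convolution operad (\cref{remark: universal property of day convolution operad}), is adjoint to a map $\Phi \colon \r{N}(\Fun(\cC,\cD)^\Day) \to \Fun(\r{N}(\cC),\r{N}(\cD))^\otimes$ over $\Fin_\ast$ which is the identity on underlying \categories.

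Next I would show $\Phi$ is an equivalence of symmetric monoidal \categories. Since $\Phi$ is the identity on underlying \categories, it is an equivalence on each fiber $\prod^n \Fun(\r{N}(\cC),\r{N}(\cD))$, so it remains to check that $\Phi$ preserves coCartesian edges. By the Segal conditions the inert case is automatic, and using the description of composition in \cref{construction: operator category of a symmetric promonoidal category} together with associativity, the active case reduces to the binary tensor (the map $\brak{2}\to\brak{1}$) and the unit (the map $\brak{0}\to\brak{1}$). Concretely, I must match the classical coend $(F\Day G)(c) = \int^{(c_1,c_2)} \mu(c_1,c_2;c)\odot F(c_1)\otimes G(c_2)$ and unit $\eta^\op(c)\odot\unit_\cD$ from \cref{construction: Day convolution in ordinary categories} with the analogous $\infty$-categorical weighted-colimit formulas from \cref{prop: existence of Day convolution product}, whose weights are the multimorphism spaces $\Mul_{\r{N}(\cC)}(\{c_1,c_2\},-)$ and $\Mul_{\r{N}(\cC)}(\varnothing,-)$. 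By \cref{cor: promonoidal nerve} these multimorphism spaces are the \emph{discrete} sets $\mu(c_1,c_2;-)$ and $\eta(-)$, so the weights on the two sides agree on the nose.

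The crux is to verify that the $\infty$-categorical weighted colimit computing the tensor product, valued in the $1$-category $\r{N}(\cD)$, coincides with the classical coend computed in $\cD$. Because the weight $\mu(c_1,c_2;c)$ is discrete, \cref{lemma: coend formula for weighted colimit} identifies this $\infty$-coend with the colimit of $(c_1,c_2)\mapsto F(c_1)\otimes G(c_2)$ over the nerve of the ordinary category of elements of $\mu(-,-;c)$; as the colimit is taken in the $1$-category $\r{N}(\cD)$, its universal property forces it, once it exists, to be $0$-truncated and hence to agree with the ordinary colimit, i.e.\ with the classical coend in $\cD$. This is the step requiring the most care and is the main obstacle: one must ensure these $\infty$-colimits genuinely exist and are computed correctly in $\r{N}(\cD)$, which is exactly where the (finite) cocompleteness of $\cD$ and the combinatorial structure of the indexing categories of elements enter, so that no higher $\varprojlim^{i}$ corrections obstruct the identification. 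Granting this comparison, $\Phi$ preserves coCartesian edges and is therefore the desired equivalence of symmetric monoidal \categories, whence $\Fun(\r{N}(\cC),\r{N}(\cD))_\r{Day}\simeq \r{N}(\Fun(\cC,\cD)_\r{Day})$.
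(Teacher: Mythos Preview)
Your proposal is correct and follows essentially the same approach as the paper: produce a comparison over $\Fin_\ast$ that is the identity on underlying \categories, then verify that the Day convolution products agree by identifying the $\infty$-categorical coend in $\r{N}(\cD)$ with the ordinary coend in $\cD$. The paper's proof is much terser, simply asserting $\r{N}(F)\Day \r{N}(G)\simeq \r{N}(F\Day G)$ and citing \cite[\href{https://kerodon.net/tag/02JD}{Example 02JD}]{kerodon} for the key fact that colimits in the nerve of a $1$-category coincide with ordinary colimits; in particular your caution about ``higher $\varprojlim^{i}$ corrections'' is unnecessary, since mapping spaces in $\r{N}(\cD)$ are discrete.
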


\begin{proof}
As $\r{h}\r{Fun}\left(\r{N}(\cC), \r{N}(\cD)\right)\simeq \Fun(\cC, \cD)$ as categories, we can extend this equivalence to the operator categories, using that the convolution products of functors $\r{N}(\cC)\rightarrow \r{N}(\cD)$ is equivalent to the convolution product of functors $\cC\rightarrow
 \cD$:
\[
\r{N}(F)\Day \r{N}(G)\simeq \r{N}(F\Day G)
\]
for all functors $F,G\colon \cC\rightarrow \cD$, using \cite[\href{https://kerodon.net/tag/02JD}{Example 02JD}]{kerodon}.
\end{proof}

We are now interested in the case where $\cD$ is a symmetric monoidal model category $\m{M}$, in the sense of \cite[4.2.6]{hovey}.
Provided that $\m{M}$ is cofibrantly generated, recall from \cite[11.6.1]{hir} that there is a model structure on the functor category $\Fun(\cC, \m{M})$ for any small category $\cC$, called the \textit{projective model structure}, with weak equivalences and fibrations are respectively levelwise weak equivalences and levelwise fibrations.
Recall a symmetric monoidal model category $\m{M}$ is said to be \textit{combinatorial} if it is cofibrantly generated and locally presentable (its monoidal product automatically preserves colimits in each variable).

\begin{lemma}\label{lem: Day convolution is monoidal model}
Let $\cC$ be a small promonoidal category. Let $\m{M}$ be a combinatorial symmetric monoidal model category. The functor category $\Fun(\cC, \m{M})$ is a symmetric monoidal model category when endowed with its projective model structure and Day convolution.
\end{lemma}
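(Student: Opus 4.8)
The plan is to verify the two conditions in the definition of a symmetric monoidal model category \cite[4.2.6]{hovey} for the projective model structure together with the Day convolution $\Day$ of \cref{construction: Day convolution in ordinary categories}: the pushout-product axiom and the unit axiom. First I would recall that, since $\m{M}$ is combinatorial, the projective model structure on $\Fun(\cC,\m{M})$ is itself combinatorial, and by \cite[11.6.1]{hir} it is cofibrantly generated with generating (trivial) cofibrations of the form $F_{c}(i)$, where $F_{c}\colon \m{M}\to\Fun(\cC,\m{M})$ is left adjoint to evaluation at $c$, explicitly $F_{c}(A)=\cC(c,-)\odot A$, and $i$ runs over the generating (trivial) cofibrations of $\m{M}$. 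Each $F_{c}$ is left Quillen because evaluation preserves fibrations and trivial fibrations, which are detected levelwise.

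For the pushout-product axiom, by \cite[4.2.5]{hovey} it suffices to check that $F_{c_1}(i)\,\square_{\Day}\,F_{c_2}(j)$ is a cofibration, and trivial whenever $i$ or $j$ is, for generating (trivial) cofibrations $i,j$ of $\m{M}$, where $\square$ denotes the pushout-product. The key computation is that, since $\Day$ preserves colimits separately in each variable, the co-Yoneda lemma gives a natural isomorphism $F_{c_1}(A)\Day F_{c_2}(B)\cong \mu(c_1,c_2;-)\odot(A\otimes B)$, and hence
\[
F_{c_1}(i)\,\square_{\Day}\,F_{c_2}(j)\;\cong\;\mu(c_1,c_2;-)\odot\bigl(i\,\square_{\otimes}\,j\bigr),
\]
where $\square_{\otimes}$ is the pushout-product in $\m{M}$. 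Since $\m{M}$ is a symmetric monoidal model category, $i\,\square_{\otimes}\,j$ is a cofibration of $\m{M}$, trivial if $i$ or $j$ is.

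It therefore remains to show that tensoring the set-valued weight $\mu(c_1,c_2;-)\colon\cC\to\Set$ with a (trivial) cofibration of $\m{M}$ yields a projective (trivial) cofibration, and this is the crux of the argument. I would isolate it by factoring the convolution as $\Day=\mu_{\star}\circ\boxtimes$, where $\boxtimes\colon\Fun(\cC,\m{M})^{\times 2}\to\Fun(\cC\times\cC,\m{M})$ is the external tensor and $\mu_{\star}(H)(c)=\int^{(d_1,d_2)}\mu(d_1,d_2;c)\odot H(d_1,d_2)$ is convolution against the profunctor $\mu$. The bifunctor $\boxtimes$ is a left Quillen bifunctor for the projective structures, since the identification $F_{c_1}(A)\boxtimes F_{c_2}(B)\cong F_{(c_1,c_2)}(A\otimes B)$ shows that pushout-products of generators are generating projective cofibrations on $\cC\times\cC$. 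The whole statement thus reduces to the left Quillenness of $\mu_{\star}$, equivalently to the assertion that each weight $\mu(c_1,c_2;-)$ is a projectively cofibrant copresheaf of sets. This last point is the main obstacle: in the strictly monoidal case $\mu(c_1,c_2;-)$ is representable and $\mu_{\star}=\operatorname{Lan}$ is visibly left Quillen, but for a general promonoidal structure one must genuinely control the cofibrancy of the weight. I would handle it by exhibiting $\mu(c_1,c_2;-)$ through a cellular presentation by representables — which is exactly what the shuffle/product-of-simplices analysis of \cref{lemma: product of simplices is skeletal} provides in the examples of interest.

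Finally, for the unit axiom, recall the Day unit is $\unit=\eta\odot\unit_{\m{M}}$, with $\eta$ the unit weight. When $\unit_{\m{M}}$ is cofibrant and $\eta$ is projectively cofibrant, $\unit$ is projectively cofibrant and the axiom is automatic from the pushout-product structure established above. In general I would choose a cofibrant replacement $Q\unit\to\unit$ and verify that $Q\unit\Day X\to \unit\Day X\simeq X$ is a weak equivalence on cofibrant $X$, reducing via the cellular presentation of $X$ and the now-established left Quillen bifunctoriality of $\Day$ to the unit axiom already available in $\m{M}$.
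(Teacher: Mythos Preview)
The paper does not give a proof of this lemma; it simply cites \cite[2.18]{SmithPhD2021}, noting that the argument there adapts \cite[4.1]{Bataninn-Berger} and \cite[5.6.35]{HHR21} to the promonoidal setting. So you are attempting something the paper does not do: an explicit argument.

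Your outline is the standard one and is correct in the monoidal case, where $\mu(c_1,c_2;-)\cong\cC(c_1\otimes c_2,-)$ is representable and the identity $F_{c_1}(i)\,\square_{\Day}\,F_{c_2}(j)\cong F_{c_1\otimes c_2}(i\,\square_\otimes j)$ closes the pushout-product axiom immediately. You have also correctly isolated the genuine difficulty in the promonoidal case: one needs $\mu(c_1,c_2;-)\odot(-)$ to send (trivial) cofibrations of $\m{M}$ to projective (trivial) cofibrations, which amounts to the weight $\mu(c_1,c_2;-)$ being projectively cofibrant as a $\Set$-valued diagram.

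The gap is that you do not resolve this obstacle in the stated generality. Your proposed fix---a cellular presentation by representables coming from \cref{lemma: product of simplices is skeletal}---is specific to $\Delta^{\op}$ with its Cartesian promonoidal structure, and you say as much (``in the examples of interest''). But the lemma is asserted for an \emph{arbitrary} small promonoidal category $\cC$, with no hypothesis on the weights $\mu(c_1,c_2;-)$ or $\eta$. For a general promonoidal $\cC$ there is no reason these weights admit a cell presentation by representables: a $\Set$-valued copresheaf is always a colimit of representables, but not always a retract of a cell complex built from them (think of a constant functor on $B(\zz/2)$). The same issue recurs in your treatment of the unit axiom, where you assume $\eta$ is projectively cofibrant. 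So as written, your argument proves the lemma only under an additional cofibrancy hypothesis on the promonoidal data, not the statement as given. To match the paper's claim you would either need to supply a general argument that $\mu_\star$ is left Quillen without assuming cofibrancy of the weights, or consult the cited reference to see how the adaptation is actually carried out there.
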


\begin{proof}
See \cite[2.18]{SmithPhD2021} for a proof, which is an adaptation of \cite[4.1]{Bataninn-Berger} and  \cite[5.6.35]{HHR21} in the promonoidal case.
\end{proof}

There is a variant of our previous result where we consider instead the injective model structure on $\Fun(\cC, \cM)$ where the cofibrations and acyclic cofibrations are defined levelwise, which always exists by \cite[3.4.1]{left}.

\begin{lemma}\label{lem: INJECTIVE Day convolution is monoidal model}
Let $\cC$ be a small promonoidal category. Let $\m{M}$ be a combinatorial symmetric monoidal model category. The functor category $\Fun(\cC, \m{M})$ is a symmetric monoidal model category when endowed with its injective model structure and Day convolution.
\end{lemma}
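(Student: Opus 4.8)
The plan is to verify the two conditions in the definition of a symmetric monoidal model category \cite[4.2.6]{hovey} — the pushout--product axiom and the unit axiom — for $\Fun(\cC,\m{M})$ with its injective model structure. Throughout I would use that for the injective model structure the cofibrations and trivial cofibrations are exactly the levelwise cofibrations and levelwise trivial cofibrations of $\m{M}$, and that every object is cofibrant. The whole argument is organized around the factorization of the Day convolution as $\Day = L\circ\boxtimes$, where $\boxtimes\colon \Fun(\cC,\m{M})\times\Fun(\cC,\m{M})\to\Fun(\cC\times\cC,\m{M})$ is the external tensor product $(F,G)\mapsto F(-)\otimes G(-)$ and $L\colon \Fun(\cC\times\cC,\m{M})\to\Fun(\cC,\m{M})$ is the convolution functor $L(H)(c)=\int^{(c_1,c_2)}\mu(c_1,c_2;c)\odot H(c_1,c_2)$. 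Since $\m{M}$ is combinatorial, $L$ is colimit-preserving (indeed a left adjoint), so it preserves colimits and retracts, and because it preserves the pushouts defining Leibniz products one gets $f\square_{\Day}g = L\bigl(f\square_{\boxtimes}g\bigr)$ for all $f,g$.

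First I would check that $\boxtimes$ is a left Quillen bifunctor for the injective structures: at an object $(c_1,c_2)$ the external Leibniz product $f\square_{\boxtimes}g$ is computed by the pushout--product $f_{c_1}\square g_{c_2}$ in $\m{M}$, which is a (trivial) cofibration by the pushout--product axiom in $\m{M}$; hence $f\square_{\boxtimes}g$ is a levelwise (trivial) cofibration, i.e.\ an injective (trivial) cofibration in $\Fun(\cC\times\cC,\m{M})$. It then remains to show that $L$ carries injective (trivial) cofibrations of the form $f\square_{\boxtimes}g$ to injective (trivial) cofibrations. Since both Leibniz products and $L$ are compatible with the weakly saturated closure (pushouts, transfinite composition, retracts), it suffices to treat a generating set. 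I would use $\cC$-indexed cells $W\odot i$, with $i$ ranging over the generating (trivial) cofibrations $I_{\m{M}}$ (resp.\ $J_{\m{M}}$) of $\m{M}$ and $W$ ranging over a set of functors $\cC\to\Set$ of bounded size. The key computation is then a co-Yoneda identity
\[
L\bigl((W\odot i)\square_{\boxtimes}(W'\odot i')\bigr)\simeq (W\Day_{\Set}W')\odot(i\square i'),
\]
obtained because copowers by sets commute with the defining coend; here $W\Day_{\Set}W'$ is the $\Set$-valued Day convolution. The right-hand side is a levelwise copower of the (trivial) cofibration $i\square i'$ of $\m{M}$, hence a levelwise (trivial) cofibration, which establishes the pushout--product axiom.

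Finally, the unit axiom follows formally from the projective case \cref{lem: Day convolution is monoidal model}. The monoidal unit $\unit$ of the Day convolution and the class of weak equivalences coincide for the two model structures, so I would fix a projectively cofibrant replacement $Q\unit\xrightarrow{\sim}\unit$, which is automatically injectively cofibrant. For an arbitrary $X$, choose a projectively cofibrant replacement $X'\xrightarrow{\sim}X$; then $Q\unit\Day X'\to X'$ is a weak equivalence by the projective unit axiom, while $Q\unit\Day(-)$ preserves the weak equivalence $X'\to X$ because, by the pushout--product axiom just verified, $Q\unit\Day(-)$ is left Quillen for the injective structure and every object is cofibrant. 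Two-out-of-three then gives that $Q\unit\Day X\to X$ is a weak equivalence.

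The main obstacle is the generation claim in the second paragraph: one must verify that the $\Set$-weighted cells $W\odot i$ generate the entire class of levelwise (trivial) cofibrations as a weakly saturated class, and that a set of weights $W$ suffices — this is precisely where combinatoriality of $\m{M}$ is used. Once this is granted, the co-Yoneda computation reduces every case to the pushout--product axiom of $\m{M}$ itself, mirroring the projective argument; the injective structure is only visibly better behaved because its cofibrations are levelwise, so no analysis of injective fibrations is required.
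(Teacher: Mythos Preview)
Your factorization $\Day=L\circ\boxtimes$ and the observation that $f\square_{\boxtimes}g$ is a levelwise (trivial) cofibration are exactly the opening moves of the paper's proof. The divergence is in how you handle $L$: the paper works directly with the arbitrary levelwise (trivial) cofibration $f\square_{\boxtimes}g$, whereas you pass to generators---and the obstacle you flag is in fact fatal. The cells $W\odot i$ with $W\colon\cC\to\Set$ and $i\in I_{\m M}$ do \emph{not} generate the injective cofibrations. Each such map is already a \emph{projective} cofibration: by the cotensor adjunction it lifts against any levelwise trivial fibration $p$, since $\{W,p\}=\int_{c} p_c^{W(c)}$ is a limit of trivial fibrations and hence a trivial fibration. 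Thus the weakly saturated class these cells generate is exactly the projective cofibrations, which in general are strictly fewer than the levelwise ones, and your argument only reproves \cref{lem: Day convolution is monoidal model}. There is no explicit description of generating injective cofibrations for a general combinatorial $\m M$; the sets produced by Smith's or Lurie's existence arguments are not of the form $W\odot i$.

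The paper sidesteps generators entirely. Since the pushout defining $\alpha\square_{\Day}\beta$ commutes with the defining coend, one has for each $c\in\cC$
\[
(\alpha\square_{\Day}\beta)(c)\;=\;\int^{(c_1,c_2)}\mu(c_1,c_2;c)\odot\bigl(\alpha_{c_1}\square\beta_{c_2}\bigr),
\]
with each $\alpha_{c_1}\square\beta_{c_2}$ a (trivial) cofibration in $\m M$ by the pushout--product axiom there; the paper then invokes \cite[18.4.1]{hir} to conclude that the $\Set$-weighted coend of such maps remains a (trivial) cofibration. No cell decomposition of $\alpha$ or $\beta$ is needed.

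A minor point: your unit-axiom argument is unnecessary, since in the injective model structure every object---in particular the Day unit---is cofibrant, so the unit axiom is vacuous.
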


\begin{proof}
    Let $\alpha\colon F\to G$ and $\beta\colon S\to T$ be injective cofibrations in $\Fun(\cC, \cM)$.
    We need to show the induced map in $\Fun(\cC, \cM)$
    \[
    \alpha\square \beta\colon (F\Day T)\coprod_{F\Day S} (G\Day S)\longrightarrow G\Day T
    \]
    is an injective cofibration, that is acyclic whenever $\alpha$ or $\beta$ is acyclic. 
    Unwinding the definition of Day convolution product \cref{construction: Day convolution in ordinary categories}, by \cite[18.4.1]{hir}, it is enough to show the induced map in $\cM$
    \[
    (F(c_1)\otimes T(c_2))\coprod_{F(c_1)\otimes S(c_2)} (G(c_1)\otimes S(c_2)) \longrightarrow G(c_1)\otimes T(c_2)
    \]
    is a cofibration that is acyclic whenever $\alpha$ or $\beta$ is acyclic for all $c_1,c_2\in \cC$, but this follows immediately from the fact that $\cM$ is a monoidal model category.
\end{proof}

\begin{remark}
    Suppose $\cC$ is both promonoidal and a Reedy category, then it may not be true in general that $\Fun(\cC, \cM)$ is a monoidal model category when endowed with its Reedy model structure, see more details when $\cC$ is monoidal in \cite{BM11}. 
\end{remark}

Given a combinatorial symmetric monoidal model category $\m{M}$, with collection of weak equivalences $\m{W}$ on cofibrant objects $\m{M}^c$, we obtain a symmetric monoidal $\infty$-category $\mathrm{N}(\m{M}^c)\left[\m{W}^{-1}\right]$ obtained by Dwyer--Kan localization, see \cite[4.1.7.4]{HA}. Therefore, given any small promonoidal category $\cC$, we can consider the Day convolution on the $\infty$-category of $\cC$-diagrams: \[\Fun\left(\mathrm{N}(\cC),\mathrm{N}(\m{M}^{c})\left[\m{W}^{-1}\right]\right).\]
By \cite[1.3.4.25]{HA}, the above $\infty$-category is equivalent to the Dwyer-Kan localization of $\Fun(\cC, \m{M})$ when endowed with its projective model structure. Indeed, let us denote $\m{W}_\r{proj}$ the collection of weak equivalences on projective cofibrant functors $\Fun(\cC, \m{M})^c$. The natural pairing:
\begin{align*}
    \Fun(\cC, \m{M})\times \cC & \longrightarrow \m{M}\\
    (F, c) & \longmapsto F(c),
\end{align*}
induces an equivalence of $\infty$-categories:
\[
\varphi\colon \mathrm{N}\left( \Fun(\cC,\m{M})^{c}\right)[\m{W}^{-1}_\r{proj}] \stackrel{\simeq}\longrightarrow \Fun\left(\mathrm{N}(\cC),\mathrm{N}(\m{M}^{c})\left[\m{W}^{-1}\right]\right).
\]
By \cref{lem: Day convolution is monoidal model}, the Day convolution at the level of ordinary categories induces a symmetric monoidal structure on the $\infty$-category $\mathrm{N}\left( \Fun(\cC,\m{M})^{c}\right)[\m{W}^{-1}_\r{proj}]$. Therefore we obtain possibly two monoidal structures. The next result states they are equivalent.

\begin{theorem}\label{theorem: Day conv model structure agrees with oo-categorical Day conv}
Let $\m{M}$ be a combinatorial symmetric monoidal model category and $\cC$ a small promonoidal category. 
Let $\m{W}$ denote the collection of weak equivalences between cofibrant objects for the model structure on $\m{M}$ and let $\m{W}_\r{proj}$ denote the weak equivalences between cofibrant objects for the projective model structure on $\Fun(\cC,\m{M})$. 
There is 
an equivalence of symmetric monoidal \categories:
\[
\varphi^\Day\colon \mathrm{N}\left( {(\Fun(\cC,\m{M})^{c})}^\Day\right)[\m{W}^{-1}_\r{proj}] \xrightarrow{\simeq} \Fun(\mathrm{N}(\cC),\mathrm{N}(\m{M}^{c})[\m{W}^{-1}]))^{\Day},
\]
where both functor categories are equipped with the Day convolution.
\end{theorem}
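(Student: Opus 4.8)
The plan is to produce an honest symmetric monoidal functor realizing $\varphi$ and then invoke the principle that a symmetric monoidal functor whose underlying functor of $\infty$-categories is an equivalence is automatically an equivalence of symmetric monoidal $\infty$-categories (both sides are coCartesian fibrations over $\Fin_\ast$ with equivalent fibers, so a fiberwise equivalence of maps of coCartesian fibrations is an equivalence). To set the stage: by \cref{lem: Day convolution is monoidal model} together with \cite[4.1.7.4]{HA}, the source $\mathrm{N}\big((\Fun(\cC,\m{M})^c)^\Day\big)[\m{W}^{-1}_\r{proj}]$ is exactly the underlying symmetric monoidal $\infty$-category of the combinatorial symmetric monoidal model category $(\Fun(\cC,\m{M}),\Day,\m{W}_\r{proj})$. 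The target is a presentably symmetric monoidal $\infty$-category: $\m{E}=\mathrm{N}(\m{M}^c)[\m{W}^{-1}]$ is presentably symmetric monoidal and $\mathrm{N}(\cC^\otimes)$ (\cref{cor: promonoidal nerve}) is a small promonoidal $\infty$-category, so \cref{prop: existence of Day convolution product} applies. The underlying equivalence $\varphi$ is already in hand from \cite[1.3.4.25]{HA}.

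First I would construct the comparison via the universal pairings. Since projective cofibrant functors are levelwise cofibrant and $\m{M}$ is a symmetric monoidal model category, the evaluation pairing underlying the $1$-categorical Day convolution restricts to a map of operator categories $(\Fun(\cC,\m{M})^c)^\Day \times_{\Fin_\ast} \cC^\otimes \to (\m{M}^c)^\otimes$ valued in cofibrant objects. Taking nerves and postcomposing with the strong symmetric monoidal localization $\gamma_{\m{M}}\colon \mathrm{N}((\m{M}^c)^\otimes)\to \m{E}^\otimes$ of \cite[4.1.7.4]{HA} produces a map of $\infty$-operads $\mathrm{N}\big((\Fun(\cC,\m{M})^c)^\Day\big)\times_{\Fin_\ast}\mathrm{N}(\cC^\otimes)\to \m{E}^\otimes$. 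By the universal property of the Day convolution operad (\cref{remark: universal property of day convolution operad} with $\m{O}^\otimes=\Fin_\ast$) this transposes to a lax symmetric monoidal functor $\Psi\colon \mathrm{N}\big((\Fun(\cC,\m{M})^c)^\Day\big)\to \Fun(\mathrm{N}(\cC),\m{E})^\Day$, whose underlying functor sends $F$ to $c\mapsto \gamma_{\m{M}}(F(c))$ and hence inverts $\m{W}_\r{proj}$. The universal property of the monoidal localization then factors $\Psi$ through a lax symmetric monoidal functor $\varphi^\Day$ out of the localized source.

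It remains to upgrade $\varphi^\Day$ from lax to strong symmetric monoidal and to identify its underlying functor with $\varphi$; the strong monoidality is the main obstacle. Concretely, strong monoidality is the assertion that for projective cofibrant $F,G$ the canonical comparison $(\gamma_{\m{M}}\circ F)\Day(\gamma_{\m{M}}\circ G)\to \gamma_{\m{M}}(F\Day G)$ is an equivalence, where the left side is computed by the $\infty$-categorical coend of \cref{prop: existence of Day convolution product} and the right side by the strict coend of \cref{construction: Day convolution in ordinary categories}. This reduces to the homotopy-theoretic input that, for projective cofibrant $F$ and $G$, the strict coend defining $F\Day G$ is already a homotopy coend: its bar/coequalizer presentation is objectwise a homotopy colimit because $F$ and $G$ are levelwise cofibrant and $\m{M}$ is a monoidal model category, so that $\gamma_{\m{M}}$, which preserves colimits and is strong monoidal on cofibrant objects, carries it to the $\infty$-categorical convolution coend. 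Granting this, \cref{lem: Day convolution underived} identifies nerves of the strict convolution with the $\infty$-categorical convolution, compatibly with $\varphi$, so the underlying functor of $\varphi^\Day$ is $\varphi$. An appeal to the equivalence-detection principle above then completes the proof.
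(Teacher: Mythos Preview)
Your proposal is correct and follows essentially the same approach as the paper: construct a lax symmetric monoidal comparison via the evaluation pairing and the universal property of the Day convolution operad, factor through the monoidal localization, identify the underlying functor with $\varphi$ from \cite[1.3.4.25]{HA}, and then verify strong monoidality by checking that the strict coend computing $F\Day G$ on projectively cofibrant $F,G$ agrees with the $\infty$-categorical one. The paper packages the construction slightly differently---it first invokes \cref{lem: Day convolution underived} to get a strong monoidal functor into $\Fun(\mathrm{N}(\cC),\mathrm{N}(\m{M}^c))^{\Day}$, then applies \cref{lem: Dau convolution preserves maps of operads} to postcompose with localization---and for the final step it cites \cite[3.26]{cothhduality} and \cite[1.3.4.24]{HA} rather than arguing directly about homotopy coends, but the substance is the same.
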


\begin{proof}
Since projectively cofibrant functors are levelwise cofibrant \cite[11.6.3]{hir}, the equivalence of \cref{lem: Day convolution underived} induces a strong symmetric monoidal functor 
\[
\r{N}\left( (\Fun(\cC, \m{M})^c)^\Day\right) \longrightarrow \Fun\left( \r{N}(\cC), \r{N}(\m{M}^c)\right)^\Day.
\]   
The localization functor $\mathrm{N}(\m{M}^c)\rightarrow \mathrm{N}(\m{M}^c) [\m{W}^{-1}]$ is strong monoidal \cite[4.1.7.4]{HA}.
By \cref{lem: Dau convolution preserves maps of operads}, 
post-composing with this localization on the functor above yields a lax symmetric monoidal functor on the Day convolutions: 
\[
 \varphi^\Day\colon\r{N}\left( (\Fun(\cC, \m{M})^c)^\Day\right) \longrightarrow \Fun\left( \r{N}(\cC), \r{N}(\m{M}^c)[\m{W}^{-1}]\right)^\Day.
\]  
As projective weak equivalences are levelwise weak equivalences, the above functor sends morphisms in $\m{W}_\r{proj}$ to equivalences in $\Fun\left( \r{N}(\cC), \r{N}(\m{M}^c)[\m{W}^{-1}]\right)$.
By the universal property of the symmetric monoidal Dwyer--Kan localization \cite[A.5]{nikolaus-scholze}, the above functor uniquely determines a lax symmetric monoidal functor:
\[
\varphi^\Day\colon\mathrm{N}\left( {(\Fun(\cC,\m{M})^{c})}^\Day\right)[\m{W}^{-1}_\r{proj}] \longrightarrow \Fun(\mathrm{N}(\cC),\mathrm{N}(\m{M}^{c})[\m{W}^{-1}]))^{\Day}.
\]
Its underlying functor $\varphi\colon \r{N}(\Fun(\cC, \m{M})^c)[\m{W}^{-1}_\r{proj}]\rightarrow \Fun(\mathrm{N}(\cC),\mathrm{N}(\m{M}^{c})[\m{W}^{-1}])$ is precisely the equivalence of \cite[1.3.4.25]{HA}, as discussed above.
We are only left to show that $\varphi^\Day$ is not just lax symmetric monoidal, but in fact strong symmetric monoidal.
We shall denote the Day convolution in $\r{N}(\Fun(\cC, \m{M})^c)[\m{W}^{-1}_\r{proj}]$ by $\Day^\mathbb{L}$ with unit $\unit^\mathbb{L}$, while the Day convolution in $\Fun(\mathrm{N}(\cC),\mathrm{N}(\m{M}^{c})[\m{W}^{-1}])$ as $\Day$, with unit $\unit$.
By \cite[3.26]{cothhduality}, it is enough to show that for all projectively cofibrant functors $F,G\colon \cC\rightarrow \m{M}$, the natural maps
\[
\left(\varphi(F)\Day \varphi(G)\right)(c) \longrightarrow \varphi(F\Day^\mathbb{L} G)(c)
\] 
and $\unit(c)\longrightarrow \varphi(\unit^\mathbb{L})(c)$
are weak equivalences in $\m{M}$ for all $c\in \cC$.
By \cref{remark: Day convolution in ordinaty symmetric monoidal case} and \cref{prop: existence of Day convolution product}, we get directly that those maps are equivalences as colimits in $\mathrm{N}(\m{M}^{c})[\m{W}^{-1}]$ correspond precisely to homotopy colimits in $\m{M}$ by \cite[1.3.4.24]{HA}.
\end{proof}

\section{A model explicit construction of the Eilenberg--Zilber map}\label{section: appendix EZ model cat}

We prove in this appendix that the skeleton filtration $\sk_*^\cE$ of a presentable $\infty$-category $\cE$ can be described entirely using model categories. This allows us to construct and describe the Eilenberg--Zilber lax monoidal structure explicitly using point-set methods.

\subsection{The skeleton filtration of a bisimplicial set}
We show here that the skeleton filtration on space $\sk_*^\cS\colon\Fun(\Delta^\op, \cS)\to\Fun(\nn, \cS)$ for the $\infty$-category of spaces is lax symmetric monoidal, using model categories.
We begin to compare the notion of skeleton filtration in \categories  reviewed in \cref{const: skeleton functor and the I category} with the usual skeleton filtration in ordinary categories.

\begin{construction}\label{const: the usual n-skeleton in simplicial set}
Let $\cM$ be a finitely cocomplete category.  Let $\iota_n\colon \Delta_{\leq n}\hookrightarrow \Delta$ be the full subcategory as in \cref{const: skeleton functor and the I category}. 
The inclusion defines an adjunction:
\[
\begin{tikzcd}
\Fun(\Delta^\op_{\leq n}, \cM) \ar[bend left]{r}{(\iota_n)_!} \ar[phantom, description, xshift=0ex]{r}{\perp}& \ar[bend left]{l}{\iota_n^*}\Fun(\Delta^\op, \cM).
\end{tikzcd}
\]
The \textit{$n$-th skeleton functor} is the induced comonad on $\Fun(\Delta^\op, \cM)$:
\[
\begin{tikzcd}
\usk_n^\cM\colon \Fun(\Delta^\op, \cM) \ar{r}{\iota^*_n}& \Fun(\Delta^\op_{\leq n}, \cM) \ar{r}{(\iota_n)_!} & \Fun(\Delta^\op, \cM).
\end{tikzcd}
\]
Explicitly, given a simplicial object $X$ in $\cM$, the $k$-simplices of its $n$-th skeleton $\usk_n^\cM (X)$ are given as a colimit in $\cM$:
\[
\left(\usk_n^\cM (X)\right)_k= \int^{[i]\in \Delta^\op_{\leq n}} \Delta(k, i)\odot X_i\cong \colim_{\substack{ [k]\to [i]\\ i\leq n}} X_i.
\]
We denoted $-\odot-\colon \Set\times \cM\to\cM$ the tensoring of $\cM$ over the category $\Set$ of sets.
It extends to a functor $-\odot-\colon \sSet\times \cM\to \Fun(\Delta^\op, \cM)$ where given a simplicial set $S_\bullet$ and object $M$ in $\cM$, the simplicial object $S_\bullet\odot M$ is defined such that $(S_\bullet\odot M)_k=S_k\odot M$ in $\cM$.
Then we can write $\usk^M(X)$ as a coend in $\Fun(\Delta^\op, \cM)$:
\[
\usk_n^\cM(X)=\int^{[i]\in \Delta^\op_{\leq n}} \Delta^i\odot X_i.
\]
It is useful to think of $\usk_n^\cM (X)$ as the simplicial object generated by the simplices of $X$ of degree less or equal to $n$. 
Therefore, we obtain the classical filtration in $\cM$:
\[
X\cong \colim \left(\begin{tikzcd}
    \usk_0^\cM(X)\ar{r} & \usk_1^\cM(X) \ar{r} & \cdots \ar{r} & \usk_n^\cM(X) \ar{r} & \cdots
\end{tikzcd}\right)
\]
and a functor $\usk_*^\cM\colon \Fun(\Delta^\op, \cM)\rightarrow \Fun(\nn, \Fun(\Delta^\op, \cM))$.
\end{construction}

One can show that the functor $\usk^\cM_*$ is well-behaved homotopically for any combinatorial model structure on $\cM$.
We focus on the case where $\cM$ equals the  category $\sSet$ of simplicial sets, endowed with its Kan--Quillen model structure, in which the weak equivalences are the weak homotopy equivalences, fibrations are Kan fibrations, and cofibrations are the monomorphisms \cite[3.6.5]{hovey}. We shall denote $\usk^\sSet_*$ simply as $\usk_*$.

We denote $\ssSet=\Fun(\Delta^\op, \sSet)$ the category of bisimplicial sets, endowed with its Reedy model structure: a weak equivalence is a levelwise weak homotopy equivalence, and a Reedy cofibration is precisely a levelwise monomorphism \cite[15.8.7]{hir}. In particular, every object is cofibrant. This model structure is in fact equal to the injective model structure.

Given a combinatorial model category $\cM$, we denote $\Fil(\cM)=\Fun(\nn, \cM)$ the category of filtered objects in $\cM$ endowed with its projective model structure: a weak equivalence is a levelwise weak equivalence in $\cM$, and a fibration is a levelwise fibration in $\cM$ \cite[11.6.1]{hir}. A map $F_*\to G_*$ in $\Fil(\cM)$ is a cofibration if $F_0\to G_0$ is a cofibration in $\cM$, and the natural map
\(
F_{n}\coprod_{F_{n-1}}G_{n-1}\longrightarrow G_{n}
\)
is a cofibration in $\cM$ for all $n\geq 1$. Such a map is referred as a \textit{projective cofibration}. We shall consider both $\Fil(\sSet)$ and $\Fil(\ssSet)$ subsequently.

\begin{proposition}
    The skeleton filtration $\usk_*\colon \ssSet\to \Fil(\ssSet)$ on bisimplicial sets is a left Quillen functor. 
\end{proposition}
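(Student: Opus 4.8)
The plan is to verify the two defining properties of a left Quillen functor for $\usk_*\colon\ssSet\to\Fil(\ssSet)$: that it is a left adjoint, and that it preserves cofibrations and acyclic cofibrations. For the first point, the coend formula of \cref{const: the usual n-skeleton in simplicial set} exhibits $\usk_*$ as colimit-preserving (in each degree it is a coend of the copowers $\Delta(k,i)\odot(-)$, which preserve colimits); since $\ssSet$ and $\Fil(\ssSet)$ are locally presentable, $\usk_*$ is a left adjoint. Concretely, writing $J\colon\nnDelta\hookrightarrow\nn\times\Delta^\op$ for the inclusion and $q\colon\nnDelta\to\Delta^\op$ for the projection, one identifies $\usk_*\simeq J_{!}q^{*}$. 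It then suffices to prove that $\usk_*$ preserves cofibrations and \emph{all} weak equivalences: a cofibration which is a weak equivalence is then carried to a cofibration which is a weak equivalence, i.e.\ to an acyclic cofibration.

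The key observation is that $\usk_n$ acts only in the outer simplicial direction: evaluating at inner degree $j$, one has $(\usk_n X)_{\bullet,j}\cong \usk_n(X_{\bullet,j})$, the ordinary $n$-skeleton of the simplicial set $X_{\bullet,j}$. In particular each $\usk_n X\to X$ is a levelwise monomorphism, so $\usk_n X$ is a sub-bisimplicial set of $X$. To see that $\usk_*$ preserves cofibrations, let $f\colon X\to Y$ be a monomorphism; I must check that each relative latching map $\usk_n X\coprod_{\usk_{n-1}X}\usk_{n-1}Y\to\usk_n Y$ is a monomorphism. Working inside $Y$, a short computation with degeneracies gives $\usk_n X\cap\usk_{n-1}Y=\usk_{n-1}X$, and since $\ssSet$ is a presheaf topos, the pushout of the subobjects $\usk_n X$ and $\usk_{n-1}Y$ along their intersection is their union $\usk_n X\cup\usk_{n-1}Y\subseteq Y$. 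The relative latching map is therefore an inclusion of subobjects $\usk_n X\cup\usk_{n-1}Y\hookrightarrow\usk_n Y$ of $Y$, hence a monomorphism, so $\usk_*f$ is a projective cofibration.

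For the preservation of weak equivalences I would argue by induction on $n$ using the standard skeletal pushout square
\[
\begin{CD}
(X_n\odot\partial\Delta^n)\coprod_{L_nX\odot\partial\Delta^n}(L_nX\odot\Delta^n) @>>> \usk_{n-1}X\\
@VVV @VVV\\
X_n\odot\Delta^n @>>> \usk_n X,
\end{CD}
\]
whose left leg is the (Leibniz) pushout–product of the monomorphism $\partial\Delta^n\hookrightarrow\Delta^n$ with the latching monomorphism $L_nX\hookrightarrow X_n$, hence a cofibration; thus the square is a homotopy pushout. For $n=0$ the object $\usk_0X$ is outer-constant on $X_0$, so $\usk_0f$ is the weak equivalence $X_0\to Y_0$. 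For the inductive step, if $f$ is a levelwise weak equivalence then $X_n\to Y_n$ and $L_nX\to L_nY$ are weak equivalences (the latter because $\Delta$ is an elegant Reedy category, so every bisimplicial set is Reedy cofibrant and the latching functors are homotopy invariant), and tensoring with $\partial\Delta^n$ or $\Delta^n$ preserves weak equivalences, since $S\odot(-)$ is a coproduct of copies of $(-)$ in each outer degree. The gluing lemma then makes the upper-left corners weakly equivalent, the induction hypothesis handles the $\usk_{n-1}$ corners, and the cube lemma for homotopy pushouts shows $\usk_n f$ is a weak equivalence. Hence $\usk_*$ preserves weak equivalences and is left Quillen.

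The main obstacle is the homotopy-theoretic input of the last paragraph: one must know that the skeletal pushouts are genuine homotopy pushouts and that the latching functors are homotopy invariant. Both rest on the elegance of $\Delta$, which ensures that the Reedy and injective structures on $\ssSet$ agree, that cofibrations are exactly the monomorphisms, that every object is Reedy cofibrant, and that each latching map $L_nX\to X_n$ is a monomorphism. These are precisely the facts that also make the cofibration argument of the second paragraph work, so once they are recorded the inductive verification is routine.
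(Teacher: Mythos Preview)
Your proof is correct and arrives at the same conclusion as the paper, but the two arguments diverge in how they handle each half.

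For cofibrations, your topos-theoretic argument and the paper's element-level check are the same computation in different clothing. The paper verifies by hand, simplex by simplex, that the relative latching map $\usk_n X\amalg_{\usk_{n-1}X}\usk_{n-1}Y\to\usk_n Y$ is injective; your identification $\usk_n X\cap\usk_{n-1}Y=\usk_{n-1}X$ (which rests on the Eilenberg--Zilber uniqueness of nondegenerate representatives, together with the fact that monomorphisms of simplicial sets preserve nondegeneracy) is exactly what that check amounts to, repackaged as an equality of subobjects.

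For weak equivalences, the approaches genuinely differ. The paper simply cites \cite[6.4]{BM11} to conclude that each $\usk_n\colon\ssSet\to\ssSet$ is left Quillen for the Reedy structure, and then uses that every bisimplicial set is Reedy cofibrant. Your inductive argument via the skeletal pushout square and the gluing/cube lemma is a self-contained replacement for that citation: it unpacks precisely what one would do to prove the cited result directly. The trade-off is length for independence. One small streamlining: rather than invoking elegance to argue that the latching functors are homotopy invariant (which is true, via Hirschhorn~15.3.11 and Ken Brown), you can simply observe $L_n X\cong(\usk_{n-1}X)_n$ and feed the inductive hypothesis straight back in; this keeps the induction entirely internal and avoids appealing to general Reedy facts mid-argument.
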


\begin{proof}
    We prove that $\usk_*$ preserves cofibrations and weak equivalences.
    The functor $\usk_n\colon \ssSet\to \ssSet$ is a left Quillen functor for all $n\geq 0$, see \cite[6.4]{BM11}. Therefore $\usk_n$ preserves weak equivalences between cofibrant objects, but every object is cofibrant in $\ssSet$.
    In particular, given a weak equivalence $X\to Y$ in $\ssSet$, then $\usk_n(X)\to \usk_n(Y)$ is a weak equivalence in $\ssSet$. Thus $\usk_*(X)\to \usk_*(Y)$ is a weak equivalence in $\Fil(\ssSet)$. 

    Suppose $f\colon X\to Y$ is a cofibration in $\ssSet$. Let us show $\usk_*(X)\to \usk_*(Y)$ is a projective cofibration. We know $\usk_n(X)\to \usk_n(Y)$ must also be a cofibration in $\ssSet$ since $\usk_n$ is left Quillen. In particular, $\usk_0(X)\to \usk_0(Y)$ is a cofibration in $\ssSet$.
    It remains to show that the natural map
    \[
    \usk_{n}(X)\coprod_{\usk_{n-1}(X)}\usk_{n-1}(Y)\longrightarrow \usk_{n}(Y)
    \]
    is a cofibration in $\ssSet$ for all $n\geq 1$.
    We view the bisimplicial sets $X$ and $Y$ as functors $\Delta^\op\times \Delta^\op\to \Set$ and we say a $(k,r)$-simplex $x\in X_{kr}$ is degenerate if it is a degenerate simplex in the simplicial set $X_{\bullet r}$.
    As pushouts in $\ssSet$ are computed levelwise, we check that the natural map:
    \[
    \usk_{n}(X)_{kr}\coprod_{\usk_{n-1}(X)_{kr}}\usk_{n-1}(Y)_{kr}\longrightarrow \usk_{n}(Y)_{kr}
    \]
     is an injective function for all $k,r\geq 0$.
     When $k\leq n-1$, the function is the identity and thus injective, so suppose $k\geq n$. 
     In this case, the function sends either a degenerate $(k, r)$-simplex $x$ of $X$ which is not in the image of a degeneracy from a $(\ell, r)$-simplex of $X$ for $\ell\leq n-1$ to its corresponding degenerate $(k,r)$-simplex $f(x)$ in $Y$; or it sends a degenerate $(k, r)$-simplex of $Y$ to itself, which is in the image of a degeneracy on a $(\ell, r)$-simplex of $Y$ for $\ell\leq n-1$. Therefore it is injective.
\end{proof}

Recall that the geometric realization $|-|\colon \ssSet\to \sSet$ of a bisimplicial set $X$ is defined as the coend in $\sSet$
\[
|X|\coloneq \int^{[k]\in \Delta^\op} \Delta^k\times X_k
\]
see \cite[15.11.1]{hir}. Recall also that the resulting simplicial set $|X|$ is isomorphic to its diagonal
\cite[15.11.6]{hir}, i.e.\ $|X|_n=X_{nn}$, where the faces and degeneracies are the composition vertical and horizontal faces and degeneracies of a bisimplicial set $X$, see precise definition in \cite[15.11.3]{hir}. 
The geometric realization is a left Quillen functor \cite[14.3.10]{Riehl} with our choice of model structures. 

It lifts to a functor $|-|\colon \Fil(\ssSet)\to \Fil(\sSet)$ which sends a filtered bisimplicial set $F_*$ to a filtered simplicial set $|F_*|$. This functor remains a left Quillen functor \cite[11.6.5]{hir}. 
This leads to a functor $|\usk_*|\colon \ssSet\to \Fil(\sSet)$ which provides a filtration of the geometric realization of any bisimplicial set $X$:
\[
\begin{tikzcd}
    {|}\usk_0(X) {|}\ar[hook]{r} &  {|}\usk_1(X) {|} \ar[hook]{r} & \cdots \ar{r} &  {|}\usk_n(X) {|} \ar[hook]{r} & \cdots \ar[hook]{r} & {|}X{|}.
\end{tikzcd}
\]
Our previous results leads to the following observation.

\begin{corollary}\label{cor: geom(usk) is left quillen}
    The functor $|\usk_*|\colon \ssSet\to \Fil(\sSet)$ is a left Quillen functor.
\end{corollary}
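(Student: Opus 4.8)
The plan is to recognize $|\usk_*|$ as a composite of two left Quillen functors that have already been established, and then invoke closure of left Quillen functors under composition. Concretely, unwinding \cref{const: the usual n-skeleton in simplicial set} together with the definition of the levelwise realization, the functor $|\usk_*|$ sends a bisimplicial set $X$ to the filtered simplicial set $n \mapsto |\usk_n(X)|$, which is visibly the composite
\[
\ssSet \xrightarrow{\usk_*} \Fil(\ssSet) \xrightarrow{|-|} \Fil(\sSet),
\]
where the first arrow is the skeleton filtration and the second is the induced (levelwise) geometric realization.

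First I would record that the skeleton filtration $\usk_*\colon \ssSet \to \Fil(\ssSet)$ is left Quillen by the preceding proposition. Next I would recall that $|-|\colon \ssSet \to \sSet$ is left Quillen, and that this property is inherited by the lifted functor $|-|\colon \Fil(\ssSet) \to \Fil(\sSet)$ on projective model structures via \cite[11.6.5]{hir}. Since both constituent functors are left adjoints that preserve cofibrations and acyclic cofibrations, the same holds for their composite $|\usk_*|$; hence $|\usk_*|$ is left Quillen.

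The only point requiring a moment's thought is that $|\usk_*|$ agrees \emph{on the nose} with the displayed composite, rather than merely up to natural isomorphism; but this is immediate from the definitions of the filtration functor and of the levelwise realization. There is therefore no genuine obstacle here: the corollary is a purely formal consequence of the two Quillen adjunctions already in hand.
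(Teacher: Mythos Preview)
Your proposal is correct and is exactly the argument the paper has in mind: the corollary is stated immediately after establishing that $\usk_*\colon \ssSet\to\Fil(\ssSet)$ is left Quillen and that the levelwise realization $|-|\colon \Fil(\ssSet)\to\Fil(\sSet)$ is left Quillen (via \cite[11.6.5]{hir}), and the paper regards the result as the evident composition of these two facts. There is nothing to add.
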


Recall we denoted $\cS$ the \category of spaces, which can be viewed as the Dwyer--Kan localization of $\sSet$.
By \cite[1.3.4.25]{HA}, the Dwyer--Kan localization of $\ssSet$ is precisely the \category $\Fun(\Delta^\op, \cS)$ of simplicial spaces, and the Dwyer--Kan localization of $\Fil(\sSet)$ is the \category $\Fun(\nn, \cS)$ of filtered spaces. 
Consequently, the total left derived functor of $|\usk_*|\colon \ssSet\to \Fil(\sSet)$ is precisely a functor $\Fun(\Delta^\op, \cS)\rightarrow\Fun(\zz_{\geq 0}, \cS)$ by \cite[1.5.1]{hindk}.

Unwinding the definition of $|\usk_n(X)|$, we see that we obtain the natural identification for a bisimplicial set $X$:
\[
|\usk_n(X)| \cong \int^{[i]\in \Delta_{\leq n}^\op} \Delta^i\times X_i
\]
Here we used the Yoneda lemma to identify $  \Delta^i\cong\int^{[k]\in \Delta^\op} \Delta(k,i)\odot \Delta^k$.
Notice $\Delta^\bullet\colon \Delta^\op\to \sSet$
    is a Reedy cofibrant object in the Reedy model structure of $\Fun(\Delta, \sSet)$, see \cite[15.9.11]{hir}.
    Moreover, $\Delta^i$ is a contractible simplicial set, for all $i\geq 0$.
    As $X$ must be Reedy cofibrant in $\ssSet$, we obtain natural weak homotopy equivalences in $\sSet$
    \[
|\usk_n(X)|\cong \int^{[i]\in \Delta^\op_{\leq n}} \Delta^i\times X_i\simeq \hocolim_{\Delta^\op_{\leq n}} \iota^*_n(X) \simeq\sk_n^\cS(X)
    \]
    by \cite[18.4.16]{hir}. 
    We record our observation in the following corollary.

\begin{corollary}
    The skeleton filtration $\sk_*^\cS\colon \Fun(\Delta^\op, \cS)\to \Fun(\nn, \cS)$ of \cref{const: skeleton functor and the I category} is equivalent to the total left derived functor of $|\usk_*|\colon \ssSet\to \Fil(\sSet)$.
    In particular, given a bisimplicial set $X$, we obtain a natural equivalence $\sk_*^\cS(X)\simeq |\usk_*(X)|$ in $\Fun(\nn, \cS)$.
\end{corollary}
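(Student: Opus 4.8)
The plan is to assemble the functorial comparison from the ingredients already in place, viewing the pointwise equivalence $\sk_*^\cS(X)\simeq |\usk_*(X)|$ as the shadow of an equivalence of total left derived functors. First I would invoke \cref{cor: geom(usk) is left quillen}, which says that $|\usk_*|\colon \ssSet\to \Fil(\sSet)$ is left Quillen; hence it possesses a total left derived functor $\mathbb{L}|\usk_*|$, and by \cite[1.5.1]{hindk} this functor is identified, upon passing to Dwyer--Kan localizations, with a functor between the localized \categories. By \cite[1.3.4.25]{HA} these localizations are $\Fun(\Delta^\op,\cS)$ and $\Fun(\nn,\cS)$ respectively, so $\mathbb{L}|\usk_*|$ is a functor $\Fun(\Delta^\op,\cS)\to\Fun(\nn,\cS)$. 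Since every bisimplicial set is Reedy cofibrant, no cofibrant replacement is required, and the derived functor is computed by applying $|\usk_*|$ on the nose: $\mathbb{L}|\usk_*|(X)\simeq|\usk_*(X)|$ for any bisimplicial set $X$.

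The core computation is to identify $|\usk_n(X)|$ with $\sk_n^\cS(X)$, naturally in $n$ and $X$. Starting from the coend formula $|\usk_n(X)|\cong \int^{[i]\in\Deltaleq{n}^\op}\Delta^i\times X_i$, I would use that $\Delta^\bullet\colon\Delta\to\sSet$ is Reedy cofibrant by \cite[15.9.11]{hir}, that each $\Delta^i$ is a contractible simplicial set, and that $X$ is Reedy cofibrant in $\ssSet$. Together these exhibit the coend as a homotopy colimit with a contractible weight, so collapsing $\Delta^\bullet$ to the point yields, via \cite[18.4.16]{hir}, a natural weak equivalence $|\usk_n(X)|\simeq\hocolim_{\Deltaleq{n}^\op}\iota_n^*(X)$. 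The target is precisely $\sk_n^\cS(X)$ by \cref{const: skeleton functor and the I category}.

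The step I expect to be the main obstacle is upgrading these degreewise equivalences to an equivalence of \emph{filtered} objects, that is, of functors $\nn\to\cS$ rather than an unrelated family of equivalences indexed by $n$. The key observation is that the weight comparison $\Delta^\bullet\to\ast$ is a single map of cosimplicial simplicial sets, independent of $n$; consequently the zigzag exhibiting each pointwise equivalence is compatible with the structure maps $|\usk_{n-1}(X)|\to|\usk_n(X)|$ and $\sk_{n-1}^\cS(X)\to\sk_n^\cS(X)$ of the two filtrations, and is natural in $X$. This promotes the collection of equivalences to a natural equivalence of functors $\mathbb{L}|\usk_*|\simeq\sk_*^\cS$, whose underlying pointwise statement is the asserted equivalence $\sk_*^\cS(X)\simeq|\usk_*(X)|$ in $\Fun(\nn,\cS)$.
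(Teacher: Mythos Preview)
Your proposal is correct and follows essentially the same approach as the paper: the coend identification $|\usk_n(X)|\cong \int^{[i]\in\Deltaleq{n}^\op}\Delta^i\times X_i$, the Reedy cofibrancy and contractibility of $\Delta^\bullet$, and the invocation of \cite[18.4.16]{hir} to obtain $\hocolim_{\Deltaleq{n}^\op}\iota_n^*(X)$ are exactly what the paper does in the paragraph preceding the corollary. If anything you are more careful than the paper about the compatibility with the filtration maps, which the paper leaves implicit under the word ``natural''.
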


Endow the category $\sSet$ of simplicial sets with its Cartesian monoidal structure, this is a monoidal model category \cite[4.2.8]{hovey}. 
The levelwise monoidal structure on $\ssSet=\Fun(\Delta^\op, \sSet)$ is also its Cartesian monoidal structure. Then $\ssSet$ is a monoidal model category with its Cartesian monoidal structure and its Reedy model structure (which is equal to its injective model structure). This is well-known but one can recover that result by \cref{lem: INJECTIVE Day convolution is monoidal model} and using the fact that the promonoidal structure on $\Delta^\op$ defines the pointwise monoidal structure by \cref{prop: Day convolution gives pointwise products}.

 Endow the filtered categories $\Fil(\sSet)$ and $\Fil(\ssSet)$ with their Day convolution: if $F_*$ and $G_*$ are filtered (bi)simplicial sets, then the Day convolution $F_*\Day G_*$ is the filtered (bi)simplicial set given by:
\[
(F_*\Day G_*)_n=\colim_{p+q=n} F_a\times G_b\cong \bigcup_{p+q=n}F_p\times G_q.
\]
using the formula appearing in \cref{remark: Day convolution in ordinaty symmetric monoidal case}. Here we used the fact that the colimit is computed in the category of sets, and therefore can be interpreted as union of (bi)simplicial sets. 
The monoidal unit are given by the constant filtered (bi)simplicial set induced by the terminal object.
It is well-known that these assemble to a monoidal model structure on $\Fil(\sSet)$ and $\Fil(\ssSet)$, but we can recover this result by \cref{lem: Day convolution is monoidal model}.

\begin{proposition}\label{prop: usk is lax mon}
    The functor $\usk_*\colon \ssSet\to \Fun(\nn, \ssSet)$ is lax symmetric monoidal.
\end{proposition}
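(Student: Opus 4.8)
The plan is to construct the lax structure directly on underlying sets, exploiting that both the skeletal filtration and the Day convolution are built from monomorphisms, so that all coherence is forced. First I would record the shape of the Day convolution: by \cref{remark: Day convolution in ordinaty symmetric monoidal case}, for filtered bisimplicial sets $F_*, G_*$ one has $(F_* \Day G_*)_n = \bigcup_{p+q = n} F_p \times G_q$, the union being taken inside $(\colim F_*) \times (\colim G_*)$ since colimits and products of bisimplicial sets are computed setwise. In particular, if $F_*$ and $G_*$ are exhaustive filtrations of $A$ and $B$ by sub-bisimplicial sets, then $F_* \Day G_*$ is an exhaustive filtration of $A \times B$ by sub-bisimplicial sets, with the associativity and symmetry constraints for $\Day$ given by the evident set-level identifications. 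Applied to $F_* = \usk_*(X)$ and $G_* = \usk_*(Y)$, which are exhaustive filtrations of $X$ and $Y$ by subobjects (\cref{const: the usual n-skeleton in simplicial set}), this presents $\usk_*(X) \Day \usk_*(Y)$ as a subobject-filtration of $X \times Y$.

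The hard part will be the single combinatorial input, namely the inclusion of sub-bisimplicial sets
\[
\usk_p(X) \times \usk_q(Y) \subseteq \usk_{p+q}(X \times Y) \qquad (p,q \geq 0).
\]
Since $\usk_n$ acts only in the outer simplicial variable and products of bisimplicial sets are levelwise, this reduces, for each fixed second degree $r$, to the corresponding statement for the ordinary simplicial sets $X_{\bullet r}$ and $Y_{\bullet r}$; this is the point-set shadow of \cref{lemma: product of simplices is skeletal}, which I would prove directly. Take a bidegree $(k,r)$-simplex $(x,y)$ of $\usk_p(X) \times \usk_q(Y)$ and write $x = \alpha^* x'$, $y = \beta^* y'$ for surjections $\alpha \colon [k] \twoheadrightarrow [p']$ and $\beta\colon [k] \twoheadrightarrow [q']$ with $p' \leq p$, $q' \leq q$. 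Factor the monotone map $(\alpha,\beta)\colon [k] \to [p'] \times [q']$ through its image as $[k] \xrightarrow{\gamma} [m] \hookrightarrow [p'] \times [q']$, and let $\iota_1,\iota_2$ denote the two components of the inclusion $[m] \hookrightarrow [p'] \times [q']$. As $[k]$ is a chain, $[m]$ is a chain in the product poset, so $m \leq p' + q' \leq p + q$; setting $w = (\iota_1^* x', \iota_2^* y')$, one checks $\gamma^* w = (x,y)$, exhibiting $(x,y)$ as a degeneracy of an $m$-simplex with $m \leq p+q$, hence as a simplex of $\usk_{p+q}(X \times Y)$.

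With this in hand, I would define the lax multiplication $\mu_{X,Y}\colon \usk_*(X) \Day \usk_*(Y) \to \usk_*(X\times Y)$ to be, in filtration degree $n$, the inclusion $\bigcup_{p+q=n}\usk_p(X) \times \usk_q(Y) \hookrightarrow \usk_n(X\times Y)$ of subobjects of $X\times Y$; these assemble into a map of filtered bisimplicial sets and are natural in $X,Y$ because $\usk_*$ is a subfunctor of the identity and products are functorial. For the unit, the terminal bisimplicial set $\ast$ is $0$-skeletal, so $\usk_*(\ast)$ is the constant filtration at $\ast$, which by \cref{remark: Day convolution in ordinaty symmetric monoidal case} is exactly the Day convolution unit; the unit map is therefore an isomorphism. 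Finally, the associativity, unit, and symmetry coherence diagrams all consist of inclusions of subobject-filtrations into a common iterated product such as $X\times Y\times Z$, and any two parallel maps of subobjects of a fixed object agree; hence all coherence is automatic, completing the proof.
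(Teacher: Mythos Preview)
Your proof is correct and follows essentially the same approach as the paper's: establish the key inclusion $\usk_p(X)\times\usk_q(Y)\subseteq\usk_{p+q}(X\times Y)$ and then assemble the lax structure from the resulting subobject inclusions, with coherence forced by uniqueness of maps between subobjects. The only difference is stylistic: the paper proves the key inclusion by a coend computation reducing to $\usk_n^{\Set}(\Delta^i\times\Delta^j)=\Delta^i\times\Delta^j$ for $i+j\le n$, whereas you do the equivalent element-level factorization directly; your coherence argument is also more explicit than the paper's ``by construction.''
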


\begin{proof}
    Given bisimplicial sets $X$ and $Y$, we build a natural map 
    \(
    \nabla\colon\usk_* X \Day \usk_* Y \longrightarrow \usk_* (X\times Y)
    \) in $\Fun(\nn, \ssSet)$.
    By definition of the Day convolution product,  we have:
    \begin{align*}
        \left( \usk_* X \Day \usk_* Y\right)_n \cong \bigcup_{p+q=n} \usk_pX \times \usk_qY
    \end{align*}
    for all $n \geq 0$.
    For $p+q=n$, notice that we have the isomorphisms of bisimplicial sets:
    \[
    \usk_n\left( \usk_pX\times \usk_q Y\right)\cong\usk_pX\times \usk_q Y.
    \]
    Indeed, we have the natural identifications
    \begin{align*}
    &\usk_n\left( \usk_pX\times \usk_q Y\right)\\
         & \cong \int^{[k]\in \Delta^\op_{\leq n}} \Delta^k \times \left( \left(\int^{[i]\in \Delta^\op_{\leq p}} \Delta(k, i)\odot X_i\right) \times \left( \int^{[j]\in \Delta^\op_{\leq q}}  \Delta(k,j)\odot Y_j\right)  \right) \\
        & \cong \int^{[k]\in \Delta^\op_{\leq n}} \int^{[i]\in \Delta^\op_{\leq p}}\int^{[j]\in \Delta^\op_{\leq q}}\Delta^k\times((\Delta(k, i)\odot X_i)\times (\Delta(k,j)\odot Y_j))\\
        & \cong \int^{[k]\in \Delta^\op_{\leq n}} \int^{[i]\in \Delta^\op_{\leq p}}\int^{[j]\in \Delta^\op_{\leq q}} (\Delta(k, i)\odot \Delta(k, j) \odot \Delta^k) \times (X_i\times Y_j)\\
        & \cong \int^{[i]\in \Delta^\op_{\leq p}}\int^{[j]\in \Delta^\op_{\leq q}} \usk^{\Set}_n(\Delta^i\times \Delta^j)\odot (X_i\times Y_j)\\
        & \cong \int^{[i]\in \Delta^\op_{\leq p}}\int^{[j]\in \Delta^\op_{\leq q}} (\Delta^i\times \Delta^j)\odot (X_i\times Y_j)\\
        & \cong \int^{[i]\in \Delta^\op_{\leq p}}\int^{[j]\in \Delta^\op_{\leq q}} (\Delta^i\odot X_i)\times (\Delta^j\odot Y_j)\\
        & \cong \left(\int^{[i]\in \Delta^\op_{\leq p}} \Delta^i\odot X_i\right) \times \left( \int^{[j]\in \Delta^\op_{\leq q}} \Delta^j\odot Y_j\right)\\
        & \cong \usk_p(X) \times \usk_q(Y).
    \end{align*}
Here, we denoted $\usk^\Set_n\colon \sSet\to \sSet$ of \cref{const: the usual n-skeleton in simplicial set}, which is not to be confused with $\usk_n\colon \ssSet\to \ssSet$, for which given a simplicial set $S_\bullet$, is defined as
\[
\usk^\Set_n(S_\bullet)= \int^{[k]\in\Delta^\op_{\leq n}} S_k\odot \Delta^k.
\]
As usual, if $S_{\bullet}$ has only non-degenerate simplices for degrees less or equal to $n$, then $\usk_n^\Set(S_\bullet)=S_\bullet$. 
Since the non-degenerate simplices of $\Delta^i\times \Delta^j$ are precisely the $(i, j)$-shuffles which are in degrees less or equal to $i+j$, and $i+j\leq p+q=n$, then $\usk_n^\Set(\Delta^i\times \Delta^j)=\Delta^i\times \Delta^j$.
    
   Therefore the desired map $\nabla\colon(\usk_* X \Day \usk_* Y)_n \longrightarrow \usk_n (X\times Y)$ is obtained as:
    \[
    \begin{tikzcd}
       \displaystyle \bigcup_{p+q=n} \usk_pX \times \usk_qY = \bigcup_{p+q=n} \usk_n\left(\usk_pX \times \usk_qY\right) \ar[hook]{r} & 
    \usk_n(X\times Y)
    \end{tikzcd}
    \]
    using the natural inclusions $\usk_p X \hookrightarrow X$  and $\usk_q Y\hookrightarrow Y$,  and then apply the folding map.

     Notice $\usk_*$ sends the terminal object of $\ssSet$ to the constant filtered object on $\Fil(\ssSet)$ induced by the terminal object of $\ssSet$, which is the monoidal unit of $\Fil(\ssSet)$.
    These observations determine the lax symmetric monoidal structure on $\usk_*$ and by construction the structure is indeed associative and unital.
\end{proof}

\begin{corollary}
    The functor $|\usk_*|\colon \ssSet\to \Fil(\sSet)$ is lax symmetric monoidal.
\end{corollary}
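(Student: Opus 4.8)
The plan is to factor $|\usk_*|$ through the levelwise geometric realization and then invoke \cref{prop: usk is lax mon}. Explicitly, $|\usk_*|$ is the composite
\[
\ssSet \xrightarrow{\ \usk_*\ } \Fun(\nn,\ssSet) = \Fil(\ssSet) \xrightarrow{\ |-|\ } \Fil(\sSet),
\]
where the second functor is the lift of geometric realization described above, sending a filtered bisimplicial set $F_*$ to the filtered simplicial set $|F_*|$. By \cref{prop: usk is lax mon}, the first functor $\usk_*$ is lax symmetric monoidal with respect to the Cartesian product on $\ssSet$ and the Day convolution on $\Fil(\ssSet)$. Since the composite of a lax symmetric monoidal functor with a (lax) symmetric monoidal functor is again lax symmetric monoidal, it suffices to equip the levelwise realization $|-|\colon \Fil(\ssSet)\to \Fil(\sSet)$ with a symmetric monoidal structure for the Day convolution products.

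First I would record that geometric realization $|-|\colon \ssSet\to \sSet$ is strong symmetric monoidal for the Cartesian structures. As $|X|$ is the diagonal of $X$ and finite products of (bi)simplicial sets are computed levelwise, we have, naturally in $X$ and $Y$,
\[
|X\times Y|_n = (X\times Y)_{nn} = X_{nn}\times Y_{nn} = |X|_n\times |Y|_n,
\]
so $|X\times Y|\cong |X|\times|Y|$, and $|-|$ carries the terminal object to the terminal object. Thus $|-|\colon \ssSet\to \sSet$ is strong symmetric monoidal, and in particular preserves both the products and the units appearing in the convolution formula.

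Next I would promote this to the filtered setting. The Day convolution on $\Fil(\ssSet)$ is computed by the formula $(F_*\Day G_*)_n \cong \bigcup_{p+q=n} F_p\times G_q$, and similarly on $\Fil(\sSet)$. Because $|-|\colon \ssSet\to \sSet$ is a left adjoint it preserves all colimits, and by the previous step it preserves finite products; applying it to the union formula yields natural isomorphisms
\[
|F_*\Day G_*|_n \cong \bigcup_{p+q=n} |F_p\times G_q| \cong \bigcup_{p+q=n} |F_p|\times |G_q| \cong (|F_*|\Day |G_*|)_n,
\]
and $|-|$ sends the monoidal unit (the constant filtered object on the terminal bisimplicial set) to the monoidal unit of $\Fil(\sSet)$. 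Hence $|-|\colon \Fil(\ssSet)\to \Fil(\sSet)$ is strong symmetric monoidal, and composing with the lax symmetric monoidal functor $\usk_*$ gives the desired lax symmetric monoidal structure on $|\usk_*|$.

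The only point requiring care is the interchange of realization with the Day convolution, but this is immediate at the point-set level: the convolution is a union of products, and realization preserves both colimits (being a left adjoint) and finite products (being the diagonal). No homotopical input is needed, so I do not expect any genuine obstacle here beyond bookkeeping of the natural comparison maps.
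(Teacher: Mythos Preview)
Your proposal is correct and follows essentially the same approach as the paper: factor $|\usk_*|$ as $\usk_*$ followed by the levelwise realization, use \cref{prop: usk is lax mon} for the first factor, and show that $|-|\colon \Fil(\ssSet)\to\Fil(\sSet)$ is strong symmetric monoidal. The only minor difference is that the paper obtains the strong monoidality on filtered objects by invoking the general functoriality result \cref{lem: Dau convolution preserves maps of operads} (together with \cref{lem: Day convolution underived}), whereas you verify it directly from the explicit convolution formula using that realization, as the diagonal, preserves products and, as a left adjoint, preserves colimits; both arguments are equally valid.
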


\begin{proof}
    The functor $|-|\colon \ssSet\to \sSet$ is strong symmetric monoidal as
    $
    (X\times Y)_{nn}=X_{nn}\times Y_{nn}
    $
    for any bisimplicial sets $X$ and $Y$, and all $n\geq 0$.
    Therefore $|-|\colon \Fil(\ssSet)\to\Fil(\sSet)$ remains strong monoidal by  \cref{lem: Dau convolution preserves maps of operads} and \cref{lem: Day convolution underived}.
    The result follows from \cref{prop: usk is lax mon}.
\end{proof}

\begin{corollary}\label{cor: sk is lax on spaces}
    The functor $\sk_*^\cS\colon \Fun(\Delta^\op, \cS)\to \Fun(\nn, \cS)$ is lax symmetric monoidal.
\end{corollary}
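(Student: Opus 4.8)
The plan is to descend the lax symmetric monoidal structure on $|\usk_*|$ through Dwyer--Kan localization, exactly as in the proof of \cref{theorem: Day conv model structure agrees with oo-categorical Day conv}. Two inputs are already available. First, by \cref{prop: usk is lax mon} and the subsequent corollary, the functor $|\usk_*|\colon \ssSet \to \Fil(\sSet)$ is lax symmetric monoidal for the Cartesian product on $\ssSet$ and the Day convolution on $\Fil(\sSet)$. Second, by \cref{cor: geom(usk) is left quillen} it is left Quillen, hence preserves weak equivalences between cofibrant objects; since every object of $\ssSet$ is Reedy cofibrant, $|\usk_*|$ sends all weak equivalences to weak equivalences, so its composite with the localization functor $\Fil(\sSet) \to \Fun(\nn,\cS)$ inverts weak equivalences.

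To organize the targets, I would first pin down the symmetric monoidal structures on the two localizations. Applying \cref{theorem: Day conv model structure agrees with oo-categorical Day conv} with $\m{M}=\sSet$ and $\cC=\Delta^\op$, equipped with its pointwise promonoidal structure whose Day convolution is the Cartesian product by \cref{prop: Day convolution gives pointwise products}, exhibits the symmetric monoidal Dwyer--Kan localization of $\ssSet$ as $\Fun(\Delta^\op,\cS)^{\times}$. Applying the same theorem with $\cC=\nn$, which is promonoidally $\omega$-small by \cref{proposition: filtered objects omega small}, exhibits the symmetric monoidal localization of $\Fil(\sSet)$ as $\Fun(\nn,\cS)^{\Day}$. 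The localization functor $\Fil(\sSet) \to \Fun(\nn,\cS)$ is itself (strong) symmetric monoidal.

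Finally, I would post-compose $|\usk_*|$ with this localization and invoke the universal property of the symmetric monoidal Dwyer--Kan localization \cite[A.5]{nikolaus-scholze}: since the composite inverts weak equivalences, it factors through an essentially unique lax symmetric monoidal functor $\Fun(\Delta^\op,\cS)^{\times} \to \Fun(\nn,\cS)^{\Day}$. Its underlying functor is the total left derived functor of $|\usk_*|$, which the corollary supplying the natural equivalence $\sk_*^\cS(X)\simeq |\usk_*(X)|$ identifies with $\sk_*^\cS$; this furnishes the required lax symmetric monoidal structure. The only real work is the bookkeeping of the preceding paragraph: one must check that the two appeals to \cref{theorem: Day conv model structure agrees with oo-categorical Day conv} match the monoidal units and products with the Cartesian product of simplicial spaces and the Day convolution of filtered spaces named in the statement. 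No homotopy coherences need to be produced by hand, as all of them are transported along the localization.
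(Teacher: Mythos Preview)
The proposal is correct and follows essentially the same approach as the paper's proof: descend the lax symmetric monoidal structure on $|\usk_*|$ through Dwyer--Kan localization via \cite[A.5]{nikolaus-scholze}, then invoke \cref{theorem: Day conv model structure agrees with oo-categorical Day conv} to identify the resulting monoidal structure on the target with the $\infty$-categorical Day convolution. Your only minor deviation is invoking that theorem also for the source $\ssSet$ (with $\cC=\Delta^\op$), whereas the paper simply cites \cite[4.1.7.4]{HA} there; note that the theorem as stated uses the projective model structure while the paper equips $\ssSet$ with the Reedy/injective one, so a word about their Quillen equivalence would make your invocation airtight.
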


\begin{proof}
   By our previous results, we know that $|\usk_*|$ is lax symmetric monoidal and a left Quillen functor, and thus it induces functor on $\infty$-categories that is lax symmetric monoidal:
    \(
    \r{N}(|\usk_*|)\colon \r{N}\left( \ssSet \right) \longrightarrow \r{N}\left(\Fil(\sSet)^c\right).
    \)
    Here the upper subscript $c$ means projectively cofibrant objects in the filtered category of simplicial sets.
    Let $\mathscr{W}_{\nn}$ be the class of projective weak homotopy equivalences between projective cofibrant filtered simplicial sets.
    The localization \(\r{N}\left(\Fil(\sSet)\right) \longrightarrow \r{N}\left(\Fil(\sSet)^c\right) \left[ \mathscr{W}^{-1}_{\nn}\right]
    \)
    is strong symmetric monoidal \cite[4.1.7.4]{HA}.
    Recall $|\usk_*|$ preserves weak equivalences. 
    Thus by Hinich--Nikolaus--Scholze \cite[A.5 (v)]{nikolaus-scholze}, the induced lax symmetric monoidal functor
    \[
    \r{N}\left(\ssSet \right) \longrightarrow \r{N}\left(\Fil(\sSet)^c\right) \longrightarrow \r{N}\left(\Fil(\sSet)^c\right) \left[ \mathscr{W}^{-1}_{\nn}\right]
    \]
    uniquely determines a lax symmetric monoidal structure on the functor:
    \[
    \r{N}\left( \ssSet \right) \left[ \mathscr{W}^{-1}_\Delta\right]\longrightarrow\r{N}\left(\Fil(\sSet)^c\right) \left[ \mathscr{W}^{-1}_{\nn}\right]
    \]
    where $\mathscr{W}_\Delta$  denotes the class of weak equivalences in bisimplicial sets.
    The above functor is precisely the total left derived functor of $|\usk_*|$, i.e.\ $\sk_*^\cS\colon \Fun(\Delta^\op, \cS)\to \Fun(\nn, \cS)$.
    We can conclude as 
     \cref{theorem: Day conv model structure agrees with oo-categorical Day conv} guarantees that the monoidal structure obtained on the Dwyer--Kan localization of $\Fil(\sSet)$ is the usual Day convolution product on the $\infty$-category $\Fun(\nn, \cS)$.
\end{proof}

\subsection{The Eilenberg--Zilber map in the presentable case}
We wish to extend the lax symmetric monoidal structure on $\sk_*^\cS\colon \Fun(\Delta^\op, \cS)\to \Fun(\nn, \cS)$ to the skeleton functor $\sk^\cE_*\colon \Fun(\Delta^\op, \cE)\to \Fun(\nn, \cE)$ associated to any presentable \category $\cE$.

\begin{lemma}\label{lem: presheaf is tensoring}
    Let $\cE$ be a presentable $\infty$-category.
    Let $K$ be any simplicial set.
    Then there is a natural equivalence of $\infty$-categories:
    \(
    \cE\otimes \Fun(K, \cS)\simeq  \Fun(K, \cE).
    \)
\end{lemma}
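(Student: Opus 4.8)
The plan is to recognize this as an instance of Lurie's computation of the tensor product of a functor \category against a presentable \category, \cite[4.8.1.17]{HA}. That result provides, for every small \category $\m{D}$ and every presentable \category $\cE$, a natural equivalence
\[
\Fun(\m{D},\cS)\otimes \cE \;\simeq\; \Fun(\m{D},\cE)
\]
in $\PrL$, exhibiting the Lurie tensor with $\cE$ as the operation which replaces the coefficient \category $\cS$ by $\cE$. Using the symmetry of the tensor product on $\PrL$, our statement $\cE\otimes \Fun(K,\cS)\simeq \Fun(K,\cE)$ is exactly this equivalence, once we have reduced to the case in which the indexing object $K$ is a small \category rather than a general simplicial set.

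First I would perform that reduction. Choose a fibrant replacement $j\colon K\to \cC$ in the Joyal model structure, so that $j$ is a categorical equivalence and $\cC$ is a small \category. It is standard that $\Fun(-,\m{D})$ sends categorical equivalences of simplicial sets to equivalences of \categories whenever $\m{D}$ is an \category (see \cite{HTT}); applying this to $\m{D}=\cS$ and to $\m{D}=\cE$ shows that $j$ induces equivalences $\Fun(\cC,\cS)\xrightarrow{\sim}\Fun(K,\cS)$ and $\Fun(\cC,\cE)\xrightarrow{\sim}\Fun(K,\cE)$. In particular $\Fun(K,\cS)\simeq \Fun(\cC,\cS)=\psh{\cC^{\op}}$ is presentable, so the tensor $\cE\otimes \Fun(K,\cS)$ is formed in $\PrL$, and it suffices to produce the equivalence for $\cC$ in place of $K$.

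With $K$ replaced by $\cC$, I would simply invoke \cite[4.8.1.17]{HA} with $\m{D}=\cC$, which yields $\Fun(\cC,\cS)\otimes\cE\simeq \Fun(\cC,\cE)$; combining this with the two equivalences from the previous step gives the desired natural equivalence. The computation itself is not the hard part; the only genuine points requiring care are the reduction from a simplicial set to an \category and the verification that the resulting equivalence is natural in $\cE$. Naturality is inherited from \cite[4.8.1.17]{HA}, where the comparison functor is the canonical colimit-preserving extension of the external product $\cS\times \cE\to \cE$ and is therefore manifestly natural; compatibility with the replacement $j$ is immediate since $j^{\ast}$ is natural in its target. The bookkeeping of opposite categories---tracking that the final variance is $\Fun(K,\cE)$ and not $\Fun(K^{\op},\cE)$---is the one place to remain attentive, and is handled automatically by stating \cite[4.8.1.17]{HA} in the coefficient-change form above.
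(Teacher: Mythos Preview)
Your proposal is correct. The paper's proof is a compact three-line chain of equivalences using the universal property of presheaves and the dualizability of $\Fun(K^{\op},\cS)$ in $\PrL$:
\[
\Fun(K,\cE)\simeq \Fun^{L}(\Fun(K^{\op},\cS),\cE)\simeq \Fun^{L}(\cS,\cE\otimes\Fun(K,\cS))\simeq \cE\otimes\Fun(K,\cS).
\]
You instead invoke \cite[4.8.1.17]{HA} as a black box, which is exactly this computation packaged as a proposition, so the two approaches are really the same idea at different levels of unpacking. Your fibrant-replacement step is harmless but not strictly needed: Lurie's statement already allows $K$ to be an arbitrary small simplicial set, and in any case the paper's dualizability argument does not require $K$ to be fibrant. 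The paper's version has the mild advantage of being self-contained and of making the identification explicit enough to feed into the subsequent monoidal refinement in \cref{lem: monoidal compatibility with presheaf and Day}.
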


\begin{proof}
    This follows from the universal property on the presheaf category $\Fun(K^\op, \cS)$ and the fact it is a dualizable object in $\PrL$ with dual $\Fun(K, \cS)$:
 \[
        \Fun(K, \cE)  \simeq \Fun^L(\Fun(K^\op, \cS), \cE)
         \simeq \Fun^L(\cS, \cE \otimes \Fun(K, \cS))
         \simeq \cE \otimes \Fun(K, \cS).
 \qedhere
 \]
\end{proof}

\begin{remark}\label{remark: description of the presheaf eq}
    Informally, the equivalence of \cref{lem: presheaf is tensoring} sends a pair $C\otimes F$ in $\cE\otimes \Fun(K, \cS)$ to a functor $F_C\colon K\rightarrow \cE$ where $F_C(x)=F(x)\odot C$, where $\odot\colon \cS\times \cE\rightarrow \cE$ denotes the tensoring on $\cE$ over spaces, and $x$ is a vertex of $K$. 
\end{remark}

\begin{corollary}\label{prop: skeleton is determined on spaces}
Let $\cE$ be a presentable \category.
Then the skeleton filtration is compatible with the equivalence of \cref{lem: presheaf is tensoring}, in the sense that the following diagram commutes:
\[
\begin{tikzcd}
    \Fun(\Delta^\op, \cE) \ar[leftarrow]{d}[swap]{\simeq} \ar{r}{\sk_*^\cE} & \Fun(\nn, \cE)\ar[leftarrow]{d}{\simeq}\\
    \cE\otimes \Fun(\Delta^\op, \cS) \ar{r}[swap]{\cE\otimes \sk_*^\cS} & \cE\otimes \Fun(\nn, \cE).
\end{tikzcd}
\]
In other words $\sk_*^\cE\simeq \cE\otimes \sk_*^\cS$
\end{corollary}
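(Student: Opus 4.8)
The plan is to exploit the factorization $\sk_*^\cE = p_! \circ q^*$ from \cref{const: skeleton functor and the I category}, where $q\colon \nnDelta \to \Delta^\op$ and $p\colon \nnDelta \to \nn$ are the two projections, together with the observation that the Lurie tensor product $\cE \otimes (-)$ is compatible with both restriction and left Kan extension along functors of small categories. First I would record that, for presentable $\cE$, both $q^*$ and $p_!$ are colimit-preserving functors between presentable \categories, hence morphisms in $\PrL$: the functor $q^*$ is precomposition and colimits in functor categories are computed pointwise, so $q^*$ preserves all colimits, while $p_!$ is a left adjoint by the adjunction $p_! \dashv p^*$. The same discussion applies over $\cS$, so $\sk_*^\cS$ is literally a composite of two arrows in $\PrL$.

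The core of the argument is a naturality lemma: for any functor $f\colon K \to L$ of small \categories and any presentable $\cE$, under the equivalence $\Fun(-,\cE) \simeq \cE \otimes \Fun(-,\cS)$ of \cref{lem: presheaf is tensoring}, the functor $\cE \otimes f^*$ is identified with $f^*\colon \Fun(L,\cE) \to \Fun(K,\cE)$, and $\cE \otimes f_!$ with $f_!\colon \Fun(K,\cE) \to \Fun(L,\cE)$. To prove this I would check the comparison transformations on the pure tensors $C \otimes F$, which by \cref{remark: description of the presheaf eq} correspond to the functor $F_C = F(-) \odot C$. For restriction this is immediate and pointwise: $(f^* F_C)(x) = F(f(x)) \odot C = (f^* F)_C(x)$. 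For left Kan extension, the key point is that the tensoring $(-)\odot C\colon \cS \to \cE$ preserves colimits, being left adjoint to $\Map_\cE(C,-)$, and $f_!$ is computed by colimits, so that $f_!(F_C) \simeq (f_! F)_C$. Since the pure tensors generate $\cE \otimes \Fun(K,\cS)$ under colimits and all functors in sight preserve colimits, these objectwise identifications promote to equivalences of functors in $\PrL$.

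With the naturality lemma in hand, I would apply it twice, once to $f = q$ in the restriction case and once to $f = p$ in the left Kan extension case, and then compose. Applying $\cE \otimes (-)\colon \PrL \to \PrL$ to the factorization $\sk_*^\cS = p_! \circ q^*$ gives $\cE \otimes \sk_*^\cS = (\cE \otimes p_!)\circ(\cE \otimes q^*)$, and under \cref{lem: presheaf is tensoring} the right-hand side is identified with $p_! \circ q^* = \sk_*^\cE$. This produces exactly the commuting square in the statement, establishing the desired equivalence $\sk_*^\cE \simeq \cE \otimes \sk_*^\cS$.

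I expect the main obstacle to be upgrading the naturality lemma from an objectwise equivalence to a genuine equivalence of functors: the pure-tensor computation pins down the comparison map on generators, but concluding that it is an equivalence requires that every functor involved preserves the relevant colimits and that the equivalence of \cref{lem: presheaf is tensoring} is itself functorial in the index category. This is most cleanly handled by staying inside $\PrL$, where the universal property of the presheaf category $\Fun(K^\op,\cS)$ used in \cref{lem: presheaf is tensoring} makes both $f^*$ and $f_!$, as well as their tensorings with $\cE$, manifestly colimit-preserving, so that checking on a generating set of corepresentables suffices.
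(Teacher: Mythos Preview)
Your proposal is correct. The paper states this corollary without proof, treating it as an immediate consequence of \cref{lem: presheaf is tensoring} and the factorization $\sk_*^\cE = p_! \circ q^*$ from \cref{const: skeleton functor and the I category}; your argument supplies exactly the details the paper leaves implicit.

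One small streamlining is available: once you have compatibility of the equivalence $\Fun(K,\cE)\simeq \cE\otimes\Fun(K,\cS)$ with restriction $f^*$ (which is just the functoriality in $K$ underlying the proof of \cref{lem: presheaf is tensoring}), compatibility with $f_!$ follows formally by passing to left adjoints, since $\cE\otimes(-)$ is an endofunctor of the $(\infty,2)$-category $\PrL$ and hence preserves internal adjunctions. This sidesteps the need to compute $f_!(F_C)$ by hand and the concern you raise about upgrading objectwise checks to equivalences of functors.
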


Suppose now that $\cE$ is a presentably symmetric monoidal $\infty$-category, i.e.\ an $\mathbb{E}_\infty$- algebra in $\PrL$. An algebra morphism in $\PrL$ is precisely a (colimit preserving) lax symmetric monoidal functor.
We can strengthen \cref{prop: skeleton is determined on spaces} as follows.

\begin{lemma}\label{lem: monoidal compatibility with presheaf and Day}
Let $\cE$ be a presentably symmetric monoidal $\infty$-category.
    Let $K$ be a promonoidal $\infty$-category.
    Then there is a natural equivalence of symmetric monoidal $\infty$-categories  (using Dav convolution products):
    \[
    \cE\otimes \Fun(K, \cS)\simeq  \Fun(K, \cE).
    \]
\end{lemma}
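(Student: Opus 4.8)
The plan is to leverage \cref{lem: presheaf is tensoring} together with the classification/functoriality results for Day convolution established earlier, to promote the bare equivalence of \categories to an equivalence of symmetric monoidal \categories. The key observation is that both sides are presentably symmetric monoidal and arise from presheaf-type constructions, so it suffices to exhibit a symmetric monoidal functor inducing the underlying equivalence; since both monoidal structures are determined by their behavior on a generating class compatible with colimits, this is enough.

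First I would recall that $\Fun(K,\cS)^{\Day}$ is a presentably symmetric monoidal \category, being the Day convolution \operad associated to the promonoidal \category $K$ (valued in $\cS^{\times}$), as produced by \cref{prop: existence of Day convolution product}. Dually, $\cE$ is an $\ee{\infty}$-algebra in $\PrL$ by hypothesis, so the Lurie tensor product $\cE \otimes \Fun(K,\cS)$ inherits a presentably symmetric monoidal structure in $\PrL$. The underlying equivalence $\cE \otimes \Fun(K,\cS) \simeq \Fun(K,\cE)$ of \cref{lem: presheaf is tensoring} is what we must upgrade. The natural approach is to observe that $\Fun(K,\cS)^{\Day}$ is the free presentably symmetric monoidal \category on the promonoidal \category $K$, in the sense that it corepresents lax symmetric monoidal (equivalently, $\ee{\infty}$-algebra) functors out of $K$; concretely, by the universal property recorded in \cref{remark: universal property of day convolution operad}, morphisms of \operads over $\Fin_\ast$ out of $\Fun(K,\cS)^{\Day}$ into a target $\m{E}^{\otimes}$ correspond to lax promonoidal functors out of $K$. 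Under the identification $\cE \otimes (-)$ with $\Fun^L(\cS, \cE \otimes -)$ from the tensoring, base-changing the universal property along $\cS \to \cE$ in $\PrL$ produces precisely $\Fun(K,\cE)^{\Day}$, with its Day convolution structure governed by the same promonoidal structure on $K$ but now with values in $\cE^{\otimes}$ rather than $\cS^{\times}$.

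The cleanest route is therefore: identify $\cE \otimes \Fun(K,\cS)^{\Day}$ with $\Fun_{\Fin_\ast}(K^{\mathrm{pt}} , \cE)$-type Day convolution by base change, then invoke the explicit colimit formula of \cref{prop: existence of Day convolution product} on both sides to match the convolution products. Specifically, the convolution product on $\cE \otimes \Fun(K,\cS)$ is computed on pure tensors $C \otimes F$ and $C' \otimes G$ (using \cref{remark: description of the presheaf eq}) as $(C\otimes C') \otimes (F \Day G)$, while the convolution product on $\Fun(K,\cE)$ is $(F_C \Day G_{C'})(k) \simeq \colim_{(k_1,k_2)} \mu(k_1,k_2;k)\odot F_C(k_1)\otimes G_{C'}(k_2)$. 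Since $\cS$ is the monoidal unit of $\PrL$ and $\cE$ is a module, both colimits agree under the tensoring $\odot \colon \cS \times \cE \to \cE$; this is exactly the compatibility between the $\cS$-tensoring and the symmetric monoidal product of $\cE$, which holds because $-\otimes-$ on $\cE$ preserves colimits in each variable. I would assemble these into a symmetric monoidal functor using that $\cE \otimes (-)$ is a symmetric monoidal functor $\PrL \to \Mod_\cE(\PrL)$, applied to the Day convolution construction viewed functorially.

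The main obstacle I anticipate is making precise the claim that tensoring with $\cE$ in $\PrL$ commutes with the formation of the Day convolution \operad, i.e., that $\cE \otimes \Fun(K,\cS)^{\Day} \simeq \Fun(K,\cE)^{\Day}$ as $\ee{\infty}$-algebras and not merely as underlying \categories. This requires knowing that the promonoidal structure on $K$ which classifies $\Fun(K,\cS)^{\times}$ (via \cref{proposition: Day conv recovers presentably O-monoidal str}) continues to classify the $\cE$-linear Day convolution after base change — in other words, that the classification result is stable under the symmetric monoidal functor $\cE \otimes (-)$. I would handle this by checking the universal properties match: an $\ee{\infty}$-map out of $\cE \otimes \Fun(K,\cS)^{\Day}$ into a target $\m{D}^{\otimes}$ in $\Mod_\cE(\PrL)$ is, by the tensor-hom adjunction in $\PrL$ and \cref{remark: universal property of day convolution operad}, the same data as a lax promonoidal functor $K \to \m{D}$ respecting the $\cE$-structure, which is precisely the universal property of $\Fun(K,\cE)^{\Day}$. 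Once this universal-property matching is in place, the symmetric monoidal equivalence follows formally, and its underlying functor is forced to agree with the equivalence of \cref{lem: presheaf is tensoring} by the uniqueness clause of that universal property.
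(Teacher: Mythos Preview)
Your proposal has a genuine gap: you invoke \cref{remark: universal property of day convolution operad} as a \emph{mapping-out} universal property of $\Fun(K,\cS)^{\Day}$, but that remark gives a \emph{mapping-in} universal property. It says maps of \operads $\m{D}^{\otimes} \to \Fun_{\m{O}}(\m{C},\m{E})^{\otimes}$ correspond to maps $\m{D}^{\otimes}\times_{\m{O}^{\otimes}}\m{C}^{\otimes} \to \m{E}^{\otimes}$; it does not describe maps \emph{from} $\Fun(K,\cS)^{\Day}$ into an arbitrary target, so it cannot be used to corepresent lax promonoidal functors out of $K$ as you claim. Consequently the ``match universal properties in $\Mod_{\cE}(\PrL)$'' step does not go through: neither $\cE\otimes\Fun(K,\cS)^{\Day}$ nor $\Fun(K,\cE)^{\Day}$ is characterized by the mapping-out property you assert. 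The slogan that $\Fun(K,\cS)^{\Day}$ is ``free on $K$'' in $\CAlg(\PrL)$ is not available from the results cited.

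The paper uses the same universal property but in the correct direction: it starts with the lax promonoidal evaluation $\Fun(K,\cS)\times K \to \cS$, tensors with $\cE$ and composes with the strong monoidal tensoring $\odot\colon \cE\times\cS\to\cE$, and then applies the mapping-in universal property to produce a lax symmetric monoidal functor $\cE\times\Fun(K,\cS)\to\Fun(K,\cE)$, which descends to the Lurie tensor product. The coend computation you sketch for pure tensors---$(F_C\Day G_{C'})(x)\simeq (F\Day G)_{C\otimes C'}(x)$ via compatibility of $\odot$ with $\otimes$ and colimits---is exactly what the paper does next to upgrade ``lax'' to ``strong'' (invoking \cite[3.26]{cothhduality}). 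So your verification step is right; what is missing is a legitimate construction of the comparison functor, which the paper supplies by running the adjunction the other way.
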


\begin{proof}
    The evaluation functor
$
    \mathrm{Ev}\colon \Fun(K, \cS)\times K\longrightarrow \cS
    $
    is lax promonoidal by the universal property of Day convolution \cref{remark: universal property of day convolution operad}.
    It induces a lax promonoidal functor
    \[
    \cE\times \mathrm{Ev}\colon \cE\times \Fun(K, \cE)\times K \longrightarrow \cE \times \cS
    \]
    The tensoring of $\cE$ over spaces provides a strong symmetric monoidal functor $\odot\colon \cE\times \cS\rightarrow \cE$ as $\cE$ is presentably symmetric monoidal.
    Thus we obtain a lax promonoidal functor:
    \[
    \begin{tikzcd}
        \cE\times \Fun(K, \cE)\times K \ar{r}{\cE\times \r{Ev}} & \cE\times \cS \ar{r}{\odot} & \cE.
    \end{tikzcd}
    \]
    Applying again the universal property of Day convolution, we obtain a lax symmetric monoidal functor
    \(
    \cE\times \Fun(K, \cS)\longrightarrow \Fun(K, \cE)
    \)
    with underlying functor informally described as $(C, F)\mapsto F_C$ as described in \cref{remark: description of the presheaf eq}.
    As it preserves colimits in each variable, by the universal property of the tensor product in $\PrL$, it uniquely determines a lax symmetric monoidal functor
    \(
    \cE\otimes \Fun(K, \cS)\longrightarrow \Fun(K, \cE)
    \)
    defined as $C\otimes F\mapsto F_C$, which is an equivalence of underlying \categories by \cref{lem: presheaf is tensoring}.
    Given any functor $F,G\colon K\rightarrow \cS$ and objects $C,C'\in \cE$, notice that, as tensoring of $\cE$ over $\cS$ is compatible with colimits and the monoidal product with $\cE$, we have:
    \begin{align*}
        (F\Day G)_{C\otimes C'}(x) & = (F\Day G)(x)\odot (C\otimes C')\\
        & \simeq \left(\int^{(x_1, x_2)\in K^{\times 2}} \Mul_K(\{x_1, x_2\}, x) \odot F(x_1) \otimes G(x_2)\right) \odot (C\otimes C')\\
        & \simeq \int^{(x_1, x_2)\in K^{\times 2}} \Mul_K(\{x_1, x_2\}, x) \odot (F(x_1) \odot C) \otimes (G(x_2) \odot C')\\
        & \simeq \int^{(x_1, x_2)\in K^{\times 2}} \Mul_K(\{x_1, x_2\}, x) \odot F_C(x_1)  \otimes G_{C'}(x_2)\\
        & \simeq (F_C \Day G_{C'})(x)
    \end{align*}
    for all objects $x$ in $K$.
    Moreover, if $\unit_\cE$ is the monoidal unit of $\cE$, notice that $*\odot \unit_\cE\simeq \unit_\cE$ by strong monoidality of $\odot\colon\cS\times \cE\rightarrow \cE$.
    If $\unit_{\cS^K}$ is the monoidal unit of $\Fun(K, \cS)$ and $\unit_{\cE^K}$ is the unit of $\Fun(K, \cE)$, then we obtain $(\unit_{\cS^K})_{\unit_\cE}\simeq \unit_{\cE^K}$ as desired.
    By \cite[3.26]{cothhduality}, this is enough to conclude that the lax symmetric monoidal structure on the equivalence $\cE\otimes \Fun(K, \cS)\longrightarrow \Fun(K, \cE)$ is actually strong symmetric monoidal.
\end{proof}

Combining our results above, since $\sk_*^\cS$ is lax symmetric monoidal by \cref{cor: sk is lax on spaces}, then the induced functor $\cE~\otimes~\sk_*^\cS\colon \cE~\otimes~\Fun(\Delta^\op, \cS)\to \cE\otimes \Fun(\nn,\cS)$ must be lax symmetric monoidal as well. Therefore, we obtain by \cref{prop: skeleton is determined on spaces} the following.

\begin{corollary}\label{cor: EZ for presentable case}
   Let $\cE$ be a presentably symmetric monoidal \category.
   Then the skeleton filtration $\sk_*^\cE\colon \Fun(\Delta^\op, \cE)\rightarrow \Fun(\nn, \cE)$ is lax symmetric monoidal.
\end{corollary}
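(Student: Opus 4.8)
The plan is to deduce this from the space-level statement, \cref{cor: sk is lax on spaces}, by tensoring with $\cE$ inside $\PrL$. First I would record that $\sk_*^\cS\colon\Fun(\Delta^\op,\cS)\to\Fun(\nn,\cS)$ is colimit-preserving, being the composite of left adjoints $p_!\circ q^*$ from \cref{const: skeleton functor and the I category}. Consequently the lax symmetric monoidal structure supplied by \cref{cor: sk is lax on spaces} exhibits $\sk_*^\cS$ as a morphism in $\CAlg(\PrL)$, that is, a morphism of $\ee{\infty}$-algebra objects of $\PrL$, between the presentably symmetric monoidal \categories $\Fun(\Delta^\op,\cS)$ (with the pointwise product) and $\Fun(\nn,\cS)$ (with Day convolution).

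Next I would use that $\cE$ is itself an $\ee{\infty}$-algebra in $\PrL$ and that the Lurie tensor product is symmetric monoidal, so that the functor $-\otimes\cE\colon\PrL\to\PrL$ carries commutative algebra objects and morphisms between them to the same. Applying this to the morphism $\sk_*^\cS$ produces a morphism in $\CAlg(\PrL)$
\[
\cE\otimes\sk_*^\cS\colon \cE\otimes\Fun(\Delta^\op,\cS)\longrightarrow \cE\otimes\Fun(\nn,\cS),
\]
which is therefore again a colimit-preserving lax symmetric monoidal functor.

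It then remains to identify the source, target, and functor with their counterparts over $\cE$. By \cref{lem: monoidal compatibility with presheaf and Day}, applied to the pointwise promonoidal structure on $\Delta^\op$ and to the additive symmetric monoidal structure $\nn^\otimes$, there are symmetric monoidal equivalences $\cE\otimes\Fun(\Delta^\op,\cS)\simeq\Fun(\Delta^\op,\cE)$ and $\cE\otimes\Fun(\nn,\cS)\simeq\Fun(\nn,\cE)$; here \cref{prop: Day convolution gives pointwise products} and \cref{proposition: filtered objects omega small} guarantee that the resulting Day convolution products are precisely the pointwise tensor product on simplicial objects and the filtered Day convolution, respectively. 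Under these equivalences, \cref{prop: skeleton is determined on spaces} identifies the underlying functor of $\cE\otimes\sk_*^\cS$ with $\sk_*^\cE$, and transporting the lax symmetric monoidal structure along the symmetric monoidal equivalences then endows $\sk_*^\cE$ with the desired structure.

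The hard part will be bookkeeping rather than new mathematics: I must ensure that the symmetric monoidal equivalences of \cref{lem: monoidal compatibility with presheaf and Day} are compatible with the specific promonoidal and Day convolution conventions used throughout, so that the transported structure genuinely recovers the pointwise product on the source and the Day convolution on the target, and that the identification of \cref{prop: skeleton is determined on spaces} is compatible with these at the level of (lax) symmetric monoidal functors, not merely at the level of underlying \categories.
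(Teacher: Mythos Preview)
Your proposal is correct and follows essentially the same route as the paper: use \cref{cor: sk is lax on spaces} to get the lax symmetric monoidal structure on $\sk_*^{\cS}$, tensor with $\cE$ in $\PrL$, and then invoke \cref{lem: monoidal compatibility with presheaf and Day} and \cref{prop: skeleton is determined on spaces} to identify source, target, and functor. One minor terminological slip: a lax symmetric monoidal colimit-preserving functor is not literally a morphism in $\CAlg(\PrL)$ (those are \emph{strong} symmetric monoidal), so the step ``apply the symmetric monoidal functor $-\otimes\cE$ to a morphism of commutative algebras'' is not quite the right packaging; the paper phrases this step just as informally, simply asserting that $\cE\otimes\sk_*^{\cS}$ inherits a lax symmetric monoidal structure, so your level of rigor matches theirs.
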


\bibliographystyle{alpha}
\bibliography{references}
\end{document}